\tikzset{node distance=2em, ch/.style={circle,draw,on chain,inner sep=2pt},chj/.style={ch,join},every path/.style={shorten >=4pt,shorten <=4pt},line width=1pt,baseline=-1ex}
\let\dlabel=\alabel
\newcommand{\dnode}[2][chj]{%
\node[#1,label={below:\dlabel{#2}}] {};
}
\newcommand{\dnodenj}[1]{%
\dnode[ch]{#1}
}
\newcommand{\dnodebr}[1]{%
\node[chj,label={below right:\dlabel{#1}}] {};
}
\newcommand{\dydots}{%
\node[chj,draw=none,inner sep=1pt] {\dots};
}
\newtheorem{Thm}{Theorem}[section]
\newtheorem{Def}[Thm]{Definition}
\newtheorem{Prop}[Thm]{Proposition}
\newtheorem{Lem}[Thm]{Lemma}
\newtheorem{Cor}[Thm]{Corollary}
\newtheorem{Rk}[Thm]{Remark}
\newtheorem{Not}[Thm]{Notation}
\numberwithin{equation}{section}
\def\QC{U_{v}^{>}(L\mathfrak{g})}
\def\Yangian{Y_{\hbar}^{>}(\mathfrak{g})}
\def\dgdual{\dot{\mathbf{Y}}^{>}_{\hbar}(\mathfrak{g})}
\def\qlc{U_{v}^{>}(L\mathfrak{sp}_{2n})}
\def\qld{U_{v}^{>}(L\mathfrak{o}_{2n})}
\def\integraldl{\mathbf{U}_{v}^{>}(L\mathfrak{o}_{2n})}
\def\integralcl{\mathbf{U}_{v}^{>}(L\mathfrak{sp}_{2n})}
\def\integrald{\mathcal{U}_{v}^{>}(L\mathfrak{o}_{2n})}
\def\integralc{\mathcal{U}_{v}^{>}(L\mathfrak{sp}_{2n})}
\def\integrall{\mathbf{U}_{v}^{>}(L\mathfrak{g})}
\def\integral{\mathcal{U}_{v}^{>}(L\mathfrak{g})}
\newif\ifShowLabels
\newdimen\theight
\def\TeXref#1{%
    \leavevmode\vadjust{\setbox0=\hbox{{\tt
        \quad\quad  {\small \rm #1}}}%
    \theight=\ht0
    \advance\theight by \lineskip
    \kern -\theight \vbox to
    \theight{\rightline{\rlap{\box0}}%
    \vss}%
    }}%
\newcommand\nc{\newcommand}
\nc{\unl}{\underline}
\nc{\ol}{\overline}
\nc{\on}{\operatorname}
\nc{\BA}{{\mathbb{A}}}
\nc{\BC}{{\mathbb{C}}}
\nc{\BD}{{\mathbb{D}}}
\nc{\BF}{{\mathbb{F}}}
\nc{\BG}{{\mathbb{G}}}
\nc{\BM}{{\mathbb{M}}}
\nc{\BN}{{\mathbb{N}}}
\nc{\BO}{{\mathbb{O}}}
\nc{\BQ}{{\mathbb{Q}}}
\nc{\BP}{{\mathbb{P}}}
\nc{\BR}{{\mathbb{R}}}
\nc{\BZ}{{\mathbb{Z}}}
\nc{\BS}{{\mathbb{S}}}
\nc{\BK}{{\mathbb{K}}}
\nc{\BW}{W}
\nc{\CA}{{\mathcal{A}}} \nc{\CB}{{\mathcal{B}}} \nc{\CalC}{{\mathcal
C}} \nc{\CalD}{{\mathcal D}} \nc{\CE}{{\mathcal{E}}}
\nc{\CF}{{\mathcal{F}}} \nc{\CG}{{\mathcal{G}}}
\nc{\CH}{{\mathcal{H}}} \nc{\CI}{{\mathcal{I}}}
\nc{\CK}{{\mathcal{K}}} \nc{\CL}{{\mathcal{L}}}
\nc{\CM}{{\mathcal{M}}} \nc{\CN}{{\mathcal{N}}}
\nc{\CO}{{\mathcal{O}}} \nc{\CP}{{\mathcal{P}}}
\nc{\CQ}{{\mathcal{Q}}} \nc{\CR}{{\mathcal{R}}}
\nc{\CS}{{\mathcal{S}}} \nc{\CT}{{\mathcal{T}}}
\nc{\CU}{{\mathcal{U}}} \nc{\CV}{{\mathcal{V}}}
\nc{\CW}{{\mathcal{W}}} \nc{\CX}{{\mathcal{X}}}
\nc{\CY}{{\mathcal{Y}}} \nc{\CZ}{{\mathcal{Z}}}
\nc{\fa}{{\mathfrak{a}}}
\nc{\fb}{{\mathfrak{b}}}
\nc{\fg}{{\mathfrak{g}}}
\nc{\fgl}{{\mathfrak{gl}}}
\nc{\fh}{{\mathfrak{h}}}
\nc{\fj}{{\mathfrak{j}}}
\nc{\fl}{{\mathfrak{l}}}
\nc{\fm}{{\mathfrak{m}}}
\nc{\fn}{{\mathfrak{n}}}
\nc{\fu}{{\mathfrak{u}}}
\nc{\fp}{{\mathfrak{p}}}
\nc{\frr}{{\mathfrak{r}}}
\nc{\fs}{{\mathfrak{s}}}
\nc{\ft}{{\mathfrak{t}}}
\nc{\fw}{{\mathfrak{w}}}
\nc{\fz}{{\mathfrak{z}}}
\nc{\fA}{{\mathfrak{A}}}
\nc{\fB}{{\mathfrak{B}}}
\nc{\fD}{{\mathfrak{D}}}
\nc{\fE}{{\mathfrak{E}}}
\nc{\fF}{{\mathfrak{F}}}
\nc{\fG}{{\mathfrak{G}}}
\nc{\fI}{{\mathfrak{I}}}
\nc{\fJ}{{\mathfrak{J}}}
\nc{\fK}{{\mathfrak{K}}}
\nc{\fL}{{\mathfrak{L}}}
\nc{\fM}{{\mathfrak{M}}}
\nc{\fN}{{\mathfrak{N}}}
\nc{\frP}{{\mathfrak{P}}}
\nc{\fQ}{{\mathfrak Q}}
\nc{\fR}{{\mathfrak R}}
\nc{\fS}{{\mathfrak S}}
\nc{\fT}{{\mathfrak{T}}}
\nc{\fU}{{\mathfrak{U}}}
\nc{\fW}{{\mathfrak{W}}}
\nc{\fY}{{\mathfrak{Y}}}
\nc{\fZ}{{\mathfrak{Z}}}
\nc{\ba}{{\mathbf{a}}}
\nc{\bb}{{\mathbf{b}}}
\nc{\bc}{{\mathbf{c}}}
\nc{\bd}{{\mathbf{d}}}
\nc{\be}{{\mathbf{e}}}
\nc{\bi}{{\mathbf{i}}}
\nc{\bj}{{\mathbf{j}}}
\nc{\bn}{{\mathbf{n}}}
\nc{\bp}{{\mathbf{p}}}
\nc{\bq}{{\mathbf{q}}}
\nc{\bu}{{\mathbf{u}}}
\nc{\bv}{{\mathbf{v}}}
\nc{\bw}{{\mathbf{w}}}
\nc{\bx}{{\mathbf{x}}}
\nc{\by}{{\mathbf{y}}}
\nc{\bz}{{\mathbf{z}}}
\nc{\bA}{{\mathbf{A}}}
\nc{\bB}{{\mathbf{B}}}
\nc{\bC}{{\mathbf{C}}}
\nc{\bD}{{\mathbf{D}}}
\nc{\bE}{{\mathbf{E}}}
\nc{\bI}{{\mathbf{I}}}
\nc{\bK}{{\mathbf{K}}}
\nc{\bH}{{\mathbf{H}}}
\nc{\bM}{{\mathbf{M}}}
\nc{\bN}{{\mathbf{N}}}
\nc{\bO}{{\mathbf{O}}}
\nc{\bQ}{{\mathbf Q}}
\nc{\bS}{{\mathbf{S}}}
\nc{\bT}{{\mathbf{T}}}
\nc{\bV}{{\mathbf{V}}}
\nc{\bW}{{\mathbf{W}}}
\nc{\bX}{{\mathbf{X}}}
\nc{\bP}{{\mathbf{P}}}
\nc{\bY}{{\mathbf{Y}}}
\nc{\bZ}{{\mathbf{Z}}}
\nc{\sA}{{\mathsf{A}}}
\nc{\sB}{{\mathsf{B}}}
\nc{\sC}{{\mathsf{C}}}
\nc{\sD}{{\mathsf{D}}}
\nc{\sF}{{\mathsf{F}}}
\nc{\sK}{{\mathsf{K}}}
\nc{\sM}{{\mathsf{M}}}
\nc{\sO}{{\mathsf{O}}}
\nc{\sQ}{{\mathsf{Q}}}
\nc{\sP}{{\mathsf{P}}}
\nc{\sT}{{\mathsf{T}}}
\nc{\sV}{{\mathsf{V}}}
\nc{\sW}{{\mathsf{W}}}
\nc{\sX}{{\mathsf{X}}}
\nc{\sZ}{{\mathsf{Z}}}
\nc{\sU}{{\mathsf{U}}}
\nc{\sS}{{\mathsf{S}}}
\nc{\sH}{{\mathsf{H}}}
\nc{\sfb}{{\mathsf{b}}}
\nc{\sfc}{{\mathsf{c}}}
\nc{\sd}{{\mathsf{d}}}
\nc{\sg}{{\mathsf{g}}}
\nc{\sk}{{\mathsf{k}}}
\nc{\sfl}{{\mathsf{l}}}
\nc{\sfp}{{\mathsf{p}}}
\nc{\sr}{{\mathsf{r}}}
\nc{\st}{{\mathsf{t}}}
\nc{\sfu}{{\mathsf{u}}}
\nc{\sw}{{\mathsf{w}}}
\nc{\sz}{{\mathsf{z}}}
\nc{\sx}{{\mathsf{x}}}
\nc{\se}{{\mathsf{e}}}
\nc{\sfv}{{\mathsf{v}}}
\nc{\bLambda}{{\boldsymbol{\Lambda}}}
\nc{\vv}{{\boldsymbol{v}}}
\nc{\Fl}{{{\mathcal F}\ell}}
\nc{\Gr}{{\on{Gr}}}
\nc{\CHH}{{\CH\!\!\CH}}
\nc{\lambdavee}{{\lambda^{\!\scriptscriptstyle\vee}}}
\nc{\alphavee}{\alpha^{\!\scriptscriptstyle\vee}}
\nc{\rhovee}{{\rho^{\!\scriptscriptstyle\vee}}}
\newcommand\iso{\,\vphantom{j^{X^2}}\smash{\overset{\sim}{\vphantom{\rule{0pt}{0.20em}}\smash{\longrightarrow}}}\,}
\nc{\oQM}{\vphantom{j^{X^2}}\smash{\overset{\circ}{\vphantom{\vstretch{0.7}{A}}\smash{\QM}}}}
\nc{\oZ}{{}^\dagger\!\vphantom{j^{X^2}}\smash{\overset{\circ}{\vphantom{\vstretch{0.7}{A}}\smash{Z}}}}
\nc{\odZ}{{}^\dagger\!\vphantom{j^{X^2}}\smash{\overset{\circ}{\vphantom{\vstretch{0.7}{A}}\smash{\mathfrak Z}}}^{c',c}}
\nc{\bdZ}{{}^\dagger\!\vphantom{j^{X^2}}\smash{\overset{\bullet}{\vphantom{\vstretch{0.7}{A}}\smash{\mathfrak Z}}}^{c',c}}
\nc{\oS}{\vphantom{j^{X^2}}\smash{\overset{\circ}{\vphantom{\vstretch{0.7}{A}}\smash{S}}}}
\nc{\buM}{\vphantom{j^{X^2}}\smash{\overset{\bullet}{\vphantom{\vstretch{0.7}{A}}\smash{M}}}}
\nc{\dW}{{}^\dagger\ol\CW{}}
\nc{\hW}{{}^\dagger\hat\CW{}}
\nc{\wW}{{}^\dagger\widetilde\CW{}}
\nc{\dZ}{{}^\dagger\!\fZ^{c',c}}
\nc{\dZc}{{}^\dagger\!\fZ^{c,c}}
\nc{\tZ}{{}^\dagger\!\tilde{Z}{}}
\nc{\hZ}{{}^\dagger\!\hat{Z}{}}
\nc{\ssl}{\mathfrak{sl}} \nc{\gl}{\mathfrak{gl}}
\nc{\wt}{\widetilde} \nc{\Sym}{\mathrm{Sym}}
\nc{\sE}{{\mathsf{E}}} \nc{\bs}{{\mathbf{s}}}
\nc{\trig}{\mathrm{trig}} \nc{\rat}{\mathrm{rat}}
\nc{\sign}{\mathrm{sign}} \nc{\sL}{{\mathsf{L}}}
\nc{\fv}{{\mathfrak{v}}} \nc{\ad}{\mathrm{ad}}
\nc{\spsi}{{\mathsf{\psi}}} \nc{\sh}{{\mathsf{h}}}
\nc{\rtt}{\mathrm{rtt}} \nc{\qdet}{\mathrm{qdet}} \nc{\pt}{{\operatorname{pt}}}
\nc{\M}{\mathrm{M}} \nc{\Ker}{\mathrm{Ker}} \nc{\ssc}{\mathrm{sc}}
\nc{\loc}{\mathrm{loc}} \nc{\fra}{\mathrm{frac}}
\nc{\ddj}{\mathrm{DJ}} \nc{\End}{\mathrm{End}}
\nc{\GL}{\mathrm{GL}}
\nc{\hzeta}{\hat{\zeta}}
\newcommand{\sso}{\mathfrak{o}}
\newcommand{\rtU}{\mathcal{U}}
\newcommand{\Lf}{\mathcal{L}}
\begin{document}

\title[]{Shuffle algebras and their integral forms:\\ specialization map approach in types $C_n$ and $D_n$}

\author[]{Yue Hu}
\address[]{Y.H.: Beijing University of Posts and Telecommunications, School of Mathematical Sciences, Beijing, China}
\email[]{ldkhtys@gmail.com}

\author[]{Alexander Tsymbaliuk}
\address[]{A.T.: Purdue University, Department of Mathematics, West Lafayette, IN 47907, USA}
\email[]{sashikts@gmail.com}

\begin{abstract}
We construct a family of PBWD (Poincar{\'e}-Birkhoff-Witt-Drinfeld) bases for the positive subalgebras of quantum
loop algebras of type $C_n$ and $D_n$, as well as their Lusztig and RTT integral forms, in the new Drinfeld realization.
We also establish a shuffle algebra realization of these $\BQ(v)$-algebras (proved earlier in~\cite{NT21} by completely
different tools) and generalize the latter to the above $\BZ[v,v^{-1}]$-forms. The rational counterparts provide
shuffle algebra realizations of positive subalgebras of type $C_n$ and $D_n$ Yangians and their Drinfeld-Gavarini duals.
While this naturally generalizes our earlier treatment of the classical type $B_n$ in~\cite{HT24} and $A_n$ in~\cite{Tsy18},
the specialization maps in the present setup are more compelling.
\end{abstract}

\maketitle


\section{Introduction}


\subsection{Summary.}

The quantum loop algebras associated to simple $\fg$ admit two presentations: the original Drinfeld-Jimbo
realization $U^{\ddj}_v(L\fg)$ and the new Drinfeld realization $U_\vv(L\fg)$. The explicit isomorphism
can be upgraded to that of quantum affine algebras, cf.~\cite[Theorem 3]{Dri88}:
\begin{equation}
\label{eq:drinfeld-iso-aff}
  U^{\ddj}_v(\widehat{\fg}) \simeq U_v(\widehat{\fg}) .
\end{equation}
Many internal algebraic properties are developed in the Drinfeld-Jimbo realization using a
\begin{equation}
\label{triang DJ}
  \mathrm{triangular\ decomposition} \ \
  U^{\ddj}_v(\widehat{\fg})\simeq
  U^{\ddj,>}_v(\widehat{\fg})\otimes U^{\ddj,0}_v(\widehat{\fg})\otimes U^{\ddj,<}_v(\widehat{\fg}).
\end{equation}
For example, Beck~\cite{b} constructed the PBW-type bases of each of these subalgebras.

\medskip
\noindent
On the other hand, the new Drinfeld realization $U_v(\widehat{\fg})$ is key to the representation theory of these algebras.
In this realization, the infinite set of generators is nicely packed into the currents $e_i(z),f_i(z),\varphi^\pm_i(z)$ (which
bore fruits in CFT already in the classical case). It is thus natural to develop algebraic aspects of $U_v(\widehat{\fg})$
intrinsic to the loop realization. We note that
\begin{equation*}
  \mathrm{triangular\ decomposition} \ \
  U_v(\widehat{\fg})\simeq U^>_v(\widehat{\fg})\otimes U^0_v(\widehat{\fg})\otimes U^<_v(\widehat{\fg})
\end{equation*}
is not intertwined with that of~\eqref{triang DJ} through the aforementioned isomorphism~\eqref{eq:drinfeld-iso-aff}.

\medskip
\noindent
Besides the standard generators-and-relations presentation, quantum groups (or rather their positive subalgebras)
admit a more elegant combinatorial (dual) realization. For finite quantum groups, this manifests in the algebra
embedding (cf.~\cite{Gre97}):
\begin{equation}
\label{eq:shuffle-finite}
  U^>_v(\fg) \hookrightarrow \mathcal{F}=\bigoplus_{i_1,\ldots,i_k\in I}^{k\in \BN}\ \BQ(v)\cdot [i_1\ldots i_k],
\end{equation}
where $I$ is the set of simple roots of $\fg$ and $\mathcal{F}$ is endowed with the \emph{quantum shuffle} product.
As shown by Lalonde-Ram in~\cite{LR95}, there is a bijection between the set $\Delta^+$ of positive roots of $\fg$ and
the so-called \emph{standard Lyndon} words in~$I$, such that the order on $\Delta^+$ induced from the lexicographical
order of words is \emph{convex}. As a consequence, Lusztig's PBW basis of $U^>_v(\fg)$ can be constructed purely
combinatorially via iterated $v$-commutators, see details in~\cite{Lec04,NT21}.

\medskip
\noindent
Using similar ideas, Feigin-Odesskii introduced the elliptic shuffle algebras in~\cite{FO97,FO98}, whose
trigonometric counterpart (in the formal setup with $\BQ[[\hbar]]$ instead of $\BQ(q)$) was further studied by
Enriquez in~\cite{Enr00, Enr03}. Explicitly, this manifests in the algebra embedding
\begin{equation}
\label{eq:shuffle-loop}
  \Psi\colon U^>_v(L\fg) \hookrightarrow S,
\end{equation}
where $S$ consists of symmetric rational functions in $\{x_{i,r}\}_{i\in I}^{r\in \BZ}$ subject to so-called
\emph{pole} and \emph{wheel} conditions, endowed with the shuffle product. Thus, it is a \emph{functional version}
of~\eqref{eq:shuffle-finite}.

\medskip
\noindent
The key benefit of \eqref{eq:shuffle-loop} is that it provides tools to treat the elements of $U_v(L\fg)$ given by
high degree non-commutative polynomials in the original generators. Within the last decade, this approach has found
novel applications in the geometric representation theory, quantum integrable systems, and knot invariants. To make
this approach self-contained, it is important to have a description of the image $\mathrm{Im}(\Psi)$. In fact,
Enriquez conjectured~\cite[Remark~3.16]{Enr03}:
\begin{equation}
\label{eq:shuffle-iso}
  \Psi\colon U^>_v(L\fg) \, \iso\, S.
\end{equation}

\noindent
To prove~\eqref{eq:shuffle-iso}, one has to ``compare the size'' of $\QC$ and $S$. For types $A_{1}$ and $\hat{A}_{1}$,
this was accomplished in~\cite{Neg14} by utilizing \emph{specialization maps} analogous to those from~\cite{FS94, FHHSY09}.
A similar approach was used later in \cite{Neg13} to prove~\eqref{eq:shuffle-iso} for types $A_{n}$ and $\hat{A}_{n}$;
for two-parameter and super counterparts of type $A_n$ in~\cite{Tsy18}; for type $\mathfrak{D}(2,1;\theta)$
in \cite{FH21}; for types $G_{2}$ and $B_{n}$ in the authors' earlier work~\cite{HT24}. In the present note we generalize
this treatment to the remaining classical types $C_n$ and $D_n$. We should emphasize right away that unlike the aforementioned
cases, the specialization maps have to be properly \underline{normalized} in the present setup, since they now require a two-step
process in which certain vanishing factors arising due to wheel conditions must be first canceled before further specialization 
(as not to produce $0$). The main technical aspect of this note is to show that these normalized specialization maps still 
exhibit the same key properties as those crucially used in~\cite{Tsy18, HT24} for types $A_n$, $B_n$, $G_2$.

\medskip
\noindent
We conclude the summary by noting that while Enriquez's conjecture \eqref{eq:shuffle-iso} was recently proved for all
finite $\fg$ in \cite{NT21} using a very different approach, the present exposition has its own benefits as it allows
to upgrade our results to important integral $\BZ[v,v^{-1}]$-forms of $U^>_v(L\fg)$ as well as to the
Yangian counterpart, none of which was possible through the technique of~\cite{NT21}.


\subsection{Outline of the paper.}

The structure of the present paper is as follows:

\medskip
\noindent
$\bullet$
In Section~\ref{pre}, we recall the notion of quantum loop algebras $U^>_v(L\fg)$ in the new Drinfeld realization and
shuffle algebras $S$, introduce certain families of quantum root vectors (associated to specific convex orders on the
set of positive roots), and state the key results (PBWD bases and shuffle algebra isomorphism) for $U^>_v(L\fg)$ of
types $C_n,D_n$. We also introduce two integral forms  and state the PBWD bases for those. We conclude this section with
introducing the main tool, the \emph{specialization maps}, and summarize their key properties in Lemmas \ref{vanish},~\ref{span}.

\medskip
\noindent
$\bullet$
In Section \ref{type C}, we establish the key properties of specialization maps for type $C_n$, and use these to prove
Theorems~\ref{mainthm} and~\ref{pbwtheorem} for type $C_n$, see Theorem~\ref{shufflePBWD-C}. We upgrade both results to
Lusztig form $\integralcl$ and RTT form $\integralc$ in Theorems~\ref{lusthm-C} and~\ref{rttthm-C}, respectively.

\medskip
\noindent
$\bullet$
In Section \ref{type D}, we establish the key properties of specialization maps for type $D_n$, and use these to prove
Theorems~\ref{mainthm} and~\ref{pbwtheorem} for type $D_n$, see Theorem~\ref{shufflePBWD-D}. We upgrade both results to
Lusztig form $\integraldl$ and RTT form $\integrald$ in Theorems~\ref{lusthm-D} and~\ref{rttthm-D}, respectively.

\medskip
\noindent
$\bullet$
In Section \ref{yangian}, we generalize the results of Sections~\ref{type C}--\ref{type D} to the rational setup by providing
the shuffle realization and constructing PBWD bases for the positive subalgebras of the Yangians and their Drinfeld-Gavarini
duals in types $C_n$ and $D_n$, see Theorems \ref{yangshuffle} and \ref{dgdualshuffle}.

\medskip
\noindent
$\bullet$
In Appendix~\ref{sec:app}, we use the RTT realization of $U_v(L{\mathfrak{sp}}_{2n}),U_v(L{\mathfrak{o}}_{2n})$
from~\cite{JLM21,JLM20} to explain the natural origin and the name of the RTT integral forms $\integralc$, $\integrald$.


\subsection{Acknowledgements}
Both authors are grateful to B.~Feigin for numerous discussions.
We are also grateful to the anonymous referees for very useful suggestions.
A.T.\ is deeply indebted to A.~Negu\c{t} for enlightening discussions over the years.
A.T.\ is grateful to IHES (Bures-sur-Yvette, France) for the hospitality and
wonderful working conditions in the Spring 2023, when the preliminary version of this note was prepared.
The work of A.T.\ was partially supported by NSF Grant DMS-$2302661$.
The work of Y.H.\ was  supported by National Natural Science Foundation of China (No. $12401028$).


\medskip

\section{Preliminaries}\label{pre}


\subsection{Quantum loop algebras and shuffle algebras in types $C_n$ and $D_n$.}\label{ssec:qlas}

Let $\mathfrak{g}$ be a finite dimensional simple Lie algebra with simple positive roots $\{\alpha_{i}\}_{i\in I}$.
We denote the set of positive roots by $\Delta^{+}$. Each $\beta\in\Delta^{+}$ can be uniquely expressed as a sum
of simple roots: $\beta=\sum_{i\in I}\nu_{\beta,i}\alpha_{i}$ with $\nu_{\beta,i}\in\BN$. We shall refer to
$\nu_{\beta,i}$ as the \emph{coefficient of $\alpha_{i}$ in $\beta$}, and we shall use the following notation:
\begin{equation*}
  i\in\beta  \Longleftrightarrow \nu_{\beta,i}\neq 0.
\end{equation*}
The height of a root $\beta\in \Delta^+$ is:
\begin{equation}
\label{eq:root-height}
  |\beta|:=\sum_{i\in I} \nu_{\beta,i}.
\end{equation}

We fix a nondegenerate invariant bilinear form on the Cartan subalgebra $\fh$ of $\fg$. This gives rise to a
nondegenerate form on the dual $\fh^*$, and we set $d_{i}\coloneqq \frac{(\alpha_{i},\alpha_{i})}{2}$. The choice of
the form is such that $d_i=1$ for short roots $\alpha_i$. Let $A=(a_{ij})_{i,j\in I}$ be the Cartan matrix of $\fg$,
so that $d_{i}a_{ij}=(\alpha_{i},\alpha_{j})=d_{j}a_{ji}$. In this paper, we consider simple Lie algebras of types
$C_{n}$ and $D_{n}$. The corresponding Dynkin diagrams look as follows:
\begin{align}
  C_{n}\ (n\geq 3) &\qquad
  \begin{tikzpicture}[start chain]
  \dnode{1}
  \dnode{2}
  \dydots
  \dnode{n-1}
  \dnodenj{n}
  \path (chain-4) -- node[anchor=mid] {\(\Leftarrow\)} (chain-5);
  \end{tikzpicture}
\label{Dynkin-C} \\
  D_{n}\ (n\geq 4) &\qquad
  \begin{tikzpicture}
  \begin{scope}[start chain]
  \dnode{1}
  \dnode{2}
  \node[chj,draw=none] {\dots};
  \dnode{n-2}
  \dnode{n}
  \end{scope}
  \begin{scope}[start chain=br going above]
  \chainin(chain-4);
  \dnodebr{n-1}
  \end{scope}
  \end{tikzpicture}
\label{Dynkin-D}
\end{align}
For these types, we have
\begin{align*}
  C_{n}\text{-type}\ (n\geq 2) \colon & \quad d_{i}=1\ (1\leq i\leq n-1),\  d_{n}=2,\\
  D_{n}\text{-type}\ (n\geq 4) \colon & \quad d_{i}=1\ (1\leq i\leq n).
\end{align*}

Let $v$ be a formal variable. We define $v_{\alpha}=v^{(\alpha,\alpha)/2}$ for any $\alpha\in\Delta^{+}$, and denote
$v_{\alpha_{i}}=v^{d_i}$ simply by $v_{i}$ for any $i\in I$. Let $\fS_{m}$ denote the symmetric group of degree $m$.
Let $U_{v}^{>}(L\mathfrak{g})$ be the \textbf{``positive subalgebra'' of the quantum loop algebra} $U_{v}(L\mathfrak{g})$
associated to $\fg$ in the new Drinfeld realization. Explicitly, $U_{v}^{>}(L\mathfrak{g})$ is the $\BQ(v)$-algebra
generated by $\{e_{i,r}\}_{i\in I}^{r\in\mathbb{Z}}$ subject to the following defining relations:
\begin{equation*}
  (z-v_{i}^{a_{ij}}w)e_{i}(z)e_{j}(w)=(v_{i}^{a_{ij}}z-w)e_{j}(w)e_{i}(z) \qquad \forall\, i,j \in I,
\end{equation*}
\begin{equation*}
  \mathop{Sym}_{z_{1},\dots,z_{1-a_{ij}}}
  \sum_{k=0}^{1-a_{ij}} (-1)^{k}\left[\begin{matrix} 1-a_{ij}\\k\end{matrix}\right]_{v_{i}}
  e_{i}(z_{1})\cdots e_{i}(z_{k})e_{j}(w)e_{i}(z_{k+1})\cdots e_{i}(z_{1-a_{ij}})=0\qquad \forall\, i\neq j.
\end{equation*}
Here, we use the following notations:
\begin{equation*}
\begin{aligned}
  & [\ell]_{u}\coloneqq\frac{u^{\ell}-u^{-\ell}}{u-u^{-1}},\quad  [\ell]_{u}!\coloneqq\prod_{k=1}^{\ell}[k]_{u},\quad
    \left[\begin{matrix} \ell\\m\end{matrix}\right]_{u}\coloneqq\frac{[\ell]_{u}!}{[\ell-m]_{u}![m]_{u}!},\\
  & e_{i}(z)\coloneqq\sum_{r\in\mathbb{Z}} e_{i,r}z^{-r},\quad
    \mathop{Sym}_{z_{1},\dots,z_{m}}V(z_{1},\dots,z_{m})\coloneqq\sum_{\sigma\in\mathfrak{S}_{m}}
    V(z_{\sigma(1)},\dots,z_{\sigma(m)}).
\end{aligned}
\end{equation*}
We shall also need the following notation later:
\begin{equation}
\label{anglev}
  \langle m \rangle_{u}\coloneqq u^m-u^{-m}\qquad \forall\, m\in\BN.
\end{equation}

We define $\mathfrak{S}_{\underline{k}}\coloneqq\prod_{i\in I}\mathfrak{S}_{k_{i}}$ for any
$\underline{k}=(k_{1},\dots,k_{|I|})\in\mathbb{N}^{I}$. Associated to the Cartan matrix $A=(a_{ij})_{i,j\in I}$,
we also have the (trigonometric version of the) \textbf{Feigin-Odesskii shuffle algebra $S$}. To this end,
consider the following $\BN^{I}$-graded $\BQ(v)$-vector space
\[
  S=\bigoplus_{\underline{k}\in \mathbb{N}^{I}}S_{\underline{k}},
\]
where $S_{\underline{k}}$ consists of rational functions $F$ in the variables
$\{x_{i,r}\}_{i\in I}^{1\leq r\leq k_{i}}$ such that:
\begin{itemize}[leftmargin=0.7cm]

\item
$F$ is $\fS_{\unl{k}}$-symmetric, that is, symmetric in $\{x_{i,r}\}_{r=1}^{k_{i}}$ for each $i\in I$,

\medskip
\item
(\emph{pole conditions}) $F$ has the form
\begin{equation}
\label{polecondition}
  F=\frac{f(\{x_{i,r}\}_{i\in I}^{1\leq r\leq k_{i}})}
         {\prod_{i<j}^{a_{ij}\neq 0}\prod_{1\leq r\leq k_{i}}^{1\leq s\leq k_{j}}(x_{i,r}-x_{j,s})},
\end{equation}
where $f\in \BQ(v)[\{x_{i,r}^{\pm 1}\}_{i\in I}^{1\leq r\leq k_{i}}]^{\mathfrak{S}_{\underline{k}}}$ and an arbitrary
order $<$ is chosen on $I$ to make sense of $i<j$ (though the space $S_{\unl{k}}$ is clearly independent of this order),

\medskip
\item
(\emph{wheel conditions}) for any $F\in S_{\unl{k}}$, its numerator $f$ from~\eqref{polecondition} satisfies:
\begin{equation}
\label{wheelcon}
  f(\{x_{i,r}\}_{i\in I}^{1\leq r\leq k_{i}})=0 \quad \text{once} \quad
  x_{i,s_{1}}=v_{i}^{2}x_{i,s_{2}}=\cdots=v_{i}^{-2a_{ij}}x_{i,s_{1-a_{ij}}}=v_{i}^{-a_{ij}}x_{j,r}
\end{equation}
for any $i\neq j$ such that $a_{ij}\neq 0$, $1\leq s_{1},\dots,s_{1-a_{ij}}\leq k_{i}$, and $1\leq r\leq k_{j}$.

\end{itemize}

Let $(\zeta_{i,j}(z))_{i,j\in I}$ be the matrix of rational functions in $z$ given by
\begin{equation}
\label{eq:zeta}
  \zeta_{i,j}(z)=\frac{z-v^{-(\alpha_{i},\alpha_{j})}}{z-1}.
\end{equation}
For $\unl{k},\unl{\ell}\in\BN^{I}$, let
\begin{equation*}
  \unl{k}+\unl{\ell}=(k_{i}+\ell_{i})_{i\in I}\in\BN^{I}.
\end{equation*}
Let us introduce the bilinear {\em shuffle product} $\star$ on $S$ as follows: for $F\in S_{\underline{k}}$
and $G\in S_{\underline{\ell}}$, we set
\begin{equation}
\begin{aligned}
  & F\star G \big(\{x_{i,r}\}_{i\in I}^{1\leq r\leq k_{i}+\ell_i}\big)=\\
  & \frac{1}{\unl{k}!\cdot \unl{\ell}!}\cdot
    {\mathop{Sym}}_{\mathfrak{S}_{\underline{k}+\underline{\ell}}}
    \bigg(F\big(\{x_{i,r}\}_{i\in I}^{1\leq r\leq k_{i}}\big)\cdot
          G\big(\{x_{j,s}\}_{j\in I}^{k_{j}<s\leq k_{j}+\ell_{j}}\big)
          \prod_{i,j\in I}\prod_{r\leq k_{i}}^{s>k_{j}}\zeta_{i,j}\Big(\frac{x_{i,r}}{x_{j,s}}\Big)\bigg).
\label{shuffleproduct}
\end{aligned}
\end{equation}
Here, for $\unl{k}\in \BN^I$, we set $\unl{k}!=\prod_{i\in I} k_{i}!$, and define the \emph{symmetrization}
\begin{equation}
\label{eq:symmetrization}
  {\mathop{Sym}}_{\mathfrak{S}_{\unl{k}}}\big(F(\{x_{i,r}\}_{i\in I}^{1\leq r\leq k_{i}})\big)\, \coloneqq
  \sum_{(\sigma_{1},\dots,\sigma_{|I|})\in \mathfrak{S}_{\unl{k}}}F(\{x_{i,\sigma_{i}(r)}\}_{i\in I}^{1\leq r\leq k_{i}}).
\end{equation}
This endows $S$ with a structure of an associative unital algebra.

\begin{Not}
To simplify our formulas below, we shall often use $\zeta\Big(\frac{x_{i,r}}{x_{j,s}}\Big)$ instead of
$\zeta_{i,j}\Big(\frac{x_{i,r}}{x_{j,s}}\Big)$.
\end{Not}

This algebra $(S,\star)$ is related to $\QC$ via the following result of~\cite{NT21} (conjectured in~\cite{Enr03}):

\begin{Thm}\label{mainthm}
The assignment $e_{i,r}\mapsto x_{i,1}^{r}\in S_{\mathbf{1}_i} \ (i\in I, r\in\BZ)$, where
$\mathbf{1}_i=(0,\ldots,1,\ldots,0)$ with $1$ at the $i$-th coordinate, gives rise to a $\BQ(v)$-algebra isomorphism
\begin{equation}
\label{eq:Psi-homom}
  \Psi\colon U_{v}^{>}(L\mathfrak{g}) \, \iso \, S.
\end{equation}
\end{Thm}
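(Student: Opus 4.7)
The plan is to prove Theorem~\ref{mainthm} by the combined PBWD-basis and specialization-map strategy that was used for type $A_n$ in~\cite{Tsy18} and for types $B_n, G_2$ in~\cite{HT24}. The argument naturally splits into three pieces: (i) $\Psi$ is a well-defined algebra homomorphism, (ii) $\Psi$ is injective, and (iii) $\Psi$ is surjective. Pieces (ii) and (iii) will be established simultaneously, together with the PBWD basis statement of Theorem~\ref{pbwtheorem}.

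For well-definedness, one must check that the images $\Psi(e_{i,r})=x_{i,1}^r$ satisfy the quadratic and $v$-Serre relations of the new Drinfeld presentation. The quadratic relation is an immediate consequence of the definition of $\zeta_{i,j}$ in~\eqref{eq:zeta} and the shuffle product~\eqref{shuffleproduct}. The $v$-Serre relation translates, after expanding $\Psi(e_i(z_1)\cdots e_i(z_k)e_j(w)e_i(z_{k+1})\cdots e_i(z_{1-a_{ij}}))$, into the vanishing of a symmetrized rational function precisely on the hyperplanes singled out by the wheel conditions~\eqref{wheelcon}; this is a standard computation parallel to those of~\cite{Neg13,NT21}.

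Next, I would fix the convex order on $\Delta^+$ arising from the standard Lyndon-word enumeration of~\cite{LR95,Lec04,NT21} and, for each $\beta\in\Delta^+$ and $r\in\BZ$, define a quantum root vector $e_{\beta,r}\in U_v^>(L\fg)$ by iterated $v$-commutators of the generators $e_{i,s}$. A straightforward straightening argument shows that ordered monomials in the $e_{\beta,r}$ span $U_v^>(L\fg)$, which is the spanning half of Theorem~\ref{pbwtheorem}. Applying $\Psi$ to each such monomial produces an explicit shuffle element whose pole structure and vanishing loci are controlled by the chosen order. The bijection then follows from Lemmas~\ref{vanish} and~\ref{span}: Lemma~\ref{vanish} asserts that a suitably normalized specialization map annihilates every PBWD monomial of ``type'' strictly larger than a fixed reference type in the induced order on ordered products, while Lemma~\ref{span} asserts that on monomials of exactly that type the specialization is essentially a bijection onto an explicit space of symmetric Laurent polynomials. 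Given $F\in S$, the maximal specialization identifies $F$ modulo smaller PBWD monomials as a unique linear combination of leading-type monomials; subtracting and iterating on the type then yields both surjectivity of $\Psi$ and linear independence of the PBWD monomials, hence injectivity of $\Psi$ and the remaining half of Theorem~\ref{pbwtheorem}.

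The main obstacle, as already flagged in the introduction, is that in types $C_n$ and $D_n$ the specialization maps must be carefully \emph{normalized}, which was not needed in the earlier cases $A_n, B_n, G_2$. The normalization prefactors depend on the root data and must be chosen so that both Lemmas~\ref{vanish} and~\ref{span} remain valid: the vanishing on non-leading PBWD monomials, as well as the non-degeneracy on leading ones. Pinning down these prefactors — and verifying that the resulting maps still respect the shuffle-product structure enough to run the induction — is the technical heart of Sections~\ref{type C} and~\ref{type D}. Once the two normalized lemmas are in place, the inductive argument above applies uniformly to both $\fg$ of type $C_n$ and $\fg$ of type $D_n$, yielding Theorem~\ref{mainthm}.
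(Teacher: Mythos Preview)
Your proposal is correct and follows the paper's strategy: establish Lemmas~\ref{vanish} and~\ref{span} for types $C_n$ and $D_n$ (Propositions~\ref{vanishC}, \ref{spanC}, \ref{vanishD}, \ref{spanD}) and then run the induction on the order~\eqref{eq:KP-order} on $\mathrm{KP}(\underline{k})$ to obtain both the isomorphism and Theorem~\ref{pbwtheorem} simultaneously (Theorems~\ref{shufflePBWD-C}, \ref{shufflePBWD-D}). One clarification on the ``normalization'': it is not a multiplicative prefactor but a genuinely \emph{two-step} specialization $\phi_{\beta,s}=\phi_{\beta,s}^{(2)}\circ\phi_{\beta,s}^{(1)}$ for the exceptional roots $\beta=[i,n,i]$ in type $C_n$ and $\beta=[i,n,j]$ with $j\le n-2$ in type $D_n$ (see~\eqref{spe-C-2}--\eqref{spe-C-3} and~\eqref{spe-D-2}--\eqref{spe-D-3}), in which one first partially specializes to an auxiliary variable $w'_{\beta,s}$, then \emph{divides} by an explicit wheel-forced factor $B_\beta$, and only then specializes $w'_{\beta,s}$ further---without this division the naive one-step specialization would vanish identically on all of $S$ for these $\beta$, and neither lemma could hold.
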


The key objective of the present paper is to extend the method used in \cite{HT24} to the remaining classical types
$C_{n}$ and $D_{n}$. This provides a new proof of Theorem \ref{mainthm} in these types, different from~\cite{NT21},
but more importantly also yields tools to treat integral forms along the same lines.


\subsection{Root vectors and PBWD bases in types $C_n,D_n$}

Our construction of the specialization maps and PBWD bases is based on the specific choice of a convex order
on $\Delta^{+}$. The one that is best suited for our purposes is arising through the lexicographical order on
standard Lyndon words, see~\cite{LR95,Lec04}, as we recall next. The labeling of simple roots in the corresponding
Dynkin diagrams (\ref{Dynkin-C},~\ref{Dynkin-D}) provides a total order on the set $I$ of those, and hence the
lexicographical order on the set of words in the alphabet $I$. According to \cite[Proposition~3.2]{LR95}, there
is a natural bijection between the sets of positive roots $\Delta^{+}$ and so-called {\em standard Lyndon words}.
Thus, the lexicographical order on the latter gives rise to an order $<$ on $\Delta^{+}$, which is convex
by~\cite[Proposition 26]{Lec04} (cf.~\cite[Proposition 2.34]{NT21}). Henceforth, we fix this convex order
on $\Delta^{+}$ and use standard Lyndon words to parametrize positive roots.

Let us work this out explicitly for types $C_{n}$ and $D_{n}$ with the specific order on $I$ as in
(\ref{Dynkin-C},~\ref{Dynkin-D}). Applying~\cite[Proposition 25]{Lec04}, we find the set of all standard Lyndon words:
\begin{equation*}
\begin{aligned}
  C_{n}\text{-type}\ (n\geq 3) \colon & \quad  \Delta^{+}=\
    \big\{[i\dots j] \, | \, 1\leq i\leq j\leq n\big\}\\
  & \qquad \qquad \cup \big\{[i\dots (n-1)n(n-1)\dots j] \, | \, 1\leq i<j\leq n-1\big\} \\
  & \qquad \qquad \cup \big\{[i\dots (n-1) i\dots (n-1)n] \, | \, 1\leq i\leq n-1\big\}.\\
  D_{n}\text{-type}\ (n\geq 4) \colon & \quad  \Delta^{+}=\
    \big\{[i\dots j] \, | \, 1\leq i\leq j\leq n-1\big\}\cup \big\{[n]\big\}\\
  & \qquad \qquad \cup \big\{[i\dots(n-2)n] \, | \, 1\leq i\leq n-2\big\} \\
  & \qquad \qquad \cup \big\{[i \dots (n-2) n(n-1)\dots j] \, | \, 1\leq i<j\leq n-1\big\}.
\end{aligned}
\end{equation*}
For convenience, we shall use the following notations for positive roots in types $C_{n}$ and $D_{n}$:
\begin{itemize}[leftmargin=0.7cm]

\item
Type $C_{n}\colon$
\begin{equation}
\label{eq:abbr-prC}
\begin{aligned}
  & [i,j]\coloneqq [i\dots j] \qquad \mathrm{for} \quad 1\leq i\leq j\leq n,\\
  & [i,n,j]\coloneqq [i\dots (n-1)n(n-1)\dots j] \qquad \mathrm{for} \quad 1\leq i< j < n,\\
  & [i,n,i]\coloneqq [i\dots(n-1)i\dots(n-1)n] \qquad \mathrm{for} \quad 1\leq i < n.
\end{aligned}
\end{equation}

\item
Type $D_{n}\colon$
\begin{equation}
\label{eq:abbr-prD}
\begin{aligned}
  & [i,j]\coloneqq [i\dots j] \qquad \mathrm{for} \quad 1\leq i\leq j < n \ \text{or}\ i=j=n,\\
  & [i,n]\coloneqq [i\dots (n-2)n] \qquad \mathrm{for} \quad 1\leq i\leq n-2,\\
  & [i,n,j]\coloneqq [i \dots(n-2)n(n-1)\dots j] \qquad \mathrm{for} \quad 1\leq i < j < n.
\end{aligned}
\end{equation}

\end{itemize}
The aforementioned specific convex order on $\Delta^{+}$ in types $C_{n}$, $D_{n}$ looks as follows:
\begin{itemize}[leftmargin=0.7cm]

\item
Type $C_{n}\colon$
\begin{equation}
\label{lynorderC}
\begin{aligned}
  [1]<[1,2]< \cdots<&[1,n-1]<[1,n,1]<[1,n]<[1,n,n-1]<\cdots<[1,n,2] \\
  &<[2]<\cdots<[n-1,n,n-1]<[n].
\end{aligned}
\end{equation}

\item
Type $D_{n}\colon$
\begin{equation}
\label{lynorderD}
\begin{aligned}
  &[1]<[1,2]< \cdots<[1,n-1]<[1,n]<[1,n,n-1]<[1,n,n-2]<\cdots<[1,n,2]\\
  &<[2]<\cdots<[n-2,n-1]<[n-2,n]<[n-2,n,n-1]<[n-1]<[n].
\end{aligned}
\end{equation}

\end{itemize}
We define the \emph{quantum root vectors} $\{E_{\beta,s}\}_{\beta\in\Delta^{+}}^{s\in\BZ}$ of $\QC$ in type $C_{n},D_n$
via iterated $v$-commutators. Here, for $x,y\in\QC$ and $u\in\BQ(v)$, the \emph{$u$-commutator} $[x,y]_{u}$ is
\begin{equation*}
  [x,y]_{u}\coloneqq xy-u\cdot yx.
\end{equation*}
\begin{itemize}[leftmargin=0.7cm]

\item
Type $C_{n}\colon$

If $\beta=[i_{1},\dots,i_{\ell}]\neq [i,n,i]$, we choose a collection $\lambda_{1},\dots,\lambda_{\ell-1}\in v^{\BZ}$
and a decomposition $s=s_{1}+\cdots+s_{\ell}$ with $s_{1}, \dots,s_{\ell}\in\BZ$. Then, we define
\begin{equation}
\label{rootvector1}
  E_{\beta,s}\coloneqq
  [\cdots[[e_{i_{1},s_{1}},e_{i_{2},s_{2}}]_{\lambda_{1}},e_{i_{3},s_{3}}]_{\lambda_{2}},\cdots,
  e_{i_{\ell},s_{\ell}}]_{\lambda_{\ell-1}}.
\end{equation}
If $\beta=[i,n,i]$, we choose $\lambda\in v^{\BZ}$, a decomposition $s=s_{1}+s_{2}$ with $s_{1},s_{2}\in\BZ$,
and any quantum root vector $E_{[i,n-1],s_{1}}, E_{[i,n],s_{2}}$ defined by \eqref{rootvector1}, and then define
\begin{equation}
\label{rootvector2}
  E_{\beta,s}\coloneqq [E_{[i,n-1],s_{1}},E_{[i,n],s_{2}}]_{\lambda}.
\end{equation}

\item
Type $D_{n}\colon$

For any $\beta=[i_{1},\dots,i_{\ell}]\in\Delta^{+}$, we choose a collection $\lambda_{1},\dots,\lambda_{\ell-1}\in v^{\BZ}$
and a decomposition $s=s_{1}+\cdots+s_{\ell}$ with $s_{1}, \dots,s_{\ell}\in\BZ$. Then, we define
\begin{equation}
\label{rootvector3}
  E_{\beta,s}\coloneqq
  [\cdots[[e_{i_{1},s_{1}},e_{i_{2},s_{2}}]_{\lambda_{1}},e_{i_{3},s_{3}}]_{\lambda_{2}},\cdots,
   e_{i_{\ell},s_{\ell}}]_{\lambda_{\ell-1}}.
\end{equation}
\end{itemize}

In particular, we have the following specific choices $\{\tilde{E}^{\pm}_{\beta,s}\}_{\beta\in\Delta^{+}}^{s\in \BZ}$
which will be used to construct PBWD bases of the integral forms in Theorems~\ref{pbwtheorem-Lus} and~\ref{PBWDintegralrtt}:
\begin{itemize}[leftmargin=0.7cm]

\item
Type $C_{n}\colon$

For $\beta=[i,j]$ with $1\leq i\leq j<n$ and $s\in \BZ$, we choose any decomposition $s=s_{i}+\cdots+s_{j}$,
fix a sign $\pm$, and define
\begin{equation}
\label{rvc1}
  \tilde{E}^{\pm}_{[i,j],s}\coloneqq
  [\cdots[[e_{i,s_{i}},e_{i+1,s_{i+1}}]_{v^{\pm 1}},e_{i+2,s_{i+2}}]_{v^{\pm 1}},\cdots,e_{j,s_{j}}]_{v^{\pm 1}}.
\end{equation}
For $\beta=[i,n]$ with $1\leq i\leq n$ and $s\in \BZ$, we choose any decomposition $s=s_{i}+\cdots+s_{n}$,
fix a sign $\pm$, and define
\begin{equation}
\label{rvc2}
  \tilde{E}^{\pm}_{[i,n],s}\coloneqq
  [[\cdots[e_{i,s_{i}},e_{i+1,s_{i+1}}]_{v^{\pm 1}},\cdots,e_{n-1,s_{n-1}}]_{v^{\pm 1}},e_{n,s_{n}}]_{v^{\pm 2}}.
\end{equation}
For $\beta=[i,n,j]$ with $1\leq i<j<n$ and $s\in \BZ$, we choose any decomposition
$s=s_{i}+\cdots+s_{j-1}+2s_{j}+\cdots +2s_{n-1}+s_{n}$, fix a sign~$\pm$, and define
\begin{equation}
\label{rvc3}
\begin{aligned}
  \tilde{E}^{\pm}_{[i,n,j],s}\coloneqq
  [\cdots[[[\cdots[e_{i,s_{i}},e_{i+1,s_{i+1}}]_{v^{\pm 1}},&\cdots,e_{n-1,s_{n-1}}]_{v^{\pm 1}},\\
  &e_{n,s_{n}}]_{v^{\pm 2}},e_{n-1,s_{n-1}}]_{v^{\pm 1}},\cdots,e_{j,s_{j}}]_{v^{\pm 1}}.
\end{aligned}
\end{equation}
For $\beta=[i,n,i]$ with $1\leq i\leq n-1$ and $s\in \BZ$, we choose any decomposition $s=2s_{i}+\cdots+2s_{n-1}+s_{n}$,
fix a sign~$\pm$, and define
\begin{equation}
\label{rvc4}
\begin{aligned}
  \tilde{E}^{\pm}_{[i,n,i],s}\coloneqq
    [[\cdots&[e_{i,s_{i}},e_{i+1,s_{i+1}}]_{v^{\pm 1}},\cdots,e_{n-1,s_{n-1}}]_{v^{\pm 1}},\\
  & [[\cdots[e_{i,s_{i}},e_{i+1,s_{i+1}}]_{v^{\pm 1}},\cdots,e_{n-1,s_{n-1}}]_{v^{\pm 1}},e_{n,s_{n}}]_{v^{\pm 2}}] .
\end{aligned}
\end{equation}

\item
Type $D_{n}\colon$

For $\beta=[i,j]$ with $1\leq i\leq j<n$ (or $i=j=n$) and  $s\in \BZ$, we choose any decomposition $s=s_{i}+\cdots+s_{j}$,
fix a sign $\pm$, and define
\begin{equation}
\label{rvd1}
  \tilde{E}^{\pm}_{[i,j],s}\coloneqq
  [\cdots[[e_{i,s_{i}},e_{i+1,s_{i+1}}]_{v^{\pm 1}},e_{i+2,s_{i+2}}]_{v^{\pm 1}},\cdots,e_{j,s_{j}}]_{v^{\pm 1}}.
\end{equation}
For $\beta=[i,n]$ with $1\leq i\leq n-2$ and $s\in \BZ$, we choose any decomposition $s=s_{i}+\cdots+s_{n-2}+s_{n}$,
fix a sign $\pm$, and define
\begin{equation}
\label{rvd2}
  \tilde{E}^{\pm}_{[i,n],s}\coloneqq
  [[\cdots[e_{i,s_{i}},e_{i+1,s_{i+1}}]_{v^{\pm 1}},\cdots,e_{n-2,s_{n-2}}]_{v^{\pm 1}},e_{n,s_{n}}]_{v^{\pm 1}}.
\end{equation}
For $\beta=[i,n,n-1]$ with $1\leq i\leq n-2$ and $s\in \BZ$, we choose any decomposition $s=s_{i}+\cdots+s_{n-2}+s_{n-1}+s_{n}$,
fix a sign $\pm$, and define
\begin{equation}
\label{rvd3}
  \tilde{E}^{\pm}_{[i,n,n-1],s}\coloneqq
  [[[\cdots[e_{i,s_{i}},e_{i+1,s_{i+1}}]_{v^{\pm 1}},\cdots,e_{n-2,s_{n-2}}]_{v^{\pm 1}},e_{n,s_{n}}]_{v^{\pm 1}},e_{n-1,s_{n-1}}]_{v^{\pm 1}}.
\end{equation}
For $\beta=[i,n,j]$ with $1\leq i<j\leq n-2$ and $s\in \BZ$, we choose any decomposition
$s=s_{i}+\cdots+s_{j-1}+2s_{j}+\cdots +2s_{n-2}+s_{n-1}+s_{n}$, fix a sign~$\pm$, and define
\begin{equation}
\label{rvd4}
\begin{aligned}
  \tilde{E}^{\pm}_{[i,n,j],s}\coloneqq
  [\cdots[[[\cdots[e_{i,s_{i}},e_{i+1,s_{i+1}}]_{v^{\pm 1}},&\cdots,e_{n-2,s_{n-2}}]_{v^{\pm 1}},\\
  &e_{n,s_{n}}]_{v^{\pm 1}},e_{n-1,s_{n-1}}]_{v^{\pm 1}},\cdots,e_{j,s_{j}}]_{v^{\pm 1}}.
\end{aligned}
\end{equation}

\end{itemize}

Evoking the specific convex orders $<$ on $\Delta^+$ from~\eqref{lynorderC}--\eqref{lynorderD},
let us consider the following order $<$ on the set $\Delta^{+}\times \mathbb{Z}$:
\begin{equation}
\label{orderbetas}
  (\alpha,s) < (\beta,t) \quad  \text{iff} \quad \alpha<\beta \ \text{ or }\  \alpha=\beta, s < t.
\end{equation}
Let $H$ denote the set of all functions $h\colon \Delta^{+}\times\BZ\rightarrow \BN$ with finite support.
The monomials
\begin{equation}
\label{PBWDbases}
  E_{h}\ :=\prod_{(\beta,s)\in\Delta^{+}\times\mathbb{Z}}\limits^{\rightarrow}E_{\beta,s}^{h(\beta,s)}
  \qquad \forall\, h\in H
\end{equation}
will be called the \emph{ordered PBWD monomials} of $\QC$. Here, the arrow $\rightarrow$ over the product sign
refers to the total order \eqref{orderbetas}. Our second key result generalizes~\cite[Theorem 2.16]{Tsy18} and
\cite[Theorem 2.5]{HT24} from types $A_n,B_n,G_2$ to types $C_{n}$ and $D_{n}$:

\begin{Thm}\label{pbwtheorem}
The ordered PBWD monomials $\{E_{h}\}_{h\in H}$ of~\eqref{PBWDbases} form $\BQ(v)$-bases of~$\QC$
for $\fg$ of type $C_{n}$ and $D_{n}$.
\end{Thm}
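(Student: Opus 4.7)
The plan is to establish Theorem~\ref{pbwtheorem} jointly with Theorem~\ref{mainthm}, following the specialization map strategy of~\cite{Tsy18,HT24}. The argument has two main steps: spanning of $\QC$ by the ordered PBWD monomials $\{E_h\}_{h\in H}$, and their linear independence.

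For the spanning step, one first verifies that for each root $\beta\in\Delta^+$ and each $s\in\BZ$, the various choices of bracketing scalars $\lambda_i\in v^{\BZ}$ and of decompositions $s=s_1+\cdots+s_\ell$ in~\eqref{rootvector1}--\eqref{rootvector3} yield root vectors $E_{\beta,s}$ which agree modulo sums of ordered products of lower PBWD monomials. Combining this flexibility with the Drinfeld commutation relations among the currents $e_i(z)$ and the quantum Serre relations, one then shows by induction on a weight-and-length filtration that any monomial in the generators $e_{i,r}$ can be rewritten as a $\BQ(v)$-linear combination of the ordered monomials~\eqref{PBWDbases}. The convexity of the Lyndon order on $\Delta^+$ (cf.~\cite[Proposition 26]{Lec04}) is essential here: it ensures that for $\beta<\gamma$, the ``wrong-order'' commutator $[E_{\gamma,t},E_{\beta,s}]$ expands only into root vectors indexed by positive roots lying strictly between $\beta$ and $\gamma$, so that the rewriting procedure terminates.

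For linear independence, we pass to the shuffle algebra via the homomorphism $\Psi$ of~\eqref{eq:Psi-homom} and exploit the specialization maps whose key properties are recorded in Lemmas~\ref{vanish} and~\ref{span}. The first task is to compute $\Psi(E_{\beta,s})$ explicitly for each quantum root vector: starting from $\Psi(e_{i,r})=x_{i,1}^r$ and iterating the shuffle product~\eqref{shuffleproduct}, one produces symmetric rational functions in the variables indexed by the letters of the Lyndon word $\beta=[i_1,\ldots,i_\ell]$. To each $h\in H$ one then attaches a specialization point $\phi_h$ reflecting the Lyndon factorization encoded in $h$, and verifies the triangularity property $\phi_h(\Psi(E_h))\neq 0$ while $\phi_h(\Psi(E_{h'}))=0$ for all $h'>h$ in the PBWD ordering induced by~\eqref{orderbetas}. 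This forces $\{\Psi(E_h)\}_{h\in H}$ to be $\BQ(v)$-linearly independent in $S$, and hence so are $\{E_h\}_{h\in H}$ in $\QC$. Combined with Lemma~\ref{span}, which asserts that the collection of specialization maps separates points of $S$, the same triangularity further implies that $\{\Psi(E_h)\}_{h\in H}$ spans $S$, thereby delivering Theorem~\ref{mainthm} as a bonus.

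The central obstacle, as emphasized in the introduction, is that the specialization maps in types $C_n$ and $D_n$ must be properly \emph{normalized}, unlike in the situations treated in~\cite{Tsy18,HT24}. This normalization is forced by the presence of the long root in type $C_n$ (so that $v_n=v^2$) and by the fork at the end of the $D_n$ Dynkin diagram, both of which produce the ``exotic'' Lyndon roots $[i,n,i]$ in type $C_n$ and $[i,n,j]$ in type $D_n$. The careful bookkeeping of how the pole and wheel conditions~\eqref{polecondition}--\eqref{wheelcon} interact with the specialization points at these exotic roots, and the verification that the normalized specializations indeed produce the required non-vanishing on $\Psi(E_h)$, constitute the technical heart of Sections~\ref{type C} and~\ref{type D}.
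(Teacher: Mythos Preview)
Your overall plan — pass to the shuffle algebra via $\Psi$ and use specialization maps to establish triangularity — is exactly the paper's strategy, and your final paragraph correctly identifies the technical crux (the normalized two-step specializations for the exotic roots $[i,n,i]$ in type $C_n$ and $[i,n,j]$ in type $D_n$). However, two points in your outline diverge from the paper in ways that matter.

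First, your direct spanning argument for $\QC$ (second paragraph) is both unnecessary and not rigorously sketched. The paper does \emph{not} straighten monomials inside $\QC$; instead, the injectivity of $\Psi$ is already known from Enriquez's pairing construction~\cite{Enr00,Enr03} (this is the embedding~\eqref{eq:shuffle-loop}), so once Lemmas~\ref{vanish} and~\ref{span} show that $\{\Psi(E_h)\}_{h\in H}$ is a basis of $S$, pulling back along the injection $\Psi$ immediately gives that $\{E_h\}_{h\in H}$ is a basis of $\QC$. Your convexity claim — that $[E_{\gamma,t},E_{\beta,s}]$ expands only into root vectors with roots strictly between $\beta$ and $\gamma$ — is a Levendorskii--Soibelman type statement which, in the loop setting and with the general $\lambda_k\in v^{\BZ}$ allowed in~\eqref{rootvector1}--\eqref{rootvector3}, is not available a priori and would itself require substantial work (and in any case the commutator lands in ordered \emph{products} of such root vectors, not single ones).

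Second, your description of the triangularity is off in its indexing. The specialization maps $\phi_{\unl{d}}$ are indexed by Kostant partitions $\unl{d}\in\mathrm{KP}(\unl{k})$, not by individual $h\in H$; many $h$ share the same $\deg(h)=\unl{d}$. Lemma~\ref{vanish} gives the strict upper-triangularity $\phi_{\unl{d}'}(\Psi(E_h))=0$ for $\unl{d}'<\unl{d}$, while Proposition~\ref{spekpC} (and its $D_n$ analogue) shows that on the diagonal block $\unl{d}'=\unl{d}$ the specialization factors as a universal piece times the Hall--Littlewood type polynomials $P_{\lambda_{h,\beta}}$ of~\eqref{hlp-C}. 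Linear independence within each block is then a rank-$1$ computation (Remark~\ref{rankreduction}), not a further refinement of the specialization point. This block-triangular structure, rather than a bijection $h\mapsto\phi_h$, is what drives both halves of the argument.
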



\subsection{Two integral forms in types $C_n$ and $D_n$}
Following \cite{Tsy18,HT24}, we shall also use shuffle approach to study integral forms of $\QC$ in types $C_n$ and $D_n$.
Consider the \emph{divided powers}
\begin{equation*}
  \mathbf{E}_{i,r}^{(k)}\coloneqq \frac{e_{i,r}^{k}}{[k]_{v_{i}}!} \qquad \forall\, i\in I,\  r\in\mathbb{Z},\ k\in\BN.
\end{equation*}
Following~\cite[\S7.7]{Gro94}, we define:

\begin{Def}\label{def:lusintegral}
For $\fg$ of type $C_n$ and $D_n$, the \textbf{Lusztig integral form} $\integrall$ is  the $\BZ[v,v^{-1}]$-subalgebra
of $\QC$ generated by $\{\mathbf{E}_{i,r}^{(k)}\}_{i\in I, r\in\BZ}^{k\in\BN}$.
\end{Def}

To construct PBWD bases of $\integrall$, we define the following \emph{normalized divided powers} of the
quantum root vectors in types $C_n$ and $D_n$ (cf.\ \eqref{rvc1}--\eqref{rvc4} and \eqref{rvd1}--\eqref{rvd4}):
\begin{align}
  & \label{eq:nrv-C}
  C_n-\mathrm{type}\colon \qquad \qquad  \tilde{\mathbf{E}}_{\beta,s}^{\pm,(k)}\coloneqq
  \begin{cases}
    \frac{(\tilde{E}^{\pm}_{\beta,s})^{k}}{[2]^k_{v}\cdot [k]_{v_{\beta}}!} & \text{if}\ \beta=[i,n,i] \ \mathrm{with} \ 1\leq i<n \\
    \frac{(\tilde{E}^{\pm}_{\beta,s})^{k}}{[k]_{v_{\beta}}!} &\mathrm{other\  cases}
  \end{cases}, \\
  & \label{eq:nrv-D}
  D_n-\mathrm{type}\colon \qquad \qquad
  \tilde{\mathbf{E}}_{\beta,s}^{\pm,(k)}\coloneqq \frac{(\tilde{E}^{\pm}_{\beta,s})^{k}}{[k]_{v}!} \qquad \forall\, \beta\in\Delta^{+}.
\end{align}

Completely analogously to \cite[Propositions 3.8, 4.15]{HT24}, we have\footnote{This relies on~\cite[Theorem 4.2]{BKM14}
that identifies $\tilde{\mathbf{E}}_{\beta,0}^{\pm,(1)}$ with Lusztig's quantum root vectors $\hat{E}^\pm_\beta$ of $U_v(\fg)$.}:

\begin{Prop}\label{integralrv}
In types $C_n$ and $D_n$, for any $\beta\in\Delta^{+}, s\in\BZ, k\in\BN$, the normalized divided powers of quantum root vectors
$\{\tilde{\mathbf{E}}_{\beta,s}^{\pm,(k)}\}^{k\in\BN}_{\beta\in\Delta^{+},s\in\BZ}$ defined in \eqref{eq:nrv-C}--\eqref{eq:nrv-D}
belong to $\integrall$.
\end{Prop}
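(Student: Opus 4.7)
The plan is to adapt the strategy of~\cite[Propositions~3.8 and~4.15]{HT24} to the present types $C_n$ and $D_n$. The argument splits naturally according to the normalization cases in~\eqref{eq:nrv-C}--\eqref{eq:nrv-D}, with the main work concentrated in the type $C_n$ exceptional root $\beta=[i,n,i]$.

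First I would handle the case $k=1$. For every $\beta$ in type $D_n$, and for every $\beta\ne [i,n,i]$ in type $C_n$, the normalized root vector $\tilde{\mathbf{E}}^{\pm,(1)}_{\beta,s}$ coincides with $\tilde{E}^\pm_{\beta,s}$, which by construction is an iterated $v$-commutator of the generators $e_{i,r}=\mathbf{E}^{(1)}_{i,r}\in \integrall$; since $\integrall$ is closed under multiplication, the membership is immediate. For the remaining type $C_n$ case $\beta=[i,n,i]$, one has to exhibit an intrinsic factor of $[2]_v$ inside the double commutator~\eqref{rvc4}, so as to verify that $\tilde{E}^\pm_{[i,n,i],s}/[2]_v$ already lies in $\integrall$. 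This is achieved by a direct manipulation using the quantum Serre relation at the special double-bonded node $n$ (where $d_n=2$): the inner bracket $[\tilde{E}^\pm_{[i,n-1],s_1}, \tilde{E}^\pm_{[i,n],s_2}]_{v^{\pm 2}}$ is rewritten, via the Serre identity applied to the generator $e_{n}$, as a $\BZ[v,v^{-1}]$-multiple of $[2]_v$ times an element of $\integrall$.

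For the general $k\ge 2$, the key input is the footnote: by~\cite[Theorem~4.2]{BKM14}, the normalized root vector $\tilde{\mathbf{E}}^{\pm,(1)}_{\beta,0}$ (with all decomposition parameters $s_i=0$) coincides with Lusztig's classical quantum root vector $\hat{E}^\pm_\beta$ in the finite-type subalgebra $U_v(\fg)\subset U_v(L\fg)$. It is classical that $(\hat{E}^\pm_\beta)^k/[k]_{v_\beta}!$ belongs to Lusztig's integral form of $U_v(\fg)$, and this finite Lusztig form embeds naturally into $\integrall$, which settles the case $s=0$ with the trivial decomposition. To propagate this to arbitrary $s\in\BZ$ and arbitrary decompositions $s=s_{i_1}+\cdots+s_{i_\ell}$, I would combine: (i) the algebra automorphism $\tau\colon e_{i,r}\mapsto e_{i,r+1}$ of $\QC$, which preserves $\integrall$ since it permutes the generating set and thus handles uniform shifts; and (ii) an induction on the height $|\beta|$ and on the loop parameters $s_i$, exploiting commutator identities of the shape $[\mathbf{E}^{(k)}_{i,r},\tilde{E}^\pm_{\beta',s'}]\in \integrall$ to reduce an arbitrary decomposition to the uniform one.

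The hard part will be the $[2]_v$-divisibility verification for $\beta=[i,n,i]$ in type $C_n$, which is the essential new feature absent from the simpler types $A_n$ and $D_n$ treated in~\cite{Tsy18} and in the $D_n$ portion here. Once this Serre-relation calculation is carried out at the level $k=1$, the extension to higher divided powers is largely formal: it reduces to the standard fact that divided powers of Lusztig's root vectors survive after conjugation by $\tau^m$ and after commutation with $\mathbf{E}^{(k)}_{i,r}$, both of which preserve $\integrall$. This mirrors the bookkeeping in~\cite[Proposition~3.8]{HT24} almost verbatim.
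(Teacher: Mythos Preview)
Your proposal is correct and follows essentially the same route the paper indicates: invoke \cite[Theorem~4.2]{BKM14} to identify $\tilde{\mathbf{E}}^{\pm,(1)}_{\beta,0}$ with Lusztig's root vector $\hat{E}^\pm_\beta$, use the classical fact that $(\hat{E}^\pm_\beta)^k/[k]_{v_\beta}!$ lies in the finite Lusztig form, and then propagate to arbitrary $s$ and decompositions exactly as in \cite[Propositions~3.8, 4.15]{HT24}. Two small corrections: first, the outer bracket in~\eqref{rvc4} is a \emph{plain} commutator, not a $v^{\pm 2}$-commutator (the $v^{\pm 2}$ sits on the inner bracket defining $\tilde{E}^\pm_{[i,n],*}$); second, your separate $k=1$ verification of $[2]_v$-divisibility for $\beta=[i,n,i]$ is not really a Serre-relation computation but rather the observation that $[A,[A,e_n]_{v^{\pm 2}}]=A^2e_n-v^{\pm 1}[2]_v\,Ae_nA+v^{\pm 2}e_nA^2$ with $A^2=[2]_v A^{(2)}$, and in any case this step is already subsumed by the BKM14 identification at $s=0$.
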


For $\epsilon\in\{\pm\}$, define the ordered monomials (cf.~\eqref{PBWDbases})
\begin{equation}
\label{eq:Lus-pbwd}
  \tilde{\mathbf{E}}^{\epsilon}_{h}\ =\prod_{(\beta,s)\in\Delta^{+}\times\mathbb{Z}}\limits^{\rightarrow}
  \tilde{\mathbf{E}}_{\beta,s}^{\epsilon,(h(\beta,s))}\qquad \forall\, h\in H.
\end{equation}
Our third key result upgrades Theorem \ref{pbwtheorem} to the Lusztig integral form $\integrall$:

\begin{Thm}\label{pbwtheorem-Lus}
For $\epsilon\in\{\pm\}$, the ordered monomials $\{\tilde{\mathbf{E}}^{\epsilon}_{h}\}_{h\in H}$ of~\eqref{eq:Lus-pbwd}
form a $\BZ[v,v^{-1}]$-basis of $\integrall$ for $\fg$ of type $C_n$ and $D_n$.
\end{Thm}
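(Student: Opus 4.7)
The plan is to prove the theorem in three stages, paralleling \cite[Propositions 3.8 and 4.15]{HT24} while tracking the new root-vector normalizations specific to types $C_n$ and $D_n$.

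\textbf{Easy inclusions.} By Proposition~\ref{integralrv}, each normalized divided power $\tilde{\mathbf{E}}_{\beta,s}^{\pm,(k)}$ belongs to $\integrall$, so all ordered monomials $\tilde{\mathbf{E}}^{\epsilon}_h$ lie in $\integrall$. For linear independence, observe that the quantum root vectors $\tilde{E}^{\pm}_{\beta,s}$ of \eqref{rvc1}--\eqref{rvc4} (resp.\ \eqref{rvd1}--\eqref{rvd4}) are specific instances of the iterated $v$-commutator constructions \eqref{rootvector1}--\eqref{rootvector3}, so Theorem~\ref{pbwtheorem} guarantees that the ordered monomials $\prod^{\rightarrow}(\tilde{E}^{\pm}_{\beta,s})^{h(\beta,s)}$ form a $\BQ(v)$-basis of $\QC$; since the divided powers differ from $(\tilde{E}^{\pm}_{\beta,s})^{k}$ only by invertible scalars in $\BQ(v)$, the family $\{\tilde{\mathbf{E}}^{\epsilon}_h\}_{h\in H}$ remains a $\BQ(v)$-basis and is therefore $\BZ[v,v^{-1}]$-linearly independent.

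\textbf{The hard inclusion.} The remaining task is to show that every element of $\integrall$ lies in the $\BZ[v,v^{-1}]$-span of $\{\tilde{\mathbf{E}}^{\epsilon}_h\}$. Since $\integrall$ is by definition the $\BZ[v,v^{-1}]$-algebra generated by $\mathbf{E}_{i,r}^{(k)}=\tilde{\mathbf{E}}^{\pm,(k)}_{\alpha_i,r}$, it suffices to show that an arbitrary word in these Chevalley divided powers admits a PBWD expansion with integral coefficients. I would argue by a double induction, first on the total weight $\sum h(\beta,s)\beta\in\BN^I$ and then on a ``defect'' measuring distance from being ordered according to \eqref{orderbetas}: at each reduction step one locates an out-of-order adjacent pair $\tilde{\mathbf{E}}_{\gamma,s}^{\epsilon,(a)}\cdot\tilde{\mathbf{E}}_{\beta,t}^{\epsilon,(b)}$ with $(\beta,t)<(\gamma,s)$ and rewrites it as a $\BZ[v,v^{-1}]$-combination of ordered monomials of strictly lower weight together with terms of strictly smaller defect.

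\textbf{Main obstacle.} The crux of the argument is verifying that the straightening identities above have coefficients in $\BZ[v,v^{-1}]$ rather than merely in $\BQ(v)$. These identities are to be extracted from the defining current relations and iterated Serre relations for the particular $\tilde{E}^{\pm}_{\beta,s}$ in \eqref{rvc1}--\eqref{rvc4}, \eqref{rvd1}--\eqref{rvd4}, after which the Gaussian factorials $[k]_{v_\beta}!$ arising in denominators must be matched against the normalizations~\eqref{eq:nrv-C}--\eqref{eq:nrv-D}. The most delicate case lies in type $C_n$ for the root $\beta=[i,n,i]$: the seemingly unusual prefactor $[2]_v^k$ in \eqref{eq:nrv-C} is precisely designed to absorb an extra $[2]_v$ that arises when \eqref{rvc4} expresses $\tilde{E}^{\pm}_{[i,n,i],s}$ as the commutator of $\tilde{E}^{\pm}_{[i,n-1]}$- and $\tilde{E}^{\pm}_{[i,n]}$-type vectors, and handling iterated divided powers of this ``doubled'' root will demand careful bookkeeping of Gauss binomial identities. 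In type $D_n$, by contrast, all $d_i=1$ and the normalization~\eqref{eq:nrv-D} is uniform, so the straightening is closer in spirit to the $A_n$ analysis of~\cite{Tsy18}.
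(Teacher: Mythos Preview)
Your ``easy inclusions'' paragraph is correct and matches the paper. The gap is in the hard spanning direction. You propose direct algebraic straightening of out-of-order products $\tilde{\mathbf{E}}_{\gamma,s}^{\epsilon,(a)}\tilde{\mathbf{E}}_{\beta,t}^{\epsilon,(b)}$ via commutator identities extracted from the Drinfeld relations, but you do not actually produce these identities, and you yourself flag the integrality of their coefficients as the ``main obstacle.'' For quantum loop algebras in the new Drinfeld realization, Levendorskii--Soibelman-type straightening relations over $\BZ[v,v^{-1}]$ for divided powers of affine root vectors are not available in the literature and are not derivable in any straightforward way from the quadratic and Serre relations; the shuffle-algebra machinery of this paper exists precisely to bypass that difficulty. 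Your discussion of the $[2]_v$ factor for $\beta=[i,n,i]$ is on the right track as a heuristic for why the normalization~\eqref{eq:nrv-C} is chosen, but it does not constitute a proof of the required identities.

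The paper's argument is entirely different and never computes a single pairwise straightening relation. For type $C_n$ (Subsection~\ref{lusc}), one defines an integral shuffle submodule $\mathbf{S}\subset S$ by the divisibility conditions~\eqref{li1-C}--\eqref{li2-C} on $\phi_{\unl{d}}(F)$ for every Kostant partition $\unl{d}$. Proposition~\ref{lintegralC} shows $\Psi(\integralcl)\subset\mathbf{S}$ by specializing an arbitrary product of Chevalley divided powers $\mathbf{E}^{(\ell)}_{i,r}$ and reading off the factor $\tilde{c}_\beta$ of~\eqref{ctildebeta-C} from the $\zeta$-contributions. Proposition~\ref{span-Lus-C} (the integral analogue of Lemma~\ref{span}, proved as in~\cite[Proposition~3.11]{HT24}) then shows that $\mathbf{S}$ is spanned over $\BZ[v,v^{-1}]$ by $\{\Psi(\tilde{\mathbf{E}}^\epsilon_h)\}$, using the explicit factorization~\eqref{speehC} and the rank-$1$ reduction of Remark~\ref{rankreduction}. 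Combined with Proposition~\ref{integralrv}, this yields Theorem~\ref{lusthm-C}(b), which is the $C_n$ case of Theorem~\ref{pbwtheorem-Lus}; type $D_n$ is identical via Propositions~\ref{lintegralD} and~\ref{span-Lus-D}. Thus all the integrality bookkeeping happens on the shuffle side via the specialization maps, not via commutator algebra in $\integrall$.
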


Let us now introduce another integral form of $\QC$. For $\epsilon\in\{\pm \}$, define the following
\emph{normalized quantum root vectors} in types $C_n$ and $D_n$ (cf.\ \eqref{rvc1}--\eqref{rvc4} and
\eqref{rvd1}--\eqref{rvd4}):
\begin{align}
  & \label{eq:rtt-vectors-C}
  C_n-\mathrm{type}\colon \qquad \qquad
  \tilde{\mathcal{E}}^{\epsilon}_{\beta,s}\coloneqq
  \begin{cases}
    \langle 2\rangle_{v}\cdot \tilde{E}^{\epsilon}_{\beta,s} & \text{if}\ \beta=[n]\\
    \langle 1\rangle_{v}\cdot \tilde{E}^{\epsilon}_{\beta,s} & \text{other cases}
  \end{cases},\\
  & \label{eq:rtt-vectors-D}
  D_n-\mathrm{type}\colon \qquad \qquad
  \tilde{\mathcal{E}}^{\epsilon}_{\beta,s}\coloneqq \langle 1\rangle_{v}\cdot \tilde{E}^{\epsilon}_{\beta,s}
  \qquad \forall\, (\beta,s)\in\Delta^{+}\times \BZ.
\end{align}
The origin of these normalization factors (as well as the terminology ``RTT'' below) is explained in
Appendix~\ref{sec:app}.\footnote{We also note that $\tilde{\mathcal{E}}^{\pm}_{\beta,0}=(v_\beta-v_\beta^{-1})\hat{E}^\pm_\beta$,
with $\hat{E}^\pm_\beta$ being the Lusztig's quantum root vector of $U_v(\fg)$.}
Similarly to \eqref{PBWDbases}, we  consider the ordered monomials
\begin{equation}
\label{pbwdbasesrtt}
  \tilde{\mathcal{E}}^{\epsilon}_{h} \ =
  \prod_{(\beta,s)\in\Delta^{+}\times\mathbb{Z}}\limits^{\rightarrow}
  (\tilde{\mathcal{E}}^{\epsilon}_{\beta,s})^{h(\beta,s)}\qquad \forall\, h\in H.
\end{equation}

\begin{Def}\label{def:rttintegral}
For $\fg$ of type $C_n$ and $D_n$, and fixed $\epsilon\in\{\pm\}$, the \textbf{RTT integral form} $\integral$
is the $\BZ[v,v^{-1}]$-subalgebra of $\QC$ generated by
  $\{\tilde{\mathcal{E}}^{\epsilon}_{\beta,s}\}_{\beta\in\Delta^{+}}^{s\in\BZ}$.
\end{Def}

We note that the above definition depends on the choices of quantum root vectors in \eqref{rvc1}--\eqref{rvc4} or
\eqref{rvd1}--\eqref{rvd4}, as well as of $\epsilon\in\{\pm\}$. Our fourth key result shows that Definition
\ref{def:rttintegral} is well-defined and upgrades Theorem~\ref{pbwtheorem} to the RTT integral form $\integral$:

\begin{Thm}\label{PBWDintegralrtt}
Let $\fg$ be of type $C_n$ or $D_n$.

\noindent
(a) $\integral$ is independent of $\epsilon\in\{\pm\}$ and the choice of
$\{\tilde{\mathcal{E}}^{\epsilon}_{\beta,s}\}_{\beta\in\Delta^{+}}^{s\in \BZ}$
from \eqref{eq:rtt-vectors-C} or \eqref{eq:rtt-vectors-D}.

\noindent
(b) For $\epsilon\in\{\pm\}$, the ordered monomials
$\{ \tilde{\mathcal{E}}^{\epsilon}_{h}\}_{h\in H}$ of~\eqref{pbwdbasesrtt} form a $\BZ[v,v^{-1}]$-basis of $\integral$.
\end{Thm}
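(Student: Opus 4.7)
The plan is to reduce Theorem~\ref{PBWDintegralrtt} to an analysis on the shuffle side via the isomorphism $\Psi$ of Theorem~\ref{mainthm}, exploiting the specialization maps whose key properties are recorded in Lemmas~\ref{vanish} and~\ref{span}. First I would fix once and for all a single ``reference'' choice of decompositions $s=s_1+\cdots+s_\ell$, of constants $\lambda_k\in v^{\BZ}$, and of the sign $\epsilon=+$ in the root vectors of \eqref{rvc1}--\eqref{rvc4} and \eqref{rvd1}--\eqref{rvd4}; call the resulting subalgebra $\integral_{\mathrm{ref}}$. Transporting to the shuffle side, I would compute $\Psi(\tilde{\mathcal E}^{+}_{\beta,s})$ explicitly as a rational function in one block of variables and verify, by a direct specialization computation following the pattern of~\cite[\S4]{HT24}, that $\Psi$ maps each generator into a natural $\BZ[v,v^{-1}]$-lattice $\integrall_{\mathrm{ref}}\subset S$ whose symmetrizations behave integrally thanks precisely to the normalization factors $\langle 1\rangle_v$ and $\langle 2\rangle_v$ introduced in~\eqref{eq:rtt-vectors-C}--\eqref{eq:rtt-vectors-D}.

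For the linear independence half of part (b), the $\tilde{\mathcal{E}}^{\epsilon}_{\beta,s}$ differ from the $E_{\beta,s}$ of~\eqref{rootvector1}--\eqref{rootvector3} only by non-zero scalars in $\BQ(v)$, so Theorem~\ref{pbwtheorem} immediately yields $\BQ(v)$-linear independence of $\{\tilde{\mathcal{E}}^{\epsilon}_{h}\}_{h\in H}$, hence $\BZ[v,v^{-1}]$-linear independence in $\QC$. For spanning, I would induct on the $\BN^I$-grading: given $F\in\Psi(\integral_{\mathrm{ref}})$ of graded degree $\unl{k}$, the vanishing statement of Lemma~\ref{vanish} says that specialization $\varphi_h(F)$ is zero unless $h$ is $\leq$ some ``leading'' index $h_F$ in the order~\eqref{orderbetas}, while the span statement of Lemma~\ref{span} identifies the leading specialization with an explicit non-zero multiple of a product of $v$-factors. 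Using the normalization in \eqref{eq:rtt-vectors-C}--\eqref{eq:rtt-vectors-D}, these leading multiples are units in $\BZ[v,v^{-1}]$, so one subtracts a $\BZ[v,v^{-1}]$-multiple of the corresponding $\tilde{\mathcal{E}}^{\epsilon}_{h_F}$ and continues by descending induction.

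The main obstacle, as in the Lusztig-form case, will be tracking the integrality of the symmetrization in~\eqref{shuffleproduct} after one divides by $\unl{k}!\,\unl{\ell}!$: the $\zeta_{i,j}$-factors involving $\alpha_n$ (the short root in type $C_n$, and the branch nodes in type $D_n$) produce denominators $(x-1)$ that, combined with the shuffle symmetrization, precisely consume the normalizations $\langle 1\rangle_v$ and $\langle 2\rangle_v$ only when these are in place. Checking this divisibility case by case for each standard Lyndon word from~\eqref{lynorderC}--\eqref{lynorderD} is where most of the technical work lies and is handled in Sections~\ref{type C}--\ref{type D} through Theorems~\ref{rttthm-C} and~\ref{rttthm-D}.

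Finally, part (a) follows once (b) is in hand for $\integral_{\mathrm{ref}}$: any other admissible $\tilde{\mathcal{E}}^{\epsilon'}_{\beta,s}$ (with different $\epsilon$, different decomposition of $s$, or different $\lambda_k$) differs from the reference one by iterated $v$-commutators whose expansion is, by inspection of~\eqref{rootvector1}--\eqref{rootvector3}, a $\BZ[v,v^{-1}]$-combination of reference normalized root vectors together with strictly lower PBWD monomials of the form $\tilde{\mathcal{E}}^{+}_{h}$. Since the latter already lie in $\integral_{\mathrm{ref}}$ by part~(b), every $\tilde{\mathcal{E}}^{\epsilon'}_{\beta,s}\in\integral_{\mathrm{ref}}$, proving independence of the choices; the sign flip $+\leftrightarrow -$ is handled symmetrically.
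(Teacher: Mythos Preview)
Your overall framework (transport to the shuffle side, use specialization maps, induct along the order on $\mathrm{KP}(\unl{k})$) matches the paper's, but the spanning step contains a genuine gap. The assertion that after the normalizations~\eqref{eq:rtt-vectors-C}--\eqref{eq:rtt-vectors-D} the ``leading multiples are units in $\BZ[v,v^{-1}]$'' is false. By Proposition~\ref{spekpC} (and its $D$-type analogue), the specialization $\phi_{\unl{d}}(\Psi(\tilde{\mathcal{E}}^{\epsilon}_h))$ equals, up to the choice-independent factors $G_{\beta}$, $G_{\beta,\beta'}$ and powers of $c_\beta$, the product $\prod_\beta P_{\lambda_{h,\beta}}$ of~\eqref{hlp-C}. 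These $P_{\lambda_{h,\beta}}$ are rank-one shuffle products (Hall--Littlewood type symmetric polynomials), not units; in particular their transition to monomial symmetric functions has denominators $[t]_{v_\beta}!$. Thus, to subtract a $\BZ[v,v^{-1}]$-multiple of some $\tilde{\mathcal{E}}^{\epsilon}_h$ and descend, you must first know that $\phi_{\unl{d}}(F)$ is divisible by $\prod_\beta G_\beta$ (Proposition~\ref{goodC}/\ref{goodD}) and that the remaining symmetric function satisfies an \emph{integrality} constraint: the cross specialization $\Upsilon_{\unl{d},\unl{t}}(F)$ of~\eqref{eq:crossspe-C} is divisible by $\prod_{\beta,r}[t_{\beta,r}]_{v_\beta}!$. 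This condition is the heart of the RTT story (it is what distinguishes $\mathcal{S}$ from $S$) and is entirely absent from your outline; without it the inductive subtraction produces coefficients only in $\BQ(v)$, not $\BZ[v,v^{-1}]$.

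Your argument for part~(a) is also incomplete as written. Asserting that an alternative $\tilde{\mathcal{E}}^{\epsilon'}_{\beta,s}$ expands as a $\BZ[v,v^{-1}]$-combination of reference root vectors ``by inspection of~\eqref{rootvector1}--\eqref{rootvector3}'' is precisely the non-trivial claim to be proved; different $\lambda_k$'s and different splittings of $s$ do not obviously cancel integrally. The paper avoids this entirely: it defines the lattice $\mathcal{S}$ by intrinsic divisibility conditions~\eqref{rttconstant1-C}--\eqref{eq:crossspe-C} that make no reference to any choice of root vectors, proves $\Psi(\integral)\subset\mathcal{S}$ for \emph{every} admissible choice (Propositions~\ref{goodC},~\ref{goodD}), and then shows the ordered monomials for any fixed choice span $\mathcal{S}$. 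Independence of the choice is then immediate, since all versions of $\integral$ map isomorphically onto the same $\mathcal{S}$. You should replace the unit-coefficient heuristic by this intrinsic definition of $\mathcal{S}$ and verify the cross-specialization divisibility directly for arbitrary products of normalized root vectors.
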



\subsection{Specialization maps in types $C_{n}$ and $D_n$}

Following~\cite{HT24}, we shall use the technique of specialization maps to prove all theorems above.
We shall now briefly introduce those and state their key properties in the end of this subsection.

Identifying each simple root $\alpha_{i}\ (i\in I)$ with a basis element $\mathbf{1}_{i}\in\mathbb{N}^{I}$ (having
the $i$-th coordinate equal to $1$ and the rest equal to $0$), we can view $\mathbb{N}^{I}$ as the positive cone of
the root lattice of $\fg$. For any $\underline{k}\in\mathbb{N}^{I}$, let $\text{KP}(\underline{k})$ be the set of
{\em Kostant partitions}, i.e.\ unordered vector partitions of $\underline{k}$ into a sum of positive roots.
Explicitly, a Kostant partition of $\underline{k}$ is the same as a tuple
$\unl{d}=\{d_{\beta}\}_{\beta\in\Delta^{+}}\in \BN^{\Delta^{+}}$ satisfying
  $\sum_{i\in I} k_i\alpha_i \, = \sum_{\beta\in\Delta^{+}}d_{\beta}\beta$.
Our specific convex order~\eqref{lynorderC}--\eqref{lynorderD} on $\Delta^{+}$ induces a total order on
$\text{KP}(\underline{k})$:
\begin{equation}
\label{eq:KP-order}
  \{d'_\beta\}_{\beta\in \Delta^+}<\{d_\beta\}_{\beta\in \Delta^+}\Longleftrightarrow \exists\  \gamma\in \Delta^+\
  \mathrm{s.t.}\ d'_\gamma<d_\gamma\ \mathrm{and}\
  d'_\beta=d_\beta\ \mathrm{for\ all}\ \beta<\gamma.
\end{equation}

Let us now define the specialization maps in types $C_{n}$ and $D_n$. For any $F\in S_{\unl{k}}$ and
$\underline{d}\in\text{KP}(\underline{k})$, we split the variables $\{x_{i,\ell}\}_{i\in I}^{1\leq \ell\leq k_{i}}$
into the disjoint union of $\sum_{\beta\in\Delta^{+}}d_{\beta}$ groups
\begin{equation}
\label{splitvariable}
  \bigsqcup^{1\leq s\leq d_{\beta}}_{\beta\in\Delta^{+}}
  \Big\{x^{(\beta,s)}_{i,t} \, \Big| \, i\in I, 1\leq t\leq \nu_{\beta,i}\Big\} \,,
\end{equation}
where the integer $\nu_{\beta,i}$ is the coefficient of $\alpha_i$ in $\beta$ as defined in the beginning of
Subsection~\ref{ssec:qlas}. For $F\in S_{\unl{k}}$, let $f$ denote its numerator from~\eqref{polecondition}.
Then, the specialization map $\phi_{\underline{d}}(F)$ is defined  by successive specializations $\phi_{\beta,s}$
of the variables  \eqref{splitvariable} in $f$ as follows:
\begin{itemize}[leftmargin=0.7cm]

\item
$C_n$-type.

For $\beta\neq [i,n,i]$, we define $\phi_{\beta,s}(F)$ by specializing the variables
$\{x^{(\beta,s)}_{i,t}\}_{1\leq t\leq \nu_{\beta,i}}^{i\in I}$ of $f$ as:
\begin{equation}
\label{spe-C-1}
  x^{(\beta,s)}_{\ell\neq n,1}\mapsto v^{1-\ell}w_{\beta,s},\quad
  x^{(\beta,s)}_{\ell\neq n,2}\mapsto v^{-2n+\ell-1}w_{\beta,s}, \quad x^{(\beta,s)}_{n,1}\mapsto v^{-n}w_{\beta,s}.
\end{equation}
For $\beta= [i,n,i]$, the specialization $\phi_{\beta,s}$ is more complicated and is constructed in two steps.
First, we define $\phi^{(1)}_{\beta,s}(F)$ by specializing the variables
$\{x^{(\beta,s)}_{i,t}\}_{1\leq t\leq \nu_{\beta,i}}^{i\in I}$ of $f$ as:
\begin{align}
\label{spe-C-2}
  x^{(\beta,s)}_{\ell\neq n,1}\mapsto v^{1-\ell}w_{\beta,s},\quad x^{(\beta,s)}_{\ell\neq n,2}\mapsto v^{1-\ell}w'_{\beta,s},
  \quad x^{(\beta,s)}_{n,1}\mapsto v^{-n}w'_{\beta,s}.
\end{align}
According to wheel conditions \eqref{wheelcon}, $\phi^{(1)}_{\beta,s}(F)$ is divisible by
\begin{equation}
\label{Bfactor-C}
  B_{\beta}=\big\{(w_{\beta,s}-v^{-2}w'_{\beta,s})(w_{\beta,s}-v^{2}w'_{\beta,s})\big\}^{n-i-1}.
\end{equation}
Then, the second step of the specialization, denoted $\phi^{(2)}_{\beta,s}$, is defined by first dividing
$\phi^{(1)}_{\beta,s}(F)$ by $B_{\beta}$  and then specializing the variable $w'_{\beta,s}$ in
$\frac{\phi^{(1)}_{\beta,s}(F)}{B_{\beta}}$ to $v^{2}w_{\beta,s}$. In this way, we get the overall
specialization $\phi_{\beta,s}(F)$:
\begin{equation}
\label{spe-C-3}
  \phi_{\beta,s}(F)\coloneqq \phi^{(2)}_{\beta,s}\left(\phi^{(1)}_{\beta,s}(F)\right) =
  \eval{\frac{\phi^{(1)}_{\beta,s}(F)}{B_{\beta}}}_{w'_{\beta,s}\mapsto v^{2}w_{\beta,s}}.
\end{equation}

\item
$D_n$-type.

For $\beta \neq [i,n,j]$ with $i<j\leq n-2$, we define $\phi_{\beta,s}(F)$ by specializing the variables
$\{x^{(\beta,s)}_{i,t}\}_{1\leq t\leq \nu_{\beta,i}}^{i\in I}$ of $f$ as:
\begin{equation}
\label{spe-D-1}
  x^{(\beta,s)}_{\ell\neq n,1}\mapsto v^{1-\ell}w_{\beta,s},\quad x^{(\beta,s)}_{n,1}\mapsto v^{2-n}w_{\beta,s}.
\end{equation}
For $\beta= [i,n,j]$ with $1\leq i<j\leq n-2$, the specialization $\phi_{\beta,s}$ is again defined in two steps.
First, we define $\phi^{(1)}_{\beta,s}(F)$ by  specializing the variables
$\{x^{(\beta,s)}_{i,t}\}_{1\leq t\leq \nu_{\beta,i}}^{i\in I}$ of $f$ as:
\begin{align}
\label{spe-D-2}
  x^{(\beta,s)}_{\ell\neq n,1}\mapsto v^{1-\ell}w_{\beta,s} ,\quad x^{(\beta,s)}_{n,1}\mapsto v^{2-n}w_{\beta,s},
  \quad x^{(\beta,s)}_{\ell\neq n-1\& n,2}\mapsto v^{\ell+3-2n}w'_{\beta,s}.
\end{align}
According to wheel conditions \eqref{wheelcon}, $\phi^{(1)}_{\beta,s}(F)$ is divisible by
\begin{equation}
\label{Bfactor-D}
  B_{\beta}=\prod^{n-2}_{\ell=j}(w_{\beta,s}-v^{2\ell+4-2n}w'_{\beta,s})(w_{\beta,s}-v^{2\ell-2n}w'_{\beta,s}).
\end{equation}
Then, the second step of the specialization, denoted $\phi^{(2)}_{\beta,s}$, is defined by first dividing $\phi^{(1)}_{\beta,s}(F)$
by $B_{\beta}$  and then specializing the variable $w'_{\beta,s}$ in $\frac{\phi^{(1)}_{\beta,s}(F)}{B_{\beta}}$ to $w_{\beta,s}$.
In this way, we get the overall specialization $\phi_{\beta,s}(F)$:
\begin{equation}
\label{spe-D-3}
  \phi_{\beta,s}(F)\coloneqq \phi^{(2)}_{\beta,s}\left(\phi^{(1)}_{\beta,s}(F)\right) =
  \eval{\frac{\phi^{(1)}_{\beta,s}(F)}{B_{\beta}}}_{w'_{\beta,s}\mapsto w_{\beta,s}}.
\end{equation}

\end{itemize}
Finally, the specialization map  $\phi_{\underline{d}}(F)$ is defined by applying those separate maps $\phi_{\beta,s}$
in each group $\{x^{(\beta,s)}_{i,t}\}_{1\leq t\leq \nu_{\beta,i}}^{i\in I}$ of variables (the result is independent
of splitting as $F$ is symmetric):
\begin{equation*}
  \phi_{\unl{d}}(F)\coloneqq \prod^{1\leq s\leq d_{\beta}}_{\beta\in\Delta^{+}} \phi_{\beta,s}(F).
\end{equation*}
We note that $\phi_{\unl{d}}(F)$ is symmetric in $\{w_{\beta,s}\}_{s=1}^{d_{\beta}}$ for any $\beta\in\Delta^{+}$.
This gives rise to the
\begin{equation*}
  \mathrm{\textbf{specialization\ map}} \quad
  \phi_{\underline{d}}\colon S_{\underline{k}}\longrightarrow
  \BQ(v)[\{w_{\beta,s}^{\pm 1}\}_{\beta\in\Delta^{+}}^{1\leq s\leq d_{\beta}}]^{\mathfrak{S}_{\unl{d}}}.
\end{equation*}
We shall further extend it to the specialization map $\phi_{\unl{d}}$ on the entire shuffle algebra $S$:
\begin{equation*}
  \phi_{\underline{d}}\colon S\longrightarrow
  \BQ(v)[\{w_{\beta,s}^{\pm 1}\}_{\beta\in\Delta^{+}}^{1\leq s\leq d_{\beta}}]^{\mathfrak{S}_{\unl{d}}}
\end{equation*}
by declaring $\phi_{\unl{d}}(F')=0$ for any $F'\in S_{\unl{\ell}}$ with $\unl{\ell}\neq \unl{k}$.

Let us state the key properties of specialization maps $\phi_{\unl{d}}$ defined above: their proofs constitute
the key technical part of this note and will imply our main results similarly to~\cite{HT24}
(see the paragraph following Lemma 2.8 in~\cite{HT24}). For any $h\in H$,
we define its \emph{degree} $\text{deg}(h)\in \BN^{\Delta^{+}}$ as the Kostant partition
$\unl{d}=\{d_{\beta}\}_{\beta\in\Delta^{+}}$ with $d_{\beta}=\sum_{s\in \mathbb{Z}} h(\beta,s)\in \BN$ for all
$\beta\in \Delta^{+}$, and the \emph{grading} $\text{gr}(h)\in \BN^I$ so that
$\text{deg}(h)\in\text{KP}(\text{gr}(h))$. For any $\unl{k}\in\BN^{I}$ and $\unl{d}\in\text{KP}(\unl{k})$,
we define the following subsets of $H$:
\begin{equation}
\label{hunlkunld}
  H_{\underline{k}}\coloneqq \big\{h\in H \, \big| \,  \text{gr}(h)=\underline{k}\big\}, \qquad
  H_{\underline{k},\underline{d}}\coloneqq \big\{h\in H \, \big| \, \text{deg}(h)=\underline{d}\big\}.
\end{equation}
Then we have the following ``dominance property'' of $\phi_{\unl{d}}$:

\begin{Lem}\label{vanish}
For any $h\in H_{\unl{k},\unl{d}}$ and $\unl{d}'<\unl{d}$, cf.~\eqref{eq:KP-order}, we have $\phi_{\unl{d}'}(\Psi(E_{h}))=0$.
\end{Lem}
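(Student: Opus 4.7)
The plan is to prove Lemma~\ref{vanish} by a direct analysis of the shuffle-product expansion of $\Psi(E_h)$, combined with the vanishing of $\zeta$-factors at the prescribed specialization points. Since $\Psi$ is an algebra homomorphism, the image $\Psi(E_h)$ is the iterated shuffle product of the factors $\Psi(E_{\beta,s})$, each raised to its multiplicity $h(\beta,s)$, taken in the convex order~\eqref{orderbetas}. Unfolding the iterated $v$-commutators of \eqref{rootvector1}--\eqref{rootvector3} and using $\Psi(e_{i,r})=x_{i,1}^{r}$, we write each $\Psi(E_{\beta,s})$ explicitly as a sum of rational functions in the variables indexed by the letters of the standard Lyndon word for $\beta$, with denominators of the form prescribed by~\eqref{polecondition} and Laurent-monomial numerators. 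Iterating the shuffle formula~\eqref{shuffleproduct} then expresses $\Psi(E_h)$ as a single symmetrization over $\mathfrak{S}_{\unl{k}}$ of a product involving (i) the numerators of the individual factors $\Psi(E_{\beta_j,s_j})$, which inherit a ``source'' root-block from the PBWD monomial, and (ii) $\zeta$-factors pairing variables that lie in distinct source blocks.

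Next, I would apply $\phi_{\unl{d}'}$ term by term to this symmetrization. The specialization distributes the $k_i$ variables of color $i$ into the target groups $\{x^{(\beta,s)}_{i,t}\}$ according to~\eqref{splitvariable} for the Kostant partition $\unl{d}'$, so each summand is encoded by a bijection from source variables to target variables. For each such bijection, three mechanisms can cause the term to vanish: (a) some $\zeta_{i,j}$-factor pairing a variable of source block $\beta$ with a variable of source block $\beta'$ is evaluated at $z=v^{-(\alpha_i,\alpha_j)}$, i.e.\ at its zero; (b) the wheel conditions~\eqref{wheelcon} force the numerator of a surviving source block to vanish at the specialized values; (c) for the two-step specializations at $\beta=[i,n,i]$ in type $C_n$ and $\beta=[i,n,j]$ in type $D_n$, the numerator fails to contribute enough zeros to cancel the divisor $B_\beta$ from~\eqref{Bfactor-C} or~\eqref{Bfactor-D} before the second substitution fires.

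Finally, I would organize the overall vanishing by induction on the convex order~\eqref{eq:KP-order}. Given $\unl{d}'<\unl{d}$, let $\gamma$ be the smallest positive root in~\eqref{lynorderC} or~\eqref{lynorderD} with $d'_\gamma<d_\gamma$; then $d'_\beta=d_\beta$ for all $\beta<\gamma$. For any source-to-target bijection surviving $\phi_{\unl{d}'}$, the ``missing mass'' at $\gamma$ must be filled by variables originating from some factor $E_{\beta,*}$ with $\beta>\gamma$ being routed into target blocks for $\beta'\le\gamma$. Tracking the color composition of this rerouting shows that one of the mechanisms (a)--(c) above must fire, whence the term vanishes. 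The main technical obstacle is the careful case-analysis in (c) for the composite roots $[i,n,i]$ and $[i,n,j]$: the two-step specializations~\eqref{spe-C-2}--\eqref{spe-C-3} and~\eqref{spe-D-2}--\eqref{spe-D-3} require tracking the precise order of vanishing of the numerator against the divisor $B_\beta$, and this interplay between the divisor and the zeros of the $\zeta$-factors is precisely what forces the normalization of the specialization maps in types $C_n$ and $D_n$, as highlighted in the introduction.
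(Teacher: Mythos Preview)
Your plan is essentially the paper's approach: expand $\Psi(E_h)$ as the symmetrization~\eqref{eq:part-symm}, track for each summand which source block $\Psi(E_{\beta,r_\beta(h,s)})$ each target variable $x'^{(\beta',s')}_{i,t}$ lands in, and argue that unless this assignment is ``block-diagonal'' the summand vanishes due to $\zeta$-factors or the two-step $B_\beta$ mechanism. Two organizational points are worth noting. First, the paper does not run the induction along the convex order on $\Delta^+$ as you suggest, but rather on the rank~$n$: the cases $\gamma\geq[2]$ are handled by the $C_{n-1}$/$D_{n-1}$ hypothesis, and only the roots with first letter~$1$ require direct verification (this keeps the case analysis finite). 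Second, the paper first establishes the stronger structural result (Proposition~\ref{prop:phidEh} in type $C_n$, Proposition~\ref{prop:phidEh-D} in type $D_n$) characterizing \emph{all} surviving summands under $\phi_{\unl{d}}$, and then deduces Lemma~\ref{vanish} as Propositions~\ref{vanishC} and~\ref{vanishD}(b) by the same ``smallest $\gamma$ with $d'_\gamma<d_\gamma$'' pigeonhole you describe. Your mechanism~(b) (wheel conditions on numerators of individual source blocks) does not actually appear in the paper's argument: all vanishing is driven by $\zeta$-factors between blocks (your~(a)) and the $B_\beta$ divisor bookkeeping (your~(c)).
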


Let $S'_{\unl{k}}$ be the $\BQ(v)$-subspace of $S_{\unl{k}}$ spanned by $\{\Psi(E_{h})\}_{h\in H_{\unl{k}}}$.
Then, we have:

\begin{Lem}\label{span}
For any $F\in S_{\unl{k}}$ and $\unl{d}\in \text{\rm KP}(\unl{k})$, if $\phi_{\unl{d}'}(F)=0$ for all
$\unl{d}'\in \text{\rm KP}(\unl{k})$ such that $\unl{d}'<\unl{d}$, then there exists $F_{\unl{d}}\in S'_{\unl{k}}$
such that $\phi_{\unl{d}}(F)=\phi_{\unl{d}}(F_{\unl{d}})$ and $\phi_{\unl{d}'}(F_{\unl{d}})=0$ for all $\unl{d}'<\unl{d}$.
\end{Lem}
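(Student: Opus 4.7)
The plan is to construct $F_{\unl{d}}$ as a $\BQ(v)$-linear combination of the shuffle elements $\Psi(E_h)$ with $h \in H_{\unl{k}, \unl{d}}$. By Lemma~\ref{vanish}, every such combination automatically satisfies $\phi_{\unl{d}'}(\,\cdot\,) = 0$ for all $\unl{d}' < \unl{d}$, so the entire content of the lemma reduces to showing that the image of this restricted span under $\phi_{\unl{d}}$ contains $\phi_{\unl{d}}(F)$.

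To this end, I would first compute $\phi_{\unl{d}}(\Psi(E_h))$ explicitly for $h \in H_{\unl{k}, \unl{d}}$. Using the rule $e_{i,r} \mapsto x_{i,1}^r$ together with the iterated-commutator definitions \eqref{rootvector1}--\eqref{rootvector3} and the shuffle product formula \eqref{shuffleproduct}, each $\Psi(E_{\beta,s})$ becomes a specific symmetrization over the variables attached to the positive root $\beta$, and $\Psi(E_h)$ is a further $\fS_{\unl{k}}$-symmetrization. Applying $\phi_{\unl{d}}$ amounts to summing over all ways of distributing the $\unl{k}$ variables into the Kostant groups of \eqref{splitvariable}; the key combinatorial observation is that only those terms survive for which the variables of each factor $E_{\beta,s}$ are placed into exactly one Kostant group of matching positive root $\beta$ (and in a manner compatible with the convex orders \eqref{lynorderC}--\eqref{lynorderD}). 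All other placements acquire uncompensated denominator singularities under the specializations \eqref{spe-C-1}--\eqref{spe-C-3} or \eqref{spe-D-1}--\eqref{spe-D-3} and therefore vanish.

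The resulting expression should factor as a universal nonzero rational function in $\{w_{\beta,s}\}$ (depending only on $\unl{d}$, arising from the $\zeta$-factors and pole denominators between distinct Kostant groups) times an $\fS_{\unl{d}}$-symmetrization of a monomial $\prod_{\beta,s} w_{\beta,s}^{m_{\beta,s}(h)}$ whose exponents are linear functions of the integer indices encoded in $h$. As $h$ ranges over $H_{\unl{k}, \unl{d}}$, these monomial exponents exhaust all multisets of integers, so their symmetrizations span every $\fS_{\unl{d}}$-symmetric Laurent polynomial in $\{w_{\beta,s}\}_{s=1}^{d_\beta}$. An analogous pole/wheel analysis of $\phi_{\unl{d}}(F)$ for the given $F \in S_{\unl{k}}$ shows that $\phi_{\unl{d}}(F)$ is divisible by the same universal factor. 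Dividing out yields a symmetric Laurent polynomial, and matching it against the monomial symmetrizations produces the coefficients of a linear combination $F_{\unl{d}}$ with the required properties.

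The main obstacle will be carrying out this explicit analysis in the ``two-step'' cases, namely $\beta = [i,n,i]$ in type $C_n$ (where one first applies \eqref{spe-C-2}, divides by the factor $B_\beta$ of \eqref{Bfactor-C}, and then specializes again via \eqref{spe-C-3}) and analogously $\beta = [i,n,j]$ with $j \le n-2$ in type $D_n$ (using \eqref{Bfactor-D}--\eqref{spe-D-3}). In these doubled cases the cancellation of $B_\beta$ against the shuffle-product numerators is delicate, and verifying that the surviving universal factor is both nonzero and uniform in $h$ depends on the precise choice of quantum root vectors (cf.\ \eqref{rvc4}) and on the normalization of the specialization maps themselves. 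This is exactly the feature highlighted in the introduction that distinguishes the $C_n$ and $D_n$ treatment from the simpler types $A_n$, $B_n$, $G_2$ handled in \cite{Tsy18, HT24}.
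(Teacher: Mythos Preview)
Your overall strategy matches the paper's: take $F_{\unl{d}}$ to be a $\BQ(v)$-linear combination of $\Psi(E_h)$ with $h\in H_{\unl{k},\unl{d}}$, compute $\phi_{\unl{d}}(\Psi(E_h))$ to obtain the factorization (Propositions~\ref{spekpC} and~\ref{vanishD}(a)) into a universal part $\prod_{\beta<\beta'} G_{\beta,\beta'}\cdot\prod_\beta c_\beta^{d_\beta} G_\beta$ times the rank-$1$ Hall--Littlewood-type factors $P_{\lambda_{h,\beta}}$ of~\eqref{hlp-C}, and then invoke the rank-$1$ reduction (Remark~\ref{rankreduction}) to match any $\fS_{\unl{d}}$-symmetric Laurent polynomial.

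There is, however, a genuine gap in your divisibility step. You assert that a ``pole/wheel analysis'' of $\phi_{\unl{d}}(F)$ shows it is divisible by the same universal factor, but wheel conditions alone are \emph{not} sufficient: many of the linear factors in $G_{\beta,\beta'}$ can only be extracted by invoking the hypothesis $\phi_{\unl{d}'}(F)=0$ for specific $\unl{d}'<\unl{d}$. The paper's proofs (Propositions~\ref{spanC} and~\ref{spanD}) do this repeatedly. For instance, in type $C_n$ with $\beta=[1,j-1]$ and $\beta'=[1,n,j]$, the factor $(w_{\beta,1}-v^{-2n+2j-2}w_{\beta',1})$ in $G_{\beta,\beta'}$ does not arise from any wheel condition; rather, setting $w_{\beta,1}=v^{-2n+2j-2}w_{\beta',1}$ in the $\phi_{\unl{d}}$-specialization reproduces the $\phi_{\unl{d}'}$-specialization for $\unl{d}'=\{d'_{[1,j]}=d'_{[1,n,j+1]}=1\}<\unl{d}$, and this vanishes by hypothesis. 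Analogous arguments are needed throughout, especially for pairs $\beta<[2]\le\beta'$, where the convex orders~\eqref{lynorderC}--\eqref{lynorderD} are used to locate the appropriate~$\unl{d}'$. So the vanishing hypothesis on $F$ is not just a bookkeeping device for $F_{\unl{d}}$ via Lemma~\ref{vanish}; it is an essential input on the $F$ side of the matching as well, and your outline should make this explicit.

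A minor point: the vanishing of the ``wrong'' summands in the symmetrization computing $\phi_{\unl{d}}(\Psi(E_h))$ comes from numerator zeros of $\zeta$-factors under specialization (see the proof of Proposition~\ref{prop:phidEh}), not from ``uncompensated denominator singularities''.
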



\medskip

\section{Shuffle algebra and its integral forms in type $C_n$}\label{type C}

In this section, we establish the key properties of the specialization maps for the shuffle algebras of type $C_{n}$.
This implies the shuffle algebra realization and PBWD-type theorems for $\qlc$ and its integral forms.


\subsection{$\qlc$ and its shuffle algebra realization}

In type $C_{n}$, for any $F\in S_{\unl{k}}$ with $\unl{k}\in\mathbb{N}^{n}$, the wheel conditions are:
\begin{equation}
\label{eq:Wheel-Cn}
\begin{aligned}
  F(\{x_{i,r}\}_{1\leq i\leq n}^{1\leq r\leq k_{i}})=0 \quad \text{once} \quad
  & x_{i,1}=v^{2}x_{i,2}=vx_{i+1,1} \quad \text{for some} \quad 1\leq i\leq n-2,\\
  \text{or} \quad
  & x_{i,1}=v^{2}x_{i,2}=vx_{i-1,1} \quad \text{for some} \quad 2\leq i\leq n-1,\\
   \text{or} \quad
   &x_{n,1}=v^{4}x_{n,2}=v^{2}x_{n-1,1},\\
  \text{or} \quad
  & x_{n-1,1}=v^{2}x_{n-1,2}=v^{4}x_{n-1,3}=v^{2}x_{n,1}.
\end{aligned}
\end{equation}
We also recall the notations \eqref{eq:abbr-prC} for positive roots in type $C_n$.
Henceforth, we shall use the notation $\doteq$ as in~\cite[(2.44)]{HT24}:
\begin{equation}
\label{eq:trig-const}
  A\doteq B \quad \text{if} \quad  A=c\cdot B \quad \text{for some} \ c\in \BQ^{\times}\cdot v^{\BZ}.
\end{equation}
First, let us compute the images of the quantum root vectors $\{\tilde{E}^{\pm}_{\beta,s}\}_{\beta\in\Delta^{+}}^{s\in\BZ}$
of \eqref{rvc1}--\eqref{rvc4}. We shall use $\mathrm{denom}_\beta$ to denote the denominator in~\eqref{polecondition} for
any $F\in S_\beta$, e.g.\ for $F=\Psi(\tilde{E}^{\pm}_{\beta,s})$.

\begin{Lem}\label{lem:psirv-C}
Consider the quantum root vectors
$\{\tilde{E}^{\pm}_{\beta,s}\}_{\beta\in\Delta^{+}}^{s\in\BZ}$ of \eqref{rvc1}--\eqref{rvc4}. Their images under
$\Psi$ of~\eqref{eq:Psi-homom} in the shuffle algebra $S$ of type $C_{n}$ are as follows, cf.~\eqref{anglev}:
\begin{itemize}[leftmargin=0.7cm]

\item
If $\beta=[i,j]$ with $1\leq i\leq j<n$ or $i=j=n$, then for any $s=s_{i}+\cdots+s_{j}$ used in \eqref{rvc1}:
\begin{align*}
  \Psi(\tilde{E}^{+}_{[i,j],s})\doteq
    \frac{\langle 1\rangle_{v}^{j-i}}{\mathrm{denom}_{[i,j]}} \cdot x_{i,1}^{s_{i}+1}\cdots x_{j-1,1}^{s_{j-1}+1}x_{j,1}^{s_{j}},\
  \Psi(\tilde{E}^{-}_{[i,j],s})\doteq
    \frac{\langle 1\rangle_{v}^{j-i}}{\mathrm{denom}_{[i,j]}} \cdot x_{i,1}^{s_{i}}x_{i+1,1}^{s_{i+1}+1}\cdots x_{j,1}^{s_{j}+1}.
\end{align*}

\item
If $\beta=[i,n]$ with $1\leq i<n$, then for any decomposition $s=s_{i}+\cdots+s_{n}$ used in \eqref{rvc2}:
\begin{align*}
  \Psi(\tilde{E}^{+}_{[i,n],s})\doteq
   \frac{\langle 1\rangle_{v}^{n-i-1}{\langle 2\rangle_{v}}}{\mathrm{denom}_{[i,n]}} \cdot x_{i,1}^{s_{i}+1}\cdots x_{n-1,1}^{s_{n-1}+1}x_{n,1}^{s_{n}},  \
  \Psi(\tilde{E}^{-}_{[i,n],s})\doteq
    \frac{\langle 1\rangle_{v}^{n-i-1}{\langle 2\rangle_{v}}}{\mathrm{denom}_{[i,n]}} \cdot x_{i,1}^{s_{i}}x_{i+1,1}^{s_{i+1}+1}\cdots x_{n,1}^{s_{n}+1}.
\end{align*}

\item
If $\beta=[i,n,j]$ with $1\leq i<j\leq n-1$, then for any
$s=s_{i}+\cdots +s_{j-1}+2s_{j}+\cdots +2s_{n-1}+s_{n}$ used in \eqref{rvc3}, we have:
\begin{equation*}
\begin{aligned}
  & \Psi(\tilde{E}^{+}_{[i,n,j],s})\doteq \frac{\langle 1\rangle_{v}^{2n-i-j-1}\langle 2\rangle_{v}}{\mathrm{denom}_{[i,n,j]}}
    \cdot g_{1}\cdot \left[(1+v^2)x_{j,1}x_{j,2}-vx_{j-1,1}(x_{j,1}+x_{j,2})\right]\\
  & \qquad \qquad \qquad \times \prod_{\ell=j}^{n-2} Q(x_{\ell,1},x_{\ell,2},x_{\ell+1,1},x_{\ell+1,2}),
\end{aligned}
\end{equation*}
\begin{equation*}
\begin{aligned}
  & \Psi(\tilde{E}^{-}_{[i,n,j],s})\doteq
    \frac{\langle 1\rangle_{v}^{2n-i-j-1}\langle 2\rangle_{v}}{\mathrm{denom}_{[i,n,j]}} \cdot g_{2}\cdot \left[(1+v^2)x_{j-1,1}-v(x_{j,1}+x_{j,2})\right]\\
  & \qquad \qquad \qquad \times \prod_{\ell=j}^{n-2}Q(x_{\ell,1},x_{\ell,2},x_{\ell+1,1},x_{\ell+1,2}),
\end{aligned}
\end{equation*}
where
\begin{equation}
\label{eq:Qform-C}
  Q(x_{1},x_{2},y_{1},y_{2})=(1+v^2)(x_{1}x_{2}+y_{1}y_{2})-v(x_{1}+x_{2})(y_{1}+y_{2})
\end{equation}
and
\begin{align*}
  g_{1}=\prod^{j-1}_{\ell=i}x_{\ell,1}^{s_{\ell}+1}(x_{j,1}x_{j,2})^{s_{j}}
    \prod^{n-1}_{\ell=j+1}(x_{\ell,1}x_{\ell,2})^{s_{\ell}+1}x_{n,1}^{s_{n}+1},\
  g_{2}=x_{i,1}^{s_{i}}\prod^{j-1}_{\ell=i+1}x_{\ell,1}^{s_{\ell}+1}
    \prod^{n-1}_{\ell=j}(x_{\ell,1}x_{\ell,2})^{s_{\ell}+1}x_{n,1}^{s_{n}+1}.
\end{align*}

\item
If $\beta=[i,n,i]$ with $1\leq i\leq n-1$, then for any decomposition $s=2s_{i}+\cdots+2s_{n-1}+s_{n}$ used in \eqref{rvc4},
we have (cf.~\eqref{eq:Qform-C}):
\begin{equation}\label{eq:Imtilde Eb-3}
  \Psi(\tilde{E}^{+}_{[i,n,i],s})\doteq
  \frac{\langle 1\rangle_{v}^{2n-2i-2}\langle 2\rangle_{v}^{2}}{\mathrm{denom}_{[i,n,i]}} \cdot
  \prod^{n-1}_{\ell=i}(x_{\ell,1}x_{\ell,2})^{s_{\ell}+1} x^{s_{n}}_{n,1}
  \prod_{\ell=i}^{n-2}Q(x_{\ell,1},x_{\ell,2},x_{\ell+1,1},x_{\ell+1,2}),
\end{equation}
\begin{equation*}
   \Psi(\tilde{E}^{-}_{[i,n,i],s})\doteq
   \frac{\langle 1\rangle_{v}^{2n-2i-2}\langle 2\rangle_{v}^{2}}{\mathrm{denom}_{[i,n,i]}} \cdot
   (x_{i,1}x_{i,2})^{s_{i}} \prod^{n-1}_{\ell=i+1}(x_{\ell,1}x_{\ell,2})^{s_{\ell}+1} x^{s_{n}+2}_{n,1}
   \prod_{\ell=i}^{n-2}Q(x_{\ell,1},x_{\ell,2},x_{\ell+1,1},x_{\ell+1,2}).
\end{equation*}

\end{itemize}
\end{Lem}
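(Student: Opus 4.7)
The plan is to proceed by induction on the length of the Lyndon word labelling $\beta$, using the fact that $\Psi$ is an algebra homomorphism, so
\begin{equation*}
  \Psi([F,G]_{\lambda}) = \Psi(F)\star \Psi(G) - \lambda\cdot \Psi(G)\star \Psi(F)
\end{equation*}
reduces each inductive step to an explicit shuffle-algebra computation via \eqref{shuffleproduct}. The base case $\beta=[i]$ is just $\Psi(e_{i,s})=x_{i,1}^{s}$. For the inductive step the proof splits into four cases matching \eqref{rvc1}--\eqref{rvc4}.

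First I would handle the ``linear'' Lyndon words $\beta=[i,j]$ and $\beta=[i,n]$ (cf.~\eqref{rvc1}--\eqref{rvc2}). Here each simple root occurs with multiplicity one, so when shuffle-multiplying an inductively computed $\Psi(\tilde{E}^{\pm}_{[i,\ell-1],?})$ by $x_{\ell,1}^{s_\ell}$ no non-trivial symmetrization is required: the shuffle product is just the product of the factors times a single $\zeta_{\ell-1,\ell}(x_{\ell-1,1}/x_{\ell,1})$ (resp.\ $\zeta_{\ell,\ell-1}(x_{\ell,1}/x_{\ell-1,1})$). The $v^{\pm 1}$-commutator cancellation then produces a numerator linear in $x_{\ell-1,1}, x_{\ell,1}$ that factors as $x_{\ell-1,1}(1-v^{\mp 2})$ (respectively $x_{\ell,1}(1-v^{\mp 2})$), yielding one additional factor of $\langle 1\rangle_{v}$ per step. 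When the added generator is $e_{n,s_n}$ the factor $v^{\pm 2}$ in place of $v^{\pm 1}$ (coming from $d_n=2$) gives $\langle 2\rangle_{v}$ instead. A straightforward induction then produces the claimed monomials with the stated $\langle 1\rangle_{v}^{j-i}$ (or $\langle 1\rangle_{v}^{n-i-1}\langle 2\rangle_{v}$) prefactor and the correct pole denominator $\mathrm{denom}_\beta$.

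For $\beta=[i,n,j]$ with $i<j\leq n-1$ I use $\tilde E^{\pm}_{[i,n,j],s}=[\tilde E^{\pm}_{[i,n,j+1],s'},e_{j,s_j}]_{v^{\pm 1}}$ (cf.~\eqref{rvc3}). The key new feature is that $\tilde E^{\pm}_{[i,n,j+1],s'}$ already carries one $x_{j,1}$ variable, so shuffling with $x_{j,1}^{s_j}$ forces a genuine symmetrization over $x_{j,1},x_{j,2}$. After multiplying by $\zeta$-factors and taking the difference required by the $v^{\pm 1}$-commutator, one cancellation produces the linear factor $(1+v^{2})x_{j,1}x_{j,2}-vx_{j-1,1}(x_{j,1}+x_{j,2})$ (resp.\ $(1+v^{2})x_{j-1,1}-v(x_{j,1}+x_{j,2})$), and the interaction with the already-present $x_{j+1,1},x_{j+1,2}$ generates the quartic factor $Q(x_{j,1},x_{j,2},x_{j+1,1},x_{j+1,2})$ of \eqref{eq:Qform-C}; the remaining product $\prod_{\ell>j}Q(x_{\ell,1},x_{\ell,2},x_{\ell+1,1},x_{\ell+1,2})$ and the overall $\langle 1\rangle_v^{2n-i-j-1}\langle 2\rangle_v$ prefactor come by induction. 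Finally, for $\beta=[i,n,i]$ I apply \eqref{rvc4} and the identity $\tilde E^{\pm}_{[i,n,i],s}=[X,Y]$ with $X$ a root vector for $[i,n-1]$ and $Y$ a root vector for $[i,n]$, both already computed; the shuffle commutator $\Psi(X)\star\Psi(Y)-\Psi(Y)\star\Psi(X)$ requires simultaneous symmetrization in the pairs $x_{\ell,1},x_{\ell,2}$ for every $\ell\in\{i,\ldots,n-1\}$, yielding all $Q$-factors as well as the doubled prefactor $\langle 1\rangle_v^{2n-2i-2}\langle 2\rangle_v^{2}$.

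The main technical obstacle will be the $[i,n,j]$ and $[i,n,i]$ computations, namely isolating the polynomials $Q$ and verifying the precise $v$-power normalizations. The most efficient way is to exploit the wheel conditions~\eqref{eq:Wheel-Cn}: they force $\Psi(\tilde{E}^{\pm}_{\beta,s})$ to vanish at the specializations $x_{\ell,1}=v^{2}x_{\ell,2}=vx_{\ell+1,?}$ (up to re-labelling), which pins down the zero locus of the numerator; the quartic $Q$ is precisely the unique (up to scalar) $(x_{\ell,1},x_{\ell,2})\leftrightarrow(x_{\ell+1,1},x_{\ell+1,2})$-symmetric biquadratic polynomial of the correct bidegrees vanishing on that locus. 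After matching zeros in this way, one is reduced to determining the overall scalar and monomial prefactor, which is a one-point evaluation. This wheel-conditions shortcut avoids grinding through the full symmetrization sum and makes the induction transparent.
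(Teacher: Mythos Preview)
Your inductive skeleton and the treatment of the linear roots $[i,j]$, $[i,n]$ are correct and match the paper. The gap is the ``wheel-conditions shortcut'' in your last paragraph. Wheel conditions~\eqref{eq:Wheel-Cn} hold for \emph{every} element of $S$, so knowing that the numerator of $\Psi(\tilde E^\pm_{\beta,s})$ vanishes on those loci cannot by itself determine it: for instance $S_{[i,n,i]}$ also contains $\Psi(\tilde E^\pm_{[i,n-1],a})\star\Psi(\tilde E^\pm_{[i,n],b})$ for all $a+b=s$, and all of these satisfy the same wheel conditions. Your observation that $Q$ is the unique degree-$2$ symmetric polynomial (degree $\le 1$ in each variable) vanishing at $x_1=v^2x_2=vy_1$ is correct, but it does not show that the full numerator \emph{factors} as a product of such $Q$'s --- vanishing on a union of loci only places the numerator in an ideal, not in a one-dimensional space. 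To obtain the factorization you must actually carry out the symmetrization you hoped to bypass. (Also, $Q$ is homogeneous of degree $2$, not quartic or biquadratic.)

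The paper supplies exactly this computation via the explicit identity
\[
  \mathop{Sym}_{x_1,x_2}\,\frac{(x_1-v^{-2}x_2)(x_1-vy_2)(vx_2-y_1)}{x_1-x_2}\ \doteq\ Q(x_1,x_2,y_1,y_2),
\]
which simultaneously cancels the spurious pole $(x_{j,1}-x_{j,2})^{-1}$ coming from $\zeta_{j,j}$ and packages the two cross $\zeta$-numerators into the polynomial $Q$. For $\beta=[i,n,i]$ the paper then runs a \emph{descending} induction on $i$ (base case $i=n-1$ computed directly), using this identity at level $j$ to peel off one $Q$-factor and reduce $\Psi(\tilde{E}^{+}_{[j,n,j],s})$ to $\Psi(\tilde{E}^{+}_{[j+1,n,j+1],\,s-2s_j})$, rather than a simultaneous symmetrization over all pairs at once. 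Replace your heuristic with this two-variable calculation and the rest of your outline goes through.
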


\begin{proof}
We shall present only the derivation of the formula for $\Psi(\tilde{E}^{+}_{[i,n,i],s})$, while the other formulas
are obtained in a similar (but simpler) way. The proof proceeds by a descending induction in $i$. The base case $i=n-1$
is derived as follows:
\begin{multline*}
  \Psi(\tilde{E}^{+}_{[n-1,n,n-1],s})
  = \Psi(e_{n-1,s_{n-1}})\star \Psi(\tilde{E}^{+}_{[n-1,n],s_{n-1}+s_{n}}) -
    \Psi(\tilde{E}^{+}_{[n-1,n],s_{n-1}+s_{n}})\star\Psi(e_{n-1,s_{n-1}}) \\
  = \frac{\langle 2\rangle_{v}(x_{n-1,1}x_{n-1,2})^{s_{n-1}}x^{s_{n}}_{n,1}}{\mathrm{denom}_{[n-1,n,n-1]}} \, \cdot
    \mathop{Sym}_{x_{n-1,1},x_{n-1,2}}\left(\frac{x_{n-1,2}(x_{n-1,1}-v^{-2}x_{n-1,2})(x_{n-1,1}-v^2x_{n,1})}{x_{n-1,1}-x_{n-1,2}} \, + \right.\\
    \left.\frac{x_{n-1,1}(x_{n-1,1}-v^{-2}x_{n-1,2})(x_{n,1}-v^2x_{n-1,2})}{x_{n-1,1}-x_{n-1,2}}\right) \doteq
    \frac{\langle 2\rangle^{2}_{v}}{\mathrm{denom}_{[n-1,n,n-1]}}\cdot (x_{n-1,1}x_{n-1,2})^{s_{n-1}+1}x^{s_{n}}_{n,1}.
\end{multline*}
As per the step of induction, let us assume that \eqref{eq:Imtilde Eb-3} holds for any $j+1\leq i\leq n-1$. Due to
\begin{equation*}
  \mathop{Sym}_{x_{1},x_{2}}\left(\frac{(x_1-v^{-2} x_{2})(x_1-vy_2)(vx_{2}-y_1)}{x_1-x_2}\right)\doteq Q(x_{1},x_{2},y_{1},y_{2})
\end{equation*}
with $Q(x_{1},x_{2},y_{1},y_{2})$ defined in \eqref{eq:Qform-C}, we obtain:
\begin{equation*}
  \Psi(\tilde{E}^{+}_{[j,n,j],s})\doteq
  \frac{\langle 1\rangle^{2}_{v} (x_{j,1}x_{j,2})^{s_{j}+1}}{\prod^{t=1,2}_{s=1,2}(x_{j,s}-x_{j+1,t})}\cdot
  Q(x_{j,1},x_{j,2},x_{j+1,1},x_{j+1,2})\cdot \Psi(\tilde{E}^{+}_{[j+1,n,j+1],s-2s_{j}}).
\end{equation*}
Using the induction hypothesis for $\Psi(\tilde{E}^{+}_{[j+1,n,j+1],s-2s_{j}})$, we derive~\eqref{eq:Imtilde Eb-3} for $i=j$.
\end{proof}

For more general quantum root vectors $\{E_{\beta,s}\}_{\beta\in\Delta^{+}}^{s\in\BZ}$ of $\qlc$ defined in
\eqref{rootvector1}--\eqref{rootvector2}, their images under $\Psi$ are not so well factorized as for the particular
choices above, but what is actually important is that they behave well under the specialization maps:

\begin{Lem}\label{phirv-C}
For any choices of $s_k$ and $\lambda_k$ in \eqref{rootvector1}--\eqref{rootvector2}, we have:
\begin{equation*}
  \phi_{\beta}(\Psi(E_{\beta,s}))\doteq c_{\beta}\cdot w_{\beta,1}^{s+\kappa_{\beta}}
  \qquad \forall\, (\beta,s)\in\Delta^{+}\times \BZ,
\end{equation*}
where $\{\kappa_{\beta}\}_{\beta\in\Delta^{+}}$ are explicitly given by
\begin{equation}
\label{kappaC}
  \kappa_{\beta}=
  \begin{cases}
    j-i &\quad \mathrm{if}\ \beta=[i,j] \ \ \mathrm{with} \ i\leq j \\
    4n-i-3j-1 &\quad \mathrm{if}\ \beta=[i,n,j]\ \ \mathrm{with} \ i<j \\
    2n-2i  &\quad \mathrm{if}\ \beta=[i,n,i]
  \end{cases}
\end{equation}
and the constants $\{c_{\beta}\}_{\beta\in \Delta^{+}}$ are explicitly given by
\begin{equation}
\label{eq:c-factor-C}
  c_{\beta}=
  \begin{cases}
    \langle 1\rangle_{v}^{|\beta|-1} &\quad \mathrm{if}\ \beta=[i,j] \ \mathrm{or}\ \beta=[n]\\
    \langle 1\rangle_{v}^{|\beta|-2}{\langle 2\rangle_{v}} &\quad \mathrm{if}\ \beta=[i,n]\\
    \langle 1\rangle_{v}^{|\beta|-3} {\langle 2\rangle_{v}}\cdot \prod_{\ell=j}^{n-1}\left\{(v^{2n-2\ell}-1)(v^{2n-2\ell+4}-1)\right\}
    &\quad \mathrm{if}\ \beta=[i,n,j]\\
      \langle 1\rangle_{v}^{|\beta|-3} \langle 2\rangle_{v}^{2}
    &\quad \mathrm{if}\ \beta=[i,n,i]
  \end{cases}
\end{equation}
where $|\beta|$ denotes the height of $\beta$, cf.~\eqref{eq:root-height}.
\end{Lem}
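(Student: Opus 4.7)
Our strategy is to compute $\phi_\beta(\Psi(E_{\beta,s}))$ directly, first for the specific quantum root vectors $\tilde{E}^+_{\beta,s}$ of \eqref{rvc1}--\eqref{rvc4} using the explicit shuffle formulas in Lemma~\ref{lem:psirv-C}, and then to extend the conclusion to arbitrary $E_{\beta,s}$ from \eqref{rootvector1}--\eqref{rootvector2} by observing that the shape $c_\beta\cdot w_{\beta,1}^{s+\kappa_\beta}$ is stable under the admissible choices of $\lambda_k\in v^{\BZ}$ and of the decomposition $s=s_1+\cdots+s_\ell$.

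For $\beta=[i,j]$ with $i\leq j<n$, for $\beta=[n]$, and for $\beta=[i,n]$ with $i<n$, the specialization~\eqref{spe-C-1} is a single substitution $x_{\ell,1}\mapsto v^{1-\ell}w_{\beta,1}$ (together with $x_{n,1}\mapsto v^{-n}w_{\beta,1}$ when present). Plugging this into the monomial numerators from Lemma~\ref{lem:psirv-C} immediately yields $v^\bullet\cdot w_{\beta,1}^{s+|\beta|-1}$, confirming $\kappa_\beta=|\beta|-1$ together with the prefactor $c_\beta$ from~\eqref{eq:c-factor-C}.

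For $\beta=[i,n,j]$ with $i<j$ and $\beta=[i,n,i]$, the two-step specialization~\eqref{spe-C-3} is essential. The key algebraic identity, verified by direct expansion, is
\begin{equation*}
  (1+v^4)ww' - v^2\bigl(w^2+(w')^2\bigr) = -v^2(w-v^{-2}w')(w-v^2 w'),
\end{equation*}
which implies that each $Q$-factor from~\eqref{eq:Qform-C} satisfies $Q|_{\phi^{(1)}_\beta}\doteq(w_{\beta,1}-v^{-2}w'_{\beta,1})(w_{\beta,1}-v^2 w'_{\beta,1})$. Consequently, $\prod_\ell Q|_{\phi^{(1)}_\beta}$ equals $v^\bullet\cdot B_\beta$, possibly multiplied by extra factors depending on whether $\beta=[i,n,j]$ or $[i,n,i]$. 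Division by $B_\beta$ in accordance with~\eqref{spe-C-3} leaves a scalar, and the subsequent evaluation $w'_{\beta,1}\mapsto v^2 w_{\beta,1}$ together with specialization of the remaining monomial parts (the $g_k$-monomials or $(x_{\ell,1}x_{\ell,2})^{s_\ell+1}$ products from Lemma~\ref{lem:psirv-C}) recovers the prescribed $c_\beta\cdot w_{\beta,1}^{s+\kappa_\beta}$. In the $[i,n,j]$ case, the ``excess'' $Q$-factors produce exactly the product $\prod_{\ell=j}^{n-1}\{(v^{2n-2\ell}-1)(v^{2n-2\ell+4}-1)\}$ recorded in~\eqref{eq:c-factor-C}.

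To extend the equality to arbitrary $E_{\beta,s}$, we exploit the rigidity of the $w_{\beta,1}$-degree: since $\phi_\beta$ sends each $x$-variable to a scalar multiple of a $w$-variable (the grading of $\beta$ fixing the number of each type), and since every term in the bilinear expansion of the iterated $v$-commutator carries the same total spectral degree $s$, the specialization $\phi_\beta(\Psi(E_{\beta,s}))$ is forced to be a scalar multiple of $w_{\beta,1}^{s+\kappa_\beta}$. A routine induction on the length $\ell$ of $\beta$, based on $[x,y]_\lambda=xy-\lambda yx$ with $\lambda\in v^{\BZ}$ and the base computations above, shows that this scalar stays in the $\doteq$-class of $c_\beta$ for any admissible choice. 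The main obstacle is the case $\beta=[i,n,i]$ (and secondarily $[i,n,j]$), where the two-step specialization combined with the product structure of the $Q$-factors demands careful bookkeeping of $v$-powers; once the identity above is in hand, the remaining computation is tedious but routine.
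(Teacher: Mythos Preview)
Your computation of $\phi_\beta(\Psi(\tilde{E}^+_{\beta,s}))$ via Lemma~\ref{lem:psirv-C} is fine and does verify the values of $c_\beta,\kappa_\beta$ (one small slip: for $\beta=[i,n,j]$ with $i<j$ the specialization is the one-step map~\eqref{spe-C-1}, not the two-step~\eqref{spe-C-3}; the latter is used only for $[i,n,i]$). The real gap is in your extension to \emph{arbitrary} $E_{\beta,s}$. You correctly observe that $\phi_\beta(\Psi(E_{\beta,s}))$ is forced by homogeneity to be a scalar times $w_{\beta,1}^{s+\kappa_\beta}$, but the claim that this scalar ``stays in the $\doteq$-class of $c_\beta$ for any admissible choice'' is precisely the nontrivial content, and your ``routine induction'' does not supply it. Writing $E_{\beta,s}=[X,Y]_\lambda$ and letting $c_1,c_2$ be the scalars arising from $\phi_\beta(\Psi(X)\star\Psi(Y))$ and $\phi_\beta(\Psi(Y)\star\Psi(X))$, the commutator gives $c_1-\lambda c_2$. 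For this to lie in $\BQ^\times\cdot v^{\BZ}\cdot c_\beta$ \emph{for every} $\lambda\in v^{\BZ}$, one needs $c_1=0$ (or symmetrically $c_2=0$); otherwise varying $\lambda$ produces scalars not in a common $\doteq$-class. Nothing in your plan establishes such a vanishing.

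The paper's proof proceeds differently and avoids this pitfall. It does not compute $\phi_\beta$ on a specific $\tilde{E}^+_{\beta,s}$ and then extend; instead, for the key case $\beta=[i,n,i]$ (the others being analogous to \cite[Lemma~4.2]{HT24}) it analyzes the symmetrization of $\Psi(E_{[i,n-1],s_1})\star\Psi(E_{[i,n],s_2})$ directly, tracking through an $o$-function which shuffle factor each variable $x^{(\beta,1)}_{*,*}$ lands in. The $\zeta$-factors then force every summand to specialize to zero (in one case the $\zeta$-product~\eqref{eq:zeta-product-C} supplies $B_\beta$ while a residual $\zeta$ kills what remains after the second step of~\eqref{spe-C-3}). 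Hence $\phi_\beta(\Psi(X)\star\Psi(Y))=0$ and $\phi_\beta(\Psi([X,Y]_\lambda))=-\lambda\,\phi_\beta(\Psi(Y)\star\Psi(X))$; the $\lambda$-independence of the $\doteq$-class is then automatic. You would need an argument of this type to close the gap.
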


\begin{proof}
It suffices to consider only $\beta=[i,n,i]$, as for the other roots the proof is analogous to that of \cite[Lemma 4.2]{HT24}.
For $\beta=[i,n,i]$, recall that $E_{\beta,s}=[E_{[i,n-1],s_{1}},E_{[i,n],s_{2}}]_{\lambda}$ with $\lambda\in v^{\BZ}$, $s=s_{1}+s_{2}$,
so that $\Psi(E_{\beta,s})=\Psi(E_{[i,n-1],s_{1}})\star \Psi(E_{[i,n],s_{2}})-\lambda \Psi(E_{[i,n],s_{2}})\star \Psi(E_{[i,n-1],s_{1}})$.

First, let us prove that $\phi_{\beta}(\Psi(E_{[i,n-1],s_{1}})\star \Psi(E_{[i,n],s_{2}}))=0$. Consider
\begin{equation*}
\begin{aligned}
    F_{\beta}=\
    & \Psi(E_{[i,n-1],s_{1}})(x_{i,1},\ldots,x_{n-1,1})\Psi(E_{[i,n],s_{2}})(x_{i,2},\ldots,x_{n-1,2},x_{n,1}) \times\\
    & \zeta\left(\frac{x_{n-1,1}}{x_{n-1,2}}\right)\zeta\left(\frac{x_{n-1,1}}{x_{n,1}}\right) \cdot
      \prod_{\ell=i}^{n-2}\left\{\zeta\left(\frac{x_{\ell,1}}{x_{\ell,2}}\right)
      \zeta\left(\frac{x_{\ell,1}}{x_{\ell+1,2}}\right) \zeta\left(\frac{x_{\ell+1,1}}{x_{\ell,2}}\right)\right\}.
\end{aligned}
\end{equation*}
According to \eqref{shuffleproduct}--\eqref{eq:symmetrization}, we have
\begin{equation}
\label{eq:symmeFbeta}
\begin{aligned}
  \Psi(E_{[i,n-1],s_{1}}) & \star \Psi(E_{[i,n],s_{2}})  \doteq \\
  & \sum_{(\sigma_{i},\dots,\sigma_{n-1})\in \mathfrak{S}^{n-i}_{2}}
    F_{\beta}\left(x_{i,\sigma_{i}(1)},x_{i,\sigma_{i}(2)},\dots,x_{n-1,\sigma_{n-1}(1)},x_{n-1,\sigma_{n-1}(2)},x_{n,1}\right).
\end{aligned}
\end{equation}
Using $\sigma$ to denote $(\sigma_{i},\dots,\sigma_{n-1})\in \mathfrak{S}^{n-i}_{2}$, we can write each summand
above as $\sigma(F_{\beta})$. We note that evaluating the $\phi_{\beta}$-specialization of $\sigma(F_{\beta})$
in \eqref{eq:symmeFbeta} is equivalent to evaluating the $\phi_{\beta}$-specialization of $F_{\beta}$ with respect
to different splittings of the variables $\{x^{(\beta,1)}_{\ell,t}\}_{\ell\in\beta}^{1\leq t\leq \nu_{\beta,\ell}}$.
To this end, we shall write $o(x^{(*,*)}_{*,*})=1$ if a variable $x^{(*,*)}_{*,*}$ is plugged into $\Psi(E_{[i,n-1],s_{1}})$,
and $o(x^{(*,*)}_{*,*})=2$ if it is plugged into $\Psi(E_{[i,n],s_{2}})$. According to~\eqref{spe-C-2}, the
$\phi^{(1)}_{\beta}$-specialization of the corresponding summand vanishes unless
$$
  o(x^{(\beta,1)}_{i,1})= o(x^{(\beta,1)}_{i+1,1})=\cdots=o(x^{(\beta,1)}_{n-1,1}) \qquad \mathrm{and} \qquad
  o(x^{(\beta,1)}_{i,2})= o(x^{(\beta,1)}_{i+1,2})=\cdots=o(x^{(\beta,1)}_{n-1,2}).
$$
We still have two cases to consider:
\smallskip

\begin{itemize}[leftmargin=0.7cm]

\item
if $o(x^{(\beta,1)}_{i,1})=\cdots=o(x^{(\beta,1)}_{n-1,1})=2$ and $o(x^{(\beta,1)}_{i,2})=\cdots=o(x^{(\beta,1)}_{n-1,2})=1$,
then $o(x^{(\beta,1)}_{n,1})=2$, and the $\phi^{(1)}_{\beta}$-specialization of the corresponding summand vanishes due to
$\zeta\left(\frac{x^{(\beta,1)}_{n-1,2}}{x^{(\beta,1)}_{n,1}}\right)$;

\item
if $o(x^{(\beta,1)}_{i,1})=\cdots=o(x^{(\beta,1)}_{n-1,1})=1$ and
$o(x^{(\beta,1)}_{i,2})=\cdots=o(x^{(\beta,1)}_{n-1,2})=o(x^{(\beta,1)}_{n,1})=2$, then the product of $\zeta$-factors
\begin{equation}
\label{eq:zeta-product-C}
  \prod^{n-2}_{\ell=i} \left\{
  \zeta\left(\frac{x^{(\beta,1)}_{\ell,1}}{x^{(\beta,1)}_{\ell,2}}\right)
  \zeta\left(\frac{x^{(\beta,1)}_{\ell,1}}{x^{(\beta,1)}_{\ell+1,2}}\right)
  \zeta\left(\frac{x^{(\beta,1)}_{\ell+1,1}}{x^{(\beta,1)}_{\ell,2}}\right)\right\}
\end{equation}
contributes $B_{\beta}$ of~\eqref{Bfactor-C} towards the $\phi^{(1)}_{\beta}$-specialization of the corresponding summand,
and so the overall $\phi_\beta$-specialization vanishes due to the  $\zeta$-factors
  $\zeta\left(\frac{x^{(\beta,1)}_{n-1,1}}{x^{(\beta,1)}_{n-1,2}}\right)
   \zeta\left(\frac{x^{(\beta,1)}_{n-1,1}}{x^{(\beta,1)}_{n,1}}\right)$.

\end{itemize}
This completes the proof of $\phi_{\beta}(\Psi(E_{[i,n-1],s_{1}})\star \Psi(E_{[i,n],s_{2}}))=0$.

The evaluation of $\phi_{\beta}(\Psi(E_{[i,n],s_{2}})\star \Psi(E_{[i,n-1],s_{1}}))$ is analogous. We shall write
$o(x^{(*,*)}_{*,*})=1$ if $x^{(*,*)}_{*,*}$ is plugged into $\Psi(E_{[i,n],s_{2}})$, and $o(x^{(*,*)}_{*,*})=2$ if
it is plugged into $\Psi(E_{[i,n-1],s_{1}})$. As before, the $\phi^{(1)}_{\beta}$-specialization of the corresponding
summand vanishes unless
$$
  o(x^{(\beta,1)}_{i,1})= o(x^{(\beta,1)}_{i+1,1})=\cdots=o(x^{(\beta,1)}_{n-1,1}) \qquad \mathrm{and} \qquad
  o(x^{(\beta,1)}_{i,2})= o(x^{(\beta,1)}_{i+1,2})=\cdots=o(x^{(\beta,1)}_{n-1,2}).
$$
We have two cases to consider:
\smallskip

\begin{itemize}[leftmargin=0.7cm]

\item
if $o(x^{(\beta,1)}_{i,1})=\cdots=o(x^{(\beta,1)}_{n-1,1})=o(x^{(\beta,1)}_{n,1})=1$ and
$o(x^{(\beta,1)}_{i,2})=\cdots=o(x^{(\beta,1)}_{n-1,2})=2$, then the product~\eqref{eq:zeta-product-C}
contributes $B_{\beta}$ to the $\phi^{(1)}_{\beta}$-specialization of the corresponding summand, and so again the overall
$\phi_\beta$-specialization vanishes due to the $\zeta$-factor $\zeta\left(\frac{x^{(\beta,1)}_{n-1,1}}{x^{(\beta,1)}_{n-1,2}}\right)$;

\item
if $o(x^{(\beta,1)}_{i,1})=\cdots=o(x^{(\beta,1)}_{n-1,1})=2$ and
$o(x^{(\beta,1)}_{i,2})=\cdots=o(x^{(\beta,1)}_{n-1,2})=o(x^{(\beta,1)}_{n,1})=1$, then this is the only
summand that does not vanish under the specialization $\phi_{\beta}$, and its $\phi^{(1)}_{\beta}$-specialization is
\begin{align*}
  \doteq &
  \eval{\Psi(E_{[i,n],s_{2}})}^{x_{n,1}\mapsto v^{-n}w'_{\beta,1}}_{x_{\ell\neq n,1}\mapsto v^{1-\ell}w'_{\beta,1} }\cdot
  \eval{\Psi(E_{[i,n-1],s_{1}})}_{x_{\ell,1}\mapsto v^{1-\ell}w_{\beta,1}} \\
  & \qquad \qquad \qquad \times   B_{\beta}\cdot \frac{(w'_{\beta,1}-v^{-2}w_{\beta,1})(w'_{\beta,1}-v^{4}w_{\beta,1})}{w'_{\beta,1}-w_{\beta,1}}.
\end{align*}
Dividing by $B_\beta$ and specializing further $w'_{\beta,1}\mapsto v^{2}w_{\beta,1}$, we thus get
\[
  \phi_{\beta}(\Psi(E_{[i,n],s_{2}})\star \Psi(E_{[i,n-1],s_{1}}))\doteq
  \langle 1 \rangle_v^{2n-2i-2} \langle 2 \rangle_v^{2} \cdot w^{s+2n-2i}_{\beta,1}.
\]

\end{itemize}
This implies the desired result
  $\phi_{\beta}(\Psi(E_{\beta,s}))\doteq \langle 1 \rangle_v^{2n-2i-2} \langle 2 \rangle_v^{2} \cdot w^{s+2n-2i}_{\beta,1}$.
\end{proof}

Let us generalize the above lemma by computing $\phi_{\unl{d}}(\Psi(E_{h}))$ for any $h\in H_{\unl{k},\unl{d}}$.
Note that
\begin{equation}
\label{eq:formulapsiEh}
  \Psi(E_{h})\ =
  \prod_{\beta\in\Delta^{+}}\limits^{\rightarrow}
    \left(\Psi(E_{\beta,r_{\beta}(h,1)})\star\cdots\star\Psi( E_{\beta,r_{\beta}(h,d_\beta)})\right)
  \qquad \forall\, h\in H_{\unl{k},\unl{d}}.
\end{equation}
Here, the product refers to the shuffle product and the arrow $\rightarrow$ over the product sign refers to the
order \eqref{lynorderC}, and $r_{\beta}(h,1)\leq \dots\leq r_{\beta}(h,d_{\beta})$  is obtained by listing all
integers $r\in\mathbb{Z}$ with multiplicity $h(\beta,r)>0$ in the non-decreasing order. Denote the variables in
$\Psi(E_{\beta,r_{\beta}(h,s)})$ by $\{z^{(\beta,s)}_{i,t}\}^{1\leq t\leq \nu_{\beta,i}}_{i\in\beta}$ (as we reserve
$\{x^{(\beta,s)}_{i,t}\}^{1\leq t\leq \nu_{\beta,i}}_{i\in\beta}$ for the variables of splittings below), and let
\begin{equation*}
    F_{h}\coloneqq
    \prod_{\substack{\beta\in\Delta^{+} \\ 1\leq s\leq d_{\beta}}} \Psi(E_{\beta,r_{\beta}(h,s)})
    \prod^{(\beta,p)<(\beta',q)}_{\substack{\beta,\beta'\in\Delta^{+} \\ 1\leq p\leq d_{\beta},1\leq q\leq d_{\beta'}}}
    \prod^{j\in\beta'}_{i\in\beta}\prod_{1\leq t\leq \nu_{\beta,i}}^{1\leq r\leq \nu_{\beta',j}}
      \zeta\left(\frac{z^{(\beta,p)}_{i,t}}{z^{(\beta',q)}_{j,r}}\right),
\end{equation*}
where the order $(\beta,p)<(\beta',q)$ is as in \eqref{orderbetas}. Then we have
\begin{equation}
\label{eq:part-symm}
  \Psi(E_{h})\doteq
  \sum_{\sigma\in\mathfrak{S}_{\unl{k}}} \sigma\big(F_{h}(\{z^{(*,*)}_{*,*}\})\big)=
  \sum_{\sigma\in\mathfrak{S}_{\unl{k}}} F_{h}\big(\{\sigma(z^{(*,*)}_{*,*})\}\big).
\end{equation}
To evaluate the $\phi_{\unl{d}}$-specialization of each term $\sigma(F_{h})$ in~\eqref{eq:part-symm}, it is
equivalent to evaluate  the $\phi_{\unl{d}}$-specialization of $F_{h}$ with respect to different splittings
of the variables $x^{(*,*)}_{*,*}$. We shall write $ o(x^{(*,*)}_{*,*})=(\beta,s)$ if a variable $x^{(*,*)}_{*,*}$
is plugged into $\Psi(E_{\beta,r_{\beta}(h,s)})$. Then, we have:

\begin{Prop}\label{prop:phidEh}
For a summand $\sigma(F_{h})$ in the symmetrization \eqref{eq:part-symm}, we have $\phi_{\unl{d}}(\sigma(F_{h}))=0$ unless
for any $\beta\in\Delta^{+}$ and $1\leq s\leq d_{\beta}$, there is $s'$ with $1\leq s'\leq d_{\beta}$ so that
\begin{equation}
\label{fsunld}
  o(x^{(\beta,s')}_{i,t})=(\beta,s)\  \text{for any} \ i\in\beta\  \text{and}\  1\leq t\leq \nu_{\beta,i},
\end{equation}
that is we plug the variables $x^{(\beta,s')}_{*,*}$ into the same function $\Psi(E_{\beta,r_{\beta}(h,s)})$.
\end{Prop}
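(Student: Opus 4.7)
The plan is to adapt the case-by-case vanishing argument established for type $B_n$ in~\cite[Proposition~3.15]{HT24} to the root structure~\eqref{eq:abbr-prC} and the (at times two-step) specializations~\eqref{spe-C-1}--\eqref{spe-C-3} of type $C_n$. Concretely, for each splitting of the variables that violates condition~\eqref{fsunld}, I would exhibit an explicit $\zeta$-factor appearing in $F_h$ whose specialization under $\phi_{\unl{d}}$ produces a zero, thereby forcing $\phi_{\unl{d}}(\sigma(F_h))=0$. Organizing the $(\beta,s')$-groups according to the convex order~\eqref{lynorderC} and proceeding by descending induction on $\beta$, I will fix a group $\{x^{(\beta,s')}_{i,t}\}_{i\in\beta}^{1\leq t\leq \nu_{\beta,i}}$ and split the analysis into four sub-cases based on whether $\beta=[i,j]$, $\beta=[i,n]$, $\beta=[i,n,j]$ with $i<j$, or $\beta=[i,n,i]$.

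For the first three sub-cases the specialization $\phi_{\beta,s'}$ is one-step~\eqref{spe-C-1} and the argument should run parallel to the type-$B_n$ case: given that some variable $x^{(\beta,s')}_{\ell,t}$ is plugged into a function $\Psi(E_{\gamma',r})$ with $\gamma'\neq\beta$, I would use the explicit pole structure of $\Psi(E_{\gamma',r})$ recorded in Lemma~\ref{lem:psirv-C} together with the convex order~\eqref{lynorderC} to locate a $\zeta$-factor of the form $\zeta(x^{(\beta,s')}_{\ell,t}/x^{(\beta'',s'')}_{\ell',t'})$ whose numerator vanishes at the specialized values. The selection of the relevant $\zeta$-factor depends on whether $\gamma'$ overlaps $\beta$ near the $\alpha_1$-end or the $\alpha_n$-end of the corresponding standard Lyndon word, and the convex order~\eqref{lynorderC} will ensure the needed comparability between $\gamma'$ and $\beta$ so that the offending $\zeta$-factor is actually present in $F_h$.

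The main obstacle will be the sub-case $\beta=[i,n,i]$, where $\phi_{\beta,s'}$ is genuinely two-step: one first specializes via~\eqref{spe-C-2}, then divides by the factor $B_{\beta}$ of~\eqref{Bfactor-C}, and only afterwards sends $w'_{\beta,s'}\mapsto v^{2} w_{\beta,s'}$. Naively, certain $\zeta$-factors vanish already at the $\phi^{(1)}_{\beta,s'}$ stage, but the division by $B_{\beta}$ may recover non-zero limits, so the analysis must be pushed through to~\eqref{spe-C-3}. The plan here is to mirror the bookkeeping from the proof of Lemma~\ref{phirv-C}: I will identify precisely which $\zeta$-factors between the two offending functions supply the poles of $B_{\beta}$, and then check that the remaining $\zeta$-factors---typically $\zeta(x^{(\beta,s')}_{n-1,1}/x^{(\beta,s')}_{n-1,2})$ and $\zeta(x^{(\beta,s')}_{n-1,1}/x^{(\beta,s')}_{n,1})$---produce the needed zero after the second-step specialization $w'_{\beta,s'}\mapsto v^{2} w_{\beta,s'}$. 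Once the vanishing is established for each $\beta$ in turn, the descending induction closes and the proposition follows.
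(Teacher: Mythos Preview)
Your high-level plan---find $\zeta$-factors that vanish under $\phi_{\unl d}$, and treat $\beta=[i,n,i]$ separately via the two-step specialization~\eqref{spe-C-2}--\eqref{spe-C-3}---is the right one, but there is a concrete error in which $\zeta$-factors you intend to use. You propose to locate a factor $\zeta\big(x^{(\beta,s')}_{\ell,t}/x^{(\beta'',s'')}_{\ell',t'}\big)$ between variables in \emph{different} specialization groups. Such a factor cannot vanish under $\phi_{\unl d}$: the two variables specialize to $v^{?}w_{\beta,s'}$ and $v^{?}w_{\beta'',s''}$ with independent $w$'s, so the numerator is generically nonzero. The factors that actually kill the bad summands are those between variables in the \emph{same} specialization group $(\eta,q)$ that have been plugged into \emph{different target} functions $\Psi(E_{*,*})$; only then does the ratio become a fixed power of $v$ that can hit a zero of $\zeta_{i,j}$. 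Relatedly, the proof does not need the explicit formulas of Lemma~\ref{lem:psirv-C} (those are for the specific $\tilde{E}^\pm_{\beta,s}$, not the general $E_{\beta,s}$ of~\eqref{rootvector1}--\eqref{rootvector2}); it uses only the order constraints $o(x^{(\eta,q)}_{a,*})\gtrless o(x^{(\eta,q)}_{b,*})$ that these intra-group $\zeta$-factors impose.

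The paper's organization also differs from your descending induction on the source group. The outer induction is on the \emph{rank} $n$: one checks $C_2$ by hand, and in the step from $C_{n-1}$ to $C_n$ only the roots with first letter $1$ require new work (Case~5 invokes the $C_{n-1}$ hypothesis for all $\gamma\geq[2]$). Within each rank, the paper processes the \emph{target} labels $(\gamma,p)$ in \emph{ascending} order along~\eqref{lynorderC}: assuming~\eqref{fsunld} holds for all $(\beta,s)<(\gamma,p)$, one picks the variable $x^{(\eta,q)}_{1,t}$ landing in $\Psi(E_{\gamma,r_\gamma(h,p)})$, observes that the hypothesis forces $\eta\geq\gamma$, and then uses the intra-$(\eta,q)$ $\zeta$-factors to rule out each $\eta>\gamma$. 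Your identification of the delicate point in the $[i,n,i]$ sub-case is exactly right: one first checks that enough intra-group $\zeta$-factors supply the full $B_\eta$ of~\eqref{Bfactor-C} at the $\phi^{(1)}$ stage, and then verifies that the residual factors $\zeta\big(x^{(\eta,q)}_{n-1,1}/x^{(\eta,q)}_{n-1,2}\big)\zeta\big(x^{(\eta,q)}_{n-1,1}/x^{(\eta,q)}_{n,1}\big)$ still vanish after $w'_{\eta,q}\mapsto v^{2}w_{\eta,q}$.
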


\begin{proof}
We prove this result by an induction on $n$.

\noindent
\underline{Step 1} (base of induction): Verification for type $C_{2}$.

In this case, $\Delta^{+}=\{[1]<[1,2,1]<[1,2]<[2]\}$. For $\beta=[1,2,1]$, $B_{\beta}$ of \eqref{Bfactor-C} is trivial,
and the specialization map $\phi_{\beta,s}$ is
\begin{equation}
\label{eq:spe121}
  x^{(\beta,s)}_{1,1}\mapsto w_{\beta,s},\quad
  x^{(\beta,s)}_{1,2}\mapsto v^{2}w_{\beta,s},\quad
  x^{(\beta,s)}_{2,1}\mapsto w_{\beta,s}.
\end{equation}
\begin{itemize}[leftmargin=0.7cm]

\item Case 1: $\beta=[1]$.

If \eqref{fsunld} fails for $\beta=[1]$,  then there is a variable $x^{(\eta,r)}_{1,t}$ with $\eta>[1]$ and
$o(x^{(\eta,r)}_{1,t})=([1],s)$ for some $1\leq s\leq d_{[1]}$. We can also assume that $s$ is the smallest number
with this property, which means for any $1\leq s'<s$, we already plug a variable $x^{(\beta,*)}_{1,1}$ into
$\Psi(E_{\beta,r_{\beta}(h,s')})$. If $\eta=[1,2]$ or $\eta=[1,2,1]$ and $t=2$, then $\phi_{\unl{d}}(\sigma(F_{h}))=0$
due to the $\zeta$-factors $\zeta\left(\frac{x^{(\eta,r)}_{1,1}}{x^{(\eta,r)}_{2,1}}\right)$ or
$\zeta\left(\frac{x^{(\eta,r)}_{1,2}}{x^{(\eta,r)}_{2,1}}\right)$ respectively. Otherwise $\eta=[1,2,1]$ and $t=1$,
so that $o(x^{(\eta,r)}_{1,2})>o(x^{(\eta,r)}_{1,1})$ (by the minimality of $s$), and $\phi_{\unl{d}}(\sigma(F_{h}))=0$
due to $\zeta\left(\frac{x^{(\eta,r)}_{1,1}}{x^{(\eta,r)}_{1,2}}\right)$.

\item Case 2: $\beta=[1,2,1]$.

Assuming \eqref{fsunld} holds for any $([1],s)$ with $1\leq s\leq d_{[1]}$, let us prove that $\phi_{\unl{d}}(\sigma(F_{h}))=0$
unless \eqref{fsunld} holds for any $([1,2,1],s)$ with $1\leq s\leq d_{[1,2,1]}$. Suppose $o(x^{(\eta,p)}_{1,q})=([1,2,1],1)$.
From \eqref{eq:spe121}, we see that $\phi_{\unl{d}}(\sigma(F_{h}))=0$ unless
$o(x^{([1,2,1],s)}_{1,1})\geq  o(x^{([1,2,1],s)}_{1,2})\geq o(x^{([1,2,1],s)}_{2,1})$ for any $1\leq s\leq d'_{[1,2,1]}$,
due to the $\zeta$-factors
$
  \zeta\left(\frac{x^{([1,2,1],s)}_{1,1}}{x^{([1,2,1],s)}_{1,2}}\right)
  \zeta\left(\frac{x^{([1,2,1],s)}_{1,2}}{x^{([1,2,1],s)}_{2,1}}\right).
$
Since $1\in\eta$ and $\eta\geq [1,2,1]$, we have $2\in\eta$ and we can assume that
$o(x^{(\eta,p)}_{1,q})\geq o(x^{(\eta,p)}_{2,1})$, as otherwise $\phi_{\unl{d}}(\sigma(F_{h}))=0$, so that
$o(x^{(\eta,p)}_{2,1})=([1,2,1],1)$. Yet there is another variable that satisfies $o(x^{(\eta',p')}_{1,q'})=([1,2,1],1)$.
If $(\eta',p')\neq (\eta,p)$, then we have $o(x^{(\eta',p')}_{1,q'})<o(x^{(\eta',p')}_{2,1})$ and so $\phi_{\unl{d}}(\sigma(F_{h}))=0$.
If $(\eta',p')= (\eta,p)$, then $\eta=[1,2,1]$, and so all the variables $x^{([1,2,1],p)}_{*,*}$ are plugged into
$\Psi(E_{[1,2,1],r_{[1,2,1]}(h,1)})$. Proceeding the same way, we get $\phi_{\unl{d}}(\sigma(F_{h}))=0$ unless
\eqref{fsunld} holds for any $([1,2,1],s)$ with $1\leq s\leq d_{[1,2,1]}$.

\item Case 3: $\beta=[1,2]$.

Assuming \eqref{fsunld} holds for $\beta=[1]$ and $\beta=[1,2,1]$, choose a variable satisfying $o(x^{(\eta,p)}_{1,q})=([1,2],1)$.
As $\eta\geq [1,2]$ and $1\in \eta$, it must be $\eta=[1,2]$, $q=1$. And we know $\phi_{\unl{d}}(\sigma(F_{h}))=0$ unless
$o(x^{(\eta,p)}_{1,1})\geq o(x^{(\eta,p)}_{2,1})$, so that $ o(x^{(\eta,p)}_{2,1})=([1,2],1)$. Proceeding the same way, we get
$\phi_{\unl{d}}(\sigma(F_{h}))=0$ unless \eqref{fsunld} holds for any $([1,2],s)$ with $1\leq s\leq d_{[1,2]}$.

\item Case 4:  $\beta=[2]$.

If \eqref{fsunld} holds for any $\beta<[2]$, then it must also hold for $\beta=[2]$.

\end{itemize}
This completes the verification of the result for $C_2$.

\medskip
\noindent
\underline{Step 2} (step of induction): Assuming the validity for type $C_{n-1}$, let us prove it for $C_n$.

Fix  $\gamma\in\Delta^{+}$ and $1\leq p\leq d_{\gamma}$. Then, it suffices to prove that if  for any $(\beta,s)<(\gamma,p)$,
we already chose $s'$ such that all the variables $x^{(\beta,s')}_{*,*}$ are plugged into $\Psi(E_{\beta,r_{\beta}(h,s)})$,
then $\phi_{\unl{d}}(\sigma(F_{h}))=0$ unless we choose $p'$ and plug all the variables $x^{(\gamma,p')}_{*,*}$ into
$\Psi(E_{\gamma,r_{\gamma}(h,p)})$. To this end, we present case-by-case study:
\begin{itemize}[leftmargin=0.7cm]

\item Case 1: $\gamma=[1,j]$ with $1\leq j\leq n-1$, and suppose $o(x^{(\eta,q)}_{1,t})=(\gamma,p)$ with $\eta\geq \gamma$.

If $\eta=[1,\ell]$ with $j<\ell\leq n$, then $t=1$ and $\phi_{\unl{d}}(\sigma(F))=0$ from type $A_n$ results.

If $\eta=[1,n,1]$ and $t=2$, then $\phi_{\unl{d}}(\sigma(F))=0$ unless
$o(x^{(\eta,q)}_{1,2})\geq\cdots\geq  o(x^{(\eta,q)}_{n-1,2}) \geq o(x^{(\eta,q)}_{n,1})$ due to the  $\zeta$-factors
  $\zeta\left(\frac{x^{(\eta,q)}_{1,2}}{x^{(\eta,q)}_{2,2}}\right) \cdots
   \zeta\left(\frac{x^{(\eta,q)}_{n-1,2}}{x^{(\eta,q)}_{n,1}}\right)$.
As $o(x^{(\eta,q)}_{1,2})=(\gamma,p)$ and we already plugged variables into all
$\Psi(E_{\beta,r_{\beta}(h,s)})$ with $(\beta,s)<(\gamma,p)$, we get $\phi_{\unl{d}}(\sigma(F))=0$
unless $o(x^{(\eta,q)}_{1,2})=\cdots= o(x^{(\eta,q)}_{n-1,2}) = o(x^{(\eta,q)}_{n,1})$, which is impossible
as $n\notin \gamma$. Thus $\phi_{\unl{d}}(\sigma(F))=0$.

If $\eta=[1,n,1]$ and $t=1$, then we likewise get $\phi_{\unl{d}}(\sigma(F))=0$ unless
\begin{equation}
\label{eq:1n1analysis-1}
  o(x^{(\eta,q)}_{1,2})\geq \cdots \geq o(x^{(\eta,q)}_{n-1,2})\geq o(x^{(\eta,q)}_{n,1}) >
  o(x^{(\eta,q)}_{1,1})=\cdots=o(x^{(\eta,q)}_{n-1,1})=(\gamma,p).
\end{equation}
The product of $\zeta$-factors (cf.~\eqref{eq:zeta-product-C})
$
  \prod^{n-2}_{\ell=1}\left\{\zeta\left(\frac{x^{(\eta,q)}_{\ell,1}}{x^{(\eta,q)}_{\ell,2}}\right)
  \zeta\left(\frac{x^{(\eta,q)}_{\ell,1}}{x^{(\eta,q)}_{\ell+1,2}}\right)
  \zeta\left(\frac{x^{(\eta,q)}_{\ell+1,1}}{x^{(\eta,q)}_{\ell,2}}\right)\right\}
$
contributes the $B_{\eta}$ factor of \eqref{Bfactor-C}, while the remaining $\zeta$-factors
  $\zeta\left(\frac{x^{(\eta,q)}_{n-1,1}}{x^{(\eta,q)}_{n-1,2}}\right)
   \zeta\left(\frac{x^{(\eta,q)}_{n-1,1}}{x^{(\eta,q)}_{n,1}}\right)$
contribute $0$ when specializing $w'_{\eta,q}$ to $v^{2}w_{\eta,q}$ (cf.~\eqref{spe-C-3}),
and so $\phi_{\unl{d}}(\sigma(F))=0$.

If $\eta=[1,n,j]$ with $2\leq j\leq n-1$, then we similarly get $\phi_{\unl{d}}(\sigma(F))=0$ unless
\[
  o(x^{(\eta,q)}_{1,1})=\cdots=o(x^{(\eta,q)}_{n,1})= o(x^{(\eta,q)}_{n-1,2})= \cdots= o(x^{(\eta,q)}_{j,2})=(\gamma,p),
\]
which is impossible, as $n\notin \gamma$.

Finally, if $\eta=\gamma=[1,j]$, then $\phi_{\unl{d}}(\sigma(F))=0$ unless
$o(x^{(\gamma,q)}_{1,1})=\cdots=o(x^{(\gamma,q)}_{j,1})=(\gamma,p)$, that is we plug all the variables
$x^{(\gamma,q)}_{*,*}$ into $\Psi(E_{\gamma,r_{\gamma}(h,p)})$.

\item Case 2: $\gamma=[1,n,1]$, and suppose $o(x^{(\eta,q)}_{1,t})=(\gamma,p)$ with $\eta\geq \gamma$.

Since $\nu_{\gamma,1}=2$, there is another variable $x^{(\eta',q')}_{1,t'}$ with $o(x^{(\eta',q')}_{1,t'})=(\gamma,p)$.
If $\eta,\eta'>\gamma$, then $t=t'=1$ and $\phi_{\unl{d}}(\sigma(F))=0$ unless
\begin{align}
\label{eq:1n1analysis-2}
  (\gamma,p)=o(x^{(\eta,q)}_{1,1})=\cdots= o(x^{(\eta,q)}_{n,1})\quad  \mathrm{and} \quad
  (\gamma,p)=o(x^{(\eta',q')}_{1,1})= \cdots = o(x^{(\eta',q')}_{n,1}),
\end{align}
which is impossible as $\nu_{\gamma,n}=1$.

If exactly one of  $\eta,\eta'$ is $\gamma$, then without loss of generality we can assume $\eta=\gamma$ and $\eta'>\gamma$,
so that $t'=1$. If $t=1$, then the same analysis as after \eqref{eq:1n1analysis-1} implies $\phi_{\unl{d}}(\sigma(F))=0$.
If $t=2$, then the same analysis as after \eqref{eq:1n1analysis-2} implies $\phi_{\unl{d}}(\sigma(F))=0$.

If $\eta=\eta'=\gamma$ and $q\neq q'$, then we consider three cases depending on the values of $t$, $t'$. If $t=t'=2$,
then analysis similar to that after \eqref{eq:1n1analysis-2} implies that $\phi_{\unl{d}}(\sigma(F))=0$. If exactly one
of $t,t'$ is equal to $1$, then the same analysis as after \eqref{eq:1n1analysis-1} implies $\phi_{\unl{d}}(\sigma(F))=0$ again.
Finally, if $t=t'=1$, then we know $\phi_{\unl{d}}(\sigma(F))=0$ unless
\begin{align*}
  (\gamma,p)=o(x^{(\eta,q)}_{1,1})=\cdots= o(x^{(\eta,q)}_{n-1,1})\quad  \mathrm{and} \quad
  (\gamma,p)=o(x^{(\eta',q')}_{1,1})= \cdots = o(x^{(\eta',q')}_{n-1,1}).
\end{align*}
Since $\nu_{\gamma,n}=1$ and $q\neq q'$, we have $o(x^{(\eta,q)}_{n,1})>(\gamma,p)$ or $o(x^{(\eta',q')}_{n,1})>(\gamma,p)$,
and therefore the same analysis as after \eqref{eq:1n1analysis-1} implies $\phi_{\unl{d}}(\sigma(F))=0$.

Finally if $\eta=\eta'=\gamma$ and  $q=q'$, then $\phi_{\unl{d}}(\sigma(F))=0$ unless we plug all the variables
$x^{(\gamma,q)}_{*,*}$ into $\Psi(E_{\gamma,r_{\gamma}(h,p)})$.

\item Case 3: $\gamma=[1,n]$ or $\gamma=[1,n,j]$ with $2<j\leq n-1$. We also suppose $o(x^{(\eta,q)}_{1,t})=(\gamma,p)$.

If $\eta=[1,n,k]> \gamma$, then $\phi_{\unl{d}}(\sigma(F))=0$ unless
\[
  o(x^{(\eta,q)}_{1,1})=\cdots=o(x^{(\eta,q)}_{n,1})= o(x^{(\eta,q)}_{n-1,2})= \cdots= o(x^{(\eta,q)}_{k,2})=(\gamma,p).
\]
The latter is impossible for $k<j$ as $\nu_{\gamma,k}=1$. Thus $\phi_{\unl{d}}(\sigma(F))=0$ unless $\eta=\gamma$ and
we plug all the variables $x^{(\gamma,q)}_{*,*}$ into $\Psi(E_{\gamma,r_{\gamma}(h,p)})$.

\item Case 4: $\gamma=[1,n,2]$, and suppose $o(x^{(\eta,q)}_{1,t})=(\gamma,p)$ with $\eta\geq \gamma$.

If \eqref{fsunld} holds for any $(\beta,s)<(\gamma,p)$, then we must have $\eta=\gamma$ and so $\phi_{\unl{d}}(\sigma(F))=0$
unless we plug all the variables $x^{(\gamma,q)}_{*,*}$ into $\Psi(E_{\gamma,r_{\gamma}(h,p)})$.

\item Case 5: $\gamma>[1,n,2]$.

If \eqref{fsunld} holds for any $(\beta,s)<([2],1)$, then we can use the induction assumption for $C_{n-1}$ to conclude that
$\phi_{\unl{d}}(\sigma(F))=0$ unless \eqref{fsunld} holds for all $(\gamma,p)$.

\end{itemize}

This completes the proof.
\end{proof}

Combining Lemma \ref{phirv-C} and Proposition \ref{prop:phidEh}, we obtain the formula for
$\phi_{\unl{d}}(\Psi(E_{h}))$ with $h\in H_{\unl{k},\unl{d}}$:

\begin{Prop}\label{spekpC}
For any $h\in H_{\unl{k},\unl{d}}$, we have
\begin{equation}
\label{speehC}
  \phi_{\unl{d}}(\Psi(E_{h}))\doteq
  \prod_{\beta,\beta'\in \Delta^+}^{\beta<\beta'}G_{\beta,\beta'}\cdot
  \prod_{\beta\in\Delta^{+}}(c_{\beta}^{d_{\beta}}\cdot G_{\beta})\cdot
  \prod_{\beta\in\Delta^{+}} P_{\lambda_{h,\beta}}
\end{equation}
with $\{P_{\lambda_{h,\beta}}\}_{\beta\in\Delta^{+}}$ given by
\begin{equation}
\label{hlp-C}
  P_{\lambda_{h,\beta}}=\text{\rm Sym}_{\mathfrak{S}_{d_{\beta}}}
  \left(w_{\beta,1}^{r_{\beta}(h,1)}\cdots w_{\beta,d_{\beta}}^{r_{\beta}(h,d_{\beta})}
       \prod_{1\leq i<j\leq d_{\beta}}\frac{w_{\beta,i}-v_{\beta}^{-2}w_{\beta,j}}{w_{\beta,i}-w_{\beta,j}}\right),
\end{equation}
where $\{r_{\beta}(h,s)\}^{1\leq s\leq d_{\beta}}_{\beta\in\Delta^{+}}$ are defined after \eqref{eq:formulapsiEh},
the constants $\{c_{\beta}\}_{\beta\in\Delta^{+}}$ are as in Lemma~\ref{phirv-C}, and the terms $G_{\beta,\beta'},G_{\beta}$
are products of linear factors $w_{\beta,s}$ and $w_{\beta,s}-v^{\BZ}w_{\beta',s'}$ which are independent of
$h\in H_{\unl{k},\unl{d}}$ and are $\mathfrak{S}_{\unl{d}}$-symmetric (the factors $G_\beta$ are specified in
Remark~\ref{rmk:G-Ctype}).
\end{Prop}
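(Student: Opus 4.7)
Starting from \eqref{eq:part-symm}, $\Psi(E_h)\doteq \sum_{\sigma\in\mathfrak{S}_{\unl{k}}}\sigma(F_h)$. By Proposition~\ref{prop:phidEh}, $\phi_{\unl{d}}(\sigma(F_h))=0$ unless, for each root $\beta\in\Delta^+$ and each splitting index $1\leq s'\leq d_\beta$, all the variables $\{x^{(\beta,s')}_{i,t}\}_{i\in\beta}^{1\leq t\leq \nu_{\beta,i}}$ are plugged into a single input function $\Psi(E_{\beta,r_\beta(h,s)})$ for some $s=\tau_\beta(s')$. The surviving summands of \eqref{eq:part-symm} are therefore parametrized by a tuple of permutations $(\tau_\beta)_{\beta\in\Delta^+}\in \prod_{\beta\in\Delta^+}\mathfrak{S}_{d_\beta}$, the residual $\mathfrak{S}_{\unl{k}}$-sum collapsing to $\prod_\beta \mathfrak{S}_{d_\beta}$.

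For each such surviving configuration, I would decompose $\phi_{\unl{d}}(\sigma(F_h))$ into three disjoint parts. First, specializing each individual factor $\Psi(E_{\beta,r_\beta(h,s)})$ against its own block of variables at position $s'=\tau_\beta^{-1}(s)$ produces, by (the proof of) Lemma~\ref{phirv-C}, the monomial $c_\beta \cdot w_{\beta,s'}^{\,r_\beta(h,s)+\kappa_\beta}$ up to an $h$-independent polynomial residue coming from the non-monomial pieces of the formulas in Lemma~\ref{lem:psirv-C} (such as the $Q$-polynomial for $\beta=[i,n,i]$ or the two-step $\phi_\beta$-specialization for $\beta=[i,n,j]$); these residues are absorbed into $G_\beta$. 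Second, the $\zeta$-factors in $F_h$ that couple pairs $(\beta,p)<(\beta,q)$ of splitting groups of the \emph{same} root $\beta$, together with the denominator $\mathrm{denom}_\beta$ from~\eqref{polecondition} and the division by $B_\beta$ for the problematic roots, simplify after specialization~\eqref{spe-C-1}--\eqref{spe-C-3} (or \eqref{spe-D-1}--\eqref{spe-D-3}) to the Hall--Littlewood-type kernel $\prod_{p<q}\frac{w_{\beta,p}-v_\beta^{-2}w_{\beta,q}}{w_{\beta,p}-w_{\beta,q}}$. Third, the $\zeta$-factors coupling pairs $(\beta,p)<(\beta',q)$ for \emph{distinct} roots $\beta\ne\beta'$ specialize to an $h$-independent expression in $\{w_{\beta,p},w_{\beta',q}\}$ that depends only on $\unl{d}$ and the root data; these assemble into the $\mathfrak{S}_{\unl{d}}$-symmetric factors $G_{\beta,\beta'}$.

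Summing over $\tau_\beta\in\mathfrak{S}_{d_\beta}$ for each $\beta$, the monomial part from the first step times the kernel from the second step recombines exactly into the symmetrized expression $P_{\lambda_{h,\beta}}$ of~\eqref{hlp-C}, multiplied by $c_\beta^{d_\beta}\cdot G_\beta$; the third step contributes $\prod_{\beta<\beta'}G_{\beta,\beta'}$ as an overall symmetric prefactor that already factors out of the symmetrization. Putting everything together yields~\eqref{speehC}. The main technical obstacle lies in the second step for the problematic roots $\beta=[i,n,i]$ in type $C_n$ and $\beta=[i,n,j]$ (with $i<j\leq n-2$) in type $D_n$: there one must carefully balance the product of $\zeta$-factors across different blocks $(\beta,p),(\beta,q)$ against the pole structure of the auxiliary factor $B_\beta$ of \eqref{Bfactor-C} or \eqref{Bfactor-D}, and verify that what survives after division by $B_\beta$ and the final substitution $w'_{\beta,s}\mapsto v^{2}w_{\beta,s}$ (resp.\ $w'_{\beta,s}\mapsto w_{\beta,s}$) is precisely the Hall--Littlewood kernel together with an $h$-independent, $\mathfrak{S}_{d_\beta}$-symmetric polynomial — with no $h$-dependent artifact.
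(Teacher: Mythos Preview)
Your proposal is correct and follows essentially the same approach as the paper, which records the result as an immediate consequence of combining Lemma~\ref{phirv-C} with Proposition~\ref{prop:phidEh}; you have simply spelled out the mechanism. One small clarification: the cross-block factors $\prod_{s\neq s'}(\cdots)$ appearing in $G_\beta$ (Remark~\ref{rmk:G-Ctype}) do not arise as ``residues'' of the individual block specializations in your Step~1 --- Lemma~\ref{phirv-C} yields an exact monomial $c_\beta\, w_{\beta,s'}^{\,r_\beta(h,s)+\kappa_\beta}$ with no leftover --- but rather from the same-root inter-block $\zeta$-factors in your Step~2, exactly as you acknowledge in your final paragraph.
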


\begin{Rk}\label{rankreduction}
Proposition~\ref{spekpC}  (cf.~\cite[Lemma 3.17]{Tsy18}) features a ``\textbf{rank $1$ reduction}'': each $P_{\lambda_{h,\beta}}$
from~\eqref{hlp-C} can be viewed as the shuffle product $x^{r_{\beta}(h,1)}\star\cdots \star x^{r_{\beta}(h,d_\beta)}$ in
the shuffle algebra of type $A_1$, evaluated at  $\{w_{\beta,s}\}_{s=1}^{d_\beta}$.
\end{Rk}

Using the same arguments as in the proof of Proposition \ref{prop:phidEh}, we can now evaluate $\phi_{\unl{d}'}(\Psi(E_{h}))$
for any $\unl{d}'<\unl{d}\in\text{KP}(\unl{k})$ and $h\in H_{\unl{k},\unl{d}}$:

\begin{Prop}\label{vanishC}
Lemma {\rm \ref{vanish}} is valid for type $C_{n}$, with $\phi_{\unl{d}}$ of~\eqref{spe-C-1}--\eqref{spe-C-3}.
\end{Prop}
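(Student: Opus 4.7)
The plan is to adapt the case-by-case analysis of Proposition~\ref{prop:phidEh} to deduce vanishing whenever $\unl{d}' < \unl{d}$. Fix $h \in H_{\unl{k},\unl{d}}$ and $\unl{d}' < \unl{d}$, and let $\gamma \in \Delta^{+}$ be the smallest root in the convex order~\eqref{lynorderC} for which $d'_\gamma \ne d_\gamma$; by the defining condition~\eqref{eq:KP-order} this forces $d'_\gamma < d_\gamma$ while $d'_\beta = d_\beta$ for all $\beta < \gamma$. Expanding $\Psi(E_h)$ via the symmetrization~\eqref{eq:part-symm} and evaluating $\phi_{\unl{d}'}$ term-by-term, it suffices to show that each summand $\sigma(F_h)$ specializes to zero.

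First I would run the inductive argument of the proof of Proposition~\ref{prop:phidEh} up through all $\beta < \gamma$ (in increasing order). At each such step, $\phi_{\unl{d}'}(\sigma(F_h)) = 0$ unless every group of variables $\{x^{(\beta,s')}_{*,*}\}_{1 \le s' \le d'_\beta}$ is plugged consistently into a single factor $\Psi(E_{\beta,r_\beta(h,s)})$ appearing in~\eqref{eq:formulapsiEh}. Because $d'_\beta = d_\beta$ for all such $\beta$, this matches the $\beta$-groups bijectively with the available $\beta$-factors, so we may assume such a matching has been fixed and that all $\beta$-groups and $\beta$-factors with $\beta < \gamma$ are accounted for.

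Next turn to $\beta = \gamma$: there are still $d_\gamma$ factors $\Psi(E_{\gamma,*})$ awaiting variables, but only $d'_\gamma < d_\gamma$ groups of $\gamma$-variables remain available. Even if every such $\gamma$-group were plugged consistently into a single $\gamma$-factor (the only way to avoid an immediate vanishing, again by the $\zeta$-factor analysis of Proposition~\ref{prop:phidEh}), at least one of the remaining $d_\gamma - d'_\gamma \ge 1$ factors $\Psi(E_{\gamma,*})$ must receive at least one variable from a group $\{x^{(\beta',q')}_{*,*}\}$ with $\beta' > \gamma$, since the $\beta$-groups with $\beta < \gamma$ have been exhausted. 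At this point I would invoke verbatim the ``$\eta > \gamma$'' subcases from the proof of Proposition~\ref{prop:phidEh}: the specializations~\eqref{spe-C-1}--\eqref{spe-C-3} together with the surviving $\zeta$-factors then force $\phi_{\unl{d}'}(\sigma(F_h)) = 0$.

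The main obstacle, as in Proposition~\ref{prop:phidEh}, is the case $\gamma = [i,n,i]$, where the two-step specialization~\eqref{spe-C-2}--\eqref{spe-C-3} requires careful bookkeeping of the $B_\gamma$-factor~\eqref{Bfactor-C} against the residual $\zeta$-factors surviving the second step $w'_{\gamma,s} \mapsto v^{2} w_{\gamma,s}$. The arguments around~\eqref{eq:1n1analysis-1} and~\eqref{eq:1n1analysis-2} in the proof of Proposition~\ref{prop:phidEh} carry over essentially unchanged and yield the required vanishing.
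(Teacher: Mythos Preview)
Your proposal is correct and follows essentially the same strategy as the paper's proof: identify the minimal root $\gamma$ with $d'_\gamma < d_\gamma$, reuse the case analysis of Proposition~\ref{prop:phidEh} to match all $(\beta,s)$ with $\beta<\gamma$ (and the first $d'_\gamma$ of the $\gamma$-factors), and then observe that the next factor $\Psi(E_{\gamma,*})$ must receive a variable from some group indexed by $\eta>\gamma$, at which point the ``$\eta>\gamma$'' subcases of Proposition~\ref{prop:phidEh} (including the two-step specialization analysis around~\eqref{eq:1n1analysis-1}--\eqref{eq:1n1analysis-2}) force vanishing. One small terminological slip: in your final paragraph the delicate case is really when the \emph{source group} $\eta$ (not the target $\gamma$) is of the form $[i,n,i]$, since that is when the two-step specialization and the $B_\eta$-factor come into play; but your references to the relevant arguments are correct.
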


\begin{proof}
Given $\unl{d}'<\unl{d} \in \mathrm{KP}(\unl{k})$, let $\gamma\in \Delta^+$ be the smallest root such that
$d'_{\gamma}<d_{\gamma}$, and let
\begin{equation*}
  \bigsqcup^{1\leq s\leq d'_{\beta}}_{\beta\in\Delta^{+}}
  \Big\{x'^{(\beta,s)}_{i,t} \, \Big| \, i\in I, 1\leq t\leq \nu_{\beta,i}\Big\} \,
\end{equation*}
be any splitting of the variables for $\phi_{\unl{d}'}$. To evaluate the $\phi_{\unl{d}'}$-specialization of each
summand $\sigma(F_{h})$ in the symmetrization \eqref{eq:part-symm}, we write $o(x'^{(*,*)}_{*,*})=(\beta,s)$ if
a variable $x'^{(*,*)}_{*,*}$ is plugged into $\Psi(E_{\beta,r_{\beta}(h,s)})$. Arguing as in the Step 1 of the proof
of Proposition \ref{prop:phidEh}, we know that Lemma \ref{vanish} is valid for type $C_{2}$. Now assuming that
Lemma \ref{vanish} is valid for type $C_{n-1}$, let us prove its validity for type $C_n$. First, according to
the proof of Proposition \ref{prop:phidEh}, we know $\phi_{\unl{d}'}(\sigma(F_{h}))=0$ unless for any
$(\beta,s)\leq (\gamma,d'_{\gamma})$, there is some $1\leq s'\leq d'_{\beta}$ such that all the variables $x'^{(\beta,s')}_{*,*}$
are plugged into $\Psi(E_{\beta,r_{\beta}(h,s)})$. Then there is $\eta>\gamma$ and $1\leq q\leq d'_{\eta}$ with
$o(x'^{(\eta,q)}_{1,t})=(\gamma,d'_{\gamma}+1)$. Using the same analysis as in the Step 2 of the proof of
Proposition~\ref{prop:phidEh}, we then get $\phi_{\unl{d}'}(\sigma(F_{h}))=0$. This completes the proof.
\end{proof}

\begin{Rk}\label{rmk:G-Ctype}
The factors $\{G_{\beta}\}_{\beta\in\Delta^{+}}$ featuring in \eqref{speehC} are explicitly given by:
\begin{itemize}[leftmargin=0.7cm]

\item
If $\beta=[i,j]$ with $1\leq i\leq j<n$ or $i=j=n$, then
\begin{equation}
\label{eq:formulagbeta-C-1}
  G_{\beta}=
  \prod_{1\leq s\leq d_{\beta}} w_{\beta,s}^{\kappa_{\beta}}
  \prod_{1\leq s\neq s'\leq d_{\beta}} (w_{\beta,s}-v^{2}w_{\beta,s'})^{j-i}.
\end{equation}

\item
If $\beta=[i,n]$ with $1\leq i<n$, then
\begin{equation}
\label{eq:formulagbeta-C-2}
 G_{\beta}=
 \prod_{1\leq s\leq d_{\beta}} w_{\beta,s}^{\kappa_{\beta}}
 \prod_{1\leq s\neq s'\leq d_{\beta}} \big\{(w_{\beta,s}-v^{2}w_{\beta,s'})^{n-i-1}(w_{\beta,s}-v^{4}w_{\beta,s'})\big\}.
\end{equation}

\item
If $\beta=[i,n,j]$ with $1\leq i<j \leq  n-1$, then
\begin{equation}
\label{eq:formulagbeta-C-3}
\begin{aligned}
  G_{\beta}=
  &\prod_{1\leq s\leq d_{\beta}} w_{\beta,s}^{\kappa_{\beta}}
   \prod_{1\leq s\neq s'\leq d_{\beta}} \big\{(w_{\beta,s}-v^{2}w_{\beta,s'})^{2n-i-j-1}(w_{\beta,s}-v^{4}w_{\beta,s'})\big\}\times \\
  &\prod_{1\leq s\neq s'\leq d_{\beta}}
    \left\{ \prod_{\ell=j}^{n-2} (w_{\beta,s}-v^{2n-2\ell}w_{\beta,s'})
            \prod_{\ell=j}^{n-1}(w_{\beta,s}-v^{2n-2\ell+4}w_{\beta,s'}) \right\}.
\end{aligned}
\end{equation}

\item
If $\beta=[i,n,i]$ with $1\leq i < n$, then
\begin{equation}
\label{eq:formulagbeta-C-4}
\begin{aligned}
  G_{\beta}
  &=\prod_{1\leq s\leq d_{\beta}} w_{\beta,s}^{\kappa_{\beta}}
    \prod_{1\leq s\neq s'\leq d_{\beta}} (w_{\beta,s}-v^{2}w_{\beta,s'})^{2n-2i-1}\times \\
  &\quad \prod_{1\leq s\neq s'\leq d_{\beta}} \big\{ (w_{\beta,s}-w_{\beta,s'})^{n-i-1}  (w_{\beta,s}-v^{4}w_{\beta,s'})^{n-i} \big\}.
\end{aligned}
\end{equation}
\end{itemize}
\end{Rk}

The factors $G_{\beta,\beta'}$ featuring in \eqref{speehC} can be computed recursively, which  shall be used in the proof
of our next result:

\begin{Prop}\label{spanC}
Lemma {\rm \ref{span}}  is valid for type $C_{n}$, with $\phi_{\unl{d}}$ of \eqref{spe-C-1}--\eqref{spe-C-3}.
\end{Prop}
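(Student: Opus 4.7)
The proof will follow the template of the analogous results in \cite{Tsy18} and \cite{HT24}, adapted to the type $C_n$ setup with its two-step specialization for roots $\beta=[i,n,i]$. The main task is to produce, for any $F\in S_{\unl{k}}$ satisfying the vanishing hypothesis, an element $F_{\unl{d}}$ in the span of $\{\Psi(E_{h})\}_{h\in H_{\unl{k},\unl{d}}}$ such that $\phi_{\unl{d}}(F)=\phi_{\unl{d}}(F_{\unl{d}})$; the condition $\phi_{\unl{d}'}(F_{\unl{d}})=0$ for all $\unl{d}'<\unl{d}$ is then automatic by Proposition~\ref{vanishC}. Equivalently, one must show that $\phi_{\unl{d}}(F)$ lies in the $\BQ(v)$-span of $\{\phi_{\unl{d}}(\Psi(E_{h}))\}_{h\in H_{\unl{k},\unl{d}}}$.

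The first step is to extract from $\phi_{\unl{d}}(F)$ all the ``obvious'' factors prescribed by Proposition~\ref{spekpC}. Concretely, using the pole condition \eqref{polecondition} together with the wheel conditions~\eqref{eq:Wheel-Cn}, we show that $\phi_{\unl{d}}(F)$ is divisible by $\prod_{\beta\in\Delta^{+}}G_{\beta}\cdot \prod_{\beta<\beta'}G_{\beta,\beta'}$, where the $G_\beta$ are as recorded in Remark~\ref{rmk:G-Ctype} and the $G_{\beta,\beta'}$ are read off inductively from the $\zeta$-factor structure of \eqref{shuffleproduct} combined with the coincidences forced by the splittings~\eqref{splitvariable}. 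For $\beta\neq [i,n,i]$ the divisibility by $G_\beta$ follows directly from iterated applications of the wheel conditions at the colliding specialized variables prescribed by \eqref{spe-C-1}. For cross-terms $G_{\beta,\beta'}$, one identifies which pairs of specialized variables $w_{\beta,s},w_{\beta',s'}$ produce resonances triggering the wheel conditions.

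The second step is the ``rank-one reduction'' of Remark~\ref{rankreduction}. After dividing $\phi_{\unl{d}}(F)$ by $\prod_{\beta}G_{\beta}\cdot \prod_{\beta<\beta'}G_{\beta,\beta'}$, the quotient is a $\mathfrak{S}_{\unl{d}}$-symmetric Laurent polynomial in $\{w_{\beta,s}\}_{\beta\in\Delta^{+}}^{1\leq s\leq d_{\beta}}$, because the $G$'s are $\mathfrak{S}_{\unl{d}}$-symmetric and the full $\phi_{\unl{d}}(F)$ is symmetric by construction. By the classical type $A_1$ shuffle algebra computation, as $h$ ranges over $H_{\unl{k},\unl{d}}$ the functions $\{P_{\lambda_{h,\beta}}\}$ of~\eqref{hlp-C} span, in each $\beta$-block, the whole space of $\mathfrak{S}_{d_\beta}$-symmetric Laurent polynomials. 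Taking tensor products across $\beta\in\Delta^{+}$ and re-multiplying by the $G$-factors gives the desired expression of $\phi_{\unl{d}}(F)$ as a linear combination of the $\phi_{\unl{d}}(\Psi(E_{h}))$ from~\eqref{speehC}.

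The main obstacle is the verification of divisibility by $G_{\beta}$ for the ``long'' roots $\beta=[i,n,i]$, where the specialization $\phi_{\beta,s}$ is a two-step procedure~\eqref{spe-C-3} involving a preliminary division by the factor $B_{\beta}$ of~\eqref{Bfactor-C}. One must argue that, prior to dividing by $B_{\beta}$, the factor $\phi^{(1)}_{\unl{d}}(F)$ already vanishes along $B_{\beta}$ with the correct multiplicity (so that the quotient is regular), and that further coincidences among $w_{\beta,s}$ and $w_{\beta,s'}$ after the substitution $w'_{\beta,s}\mapsto v^{2}w_{\beta,s}$ produce exactly the extra powers of $(w_{\beta,s}-w_{\beta,s'})^{n-i-1}$ and $(w_{\beta,s}-v^{4}w_{\beta,s'})^{n-i}$ appearing in~\eqref{eq:formulagbeta-C-4}. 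This will be carried out by tracking the wheel conditions near the short simple root $\alpha_n$, paralleling the bookkeeping used in Cases 2 and~3 of the proof of Proposition~\ref{prop:phidEh}.
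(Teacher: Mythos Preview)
Your overall strategy is correct and matches the paper: reduce to showing $\phi_{\unl{d}}(F)$ is divisible by $\prod_{\beta}G_{\beta}\cdot\prod_{\beta<\beta'}G_{\beta,\beta'}$, then invoke the rank-one reduction via~\eqref{hlp-C}. The second step is fine. The gap is in your first step.

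You write that divisibility by $\prod_{\beta}G_{\beta}\cdot\prod_{\beta<\beta'}G_{\beta,\beta'}$ will follow ``using the pole condition~\eqref{polecondition} together with the wheel conditions~\eqref{eq:Wheel-Cn}''. This is not sufficient: several linear factors in the cross-terms $G_{\beta,\beta'}$ are \emph{not} consequences of wheel conditions at all, and can only be obtained from the hypothesis $\phi_{\unl{d}'}(F)=0$ for suitable $\unl{d}'<\unl{d}$. For example, in the paper's treatment of $\beta=[1,j-1]$, $\beta'=[1,n,j]$, the factor $(w_{\beta,1}-v^{-2n+2j-2}w_{\beta',1})$ comes from taking $\unl{d}'=\{d'_{[1,j]}=d'_{[1,n,j+1]}=1\}$ and observing that under $\phi_{\unl{d}}$ the specialization of the $\beta$- and $\beta'$-variables at $w_{\beta,1}=v^{-2n+2j-2}w_{\beta',1}$ coincides with $\phi_{\unl{d}'}$; hence $\phi_{\unl{d}'}(F)=0$ forces the vanishing. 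Similar uses of the vanishing hypothesis occur for $(\beta,\beta')=([1],[2,n,2])$, $([1,i],[2,n,j])$, $([1,n,i],[2,n,j])$, and others. Your proposal never invokes $\phi_{\unl{d}'}(F)=0$ in the divisibility argument, so these factors are unaccounted for.

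Relatedly, your identification of the ``main obstacle'' is misplaced. The two-step specialization for $\beta=[i,n,i]$ does require care (one must produce $B_\beta$ from wheel conditions before dividing, and then track the remaining factors), but the paper handles $\beta=\beta'=[1,n,1]$ purely through wheel conditions. The genuinely delicate part is the long case-by-case analysis of the cross-terms $G_{\beta,\beta'}$, combining wheel conditions, induction on $n$ (reducing to $C_{n-1}$ and to type $A$), and the vanishing hypothesis. Your sketch gives no indication of how this case analysis would be organized or which $\unl{d}'$ to choose in each instance.
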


\begin{proof}
The wheel conditions~\eqref{eq:Wheel-Cn} for $F \in S_{\unl{k}}$, together with the condition $\phi_{\unl{d}'}(F)=0$
for any $\unl{d}'\in \text{\rm KP}(\unl{k})$ satisfying $\unl{d}'<\unl{d}$, guarantee that $\phi_{\unl{d}}(F)$
(which is a Laurent polynomial in the variables $\{w_{\beta,s}\}$) vanishes under specific specializations
$w_{\beta,s}=v^\#\cdot w_{\beta',s'}$. To evaluate the aforementioned powers $\#$ of $v$ and the orders of vanishing,
let us view $\phi_{\unl{d}}$ as a step-by-step specialization in each interval $[\beta]$. We note that this computation
is local with respect to any fixed pair $(\beta,s)\leq (\beta',s')$. We set $G_{\beta,\beta}=G_{\beta}$.
For any pair $\beta\leq \beta'$, consider
\begin{equation*}
  \unl{d}=
  \begin{cases}
    \big\{d_{\beta}=2,\, \mathrm{and}\  d_{\gamma}=0\ \mathrm{for\ other}\ \gamma\big\} & \mathrm{if}\ \beta=\beta'\\
    \big\{d_{\beta}=d_{\beta'}=1,\, \mathrm{and}\  d_{\gamma}=0\ \mathrm{for\ other}\ \gamma\big\} & \mathrm{if}\ \beta<\beta'
  \end{cases}
\end{equation*}
and let $\unl{d}\in\text{KP}(\unl{k})$. According to Proposition~\ref{spekpC} and Remark \ref{rankreduction}, it suffices
to show that for any $F\in S_{\unl{k}}$, the specialization $\phi_{\unl{d}}(F)$ is divisible by $G_{\beta,\beta'}$ if
$\phi_{\unl{d}'}(F)=0$ for any $\unl{d}'<\unl{d}$. Using $A_n$-type results and the induction (i.e.\ assuming the result
holds for type $C_{n-1}$), we still have the following cases to analyze (henceforth, we shall use the notation
$w-v^{\pm k}w'$ to denote the product $(w-v^{k}w')(w'-v^{k}w)$):
\begin{itemize}[leftmargin=0.7cm]

\item $\beta=\beta'=[1,n,j]$ with $1<j<n$.

If $j=n-1$, then
\begin{equation*}
  G_{\beta}={w_{\beta,1}^2w_{\beta,2}^2} (w_{\beta,1}-v^{\pm 2}w_{\beta,2})(w_{\beta,1}-v^{\pm 6}w_{\beta,2})\cdot G_{\alpha}
  \quad \mathrm{with} \quad \alpha=[1,n].
\end{equation*}
For any $F\in S_{\unl{k}}$, as we specialize all the variables but $\{x^{(\beta,1)}_{n-1,2},x^{(\beta,2)}_{n-1,2}\}$,
we know that the wheel conditions involving the specialized variables produce the factor $G_{\alpha}$ by the induction
assumption. As we specialize $x^{(\beta,1)}_{n-1,2}$, the corresponding wheel conditions
\begin{equation*}
  x^{(\beta,1)}_{n-1,2}=v^{2}x^{(\beta,2)}_{n-1,1}=vx^{(\beta,2)}_{n-2,1}, \qquad
  x^{(\beta,1)}_{n-1,1}=v^{2}x^{(\beta,2)}_{n-1,1}=v^{4}x^{(\beta,1)}_{n-1,2}=v^{2}x^{(\beta,1)}_{n,1}
\end{equation*}
contribute the new factors $w_{\beta,1}-v^{ 6}w_{\beta,2}$, $w_{\beta,1}-v^{ 2}w_{\beta,2}$ to $\phi_{\unl{d}}(F)$.
By symmetry, as we specialize the variable $x^{(\beta,2)}_{n-1,2}$, we also get new extra factors
$w_{\beta,2}-v^{6}w_{\beta,1}$ and $w_{\beta,2}-v^{2}w_{\beta,1}$. Thus $\phi_{\unl{d}}(F)$ is divisible by
$(w_{\beta,1}-v^{\pm 2}w_{\beta,2})(w_{\beta,1}-v^{\pm 6}w_{\beta,2})\cdot G_{\alpha}$, hence by $G_{\beta}$.

If $2\leq j\leq n-2$, then
\begin{equation*}
  G_{\beta}= {w_{\beta,1}^3w_{\beta,2}^3}
  (w_{\beta,1}-v^{\pm 2}w_{\beta,2})(w_{\beta,1}-v^{\pm(2n-2j)}w_{\beta,2})(w_{\beta,1}-v^{\pm(2n-2j+4)}w_{\beta,2})\cdot G_{\alpha}
\end{equation*}
with $\alpha=[1,n,j+1]$. For any $F\in S_{\unl{k}}$, as we specialize all the variables but
$\{x^{(\beta,1)}_{j,2},x^{(\beta,2)}_{j,2}\}$ we know that the wheel conditions involving the specialized
variables produce the factor $G_{\alpha}$ by the induction assumption. When we specialize $x^{(\beta,1)}_{j,2}$,
the new wheel conditions
\begin{equation*}
  x^{(\beta,1)}_{j,2}=v^{2}x^{(\beta,2)}_{j,1}=vx^{(\beta,2)}_{j-1,1},\
  x^{(\beta,2)}_{j,1}=v^{2}x^{(\beta,1)}_{j,2}=vx^{(\beta,2)}_{j+1,1},\
  x^{(\beta,1)}_{j+1,2}=v^{2}x^{(\beta,2)}_{j+1,2}=vx^{(\beta,1)}_{j,2}
\end{equation*}
contribute the factors $w_{\beta,1}-v^{2n-2j+4}w_{\beta,2}$, $w_{\beta,1}-v^{2n-2j}w_{\beta,2}$,
$w_{\beta,1}-v^{ 2}w_{\beta,2}$, respectively, into $\phi_{\unl{d}}(F)$. Then from symmetry (using $x^{(\beta,2)}_{j,2}$
instead of $x^{(\beta,1)}_{j,2}$), we see that $\phi_{\unl{d}}(F)$ is indeed divisible by $G_{\beta}$.

\item $\beta=\beta'=[1,n,1]$.

If $\alpha=[2,n,2]$, then we have
\begin{equation*}
  G_{\beta} = {w_{\beta,1}^2 w_{\beta,2}^2}(w_{\beta,1}-w_{\beta,2})^{2}
  (w_{\beta,1}-v^{\pm 2}w_{\beta,2})^{2}(w_{\beta,1}-v^{\pm 4}w_{\beta,2})\cdot G_{\alpha}.
\end{equation*}
For any $F\in S_{\unl{k}}$, as we specialize all the variables but
$\{x^{(\beta,1)}_{1,1},x^{(\beta,1)}_{1,2},x^{(\beta,2)}_{1,1},x^{(\beta,2)}_{1,2}\}$,
we know that the wheel conditions involving the specialized variables produce the factor $G_{\alpha}$ by the
induction assumption. As we specialize the variables $x^{(\beta,1)}_{1,1},x^{(\beta,1)}_{1,2}$, the wheel conditions at
\[
  x^{(\beta,1)}_{2,2}=v^{2}x^{(\beta,1)}_{2,1}=vx^{(\beta,1)}_{1,1}, \qquad
  x^{(\beta,1)}_{2,1}=v^{2}x^{(\beta,1)}_{2,2}=vx^{(\beta,1)}_{1,2}
\]
contribute the factor $B_{\beta}/B_{\alpha}= (w_{\beta,1}-v^{\pm 2}w'_{\beta,1})$ to the first step of the specialization
$\phi^{(1)}_{\beta}(F)$, cf.~\eqref{spe-C-2}. Then in the second step of the specialization, cf.~\eqref{spe-C-3}, we divide
by $B_{\beta}/B_{\alpha}$ and specialize $w'_{\beta,1}\mapsto w_{\beta,1}, w'_{\beta,2}\mapsto w_{\beta,2}$.
The  wheel conditions at
\begin{equation*}
\begin{aligned}
  x^{(\beta,2)}_{2,1}=v^{2}x^{(\beta,1)}_{2,1}=vx^{(\beta,1)}_{1,1}, \qquad
    x^{(\beta,2)}_{2,2}=v^{2}x^{(\beta,1)}_{2,1}=vx^{(\beta,1)}_{1,1},\\
  x^{(\beta,2)}_{2,1}=v^{2}x^{(\beta,1)}_{2,2}=vx^{(\beta,1)}_{1,2}, \qquad
    x^{(\beta,2)}_{2,2}=v^{2}x^{(\beta,1)}_{2,2}=vx^{(\beta,1)}_{1,2}
\end{aligned}
\end{equation*}
contribute the overall factor
$(w_{\beta,1}-w_{\beta,2})(w_{\beta,1}-v^{ -2}w_{\beta,2})^{2}(w_{\beta,1}-v^{-4}w_{\beta,2})$ to $\phi_{\unl{d}}(F)$.
Then from symmetry (using $x^{(\beta,2)}_{1,1},x^{(\beta,2)}_{1,2}$ instead of $x^{(\beta,1)}_{1,1},x^{(\beta,1)}_{1,2}$),
we see that $\phi_{\unl{d}}(F)$ is indeed divisible by
  $(w_{\beta,1}-w_{\beta,2})^{2}(w_{\beta,1}-v^{\pm 2}w_{\beta,2})^{2}(w_{\beta,1}-v^{\pm 4}w_{\beta,2})\cdot G_{\alpha}$,
hence by $G_{\beta}$.

\item $\beta=[1,i]$, $\beta'=[1,n,1]$.

If $i=1$, that is $\beta=[1]$, then
\begin{equation*}
  G_{\beta,\beta'}=(w_{\beta,1}-w_{\beta',1})(w_{\beta,1}-v^{-2}w_{\beta',1}).
\end{equation*}
The wheel conditions $F=0$ at $x^{(\beta',1)}_{1,1}=v^{2}x^{(\beta,1)}_{1,1}=vx^{(\beta',1)}_{2,1}$ and
$x^{(\beta',1)}_{1,2}=v^{2}x^{(\beta,1)}_{1,1}=vx^{(\beta',1)}_{2,2}$ imply that $\phi_{\unl{d}}(F)$ is
divisible by $G_{\beta,\beta'}$.

If $2\leq i\leq n-2$, then
\begin{equation*}
  G_{\beta,\beta'}= (w_{\beta,1}-w_{\beta',1})(w_{\beta,1}-v^{\pm 2}w_{\beta',1})(w_{\beta,1}-v^{4}w_{\beta',1})\cdot G_{\alpha,\beta'}
  \quad \mathrm{with} \quad \alpha=[1,i-1].
\end{equation*}
As we specialize all the variables but $x^{(\beta,1)}_{i,1}$, we know that the wheel conditions involving the specialized
variables produce the factor $G_{\alpha,\beta'}$ by the induction assumption. As we specialize $x^{(\beta,1)}_{i,1}$, the
wheel conditions $F=0$ at
\begin{equation}
\label{eq:betabeta'-1}
\begin{aligned}
  & x^{(\beta,1)}_{i,1}=v^{2}x^{(\beta',1)}_{i,1}=vx^{(\beta',1)}_{i-1,1}, \qquad
    x^{(\beta,1)}_{i,1}=v^{2}x^{(\beta',1)}_{i,2}=vx^{(\beta',1)}_{i-1,2}, \\
  & vx^{(\beta',1)}_{i+1,2}=x^{(\beta',1)}_{i,2}=v^2x^{(\beta,1)}_{i,1}, \qquad
    vx^{(\beta',1)}_{i+1,1}=x^{(\beta',1)}_{i,1}=v^2x^{(\beta,1)}_{i,1}
\end{aligned}
\end{equation}
contribute the factor $(w_{\beta,1}-w_{\beta',1})(w_{\beta,1}-v^{\pm 2}w_{\beta',1})(w_{\beta,1}-v^{4}w_{\beta',1})$
to $\phi_{\unl{d}}(F)$, and so $\phi_{\unl{d}}(F)$ is divisible by $G_{\beta,\beta'}$.

If $i=n-1$, that is $\beta=[1,n-1]$, then
\begin{equation*}
  G_{\beta,\beta'}=(w_{\beta,1}-v^{\pm 2}w_{\beta',1})(w_{\beta,1}-v^{4}w_{\beta',1})\cdot G_{\alpha,\beta'}
  \qquad \mathrm{with} \quad \alpha=[1,n-2].
\end{equation*}
Then the first three wheel conditions from \eqref{eq:betabeta'-1} imply that $\phi_{\unl{d}}(F)$ is divisible by
$(w_{\beta,1}-v^{\pm 2}w_{\beta',1})(w_{\beta,1}-v^{4}w_{\beta',1})$, hence by $G_{\beta,\beta'}$.

\item $\beta=[1,i]$,  $\beta'=[1,n,j]$.

If $i\leq j-2$, then $G_{\beta,\beta'}=G_{\beta,[1,j-1]}$, and so $\phi_{\unl{d}}(F)$ is divisible by $G_{\beta,\beta'}$
from type $A_n$.

If $i=j-1$, then
\begin{equation*}
  G_{\beta,\beta'}=(w_{\beta,1}-v^{\pm 2}w_{\beta',1})(w_{\beta,1}-v^{-2n+2j-2}w_{\beta',1})\cdot G_{\alpha,\beta'}
  \qquad \mathrm{with} \quad  \alpha=[1,j-2].
\end{equation*}
As we specialize all the variables but $x^{(\beta,1)}_{j-1,1}$, we know that the wheel conditions involving the
specialized variables produce the factor $G_{\alpha,\beta'}$ by the induction assumption. As we specialize
$x^{(\beta,1)}_{j-1,1}$, the wheel conditions $F=0$ at
$x^{(\beta',1)}_{j-1,1}=v^{2}x^{(\beta,1)}_{j-1,1}=vx^{(\beta',1)}_{j,1}$,
$x^{(\beta,1)}_{j-1,1}=v^{2}x^{(\beta',1)}_{j-1,1}=vx^{(\beta',1)}_{j-2,1}$
contribute the extra factor $(w_{\beta,1}-v^{\pm 2}w_{\beta',1})$ into $\phi_{\unl{d}}(F)$. Moreover, if we consider
$\unl{d}'=\{d'_{[1,j]}=d'_{[1,n,j+1]}=1,\, \text{and}\ d'_{\gamma}=0\ \text{for other}\ \gamma\}$, then
$\unl{d}'<\unl{d}$ and $\phi_{\unl{d}'}(F)=0$ implies that $\phi_{\unl{d}}(F)$ is divisible by
$w_{\beta,1}-v^{-2n+2j-2}w_{\beta',1}$. Thus, $\phi_{\unl{d}}(F)$ is divisible by $G_{\beta,\beta'}$, as claimed.

If $i=j<n-1$, then
\begin{equation*}
\begin{aligned}
  G_{\beta,\beta'}=(w_{\beta,1}-v^{-2n+2j-4}w_{\beta',1})\cdot G_{\beta,\alpha}
  \qquad \mathrm{with} \quad \alpha=[1,n,j+1].
\end{aligned}
\end{equation*}
As we specialize all the variables but $x^{(\beta',1)}_{j,2}$, we get the factor $G_{\beta,\alpha}$ by the induction
assumption. As we specialize $x^{(\beta',1)}_{j,2}$, the wheel condition $F=0$ at
$x^{(\beta',1)}_{j,2}=v^{2}x^{(\beta,1)}_{j,1}=vx^{(\beta,1)}_{j-1,1}$ implies $\phi_{\unl{d}}(F)$ is divisible by
$w_{\beta,1}-v^{-2n+2j-4}w_{\beta',1}$, hence by $G_{\beta,\beta'}$.

If $i=j=n-1$, then $G_{\beta,\beta'}=(w_{\beta,1}-v^{-6}w_{\beta',1})\cdot G_{\beta,[1,n]}$. From the wheel condition
$F=0$ at $x^{(\beta',1)}_{n-1,2}=v^{2}x^{(\beta,1)}_{n-1,1}=vx^{(\beta,1)}_{n-2,1}$ and the induction assumption,  we get
that $\phi_{\unl{d}}(F)$ is divisible by $G_{\beta,\beta'}$.

If $i=j+1=n$, then $G_{\beta,\beta'}=(w_{\beta,1}-v^{\pm 4}w_{\beta',1})\cdot G_{[1,n-1],\beta'}$.
Due to the induction assumption and the wheel conditions at
$x^{(\beta,1)}_{n,1}=v^{4}x^{(\beta',1)}_{n,1}=v^{2}x^{(\beta',1)}_{n-1,1}$ and
$x^{(\beta',1)}_{n,1}=v^{4}x^{(\beta,1)}_{n,1}=v^{2}x^{(\beta',1)}_{n-1,2}$, we see
that $\phi_{\unl{d}}(F)$ is divisible by $G_{\beta,\beta'}$.

If $i\geq j+1$ and $1<j<n-1$, then
  $G_{\beta,\beta'}=(w_{\beta,1}-v^{-2n+2j-4}w_{\beta',1})(w_{\beta,1}-v^{-2n+2j}w_{\beta',1})\cdot G_{\beta,[1,n,j+1]}$.
By the induction assumption and the wheel condition at
$x^{(\beta,1)}_{j,1}=v^{2}x^{(\beta',1)}_{j,2}=vx^{(\beta,1)}_{j+1,1}$ or
$x^{(\beta',1)}_{j,2}=v^{2}x^{(\beta,1)}_{j,1}=vx^{(\beta,1)}_{j-1,1}$, we see that
$\phi_{\unl{d}}(F)$ is divisible by $G_{\beta,\beta'}$.

\item $\beta=[1,n,1]$, $\beta'=[1,n]$.

If we set $\alpha=[2,n,2]$, $\alpha'=[2,n]$, then we have:
\begin{equation*}
  G_{\beta,\beta'} =
  (w_{\beta,1}-w_{\beta',1})(w_{\beta,1}-v^{\pm 2}w_{\beta',1})(w_{\beta,1}-v^{-4}w_{\beta',1})\cdot G_{\alpha,\alpha'}.
\end{equation*}
From the wheel conditions at
\begin{equation*}
\begin{aligned}
  & x^{(\beta,1)}_{1,1}=v^{2}x^{(\beta',1)}_{1,1}=vx^{(\beta,1)}_{2,1}, \qquad
    x^{(\beta,1)}_{1,2}=v^{2}x^{(\beta',1)}_{1,1}=vx^{(\beta,1)}_{2,2}, \\
  & vx^{(\beta,1)}_{1,1}=x^{(\beta',1)}_{2,1}=v^2x^{(\beta,1)}_{2,1}, \qquad
    vx^{(\beta,1)}_{1,2}=x^{(\beta',1)}_{2,1}=v^2x^{(\beta,1)}_{2,2}
\end{aligned}
\end{equation*}
and the induction assumption, we see that $\phi_{\unl{d}}(F)$ is divisible by $G_{\beta,\beta'}$.

\item $\beta=[1,n,1], \beta'=[1,n,j]$.

If $j>2$, then the same arguments as for the case $(\beta,\beta')=([1,n,1],[1,n])$ above apply.

If $j=2$ and $n=3$, then we have
\begin{equation*}
  G_{\beta,\beta'} =
  (w_{\beta,1}-v^{-2}w_{\beta',1})(w_{\beta,1}-v^{-6}w_{\beta',1})(w_{\beta,1}-v^{-8}w_{\beta',1})\cdot G_{[1,3,1],[1,3]}.
\end{equation*}
From the wheel conditions at
\begin{equation*}
\begin{aligned}
  & x^{(\beta',1)}_{2,2}=v^{2}x^{(\beta,1)}_{2,1}=vx^{(\beta,1)}_{1,1}, \qquad
    x^{(\beta',1)}_{2,2}=v^{2}x^{(\beta,1)}_{2,2}=vx^{(\beta,1)}_{1,2}, \\
  & x^{(\beta,1)}_{2,2}=v^2x^{(\beta,1)}_{2,1}=v^4x^{(\beta',1)}_{2,2}=v^2x^{(\beta',1)}_{3,1},
\end{aligned}
\end{equation*}
and the induction assumption, we see that $\phi_{\unl{d}}(F)$ is divisible by $G_{\beta,\beta'}$.

If $j=2$ and $n>3$, then we have
\begin{equation*}
  G_{\beta,\beta'} =
  (w_{\beta,1}-v^{-2n}w_{\beta',1})(v^{2n}w_{\beta,1}-v^{\pm 2}w_{\beta',1})(w_{\beta,1}-v^{-2n+4}w_{\beta',1})\cdot G_{\beta,[1,n,3]}.
\end{equation*}
From the wheel conditions at
\begin{equation*}
\begin{aligned}
  & x^{(\beta',1)}_{2,2}=v^{2}x^{(\beta,1)}_{2,1}=vx^{(\beta,1)}_{1,1}, \qquad
    x^{(\beta',1)}_{2,2}=v^{2}x^{(\beta,1)}_{2,2}=vx^{(\beta,1)}_{1,2}, \\
  & v^2 x^{(\beta',1)}_{2,2}=x^{(\beta,1)}_{2,1}=vx^{(\beta,1)}_{3,1}, \qquad
   v^2x^{(\beta',1)}_{2,2}=x^{(\beta,1)}_{2,2}=vx^{(\beta,1)}_{3,2},
\end{aligned}
\end{equation*}
and the induction assumption, we see that $\phi_{\unl{d}}(F)$ is divisible by $G_{\beta,\beta'}$.

\item $\beta=[1,n,k]$, $\beta'=[1,n,j]$.

If $j>2$, then $G_{\beta,\beta'}=(w_{\beta,1}-v^{\pm 2}w_{\beta',1})\cdot G_{[2,n,k],[2,n,j]}$, and so
$\phi_{\unl{d}}(F)$ is divisible by $G_{\beta,\beta'}$ due to the induction assumption and wheel condition
at $x^{(\beta,1)}_{2,1}=v^{2}x^{(\beta',1)}_{2,1}=vx^{(\beta',1)}_{1,1}$ or
$x^{(\beta',1)}_{2,1}=v^{2}x^{(\beta,1)}_{2,1}=vx^{(\beta,1)}_{1,1}$.

If $j=2$ and $k>3$, then
\[
  G_{\beta,\beta'}=(w_{\beta,1}-v^{-2n}w_{\beta',1})(w_{\beta,1}-v^{-2n+4}w_{\beta',1})\cdot G_{[1,n,k],[1,n,3]},
\]
and so $\phi_{\unl{d}}(F)$ is divisible by $G_{\beta,\beta'}$ due to the induction assumption and wheel condition at
$x^{(\beta,1)}_{2,1}=v^{2}x^{(\beta',1)}_{2,2}=vx^{(\beta,1)}_{3,1}$ or
$x^{(\beta',1)}_{2,2}=v^{2}x^{(\beta,1)}_{2,1}=vx^{(\beta,1)}_{1,1}$.

If $j=2, k=3$ and $n>4$, then
\begin{equation*}
  G_{\beta,\beta'} =
  (w_{\beta,1}-v^{\pm 2}w_{\beta',1})(w_{\beta,1}-v^{2n- 2}w_{\beta',1})(w_{\beta,1}-v^{2n-6}w_{\beta',1})\cdot G_{[1,n,4],[1,n,2]}.
\end{equation*}
From wheel conditions at
\begin{equation*}
\begin{aligned}
  & x^{(\beta,1)}_{3,2}=v^{2}x^{(\beta',1)}_{3,1}=vx^{(\beta',1)}_{2,1}, \qquad
    x^{(\beta,1)}_{3,2}=v^{2}x^{(\beta',1)}_{3,2}=vx^{(\beta',1)}_{4,2}, \\
  & v^2 x^{(\beta,1)}_{3,2}=x^{(\beta',1)}_{3,1}=vx^{(\beta',1)}_{4,1}, \qquad
   v^2x^{(\beta,1)}_{3,2}=x^{(\beta',1)}_{3,2}=vx^{(\beta',1)}_{2,2},
\end{aligned}
\end{equation*}
and the induction assumption, we see that $\phi_{\unl{d}}(F)$ is divisible by $G_{\beta,\beta'}$.

If $j=2, k=3$ and $n=4$, then
$G_{\beta,\beta'} = (w_{\beta,1}-v^{\pm 2}w_{\beta',1})(w_{\beta,1}-v^{6}w_{\beta',1})\cdot G_{[1,4],[1,4,2]}$,
and $\phi_{\unl{d}}(F)$ is divisible by $G_{\beta,\beta'}$, due to the induction assumption and wheel conditions at
$x^{(\beta,1)}_{3,2}=v^{2}x^{(\beta',1)}_{3,1}=vx^{(\beta',1)}_{2,1}$,
$x^{(\beta',1)}_{3,2}=v^{2}x^{(\beta,1)}_{3,2}=vx^{(\beta',1)}_{2,2}$,
$x^{(\beta',1)}_{3,1}=v^2x^{(\beta,1)}_{3,2}=v^4x^{(\beta',1)}_{3,2}=v^2x^{(\beta',1)}_{4,1}$.

\item $\beta'\geq [2]>\beta$.

If $\beta=[1]$ and $\beta'=[2,n,2]$, then $G_{\beta,\beta'}=(w_{\beta,1}-w_{\beta',1})(w_{\beta,1}-v^{2}w_{\beta',1})$.
Consider $\unl{d}'=\{d'_{[1,n]}=d'_{[2,n-1]}=1,\, \text{and}\ d'_{\gamma}=0\ \text{for other}\ \gamma\}$, so that $\unl{d}'<\unl{d}$.
Then $\phi_{\unl{d}}(F)$ is divisible by $G_{\beta,\beta'}$ due to the condition $\phi_{\unl{d}'}(F)=0$ and wheel condition
at $x^{(\beta',1)}_{2,2}=v^{2}x^{(\beta',1)}_{2,1}=vx^{(\beta,1)}_{1,1}$.

If $\beta=[1,i]$ and $\beta'=[2,n,j]$, then $G_{\beta,\beta'}=(w_{\beta,1}-w_{\beta',1})\cdot G_{[2,i],\beta'}$.
Consider $\unl{d}'=\{d'_{[1,n,j]}=d'_{[2,i]}=1,\, \text{and}\ d'_{\gamma}=0\ \text{for other}\ \gamma\}$.
Then $\unl{d}'<\unl{d}$ and $\phi_{\unl{d}}(F)$ is divisible by $G_{\beta,\beta'}$ due to the induction assumption
and $\phi_{\unl{d}'}(F)=0$.

If $\beta=[1,n,i]$ and $\beta'=[2,n,j]$ with $j<i$, then $G_{\beta,\beta'}=(w_{\beta,1}-w_{\beta',1})\cdot G_{[2,n,i],\beta'}$.
Consider $\unl{d}'=\{d'_{[1,n,j]}=d'_{[2,n,i]}=1,\, \text{and}\ d'_{\gamma}=0\ \text{for other}\ \gamma\}$. Then $\unl{d}'<\unl{d}$
and $\phi_{\unl{d}}(F)$ is divisible by $G_{\beta,\beta'}$ due to the induction assumption and $\phi_{\unl{d}'}(F)=0$.

For all other cases, the divisibility of $\phi_{\unl{d}}(F)$ by $G_{\beta,\beta'}$ follows from the
induction assumption and proper count of wheel conditions similarly to the cases above.

\end{itemize}
This completes our proof.
\end{proof}

Combining Propositions~\ref{vanishC} and~\ref{spanC}, we immediately obtain the shuffle algebra realization
and the PBWD theorem for $\qlc$:

\begin{Thm}\label{shufflePBWD-C}
(a) $\Psi\colon \qlc \,\iso\, S$ of~\eqref{eq:Psi-homom} is a $\BQ(v)$-algebra isomorphism.

\noindent
(b) For any choices of $s_k$ and $\lambda_k$ in the definition~\eqref{rootvector1}--\eqref{rootvector2} of quantum root vectors
$E_{\beta,s}$, the ordered PBWD monomials $\{E_{h}\}_{h\in H}$ from \eqref{PBWDbases} form a $\BQ(v)$-basis of $\qlc$.
\end{Thm}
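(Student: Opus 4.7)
The proof combines three ingredients, following the template of~\cite{HT24}. First, the ordered PBWD monomials $\{E_h\}_{h\in H}$ span $\qlc$: any monomial in the generators $\{e_{i,r}\}$ can be rewritten as a $\BQ(v)$-linear combination of ordered $E_h$'s by iterated use of the defining $v$-commutator and quantum Serre relations, guided by the convex order~\eqref{orderbetas}; the various choices of $s_k,\lambda_k$ in~\eqref{rootvector1}--\eqref{rootvector2} produce the same leading ordered term modulo smaller ones, so spanning is insensitive to them, paralleling~\cite[Theorem~2.16]{Tsy18} and~\cite[Theorem~2.5]{HT24}.

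Second, $\{\Psi(E_h)\}_{h\in H}$ is linearly independent in $S$. Given a finite relation $\sum_h c_h\Psi(E_h)=0$ with $c_h\in\BQ(v)$, decompose by the $\BN^I$-grading and assume all $h$ share a common grading $\unl{k}$. Let $\unl{d}:=\min\{\deg(h):c_h\neq 0\}\in\text{KP}(\unl{k})$ and apply $\phi_{\unl{d}}$: by Proposition~\ref{vanishC}, every term with $\deg(h')>\unl{d}$ dies, and by minimality any $h'$ with $\deg(h')<\unl{d}$ has $c_{h'}=0$. The surviving identity, after invoking the factorization of Proposition~\ref{spekpC} and cancelling the common nonzero prefactor $\prod_{\beta<\beta'}G_{\beta,\beta'}\cdot\prod_{\beta}c_{\beta}^{d_{\beta}}G_{\beta}$, reduces to
\[
  \sum_{h\in H_{\unl{k},\unl{d}}} c_h\prod_{\beta\in\Delta^+} P_{\lambda_{h,\beta}}=0.
\]
By the rank-$1$ reduction of Remark~\ref{rankreduction}, these products form a $\BQ(v)$-linearly-independent family in $\bigotimes_{\beta\in\Delta^+} \BQ(v)[\{w_{\beta,s}^{\pm1}\}_{s=1}^{d_{\beta}}]^{\fS_{d_{\beta}}}$, since $A_1$-shuffle products of $d_{\beta}$ monomials form a basis of the symmetric Laurent polynomials in $d_{\beta}$ variables. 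Hence all $c_h=0$, contradicting the choice of $\unl{d}$. Combined with spanning, this proves part (b) and the injectivity of $\Psi$.

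Third, $\Psi$ is surjective. Fix $\unl{k}\in\BN^I$ and enumerate $\text{KP}(\unl{k})=\{\unl{d}_1<\cdots<\unl{d}_N\}$ according to~\eqref{eq:KP-order}; a direct check using the convex order~\eqref{lynorderC} shows that the maximum $\unl{d}_N$ is the ``all simple roots'' Kostant partition ($d_{\alpha_i}=k_i$ and $d_\beta=0$ for non-simple $\beta$). Given $F\in S_{\unl{k}}$, iteratively apply Proposition~\ref{spanC}: at step $i$, the element $F-\sum_{j<i}F_j$ already satisfies $\phi_{\unl{d}'}=0$ for all $\unl{d}'<\unl{d}_i$, so Proposition~\ref{spanC} yields $F_i\in S'_{\unl{k}}$ with $\phi_{\unl{d}_i}(F-\sum_{j<i}F_j)=\phi_{\unl{d}_i}(F_i)$ and $\phi_{\unl{d}'}(F_i)=0$ for $\unl{d}'<\unl{d}_i$, so that $F-\sum_{j\leq i}F_j$ has $\phi_{\unl{d}_j}=0$ for all $j\leq i$. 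After $N$ steps, $G:=F-\sum_{j}F_j$ satisfies $\phi_{\unl{d}_N}(G)=0$; but $\phi_{\unl{d}_N}$ merely substitutes each variable $x_{i,r}$ by a $v$-power scalar multiple of a fresh variable $w_{\alpha_i,r}$ (cf.~\eqref{spe-C-1}), which is a bijective rescaling, forcing $G=0$. Hence $F\in S'_{\unl{k}}$, proving surjectivity and part (a). The technical heart of the argument is entirely contained in Propositions~\ref{vanishC}, \ref{spekpC}, and~\ref{spanC}, already established in this section; the present theorem's proof is a bookkeeping assembly, and the principal care point (distinguishing it from~\cite{Tsy18,HT24}) is tracking the new $C_n$-specific normalization factors~\eqref{eq:c-factor-C}--\eqref{eq:formulagbeta-C-4} in verifying that the common prefactor in the second step above is genuinely nonzero.
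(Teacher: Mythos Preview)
Your steps 2 and 3 are correct and elaborate what the paper merely records (``Combining Propositions~\ref{vanishC} and~\ref{spanC}\ldots''): step 2 gives linear independence of $\{\Psi(E_h)\}$ via Proposition~\ref{spekpC} and the rank-$1$ reduction, and step 3 gives spanning of $S$ by iterating Proposition~\ref{spanC}, with the correct terminal observation that $\phi_{\unl{d}_N}$ for the all-simple-roots partition is just a variable rescaling of the numerator and hence injective.

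The gap is step 1. Your claim that $\{E_h\}$ span $\qlc$ ``by iterated use of the defining $v$-commutator and quantum Serre relations, guided by the convex order'' amounts to a Levendorskii--Soibelman convexity statement for the loop root vectors $E_{\beta,s}$, namely that $E_{\beta,t}E_{\alpha,s}$ modulo $v^{?}E_{\alpha,s}E_{\beta,t}$ lies in the span of ordered products of strictly intermediate root vectors whenever $(\alpha,s)<(\beta,t)$. No such result is available directly from the new-Drinfeld defining relations, and the references you invoke---\cite[Theorem~2.16]{Tsy18} and \cite[Theorem~2.5]{HT24}---are precisely the PBWD \emph{conclusions}, proved there via the shuffle route of your steps 2--3, not by rewriting. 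As written, step 1 assumes what is to be proved.

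The paper's assembly avoids this by taking injectivity of $\Psi$ as known: the embedding $\Psi\colon U^{>}_v(L\fg)\hookrightarrow S$ is recorded in the introduction, see~\eqref{eq:shuffle-loop}. With injectivity cited, steps 2--3 already show $\{\Psi(E_h)\}$ is a basis of $S$, hence $\Psi$ is surjective and therefore an isomorphism (part~(a)); pulling the basis back through $\Psi^{-1}$ gives part~(b). You should replace step 1 by a citation to~\eqref{eq:shuffle-loop}.
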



\subsection{Shuffle algebra realization of the Lusztig integral form in type $C$}\label{lusc}

For any $\unl{k}\in \BN^n$, consider the $\BZ[v,v^{-1}]$-submodule $\mathbf{S}_{\unl{k}}$ of $S_{\unl{k}}$
consisting of rational functions $F$ satisfying the following two conditions:
\begin{enumerate}[leftmargin=1cm]

\item
If $f$ denotes the numerator of $F$ from \eqref{polecondition}, then
\begin{equation}
\label{li1-C}
  f\in \BZ[v,v^{-1}][\{x_{i,r}^{\pm 1}\}_{1\leq i\leq n}^{1\leq r\leq k_{i}}]^{\mathfrak{S}_{\underline{k}}}.
\end{equation}

\item
For any $\unl{d}\in\text{KP}(\underline{k})$, the specialization $\phi_{\unl{d}}(F)$ is divisible by the product
\begin{equation}
\label{li2-C}
  \prod_{\beta\in\Delta^{+}}{\tilde{c}_{\beta}}^{d_{\beta}},
\end{equation}
where we define $\{\tilde{c}_{\beta}\}_{\beta\in\Delta^{+}}$ via $\{c_\beta\}_{\beta\in \Delta^+}$ of \eqref{eq:c-factor-C}:
\begin{equation}
\label{ctildebeta-C}
  \tilde{c}_{\beta}=
  \begin{cases}
    \frac{c_{\beta}}{[2]_{v}} & \quad \text{if}\ \beta=[i,n,i]\ \text{with}\ 1\leq i\leq n-1 \\
    c_{\beta} & \quad \text{otherwise}
  \end{cases}.
\end{equation}
\end{enumerate}

Define $\mathbf{S}\coloneqq \bigoplus_{\unl{k}\in\BN^{n}}\mathbf{S}_{\unl{k}}$ and recall the Lusztig integral
form $\integralcl$ from Definition \ref{def:lusintegral}. Then, similarly to~\cite[Proposition 4.17]{HT24}, we have:

\begin{Prop}\label{lintegralC}
$\Psi(\integralcl) \subset \mathbf{S}$.
\end{Prop}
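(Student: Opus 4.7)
The plan is to adapt the strategy of~\cite[Proposition~4.17]{HT24}. Since $\integralcl$ is, by Definition~\ref{def:lusintegral}, generated as a $\BZ[v,v^{-1}]$-algebra by the divided powers $\{\mathbf{E}_{i,r}^{(k)}\}$, it suffices to establish two facts: (I)~$\Psi(\mathbf{E}_{i,r}^{(k)})\in\mathbf{S}$ for all $i\in I,\, r\in\BZ,\, k\in\BN$, and (II)~$\mathbf{S}$ is closed under the shuffle product $\star$.

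Fact (I) is straightforward. Iterating~\eqref{shuffleproduct} and using $\Psi(e_{i,r})=x_{i,1}^{r}$, a well-known symmetrization identity evaluates $\sum_{\sigma\in\mathfrak{S}_{k}}\prod_{s<t}\zeta_{i,i}(x_{i,\sigma(s)}/x_{i,\sigma(t)})$ as $[k]_{v_{i}}!$ times a power of $v$, so $\Psi(\mathbf{E}_{i,r}^{(k)})$ equals the monomial $x_{i,1}^{r}\cdots x_{i,k}^{r}$ up to a power of $v$. The integrality condition~\eqref{li1-C} is then immediate, and the divisibility condition~\eqref{li2-C} is trivial because the only Kostant partition of $k\mathbf{1}_{i}$ is $\unl{d}$ with $d_{[i]}=k$, while $\tilde{c}_{[i]}=c_{[i]}=1$ by~\eqref{eq:c-factor-C}--\eqref{ctildebeta-C}.

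For (II), the integrality of $F\star G$ is standard: the factors $\zeta_{i,j}(x_{i,r}/x_{j,s})$ with $i\neq j$, $a_{ij}\neq 0$ produce precisely the poles $(x_{i,r}-x_{j,s})$ required by~\eqref{polecondition}, while the $\zeta_{i,i}$-factors are compensated by the normalizing $\unl{k}!\cdot\unl{\ell}!$ after symmetrization, yielding a numerator in $\BZ[v,v^{-1}][\{x^{\pm 1}_{i,r}\}]^{\mathfrak{S}_{\unl{k}+\unl{\ell}}}$.

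The main obstacle is showing that $\phi_{\unl{d}}(F\star G)$ is divisible by $\prod_{\beta\in\Delta^{+}}\tilde{c}_{\beta}^{d_{\beta}}$ for every $\unl{d}\in\text{KP}(\unl{k}+\unl{\ell})$. Expanding $\phi_{\unl{d}}(F\star G)$ via~\eqref{shuffleproduct} and analyzing surviving summands in the spirit of Proposition~\ref{prop:phidEh}, one finds that a summand contributes non-trivially only if for each pair $(\beta,s)$ with $1\leq s\leq d_{\beta}$, the variables $\{x^{(\beta,s)}_{i,t}\}$ of that group are distributed between $F$ and $G$ according to a decomposition $\beta=\beta_{1}+\beta_{2}$ as a sum of two elements of $\Delta^{+}\sqcup\{0\}$. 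Applying the inductive hypothesis to $F$ and $G$ supplies a factor $\tilde{c}_{\beta_{1}}\tilde{c}_{\beta_{2}}$ from the respective sub-specializations, while the residual $\zeta$-factors coupling the $F$- and $G$-portions of the $(\beta,s)$-group contribute an extra factor which, together with $\tilde{c}_{\beta_{1}}\tilde{c}_{\beta_{2}}$, is divisible by $\tilde{c}_{\beta}$. This final matching must be verified case-by-case over the root types enumerated in~\eqref{eq:abbr-prC}; the most delicate instance is $\beta=[i,n,i]$, where the two-step specialization~\eqref{spe-C-2}--\eqref{spe-C-3} and the $B_{\beta}$-factor~\eqref{Bfactor-C} must be tracked carefully, and the normalization $[2]_{v}$ in~\eqref{ctildebeta-C} is precisely what compensates for the discrepancy between $c_{[i,n,i]}$ and the product of the constants associated to the decomposition $[i,n,i]=[i,n-1]+[i,n]$.
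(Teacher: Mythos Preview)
Your reduction to (I) and (II) is reasonable in spirit, but step (II)—showing that $\mathbf{S}$ is closed under $\star$—has a genuine gap, and the paper deliberately avoids this route.

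The problem is with your inductive step. You claim that after distributing the variables of a $(\beta,s)$-group between $F$ and $G$ as $\beta=\beta_1+\beta_2$, the inductive hypothesis on $F,G\in\mathbf{S}$ supplies the factors $\tilde{c}_{\beta_1},\tilde{c}_{\beta_2}$. But the divisibility~\eqref{li2-C} is only asserted for the \emph{specific} specializations $\phi_{\unl{d}'}$ of~\eqref{spe-C-1}--\eqref{spe-C-3}; it says nothing about other substitutions. When a $(\beta,s)$-group with $\beta=[i,n,j]$ splits, the piece $\{x^{(\beta,s)}_{\ell,2}\}_{j\le\ell\le n-1}$ is specialized via~\eqref{spe-C-1} as $x^{(\beta,s)}_{\ell,2}\mapsto v^{-2n+\ell-1}w_{\beta,s}$, with the $v$-exponent \emph{increasing} in $\ell$. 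This is not a rescaled instance of $\phi_{[j,n-1]}$ (where $x_{\ell}\mapsto v^{1-\ell}w$ has \emph{decreasing} exponent), so the membership $F\in\mathbf{S}$ gives you no divisibility for this sub-specialization. The same mismatch occurs for the $G$-portion, and it is even more acute for $\beta=[i,n,i]$ because of the two-step procedure~\eqref{spe-C-2}--\eqref{spe-C-3} and the $B_\beta$-division. Your proposal acknowledges the $[i,n,i]$ case is ``the most delicate'' but offers no mechanism to handle it; in fact the obstruction already appears for $[i,n,j]$.

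The paper sidesteps this entirely. Rather than proving that $\mathbf{S}$ is a subalgebra, it takes an arbitrary \emph{monomial} $F=\Psi\big(\mathbf{E}^{(\ell_1)}_{i_1,r_1}\cdots\mathbf{E}^{(\ell_m)}_{i_m,r_m}\big)$ in the divided-power generators and checks~\eqref{li1-C}--\eqref{li2-C} directly. Since each factor $\Psi(\mathbf{E}^{(\ell_q)}_{i_q,r_q})$ is just a Laurent monomial by~\eqref{rank1-power}, the entire $\tilde{c}_\beta$-divisibility must come from the $\zeta$-factors among the variables of each $(\beta,s)$-group, and these can be computed case-by-case (with the $[i,n,j]$ and $[i,n,i]$ cases worked out explicitly). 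No inductive hypothesis on ambient elements of $\mathbf{S}$ is needed, and the subtlety you flag never arises. This is why the paper says ``similarly to~\cite[Proposition~4.17]{HT24}'': that reference also argues directly on products of divided powers, not via closure of the target.
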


\begin{proof}
For any $m\in\BN$, $1\leq i_{1},\dots,i_{m}\leq n$, $r_{1},\dots,r_{m}\in\BZ$, and $\ell_{1},\dots,\ell_{m}\in\BN$, let
\[
  F\coloneqq \Psi\big(\mathbf{E}^{(\ell_{1})}_{i_{1},r_{1}}\cdots \mathbf{E}^{(\ell_{m})}_{i_{m},r_{m}}\big),
\]
and $f$ be the numerator of $F$ from \eqref{polecondition}. The validity of the condition \eqref{li1-C} for $f$
follows from the equality of~\cite[Lemma 1.3]{Tsy22}:
\begin{equation}
\label{rank1-power}
  \Psi(\mathbf{E}^{(\ell_{q})}_{i_{q},r_{q}})=
  v_{i_{q}}^{-\frac{\ell_{q}(\ell_{q}-1)}{2}}(x_{i_{q},1}\cdots x_{i_{q},\ell_{q}})^{r_{q}}
  \qquad \forall \, 1\leq q\leq m.
\end{equation}
To verify the validity of the divisibility \eqref{li2-C}, it suffices to show that for any $\beta\in \Delta^+$ and
$1\leq s\leq d_{\beta}$, the total contribution of $\phi_{\unl{d}}$-specializations of the $\zeta$-factors between
the variables $\{x^{(\beta,s)}_{i,t}\}_{i\in\beta}^{1\leq t\leq \nu_{\beta,i}}$ of $f$ is divisible by $\tilde{c}_{\beta}$.
It suffices to treat only the cases $\beta=[i,n,j]$ with $1\leq i\leq j<n$, since the cases when $\beta=[i,j]$ are treated
completely analogously to type $A_n$. Henceforth, we write $o(x^{(*,*)}_{*,*})=q$ if a variable $x^{(*,*)}_{*,*}$ is plugged
into $\Psi(\mathbf{E}^{(\ell_{q})}_{i_{q},r_{q}})$. We consider the cases $i\ne j$ and $i=j$ separately:
\begin{itemize}[leftmargin=0.7cm]

\item $\beta=[i,n,j]$ with $1\leq i<j<n$.
\smallskip

According to~\eqref{spe-C-1}, the $\phi_{\unl{d}}$-specialization of each summand in $F$  vanishes unless
\begin{gather*}
  o(x^{(\beta,s)}_{i,1})\geq  \cdots \geq o(x^{(\beta,s)}_{n-1,1})\geq o(x^{(\beta,s)}_{n,1}) \geq
  o(x^{(\beta,s)}_{n-1,2})\geq \cdots \geq o(x^{(\beta,s)}_{j,2}).
\end{gather*}
Since $o(x^{(\beta,s)}_{i,t})\ne o(x^{(\beta,s)}_{i',t'})$ for $i\ne i'$, we actually have:
\begin{gather*}
  o(x^{(\beta,s)}_{i,1})> \cdots >o(x^{(\beta,s)}_{n-1,1})> o(x^{(\beta,s)}_{n,1}) > o(x^{(\beta,s)}_{n-1,2})>
  \cdots >o(x^{(\beta,s)}_{j,2}).
\end{gather*}
The $\phi_{\unl{d}}$-specialization of the product of the following $\zeta$-factors
\[
  \left\{ \prod^{n-2}_{\ell=j} \zeta\left(\frac{x^{(\beta,s)}_{\ell,2}}{x^{(\beta,s)}_{\ell+1,2}}\right) \right\}
  \cdot \zeta\left(\frac{x^{(\beta,s)}_{n,1}}{x^{(\beta,s)}_{n-1,1}}\right) \cdot
  \left\{\prod_{\ell=i+1}^{n-1} \zeta\left(\frac{x^{(\beta,s)}_{\ell,1}}{x^{(\beta,s)}_{\ell-1,1}}\right)\right\},
\]
contributes $\langle 1\rangle^{2n-i-j-2}_{v}\langle 2\rangle_{v}$. Likewise, the $\phi_{\unl{d}}$-specialization of
\[
  \prod^{n-1}_{\ell=j}\left\{
  \zeta\left(\frac{x^{(\beta,s)}_{\ell,2}}{x^{(\beta,s)}_{\ell-1,1}}\right)
  \zeta\left(\frac{x^{(\beta,s)}_{\ell,2}}{x^{(\beta,s)}_{\ell,1}}\right)
  \zeta\left(\frac{x^{(\beta,s)}_{\ell,2}}{x^{(\beta,s)}_{\ell+1,1}}\right)
  \right\}
\]
contributes $\prod_{\ell=j}^{n-1}\left\{(v^{2n-2\ell}-1)(v^{2n-2\ell+4}-1)\right\}$.
This overall yields $\tilde{c}_{[i,n,j]}$ of~\eqref{ctildebeta-C}.

\medskip
\item

$\beta=[i,n,i]$ with $1\leq i<n$.
\smallskip

According to~\eqref{spe-C-2}, the $\phi_{\unl{d}}$-specialization of each summand in $F$  vanishes unless
\begin{gather*}
  o(x^{(\beta,s)}_{i,1}) \geq o(x^{(\beta,s)}_{i+1,1})\geq \cdots  \geq o(x^{(\beta,s)}_{n-1,1}),\quad
  o(x^{(\beta,s)}_{i,2})\geq \cdots \geq o(x^{(\beta,s)}_{n-1,2})\geq o(x^{(\beta,s)}_{n,1}).
\end{gather*}
Since $o(x^{(\beta,s)}_{i,t})\ne o(x^{(\beta,s)}_{i',t'})$ for $i\ne i'$, we again have strict inequalities:
\begin{gather*}
  o(x^{(\beta,s)}_{i,1})> o(x^{(\beta,s)}_{i+1,1})> \cdots > o(x^{(\beta,s)}_{n-1,1}),\quad
  o(x^{(\beta,s)}_{i,2})> \cdots > o(x^{(\beta,s)}_{n-1,2})>o(x^{(\beta,s)}_{n,1}).
\end{gather*}
For any $i\leq \ell\leq n-2$, let us consider the $\zeta$-factors between the variables
\[
  \big\{x^{(\beta,s)}_{\ell,1},x^{(\beta,s)}_{\ell,2},x^{(\beta,s)}_{\ell+1,1},x^{(\beta,s)}_{\ell+1,2}\big\}.
\]
With symmetry in the above variables, we may assume that $o(x^{(\beta,s)}_{\ell,1})\geq o(x^{(\beta,s)}_{\ell,2})$
in the following analysis. We have two cases to consider:
\smallskip

\begin{itemize}[leftmargin=0.7cm]

\item
if $o(x^{(\beta,s)}_{\ell,2})>o(x^{(\beta,s)}_{\ell+1,1})$, then we have
$o(x^{(\beta,s)}_{\ell,1})\geq o(x^{(\beta,s)}_{\ell,2})>o(x^{(\beta,s)}_{\ell+1,1}) \ \&\ o(x^{(\beta,s)}_{\ell+1,2})$,
and
$
  \zeta\left(\frac{x^{(\beta,s)}_{\ell+1,2}}{x^{(\beta,s)}_{\ell,1}}\right)
  \zeta\left(\frac{x^{(\beta,s)}_{\ell+1,1}}{x^{(\beta,s)}_{\ell,2}}\right)
$
contributes $(w_{\beta,s}-v^{2}w'_{\beta,s})(w_{\beta,s}-v^{-2}w'_{\beta,s})$ into the $\phi^{(1)}_{\beta}$-specialization;

\item
if $o(x^{(\beta,s)}_{\ell+1,1})>o(x^{(\beta,s)}_{\ell,2})$, then
  $o(x^{(\beta,s)}_{\ell,1})>o(x^{(\beta,s)}_{\ell+1,1})>o(x^{(\beta,s)}_{\ell,2})> o(x^{(\beta,s)}_{\ell+1,2})$,
and
$
  \zeta\left(\frac{x^{(\beta,s)}_{\ell+1,2}}{x^{(\beta,s)}_{\ell,1}}\right)
  \zeta\left(\frac{x^{(\beta,s)}_{\ell+1,2}}{x^{(\beta,s)}_{\ell+1,1}}\right)
  \zeta\left(\frac{x^{(\beta,s)}_{\ell,2}}{x^{(\beta,s)}_{\ell+1,1}}\right)
$
contributes $(w_{\beta,s}-v^{2}w'_{\beta,s})(w_{\beta,s}-v^{-2}w'_{\beta,s})$ into the $\phi^{(1)}_{\beta}$-specialization.

\end{itemize}
The above analysis shows that the $\phi^{(1)}_{\beta}$-specialization of that summand is divisible by $B_{\beta}$ of \eqref{Bfactor-C}.
Now let us  consider the $\zeta$-factors between the variables $\{x^{(\beta,s)}_{n-1,1},x^{(\beta,s)}_{n-1,2},x^{(\beta,s)}_{n,1}\}$.
If $o(x^{(\beta,s)}_{n,1})>o(x^{(\beta,s)}_{n-1,1})$, then the $\phi_{\unl{d}}$-specialization of that summand vanishes due to the
$\zeta$-factor $\zeta\left(\frac{x^{(\beta,s)}_{n-1,1}}{x^{(\beta,s)}_{n-1,2}}\right)$; if $o(x^{(\beta,s)}_{n,1})<o(x^{(\beta,s)}_{n-1,1})$,
then the $\zeta$-factors
  $\zeta\left(\frac{x^{(\beta,s)}_{n,1}}{x^{(\beta,s)}_{n-1,1}}\right)\zeta\left(\frac{x^{(\beta,s)}_{n,1}}{x^{(\beta,s)}_{n-1,2}}\right)$
contribute $\langle 1\rangle_{v}\langle 2\rangle_{v}$ into the overall $\phi_{\unl{d}}$-specialization. Along with the specialization
of the $\zeta$-factors (which have not been considered above yet)
$
  \prod^{n-2}_{\ell=i}\left\{\zeta\left(\frac{x^{(\beta,s)}_{\ell+1,1}}{x^{(\beta,s)}_{\ell,1}}\right)
  \zeta\left(\frac{x^{(\beta,s)}_{\ell+1,2}}{x^{(\beta,s)}_{\ell,2}}\right)\right\}
$
shows that $\phi_{\unl{d}}(F)$ is indeed divisible by $\langle 1\rangle^{|\beta|-2}_{v}\langle 2\rangle_{v}$,
which is precisely $\tilde{c}_{[i,n,i]}$ of \eqref{ctildebeta-C}.
\end{itemize}

This completes our proof.
\end{proof}

Recall the normalized divided powers~\eqref{eq:nrv-C} of the quantum root vectors
$\{\tilde{\mathbf{E}}_{\beta,s}^{\pm,(k)}\}^{k\in\BN}_{\beta\in\Delta^{+},s\in\BZ}$ and the ordered monomials
$\{\tilde{\mathbf{E}}^{\epsilon}_{h}\}_{h\in H}$ of  \eqref{eq:Lus-pbwd}. For $\epsilon\in\{\pm\}$,
let $\mathbf{S}^{\epsilon}_{\unl{k}}$ be the $\BZ[v,v^{-1}]$-submodule of $\mathbf{S}_{\unl{k}}$
spanned by $\{\Psi(\tilde{\mathbf{E}}^{\epsilon}_{h})\}_{h\in H_{\unl{k}}}$.
Then, the following analogue of Lemma \ref{span} holds:

\begin{Prop}\label{span-Lus-C}
For any $F\in \mathbf{S}_{\unl{k}}$ and $\unl{d}\in\mathrm{KP}(\unl{k})$, if $\phi_{\unl{d}'}(F)=0$ for all
$\unl{d}'\in \mathrm{KP}(\unl{k})$ such that $\unl{d}'<\unl{d}$, then there exists
$F_{\unl{d}}\in \mathbf{S}^{\epsilon}_{\unl{k}}$ such that $\phi_{\unl{d}}(F)=\phi_{\unl{d}}(F_{\unl{d}})$
and $\phi_{\unl{d}'}(F_{\unl{d}})=0$ for all $\unl{d}'<\unl{d}$.
\end{Prop}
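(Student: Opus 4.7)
The plan is to imitate the proof of Proposition \ref{spanC} while tracking integrality at every step, ultimately reducing the question to the well-understood integral shuffle algebra in rank one. By Proposition \ref{spanC}, there already exists some $G \in S'_{\unl{k}}$ satisfying $\phi_{\unl{d}}(F) = \phi_{\unl{d}}(G)$ and $\phi_{\unl{d}'}(G) = 0$ for all $\unl{d}' < \unl{d}$; the new task is to ensure that $G$ can in fact be taken in $\mathbf{S}^{\epsilon}_{\unl{k}}$, i.e.\ as a $\BZ[v,v^{-1}]$-linear combination of the normalized monomials $\{\Psi(\tilde{\mathbf{E}}^{\epsilon}_h)\}_{h \in H_{\unl{k},\unl{d}}}$.

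The first technical step is to transport the formula of Proposition \ref{spekpC} through the normalization \eqref{eq:nrv-C}. The $[2]_v^k$ factor inserted in \eqref{eq:nrv-C} for $\beta=[i,n,i]$ is exactly what converts the constant $c_\beta$ of~\eqref{eq:c-factor-C} into the renormalized $\tilde{c}_\beta$ of~\eqref{ctildebeta-C}, and the $[k]_{v_\beta}!$ factor absorbs the multiplicities of coinciding $r$-values inside the symmetrization~\eqref{hlp-C}. Writing $\tilde{P}_{\lambda_{h,\beta}}$ for the resulting rank-one shuffle product of divided-power monomials in $\{w_{\beta,s}\}_{s=1}^{d_\beta}$, one obtains
\[
  \phi_{\unl{d}}\bigl(\Psi(\tilde{\mathbf{E}}^{\epsilon}_h)\bigr) \doteq
  \prod_{\beta<\beta'} G_{\beta,\beta'} \cdot \prod_{\beta \in \Delta^+}\bigl(\tilde{c}_\beta^{d_\beta} G_\beta\bigr) \cdot \prod_{\beta \in \Delta^+} \tilde{P}_{\lambda_{h,\beta}}.
\]

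Next, I would consider the reduced function
\[
  R \, := \, \phi_{\unl{d}}(F) \, \Big/ \, \Big( \prod_{\beta<\beta'} G_{\beta,\beta'} \cdot \prod_{\beta \in \Delta^+} \tilde{c}_\beta^{d_\beta} G_\beta \Big)
\]
and argue that $R$ is an $\mathfrak{S}_{\unl{d}}$-symmetric Laurent polynomial in $\{w_{\beta,s}\}$ with coefficients in $\BZ[v,v^{-1}]$. Divisibility of $\phi_{\unl{d}}(F)$ by $\prod_\beta \tilde{c}_\beta^{d_\beta}$ is precisely condition \eqref{li2-C}, while divisibility by $\prod_{\beta<\beta'} G_{\beta,\beta'} \cdot \prod_\beta G_\beta$ is obtained by replaying the pair-by-pair wheel analysis in the proof of Proposition \ref{spanC} (using the hypothesis $\phi_{\unl{d}'}(F)=0$ for $\unl{d}'<\unl{d}$ to extract the cross-factors $G_{\beta,\beta'}$). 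The $\BZ[v,v^{-1}]$-integrality of $R$ then follows because the $G$-factors are products of monic linear factors in the $w$-variables, which divide out cleanly in the Laurent polynomial ring.

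The final step applies the rank-one reduction of Remark \ref{rankreduction}: for each $\beta$, the corresponding factor of $R$ lies in the rank-one trigonometric shuffle algebra in the variables $\{w_{\beta,s}\}_{s=1}^{d_\beta}$, whose Lusztig integral form is $\BZ[v,v^{-1}]$-spanned by shuffle products of divided powers, as established in~\cite{Tsy18}. Hence $R$ expands as a $\BZ[v,v^{-1}]$-combination of products $\prod_\beta \tilde{P}_{\lambda_{h,\beta}}$ indexed by $h \in H_{\unl{k},\unl{d}}$, and the corresponding combination $F_{\unl{d}} = \sum_h c_h \Psi(\tilde{\mathbf{E}}^{\epsilon}_h) \in \mathbf{S}^{\epsilon}_{\unl{k}}$ satisfies the proposition, with $\phi_{\unl{d}'}(F_{\unl{d}}) = 0$ for $\unl{d}'<\unl{d}$ being immediate from Proposition~\ref{vanishC}. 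The principal obstacle, and what distinguishes this statement from its $\BQ(v)$-analogue Proposition \ref{spanC}, is preserving $\BZ[v,v^{-1}]$-integrality of the expansion coefficients; the normalization $\tilde{c}_\beta$ of \eqref{ctildebeta-C} together with the $[2]_v$-adjustment in \eqref{eq:nrv-C} for $\beta=[i,n,i]$ is designed precisely so that this reduction to the rank-one integral shuffle algebra, where divided powers do form a $\BZ[v,v^{-1}]$-basis, goes through cleanly.
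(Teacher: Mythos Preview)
Your proposal is correct and follows precisely the approach that the paper defers to \cite[Proposition~3.11]{HT24}: divide $\phi_{\unl{d}}(F)$ by the $G$-factors and the constants $\tilde{c}_\beta^{d_\beta}$ to obtain an integral $\mathfrak{S}_{\unl{d}}$-symmetric Laurent polynomial, then invoke the rank-one integral shuffle algebra basis. One minor imprecision worth flagging: $R$ does not literally factor as a product over $\beta$, but rather lies in the tensor product $\bigotimes_{\beta} \BZ[v,v^{-1}][\{w_{\beta,s}^{\pm 1}\}_s]^{\mathfrak{S}_{d_\beta}}$, each tensorand of which is $\BZ[v,v^{-1}]$-spanned by the rank-one divided-power shuffle products---this is what yields the desired integral expansion of $R$ in the basis $\prod_\beta \tilde{P}_{\lambda_{h,\beta}}$.
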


\begin{proof}
Completely analogous to that of~\cite[Proposition~3.11]{HT24}.
\end{proof}

Combining Propositions \ref{lintegralC} and~\ref{span-Lus-C}, we obtain the following upgrade of Theorem \ref{shufflePBWD-C}:

\begin{Thm}\label{lusthm-C}
(a) The $\BQ(v)$-algebra isomorphism $\Psi\colon \qlc \,\iso\, S$ of Theorem~\ref{shufflePBWD-C}(a) gives rise to
a $\BZ[v,v^{-1}]$-algebra isomorphism $\Psi\colon \integralcl \,\iso\, \mathbf{S}$.

\noindent
(b) Theorem {\rm \ref{pbwtheorem-Lus}} holds for $\fg$ of type $C_n$.
\end{Thm}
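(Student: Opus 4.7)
The plan is to derive Theorem \ref{lusthm-C} from the two key technical ingredients, Propositions \ref{lintegralC} and \ref{span-Lus-C}, together with the isomorphism $\Psi\colon\qlc\iso S$ and the PBWD basis of $\qlc$ from Theorem \ref{shufflePBWD-C}. The argument follows the same template as in the type $B_n$ and $G_2$ cases of \cite{HT24}. First, Proposition \ref{lintegralC} provides a $\BZ[v,v^{-1}]$-algebra homomorphism $\Psi\colon\integralcl\to\mathbf{S}$; since it extends the $\BQ(v)$-algebra isomorphism of Theorem \ref{shufflePBWD-C}(a), it is automatically injective. For part (a) it therefore suffices to establish surjectivity onto $\mathbf{S}$; part (b) will be obtained from (a) together with linear independence over $\BQ(v)$.

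For the surjectivity in (a), I would proceed by a double induction, first on the grading $\unl{k}\in\BN^n$ (with $\mathbf{S}_{\unl{0}}=\BZ[v,v^{-1}]$ as the trivial base case) and then, for fixed $\unl{k}$, on the total order \eqref{eq:KP-order} on $\mathrm{KP}(\unl{k})$. Given $F\in\mathbf{S}_{\unl{k}}$, let $\unl{d}_{\min}\in\mathrm{KP}(\unl{k})$ be the smallest Kostant partition with $\phi_{\unl{d}_{\min}}(F)\ne 0$; if no such $\unl{d}_{\min}$ exists, then $\phi_{\unl{d}}(F)=0$ for all $\unl{d}$, and an application of Lemma \ref{span} together with Theorem \ref{shufflePBWD-C}(a) forces $F=0$. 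Otherwise, Proposition \ref{span-Lus-C} produces $F_{\unl{d}_{\min}}\in\mathbf{S}^{\epsilon}_{\unl{k}}$ satisfying $\phi_{\unl{d}_{\min}}(F)=\phi_{\unl{d}_{\min}}(F_{\unl{d}_{\min}})$ and $\phi_{\unl{d}'}(F_{\unl{d}_{\min}})=0$ for all $\unl{d}'<\unl{d}_{\min}$. Then $F-F_{\unl{d}_{\min}}$ still lies in $\mathbf{S}_{\unl{k}}$ and has strictly larger minimal non-vanishing Kostant partition; since $\mathrm{KP}(\unl{k})$ is finite, iterating expresses $F$ as a $\BZ[v,v^{-1}]$-linear combination of elements $\Psi(\tilde{\mathbf{E}}^{\epsilon}_h)$, each of which lies in $\Psi(\integralcl)$ by Proposition \ref{integralrv}.

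For part (b), linear independence of $\{\tilde{\mathbf{E}}^{\epsilon}_h\}_{h\in H}$ over $\BZ[v,v^{-1}]$ follows immediately from Theorem \ref{shufflePBWD-C}(b): these monomials differ from the ordered PBWD monomials $E_h$ (obtained from the particular choice of quantum root vectors \eqref{rvc1}--\eqref{rvc4}) only by nonzero $\BQ(v)$-scalars coming from the factorial normalizations in \eqref{eq:nrv-C}, hence remain $\BQ(v)$-linearly independent and a fortiori $\BZ[v,v^{-1}]$-linearly independent. Spanning is then immediate from part (a) together with the iterative argument above, which shows that $\mathbf{S}$ is the $\BZ[v,v^{-1}]$-span of $\{\Psi(\tilde{\mathbf{E}}^{\epsilon}_h)\}_{h\in H}$; pulling back through the isomorphism $\Psi\colon\integralcl\iso\mathbf{S}$ yields the desired PBWD $\BZ[v,v^{-1}]$-basis of $\integralcl$.

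The main obstacle in this scheme is contained entirely in Proposition \ref{span-Lus-C}, whose content is a refinement of Lemma \ref{span} asserting that the corrective term $F_{\unl{d}}$ can be chosen inside the integral subspace $\mathbf{S}^{\epsilon}_{\unl{k}}$ rather than merely in the $\BQ(v)$-space $S'_{\unl{k}}$. The delicate point is matching the divisibility condition \eqref{li2-C} defining $\mathbf{S}_{\unl{k}}$ with the denominators that appear when expressing the coefficients of the reduction via Proposition \ref{spekpC}: in type $C_n$, the additional subtlety for $\beta=[i,n,i]$ (the two-step specialization \eqref{spe-C-3} and the extra factor $B_{\beta}$) is precisely what forces the normalization by $[2]_{v}^k$ in \eqref{eq:nrv-C}, rather than the uniform $[k]_{v_\beta}!$-normalization used for all other roots.
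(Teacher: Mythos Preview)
Your proposal is correct and follows precisely the approach the paper intends: the paper states the theorem as a direct consequence of Propositions~\ref{lintegralC} and~\ref{span-Lus-C}, and you have unpacked that combination via the standard peeling-off argument on the total order on $\mathrm{KP}(\unl{k})$, exactly as in \cite[Proposition~3.11 and the surrounding discussion]{HT24}. One minor remark: your justification that $F=0$ when all $\phi_{\unl{d}}(F)$ vanish should appeal to the upper-triangularity of Proposition~\ref{vanishC} together with the $\BQ(v)$-linear independence of $\{\Psi(E_h)\}$ from Theorem~\ref{shufflePBWD-C}(b), rather than to Lemma~\ref{span} directly; but this is a phrasing issue, not a gap.
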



\subsection{Shuffle algebra realization of the RTT integral form $\integralc$}\label{rttc}

To introduce the RTT integral form of the shuffle algebra $S$, we first recall the \textbf{vertical specialization map}
(cf.\ \cite[(1.59)]{Tsy22}):
\begin{equation}
\label{verticalspe}
  \varpi_{\unl{t}}\colon
  \BZ[v,v^{-1}][\{w_{\beta,s}^{\pm 1}\}_{\beta\in\Delta^{+}}^{1\leq s\leq d_{\beta}}]^{\mathfrak{S}_{\unl{d}}} \longrightarrow
  \BZ[v,v^{-1}][\{z_{\beta,r}^{\pm 1}\}_{\beta\in\Delta^{+}}^{1\leq r\leq \ell_{\beta}}].
\end{equation}
For $\unl{d}\in\text{KP}(\unl{k})$,  pick any collection of positive integers
$\unl{t}=\{t_{\beta,r}\}_{\beta\in\Delta^{+}}^{1\leq r\leq \ell_{\beta}}\ (\ell_{\beta}\in\BN)$ satisfying
\begin{equation}
\label{verticalpartition-1}
  d_{\beta}=\sum_{r=1}^{\ell_{\beta}}t_{\beta,r} \qquad \forall\, \beta\in\Delta^{+}.
\end{equation}
For any $\beta\in\Delta^{+}$, we split the variables $\{w_{\beta,s}\}_{s=1}^{d_{\beta}}$ into $\ell_{\beta}$ groups
of size $t_{\beta,r}$ each $(1\leq r\leq \ell_{\beta})$ and specialize the variables in the $r$-th group to
\begin{equation*}
  v_{\beta}^{-2}z_{\beta,r},\ v_{\beta}^{-4}z_{\beta,r},\ \dots\ ,\ v_{\beta}^{-2t_{\beta,r}}z_{\beta,r}.
\end{equation*}
For any
  $g\in \BZ[v,v^{-1}][\{w_{\beta,s}^{\pm 1}\}_{\beta\in\Delta^{+}}^{1\leq s\leq d_{\beta}}]^{\mathfrak{S}_{\unl{d}}}$,
we define $\varpi_{\unl{t}}(g)$ as the above specialization of~$g$.

Recall the factors $\{c_{\beta}\}_{\beta\in\Delta^{+}}$ of \eqref{eq:c-factor-C}.
When $\beta=[i,n,j]$ with $1\leq i<j<n$, we have
\begin{align*}
  c_{\beta}=\ &\langle 1\rangle^{\abs{\beta}-3}_{v} \langle 2\rangle_{v}\cdot
  \prod_{\ell=j}^{n-1}\left\{(v^{2n-2\ell}-1)(v^{2n-2\ell+4}-1)\right\} \\
  \doteq\ & \langle 1\rangle^{\abs{\beta}-2}_{v}\langle 2\rangle_{v}\cdot (v^{2n-2j+4}-1)\cdot
  \prod_{\ell=j}^{n-2}\left\{(v^{2n-2\ell}-1)(v^{2n-2\ell+2}-1)\right\}.
\end{align*}
For any $\unl{k}\in\BN^{n}$, consider the $\BZ[v,v^{-1}]$-submodule $\mathcal{S}_{\unl{k}}$ of $S_{\unl{k}}$
consisting of rational functions $F$ satisfying the following three conditions:
\begin{enumerate}[leftmargin=1cm]

\item
If $f$ denotes the numerator of $F$ from \eqref{polecondition}, then
\begin{equation}
\label{rttconstant1-C}
  f\in \langle 1\rangle_{v}^{k_1+\dots+k_{n-1}} \langle 2\rangle_{v}^{k_{n}}\cdot
  \BZ[v,v^{-1}][\{x_{i,r}^{\pm 1}\}_{1\leq i\leq n}^{1\leq r\leq k_{i}}]^{\mathfrak{S}_{\underline{k}}}.
\end{equation}

\item
For any $\unl{d}\in\text{KP}(\underline{k})$, the specialization
$\phi_{\unl{d}}(f\cdot \langle 1\rangle_{v}^{-k_1-\dots-k_{n-1}} \langle 2\rangle_{v}^{-k_{n}})$ is divisible by
\begin{equation}
\label{rttconstant2-C}
\begin{aligned}
  A_{\unl{d}}=
  \prod^{1\leq i<n}_{\beta=[i,n,i]\in \Delta^{+}}[2]^{d_{\beta}}_{v}
  \prod^{1\leq i<j<n}_{\beta=[i,n,j]\in \Delta^{+}}(v^{2n-2j+4}-1)^{d_{\beta}}\prod_{\ell=j}^{n-2}
  \big\{(v^{2n-2\ell}-1)^{d_{\beta}}(v^{2n-2\ell+2}-1)^{d_{\beta}}\big\}.
\end{aligned}
\end{equation}

\item
$F$ is \textbf{integral} in the sense of \cite[Definition 4.12]{HT24}: the {\em cross specialization}
\begin{equation}
\label{eq:crossspe-C}
  \Upsilon_{\unl{d},\unl{t}}(F) \coloneqq
  \varpi_{\unl{t}}\left(\frac{\phi_{\unl{d}}(F)}{\langle 1\rangle_{v}^{k_1+\dots+k_{n-1}} \langle 2\rangle_{v}^{k_{n}}\cdot
  A_{\unl{d}}\cdot \prod_{\beta\in\Delta^{+}}G_{\beta}}\right)
\end{equation}
is divisible by $\prod_{\beta\in\Delta^{+}}^{1\leq r\leq \ell_{\beta}}[t_{\beta,r}]_{v_{\beta}}!$ for any
$\unl{d}\in\mathrm{KP}(\unl{k})$ and $\unl{t}=\{t_{\beta,r}\}_{\beta\in\Delta^{+}}^{1\leq r\leq \ell_{\beta}}$ satisfying
\eqref{verticalpartition-1}, with $G_{\beta}$ of \eqref{eq:formulagbeta-C-1}--\eqref{eq:formulagbeta-C-4};
the divisibility of $\phi_{\unl{d}}(F)$ by $G_{\beta}$ is proved in Proposition~\ref{goodC}.
\end{enumerate}

We define $\mathcal{S}:=\bigoplus_{\unl{k}\in\BN^{n}}\mathcal{S}_{\unl{k}}$.
Recall the RTT integral form $\integralc$ from Definition~\ref{def:rttintegral}.
Then, similarly to~\cite[Proposition 4.13]{HT24}, we have:

\begin{Prop}\label{goodC}
$\Psi(\integralc) \subset \mathcal{S}$.
\end{Prop}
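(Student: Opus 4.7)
The proof follows the strategy of Proposition~\ref{lintegralC} and its type $B_n$ analog in~\cite[Proposition~4.13]{HT24}. Since $\integralc$ is generated over $\BZ[v,v^{-1}]$ by the normalized quantum root vectors $\{\tilde{\mathcal{E}}^{\epsilon}_{\beta,s}\}$, it suffices to check that for any sequence of generators, the image
\[
  F\coloneqq \Psi\big(\tilde{\mathcal{E}}^{\epsilon}_{\beta_1,s_1}\cdots \tilde{\mathcal{E}}^{\epsilon}_{\beta_m,s_m}\big)
\]
satisfies all three conditions defining $\mathcal{S}_{\unl{k}}$, where $\unl{k}$ is the total grading. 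For the integrality condition~\eqref{rttconstant1-C}, a direct count of the $\langle 1\rangle_v$- and $\langle 2\rangle_v$-factors arising from the formulas of Lemma~\ref{lem:psirv-C}, combined with the normalization in~\eqref{eq:rtt-vectors-C}, shows that each generator $\tilde{\mathcal{E}}^{\epsilon}_{\beta_q,s_q}$ contributes exactly the power of $\langle 1\rangle_v$ equal to the number of variables $x_{\ell,*}$ with $\ell<n$ it produces, and the power of $\langle 2\rangle_v$ equal to the number of $x_{n,*}$ variables. Using the identity $\langle 2\rangle_v=[2]_v\cdot\langle 1\rangle_v$ to absorb the extra $[2]_v$ in the $\beta=[i,n,i]$ case into the $\BZ[v,v^{-1}]$-polynomial part yields precisely~\eqref{rttconstant1-C}.

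For the specialization divisibility~\eqref{rttconstant2-C}, the argument of Proposition~\ref{lintegralC} carries over essentially verbatim. For each summand in the symmetrization of $\phi_{\unl{d}}(F)$, the assignment $o(x^{(*,*)}_{*,*})$ of variables to shuffle factors must impose a specific linear order within each group $\{x^{(\beta,s)}_{*,*}\}$ (just as in the proof of Proposition~\ref{prop:phidEh}); the $\zeta$-factors between these ordered variables, under the specialization~\eqref{spe-C-1}--\eqref{spe-C-3}, contribute the factor $\tilde{c}_\beta$ of~\eqref{ctildebeta-C}. Stripping off the $\langle 1\rangle_v$- and $\langle 2\rangle_v$-powers (absorbed into the integrality factor above), the residual $(v^{2k}-1)$-type factors match $A_{\unl{d}}$ exactly. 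The auxiliary divisibility of $\phi_{\unl{d}}(F)$ by $\prod_\beta G_\beta$ follows from the same wheel-condition analysis as in the proof of Proposition~\ref{spanC}.

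The most delicate condition is the cross-specialization~\eqref{eq:crossspe-C}. The plan is to expand $\tilde{\mathcal{E}}^{\epsilon}_{\beta_1,s_1}\cdots \tilde{\mathcal{E}}^{\epsilon}_{\beta_m,s_m}$ inductively (in the Kostant order~\eqref{eq:KP-order}) into a $\BZ[v,v^{-1}]$-combination of the ordered PBWD monomials $\{\tilde{\mathcal{E}}^{\epsilon}_h\}_{h\in H}$, and then to invoke the rank-$1$ reduction of Remark~\ref{rankreduction}: by Proposition~\ref{spekpC}, the quotient of $\phi_{\unl{d}}(F)$ by the product of prefactors becomes a $\BZ[v,v^{-1}]$-combination of type-$A_1$ shuffle products $\prod_\beta P_{\lambda_{h,\beta}}$ in the variables $\{w_{\beta,s}\}$. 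The vertical specialization $\varpi_{\unl{t}}$ of each such $P_{\lambda_{h,\beta}}$ is divisible by $\prod_r[t_{\beta,r}]_{v_\beta}!$ by the standard rank-$1$ integrality calculation. The main obstacle is to maintain $\BZ[v,v^{-1}]$-integrality of the coefficients throughout this inductive PBWD expansion, particularly in handling the two-step specialization~\eqref{spe-C-3} for $\beta=[i,n,i]$, where the division by $B_\beta$ could a priori introduce denominators; this is resolved in the spirit of Proposition~\ref{span-Lus-C} via descending induction on the Kostant order, each surviving rank-$1$ piece $P_{\lambda_{h,\beta}}$ remaining integral against $\varpi_{\unl{t}}$.
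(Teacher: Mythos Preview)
Your argument has genuine gaps in all three parts beyond the first.

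For condition~\eqref{rttconstant2-C}, the claim that ``the argument of Proposition~\ref{lintegralC} carries over essentially verbatim'' fails. In Proposition~\ref{lintegralC} the shuffle factors are the $\Psi(\mathbf{E}^{(\ell_q)}_{i_q,r_q})$, each involving a single simple root, so variables $x^{(\beta,s)}_{i,t}$ and $x^{(\beta,s)}_{i',t'}$ with $i\ne i'$ are forced into different factors and hence acquire a strict $o$-order. Here the factors are $\Psi(\tilde{\mathcal{E}}^{\epsilon}_{\beta_q,r_q})$, so several variables with distinct simple-root labels can land in the same factor, and no linear order is forced. The paper's proof treats exactly these ``equal $o$-value'' cases by invoking the internal $Q$-factors~\eqref{eq:Qform-C} and the linear factors~\eqref{eq:otherfactors} supplied by Lemma~\ref{lem:psirv-C}; without this, the required $(v^{2k}-1)$ and $[2]_v$ contributions to $A_{\unl{d}}$ are not accounted for.

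The divisibility of $\phi_{\unl{d}}(F)$ by $\prod_\beta G_\beta$ does \emph{not} follow from Proposition~\ref{spanC}: that result assumes $\phi_{\unl{d}'}(F)=0$ for all $\unl{d}'<\unl{d}$, which a generic product of normalized root vectors does not satisfy. The paper instead expands $F$ as a $\BZ[v,v^{-1}]$-combination of monomials $e_{i_1,s_1}\cdots e_{i_k,s_k}$ and checks the $G_\beta$-divisibility summand by summand via direct $\zeta$-factor bookkeeping with the ordering $\hat{o}$.

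Most seriously, your approach to the cross-specialization~\eqref{eq:crossspe-C} is circular. Expanding an unordered product $\tilde{\mathcal{E}}^{\epsilon}_{\beta_1,s_1}\cdots\tilde{\mathcal{E}}^{\epsilon}_{\beta_m,s_m}$ into the ordered monomials $\tilde{\mathcal{E}}^{\epsilon}_h$ with $\BZ[v,v^{-1}]$-coefficients is precisely Theorem~\ref{PBWDintegralrtt}(b), which is deduced \emph{from} Proposition~\ref{goodC} via Theorem~\ref{rttthm-C}. The paper avoids this by never invoking PBWD: it tracks, in the $G_\beta$-divisibility argument above, exactly which $\zeta$-factors were consumed, observes that the factors $\zeta\big(x^{(\beta,s)}_{i,1}/x^{(\beta,s')}_{i,1}\big)$ (for $\beta=[i,n,j]$) and $\zeta\big(x^{(\beta,s)}_{n,1}/x^{(\beta,s')}_{n,1}\big)$ (for $\beta=[i,n,i]$) were never used, and then appeals to the rank-$1$ computation of~\cite[Lemma~3.46]{Tsy18} on those remaining factors to produce $[t_{\beta,r}]_{v_\beta}!$.
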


\begin{proof}
For any $\epsilon\in\{\pm\}$, $m\in\BN$, $\beta_{1},\dots,\beta_{m}\in\Delta^{+}$, $r_{1},\dots,r_{m}\in\BZ$, let
\[
  F\coloneqq
  \Psi\big(\tilde{\mathcal{E}}^{\epsilon}_{\beta_{1},r_{1}}\cdots \tilde{\mathcal{E}}^{\epsilon}_{\beta_{m},r_{m}}\big),
\]
and $f$ be the numerator of $F$. We set $\unl{k}=\sum_{q=1}^{m} \beta_{q}$. Henceforth, we shall use the notation
$o(x^{(*,*)}_{*,*})=q$ if a variable $x^{(*,*)}_{*,*}$ is plugged into
$\Psi(\tilde{\mathcal{E}}^{\epsilon}_{\beta_{q},r_{q}})$ for some $1\leq q\leq m$.

First, due to Lemma \ref{lem:psirv-C} and our choices of the normalized quantum root vectors of \eqref{eq:rtt-vectors-C},
$f$ is divisible by $\langle 1\rangle_{v}^{k_1+\dots+k_{n-1}} \langle 2\rangle_{v}^{k_{n}}$, thus implying~\eqref{rttconstant1-C}.

Next, for any $\unl{d}\in\text{KP}(\unl{k})$, we show that
$\phi_{\unl{d}}(f/\langle 1\rangle_{v}^{k_1+\dots+k_{n-1}} \langle 2\rangle_{v}^{k_{n}})$ is divisible by $A_{\unl{d}}$
of~\eqref{rttconstant2-C}. We consider the $\phi_{\unl{d}}$-specialization of each summand from the symmetrization featuring in $f$.
\begin{itemize}[leftmargin=0.7cm]

\item
$\beta=[i,n,j]$ with $1\leq i<j<n$ such that $d_{\beta}\neq 0$.

Fix any $1\leq s\leq d_{\beta}$. We can assume that
\begin{gather*}
  o(x^{(\beta,s)}_{i,1})\geq  \cdots \geq o(x^{(\beta,s)}_{n-1,1})\geq o(x^{(\beta,s)}_{n,1}) \geq
  o(x^{(\beta,s)}_{n-1,2})\geq \cdots \geq o(x^{(\beta,s)}_{j,2}),
\end{gather*}
as otherwise the $\phi_{\unl{d}}$-specialization of the corresponding summand vanishes. Let us now consider the
$\zeta$-factors arising from the variables $\{x^{(\beta,s)}_{j-1,1},x^{(\beta,s)}_{j,1},x^{(\beta,s)}_{j,2}\}$:
\begin{itemize}[leftmargin=0.5cm]

\item
If $o(x^{(\beta,s)}_{j-1,1})=o(x^{(\beta,s)}_{j,2})$, then $o(x^{(\beta,s)}_{j-1,1})=o(x^{(\beta,s)}_{j,1})=o(x^{(\beta,s)}_{j,2})$,
and from Lemma \ref{lem:psirv-C} we know that the corresponding summand is divisible by
\begin{equation}
\label{eq:otherfactors}
  (1+v^2)x^{(\beta,s)}_{j,1}x^{(\beta,s)}_{j,2}-vx^{(\beta,s)}_{j-1,1}(x^{(\beta,s)}_{j,1}+x^{(\beta,s)}_{j,2}) \quad \text{or} \quad
  (1+v^2)x^{(\beta,s)}_{j-1,1}-v(x^{(\beta,s)}_{j,1}+x^{(\beta,s)}_{j,2}),
\end{equation}
and so the $\phi_{\unl{d}}$-specialization is divisible by $v^{2n-2j+4}-1$.

\item
If $o(x^{(\beta,s)}_{j-1,1})>o(x^{(\beta,s)}_{j,2})$, then from the $\zeta$-factor
$\zeta\left(\frac{x^{(\beta,s)}_{j,2}}{x^{(\beta,s)}_{j-1,1}}\right)$ we know that the $\phi_{\unl{d}}$-specialization
of the corresponding summand is divisible by $v^{2n-2j+4}-1$.

\end{itemize}
Next, for each $j\leq \ell\leq n-2$, let us consider the $\zeta$-factors arising from the variables
\begin{equation}
  \big\{ x^{(\beta,s)}_{\ell,1},x^{(\beta,s)}_{\ell+1,1},x^{(\beta,s)}_{\ell+1,2},x^{(\beta,s)}_{\ell,2} \big\}.
\end{equation}
\begin{itemize}[leftmargin=0.5cm]

\item
If $o(x^{(\beta,s)}_{\ell,1})=o(x^{(\beta,s)}_{\ell+1,1})=o(x^{(\beta,s)}_{\ell+1,2})=o(x^{(\beta,s)}_{\ell,2})$,
then by Lemma \ref{lem:psirv-C} we know that the corresponding summand is divisible by
$Q(x^{(\beta,s)}_{\ell,1},x^{(\beta,s)}_{\ell,2},x^{(\beta,s)}_{\ell+1,1},x^{(\beta,s)}_{\ell+1,2})$,
cf.~\eqref{eq:Qform-C}, and so the $\phi_{\unl{d}}$-specialization is divisible by
\[
  Q(v^{1-\ell},v^{-2n+\ell-1},v^{-\ell},v^{-2n+\ell})\doteq (v^{2n-2\ell}-1)(v^{2n-2\ell+2}-1).
\]

\item
If $o(x^{(\beta,s)}_{\ell,1})>o(x^{(\beta,s)}_{\ell+1,1})=o(x^{(\beta,s)}_{\ell+1,2})=o(x^{(\beta,s)}_{\ell,2})$,
then $Q(x,v^{-2n+\ell-1},v^{-\ell},v^{-2n+\ell})\doteq (x-v^{-2n+\ell+1})(v^{2n-2\ell+2}-1)$ together with
  $\zeta\left(\frac{x^{(\beta,s)}_{\ell+1,2}}{x^{(\beta,s)}_{\ell,1}}\right)
   \zeta\left(\frac{x^{(\beta,s)}_{\ell,2}}{x^{(\beta,s)}_{\ell,1}}\right)$
contribute the same factors $(v^{2n-2\ell}-1)(v^{2n-2\ell+2}-1)$ into the $\phi_{\unl{d}}$-specialization of this summand.

\item
If $o(x^{(\beta,s)}_{\ell,1})=o(x^{(\beta,s)}_{\ell+1,1})=o(x^{(\beta,s)}_{\ell+1,2})>o(x^{(\beta,s)}_{\ell,2})$,
then $Q(v^{1-\ell},x,v^{-\ell},v^{-2n+\ell})\doteq (x-v^{-\ell-1})(v^{2n-2\ell+2}-1)$ together with
  $\zeta\left(\frac{x^{(\beta,s)}_{\ell,2}}{x^{(\beta,s)}_{\ell+1,1}}\right)
   \zeta\left(\frac{x^{(\beta,s)}_{\ell,2}}{x^{(\beta,s)}_{\ell,1}}\right)$
contribute the same factors $(v^{2n-2\ell}-1)(v^{2n-2\ell+2}-1)$ into the $\phi_{\unl{d}}$-specialization of this summand.

\item
If $o(x^{(\beta,s)}_{\ell+1,1})>o(x^{(\beta,s)}_{\ell+1,2})$ or
$o(x^{(\beta,s)}_{\ell,1})>o(x^{(\beta,s)}_{\ell+1,1})=o(x^{(\beta,s)}_{\ell+1,2})>o(x^{(\beta,s)}_{\ell,2})$,
then $\zeta$-factors
  $\zeta\left(\frac{x^{(\beta,s)}_{\ell,2}}{x^{(\beta,s)}_{\ell,1}}\right)
   \zeta\left(\frac{x^{(\beta,s)}_{\ell,2}}{x^{(\beta,s)}_{\ell+1,1}}\right)
   \zeta\left(\frac{x^{(\beta,s)}_{\ell+1,2}}{x^{(\beta,s)}_{\ell,1}}\right)$
contribute the same factor $(v^{2n-2\ell}-1)(v^{2n-2\ell+2}-1)$ into the $\phi_{\unl{d}}$-specialization of this summand.

\end{itemize}

\item $\beta=[i,n,i]$ with $1\leq i<n$ and $d_{\beta}\neq 0$.

Fix any $1\leq s\leq d_{\beta}$. We can assume that
\begin{gather*}
  o(x^{(\beta,s)}_{i,1})\geq  \cdots \geq o(x^{(\beta,s)}_{n-1,1}),\qquad
  o(x^{(\beta,s)}_{i,2}) \geq \cdots \geq o(x^{(\beta,s)}_{n-1,2})\geq o(x^{(\beta,s)}_{n,1}).
\end{gather*}
First, let us consider the $\zeta$-factors arising from the variables
\begin{equation*}
  \big\{ x^{(\beta,s)}_{i,1},x^{(\beta,s)}_{i+1,1},x^{(\beta,s)}_{i,2},x^{(\beta,s)}_{i+1,2} \big\}.
\end{equation*}
\begin{itemize}[leftmargin=0.5cm]

\item
If $o(x^{(\beta,s)}_{i,2})\ne o(x^{(\beta,s)}_{i+1,1})$, then the $\zeta$-factors
$\zeta\left(\frac{x^{(\beta,s)}_{i+1,1}}{x^{(\beta,s)}_{i,2}}\right)$ or
  $\zeta\left(\frac{x^{(\beta,s)}_{i,2}}{x^{(\beta,s)}_{i,1}}\right)
   \zeta\left(\frac{x^{(\beta,s)}_{i,2}}{x^{(\beta,s)}_{i+1,1}}\right)$
contribute $(w_{\beta,s}-v^2w'_{\beta,s})$ into the $\phi^{(1)}_{\beta}$-specialization of this summand.
Similarly, if $o(x^{(\beta,s)}_{i,1})\ne o(x^{(\beta,s)}_{i+1,2})$, then the $\zeta$-factors
$\zeta\left(\frac{x^{(\beta,s)}_{i+1,2}}{x^{(\beta,s)}_{i,1}}\right)$ or
  $\zeta\left(\frac{x^{(\beta,s)}_{i,1}}{x^{(\beta,s)}_{i,2}}\right)
   \zeta\left(\frac{x^{(\beta,s)}_{i,1}}{x^{(\beta,s)}_{i+1,2}}\right)$
contribute $(w_{\beta,s}-v^{-2}w'_{\beta,s})$ into the $\phi^{(1)}_{\beta}$-specialization of this summand.

\item
If $o(x^{(\beta,s)}_{i,2})= o(x^{(\beta,s)}_{i+1,1})$ and $o(x^{(\beta,s)}_{i,1})= o(x^{(\beta,s)}_{i+1,2})$, then
\[
  o(x^{(\beta,s)}_{i,1})=o(x^{(\beta,s)}_{i,2})=o(x^{(\beta,s)}_{i+1,1})=o(x^{(\beta,s)}_{i+1,2})=q.
\]
By Lemma \ref{lem:psirv-C}, we know that $\Psi(\tilde{\mathcal{E}}^{\epsilon}_{\beta_{q},r_{q}})$ contains the factor 
$Q(x^{(\beta,s)}_{i,1},x^{(\beta,s)}_{i,2},x^{(\beta,s)}_{i+1,1},x^{(\beta,s)}_{i+1,2})$, which contributes
$(w_{\beta,s}-v^2w'_{\beta,s})(w_{\beta,s}-v^{-2}w'_{\beta,s})$ into the $\phi^{(1)}_{\beta}$-specialization of this summand.

\item
If $o(x^{(\beta,s)}_{i,2})= o(x^{(\beta,s)}_{i+1,1})$ and $o(x^{(\beta,s)}_{i,1})\neq o(x^{(\beta,s)}_{i+1,2})$, then we have
\begin{align*}
  & q=o(x^{(\beta,s)}_{i,1})=o(x^{(\beta,s)}_{i,2})=o(x^{(\beta,s)}_{i+1,1})>o(x^{(\beta,s)}_{i+1,2}),\\
  \text{or}\quad & o(x^{(\beta,s)}_{i,1})>o(x^{(\beta,s)}_{i,2})=o(x^{(\beta,s)}_{i+1,1})=o(x^{(\beta,s)}_{i+1,2})=q,\\
  \text{or}\quad & o(x^{(\beta,s)}_{i,1})>o(x^{(\beta,s)}_{i,2})=o(x^{(\beta,s)}_{i+1,1})>o(x^{(\beta,s)}_{i+1,2}).
\end{align*}
For the first case, from $Q(w,w',v^{-1}w,x)\doteq (w-v^{2}w')(w-v^{-1}x)$ and the $\zeta$-factor
$\zeta\left(\frac{x^{(\beta,s)}_{i+1,2}}{x^{(\beta,s)}_{i,1}}\right)$ we see that the $\phi^{(1)}_{\beta}$-specialization
of this summand is divisible by $(w_{\beta,s}-v^2w'_{\beta,s})(w_{\beta,s}-v^{-2}w'_{\beta,s})$;
for the second case, from $Q(x,w',v^{-1}w,v^{-1}w')\doteq (w'-v^{2}x)(w'-v^{-2}w)$ and the $\zeta$-factor
$\zeta\left(\frac{x^{(\beta,s)}_{i+1,2}}{x^{(\beta,s)}_{i,1}}\right)$ we see that the $\phi^{(1)}_{\beta}$-specialization
of this summand is divisible by $(w_{\beta,s}-v^2w'_{\beta,s})(w_{\beta,s}-v^{-2}w'_{\beta,s})$;
finally, for the third case above, the $\phi^{(1)}_{\beta}$-specialization of the $\zeta$-factors
\[
  \zeta\left(\frac{x^{(\beta,s)}_{i+1,2}}{x^{(\beta,s)}_{i,2}}\right)
  \zeta\left(\frac{x^{(\beta,s)}_{i+1,2}}{x^{(\beta,s)}_{i,1}}\right)
  \zeta\left(\frac{x^{(\beta,s)}_{i+1,1}}{x^{(\beta,s)}_{i,1}}\right)
  \zeta\left(\frac{x^{(\beta,s)}_{i,2}}{x^{(\beta,s)}_{i,1}}\right)
\]
contributes $\frac{\langle 1\rangle^{2}_v}{w_{\beta,s}-w'_{\beta,s}}\cdot (w_{\beta,s}-v^2w'_{\beta,s})(w_{\beta,s}-v^{-2}w'_{\beta,s})$.
Thus, the $\phi^{(1)}_{\beta}$-specialization of this summand is divisible by $(w_{\beta,s}-v^2w'_{\beta,s})(w_{\beta,s}-v^{-2}w'_{\beta,s})$,
and the denominator $w_{\beta,s}-w'_{\beta,s}$ will be canceled (up to a monomial) with $\langle 1\rangle_v$ in the numerator
when specializing $w'_{\beta,s}\mapsto v^2 w_{\beta,s}$ in the second step of specialization $\phi_{\beta}$, cf.~\eqref{spe-C-3}.

\item
If $o(x^{(\beta,s)}_{i,2})\ne o(x^{(\beta,s)}_{i+1,1})$ and $o(x^{(\beta,s)}_{i,1})=o(x^{(\beta,s)}_{i+1,2})$, then
we can use the same analysis as for the above case to get that the $\phi^{(1)}_{\beta}$-specialization of this summand
is divisible by $(w_{\beta,s}-v^2w'_{\beta,s})(w_{\beta,s}-v^{-2}w'_{\beta,s})$.

\end{itemize}
Along with similar $\zeta$-factors arising from the variables
$\{x^{(\beta,s)}_{\ell,1},x^{(\beta,s)}_{\ell+1,1},x^{(\beta,s)}_{\ell,2},x^{(\beta,s)}_{\ell+1,2}\}$ for any $i<\ell<n-1$,
we see the  $\phi^{(1)}_{\beta}$-specialization of any summand is divisible by $B_{\beta}$ of \eqref{Bfactor-C}.
Now let us  consider the $\zeta$-factors between the variables $\{x^{(\beta,s)}_{n-1,1},x^{(\beta,s)}_{n-1,2},x^{(\beta,s)}_{n,1}\}$,
we can assume that $ o(x^{(\beta,s)}_{n-1,1})\geq o(x^{(\beta,s)}_{n-1,2})\geq o(x^{(\beta,s)}_{n,1})$, as otherwise
the corresponding term is specialized to zero under $\phi_{\unl{d}}$. Then:
\begin{itemize}[leftmargin=0.5cm]

\item
If $o(x^{(\beta,s)}_{n-1,2})>o(x^{(\beta,s)}_{n,1})$, then $\zeta\left(\frac{x^{(\beta,s)}_{n,1}}{x^{(\beta,s)}_{n-1,2}}\right)$
contributes a factor $\langle 2\rangle_{v}$ to the $\phi_{\unl{d}}$-specialization of that summand.

\item
If $o(x^{(\beta,s)}_{n-1,1})>o(x^{(\beta,s)}_{n-1,2})=o(x^{(\beta,s)}_{n,1})$, then
  $\zeta\left(\frac{x^{(\beta,s)}_{n,1}}{x^{(\beta,s)}_{n-1,1}}\right)
   \zeta\left(\frac{x^{(\beta,s)}_{n-1,2}}{x^{(\beta,s)}_{n-1,1}}\right)$
contribute a factor $\langle 2\rangle_{v}$ to the $\phi_{\unl{d}}$-specialization of that summand.

\item
If $o(x^{(\beta,s)}_{n-1,1})=o(x^{(\beta,s)}_{n-1,2})=o(x^{(\beta,s)}_{n,1})=q$, then we know $\beta_{q}=[i,n,j]$ with $i<j<n$
or $[i,n,i]$. According to Lemma \ref{lem:psirv-C}, if $\beta_{q}=[i,n,j]$ with $j\leq n-2$,
then $\Psi(\tilde{\mathcal{E}}^{\epsilon}_{\beta_{q},r_{q}})$ contains the factor
$Q(x,y,x^{(\beta,s)}_{n-1,1},x^{(\beta,s)}_{n-1,2})$, which contributes a factor $[2]_{v}$ into $\phi_{\unl{d}}(F)$;
if $\beta_{q}=[i,n,n-1]$ with $i<n-1$, then
$\Psi(\tilde{\mathcal{E}}^{\epsilon}_{\beta_{q},r_{q}})$ contains the factor
$(1+v^2)x^{(\beta,s)}_{n-1,1}x^{(\beta,s)}_{n-1,2}-vy(x^{(\beta,s)}_{n-1,1}+x^{(\beta,s)}_{n-1,2})$ or
$(1+v^2)y-v(x^{(\beta,s)}_{n-1,1}+x^{(\beta,s)}_{n-1,2})$, which contributes a factor $[2]_{v}$ into $\phi_{\unl{d}}(F)$;
finally, if  $\beta_{q}=[i,n,i]$, then $\Psi(\tilde{\mathcal{E}}^{\epsilon}_{\beta_{q},r_{q}})$ is divisible by
$\langle 1\rangle^{2n-2i-1}_{v}\langle 2\rangle^{2}_{v}$, which contributes a factor $[2]_{v}$ into
$\phi_{\unl{d}}(f\cdot \langle 1\rangle_{v}^{-k_1-\dots-k_{n-1}} \langle 2\rangle_{v}^{-k_{n}})$.

\end{itemize}

\end{itemize}
The above overall analysis shows that the $\phi_{\unl{d}}$-specialization of $f$ is divisible by $A_{\unl{d}}$ of~\eqref{rttconstant2-C}.

Next, let us verify that $\phi_{\unl{d}}(F)$ is divisible by $\prod_{\beta\in\Delta^{+}}G_{\beta}$, where $G_{\beta}$
are as in \eqref{eq:formulagbeta-C-1}--\eqref{eq:formulagbeta-C-4}.  We can expand
$\prod_{\ell=1}^{m}\tilde{\mathcal{E}}^{\epsilon}_{\beta_{\ell},r_{\ell}}$ as a linear combination of monomials
$\prod_{\ell=1}^{k}e_{i_{\ell},s_{\ell}}$ over $\BZ[v,v^{-1}]$, with $\unl{k}=\sum^{k}_{\ell=1}\alpha_{i_{\ell}}$.
Then it suffices to prove that each $\phi_{\unl{d}}(\Psi(e_{i_{1},s_{1}}\cdots e_{i_k,s_{k}}))$ is divisible by
$G_{\beta}$ for any $\beta\in\Delta^{+}$. For $\beta=[i,j]$ with $1\leq i\leq j\leq n$, this follows from
\cite[Lemma 3.51]{Tsy18}. It remains to treat the $\beta=[i,n,j]\ (1\leq i<j<n)$ and $\beta=[i,n,i]\ (1\leq i<n)$ cases.
Henceforth, we shall use the notation $\hat{o}(x^{(*,*)}_{*,*})=q$ if a variable $x^{(*,*)}_{*,*}$ is plugged into
$\Psi(e_{i_q,s_q})$ for some $1\leq q\leq k$.
\begin{itemize}[leftmargin=0.7cm]

\item $\beta=[i,n,j]$.
Fix any $1\leq s\neq s'\leq d_{\beta}$, we can assume that
\begin{gather*}
  \hat{o}(x^{(\beta,s)}_{i,1})> \cdots > \hat{o}(x^{(\beta,s)}_{n-1,1})> \hat{o}(x^{(\beta,s)}_{n,1}) >
  \hat{o}(x^{(\beta,s)}_{n-1,2})> \cdots > \hat{o}(x^{(\beta,s)}_{j,2}),\\
  \hat{o}(x^{(\beta,s')}_{i,1})>  \cdots > \hat{o}(x^{(\beta,s')}_{n-1,1})> \hat{o}(x^{(\beta,s')}_{n,1})>
  \hat{o}(x^{(\beta,s')}_{n-1,2})> \cdots> \hat{o}(x^{(\beta,s')}_{j,2}).
\end{gather*}
Let us first consider the variables
\begin{equation}
\label{icase-C}
  \big\{x^{(\beta,s)}_{i,1},x^{(\beta,s)}_{i+1,1},x^{(\beta,s')}_{i,1},x^{(\beta,s')}_{i+1,1}\big\}.
\end{equation}
Without loss of generality, we can assume that $\hat{o}(x^{(\beta,s)}_{i+1,1})> \hat{o}(x^{(\beta,s')}_{i+1,1})$.
\begin{itemize}[leftmargin=0.5cm]

\item
If $\hat{o}(x^{(\beta,s)}_{i+1,1})< \hat{o}(x^{(\beta,s')}_{i,1})$, then the $\phi_{\unl{d}}$-specialization of
  $\zeta\left(\frac{x^{(\beta,s')}_{i+1,1}}{x^{(\beta,s)}_{i,1}}\right)
   \zeta\left(\frac{x^{(\beta,s)}_{i+1,1}}{x^{(\beta,s')}_{i,1}}\right)$
contributes the factor $(w_{\beta,s}-v^2w_{\beta,s'})(w_{\beta,s'}-v^2w_{\beta,s})$.

\item
If $\hat{o}(x^{(\beta,s)}_{i+1,1})> \hat{o}(x^{(\beta,s')}_{i,1})$, then the $\phi_{\unl{d}}$-specialization of
  $\zeta\left(\frac{x^{(\beta,s')}_{i+1,1}}{x^{(\beta,s)}_{i,1}}\right)
   \zeta\left(\frac{x^{(\beta,s')}_{i+1,1}}{x^{(\beta,s)}_{i+1,1}}\right)
   \zeta\left(\frac{x^{(\beta,s')}_{i,1}}{x^{(\beta,s)}_{i+1,1}}\right)$
contributes the factor $(w_{\beta,s}-v^2w_{\beta,s'})(w_{\beta,s'}-v^2w_{\beta,s})$.

\end{itemize}
Similarly, the $\phi_{\unl{d}}$-specialization of the $\zeta$-factors arising from the following quadruples
\begin{align*}
  \big\{x^{(\beta,s)}_{i+1,1} \,,\, x^{(\beta,s)}_{i+2,1} \,,\, x^{(\beta,s')}_{i+1,1} \,,\, x^{(\beta,s')}_{i+2,1}\big\}
    \,,\, \dots \,,\,
  \big\{x^{(\beta,s)}_{n-2,1} \,,\, x^{(\beta,s)}_{n-1,1} \,,\, x^{(\beta,s')}_{n-2,1},x^{(\beta,s')}_{n-1,1}\big\}, \\
  \big\{x^{(\beta,s)}_{n-1,2} \,,\, x^{(\beta,s)}_{n-2,2}\,,\, x^{(\beta,s')}_{n-1,2} \,,\, x^{(\beta,s')}_{n-2,2}\big\}
    \,,\, \dots \,,\,
  \big\{x^{(\beta,s)}_{j+1,2} \,,\, x^{(\beta,s)}_{j,2} \,,\, x^{(\beta,s')}_{j+1,2},x^{(\beta,s')}_{j,2}\big\},
\end{align*}
along with the contribution of the $\zeta$-factors arising from~\eqref{icase-C} above, yields the overall
contribution of the factor $\{(w_{\beta,s}-v^{2}w_{\beta,s'})(w_{\beta,s'}-v^{2}w_{\beta,s})\}^{2n-i-j-2}$.

Next, let us consider the $\zeta$-factors arising from the variables
\begin{equation}
\label{eq:ncase}
  \big\{x^{(\beta,s)}_{n-1,1} \,,\, x^{(\beta,s)}_{n,1} \,,\, x^{(\beta,s)}_{n-1,2} \,,\,
  x^{(\beta,s')}_{n-1,1} \,,\, x^{(\beta,s')}_{n,1} \,,\, x^{(\beta,s')}_{n-1,2}\big\}.
\end{equation}
Without loss of generality, we can assume that $\hat{o}(x^{(\beta,s)}_{n,1})> \hat{o}(x^{(\beta,s')}_{n,1})$.
First, we note that
  $\zeta\left(\frac{x^{(\beta,s')}_{n,1}}{x^{(\beta,s)}_{n-1,1}}\right)
   \zeta\left(\frac{x^{(\beta,s')}_{n-1,2}}{x^{(\beta,s)}_{n,1}}\right)
   \zeta\left(\frac{x^{(\beta,s')}_{n-1,2}}{x^{(\beta,s)}_{n-1,1}}\right)$
contributes $(w_{\beta,s'}-v^{ 2}w_{\beta,s})(w_{\beta,s'}-v^{4}w_{\beta,s})$ into the $\phi_{\unl{d}}$-specialization.
Now we consider four cases.
\begin{itemize}[leftmargin=0.5cm]

\item
If $\hat{o}(x^{(\beta,s')}_{n,1})>\hat{o}(x^{(\beta,s)}_{n-1,2})\ \&\ \hat{o}(x^{(\beta,s')}_{n-1,1})>\hat{o}(x^{(\beta,s)}_{n,1})$,
then
  $\zeta\left(\frac{x^{(\beta,s)}_{n-1,2}}{x^{(\beta,s')}_{n,1}}\right)
   \zeta\left(\frac{x^{(\beta,s)}_{n-1,2}}{x^{(\beta,s')}_{n-1,1}}\right)
   \zeta\left(\frac{x^{(\beta,s)}_{n,1}}{x^{(\beta,s')}_{n-1,1}}\right)$
contributes $(w_{\beta,s}-v^{ 2}w_{\beta,s'})(w_{\beta,s}-v^{4}w_{\beta,s'})$ into the $\phi_{\unl{d}}$-specialization.

\item
If $\hat{o}(x^{(\beta,s')}_{n,1})<\hat{o}(x^{(\beta,s)}_{n-1,2})\ \&\ \hat{o}(x^{(\beta,s')}_{n-1,1})>\hat{o}(x^{(\beta,s)}_{n,1})$,
then
  $\zeta\left(\frac{x^{(\beta,s')}_{n-1,2}}{x^{(\beta,s)}_{n-1,2}}\right)
   \zeta\left(\frac{x^{(\beta,s')}_{n,1}}{x^{(\beta,s)}_{n-1,2}}\right)
   \zeta\left(\frac{x^{(\beta,s)}_{n,1}}{x^{(\beta,s')}_{n-1,1}}\right)$
contributes $(w_{\beta,s}-v^{ 2}w_{\beta,s'})(w_{\beta,s}-v^{4}w_{\beta,s'})$ into the $\phi_{\unl{d}}$-specialization.

\item
If $\hat{o}(x^{(\beta,s')}_{n,1})<\hat{o}(x^{(\beta,s)}_{n-1,2})\ \&\ \hat{o}(x^{(\beta,s')}_{n-1,1})<\hat{o}(x^{(\beta,s)}_{n,1})$,
then
\[
  \zeta\left(\frac{x^{(\beta,s')}_{n-1,2}}{x^{(\beta,s)}_{n-1,2}}\right)
  \zeta\left(\frac{x^{(\beta,s')}_{n,1}}{x^{(\beta,s)}_{n-1,2}}\right)
  \zeta\left(\frac{x^{(\beta,s')}_{n,1}}{x^{(\beta,s)}_{n,1}}\right)
  \zeta\left(\frac{x^{(\beta,s')}_{n-1,1}}{x^{(\beta,s)}_{n,1}}\right)
\]
contributes $(w_{\beta,s}-v^{ 2}w_{\beta,s'})(w_{\beta,s}-v^{4}w_{\beta,s'})$ into the $\phi_{\unl{d}}$-specialization.

\item
If $\hat{o}(x^{(\beta,s')}_{n,1})>\hat{o}(x^{(\beta,s)}_{n-1,2})\ \&\ \hat{o}(x^{(\beta,s')}_{n-1,1})<\hat{o}(x^{(\beta,s)}_{n,1})$,
then
\[
  \zeta\left(\frac{x^{(\beta,s)}_{n-1,2}}{x^{(\beta,s')}_{n,1}}\right)
  \zeta\left(\frac{x^{(\beta,s)}_{n-1,2}}{x^{(\beta,s')}_{n-1,1}}\right)
  \zeta\left(\frac{x^{(\beta,s')}_{n,1}}{x^{(\beta,s)}_{n,1}}\right)
  \zeta\left(\frac{x^{(\beta,s')}_{n-1,1}}{x^{(\beta,s)}_{n,1}}\right)
\]
contributes $(w_{\beta,s}-v^{ 2}w_{\beta,s'})(w_{\beta,s}-v^{4}w_{\beta,s'})$ into the $\phi_{\unl{d}}$-specialization.

\end{itemize}
We thus conclude that the $\phi_{\unl{d}}$-specialization of the $\zeta$-factors arising
from \eqref{eq:ncase} contributes the overall factor
\[
  (w_{\beta,s}-v^{2}w_{\beta,s'})(w_{\beta,s'}-v^{2}w_{\beta,s})(w_{\beta,s}-v^{4}w_{\beta,s'})(w_{\beta,s'}-v^{4}w_{\beta,s}).
\]
Similarly to the above analysis, the $\phi_{\unl{d}}$-specialization of the $\zeta$-factors arising from the tuples
\[
  \big\{x^{(\beta,s')}_{j-1,1},x^{(\beta,s')}_{j,1},x^{(\beta,s)}_{j,2}\big\} \qquad \mathrm{and} \qquad
  \big\{x^{(\beta,s')}_{\ell,1}, x^{(\beta,s')}_{\ell+1,1},x^{(\beta,s)}_{\ell+1,2},x^{(\beta,s)}_{\ell,2}\big\}\ (j\leq \ell\leq n-2)
\]
produces an overall factor
\[
  \prod_{\ell=j}^{n-2} (w_{\beta,s}-v^{2n-2\ell}w_{\beta,s'})
  \prod_{\ell=j}^{n-1}(w_{\beta,s}-v^{2n-2\ell+4}w_{\beta,s'}).
\]
This completes the verification of divisibility of $\phi_{\unl{d}}(F)$ by $G_{\beta}$ of~\eqref{eq:formulagbeta-C-3},
up to a monomial.

\item $\beta=[i,n,i]$.
Fix any $1\leq s\neq s'\leq d_{\beta}$. We can assume that
\begin{gather*}
  \hat{o}(x^{(\beta,t)}_{i,1})>  \cdots > \hat{o}(x^{(\beta,t)}_{n-1,1}),\quad
  \hat{o}(x^{(\beta,t)}_{i,2}) > \cdots > \hat{o}(x^{(\beta,t)}_{n-1,2})>\hat{o}(x^{(\beta,t)}_{n,1})\quad
  \mathrm{for} \ \ t=s\ \text{or}\ s'.
\end{gather*}
First, let us consider the $\zeta$-factors arising from the variables
\begin{equation*}
  \big\{x^{(\beta,s)}_{i,1},x^{(\beta,s)}_{i+1,1},x^{(\beta,s')}_{i,1},x^{(\beta,s')}_{i+1,1}\big\}.
\end{equation*}
Without loss of generality, we can assume that $\hat{o}(x^{(\beta,s)}_{i,1})> \hat{o}(x^{(\beta,s')}_{i,1})$.
\begin{itemize}[leftmargin=0.5cm]

\item
If $\hat{o}(x^{(\beta,s')}_{i,1})>\hat{o}(x^{(\beta,s)}_{i+1,1})$, then
  $\zeta\left(\frac{x^{(\beta,s')}_{i+1,1}}{x^{(\beta,s)}_{i,1}}\right)
   \zeta\left(\frac{x^{(\beta,s)}_{i+1,1}}{x^{(\beta,s')}_{i,1}}\right)$
contributes the factor $(w_{\beta,s}-v^2w_{\beta,s'})(w_{\beta,s'}-v^2w_{\beta,s})$ into the $\phi_{\unl{d}}$-specialization.

\item
If $\hat{o}(x^{(\beta,s')}_{i,1})<\hat{o}(x^{(\beta,s)}_{i+1,1})$, then
  $\zeta\left(\frac{x^{(\beta,s')}_{i+1,1}}{x^{(\beta,s)}_{i,1}}\right)
   \zeta\left(\frac{x^{(\beta,s')}_{i,1}}{x^{(\beta,s)}_{i,1}}\right)\zeta\left(\frac{x^{(\beta,s')}_{i,1}}{x^{(\beta,s)}_{i+1,1}}\right)$
contributes the factor $(w_{\beta,s}-v^2w_{\beta,s'})(w_{\beta,s'}-v^2w_{\beta,s})$ into the $\phi_{\unl{d}}$-specialization.

\end{itemize}
Likewise, we conclude that the $\phi_{\unl{d}}$-specialization of the $\zeta$-factors arising from
\[
  \big\{x^{(\beta,s)}_{\ell,t},x^{(\beta,s)}_{\ell+1,t},x^{(\beta,s')}_{\ell,t},x^{(\beta,s')}_{\ell+1,t}\big\}\
  (i\leq \ell\leq n-2,\ 1\leq t\leq 2)
\]
produces the overall factor of
\[
  \big\{ (w_{\beta,s}-v^{2}w_{\beta,s'})(w_{\beta,s'}-v^{2}w_{\beta,s}) \big\}^{2n-2i-2}.
\]
Analogously, the $\phi_{\unl{d}}$-specialization of the $\zeta$-factors arising from the quadruples
\[
  \big\{x^{(\beta,s)}_{\ell,1}, x^{(\beta,s)}_{\ell+1,1},x^{(\beta,s')}_{\ell,2},x^{(\beta,s')}_{\ell+1,2}\big\},\quad
  \big\{x^{(\beta,s)}_{\ell,2}, x^{(\beta,s)}_{\ell+1,2},x^{(\beta,s')}_{\ell,1},x^{(\beta,s')}_{\ell+1,1}\big\}, \ \
  (i\leq \ell\leq n-2)
\]
produces a total factor of
\[
  \big\{ (w_{\beta,s}-w_{\beta,s'})(w_{\beta,s'}-w_{\beta,s})(w_{\beta,s}-v^{4}w_{\beta,s'})(w_{\beta,s'}-v^{4}w_{\beta,s}) \big\}^{n-i-1}.
\]
Next, let us consider the $\phi_{\unl{d}}$-specialization of the $\zeta$-factors arising from the variables
\begin{equation}
\label{eq:inincase}
 \big\{x^{(\beta,s)}_{n-1,1} \,,\, x^{(\beta,s)}_{n-1,2}\,,\, x^{(\beta,s)}_{n,1} \,,\,
       x^{(\beta,s')}_{n-1,1} \,,\, x^{(\beta,s')}_{n-1,2}\,,\, x^{(\beta,s')}_{n,1}  \big\}.
\end{equation}
We can assume that
\[
  \hat{o}(x^{(\beta,s)}_{n-1,1}) > \hat{o}(x^{(\beta,s)}_{n-1,2}) > \hat{o}(x^{(\beta,s)}_{n,1})
  \qquad \mathrm{and} \qquad
  \hat{o}(x^{(\beta,s')}_{n-1,1}) > \hat{o}(x^{(\beta,s')}_{n-1,2}) > \hat{o}(x^{(\beta,s')}_{n,1}),
\]
as otherwise the corresponding term is specialized to zero under $\phi_{\unl{d}}$. Without loss of generality, we
can assume that $\hat{o}(x^{(\beta,s)}_{n,1})>\hat{o}(x^{(\beta,s')}_{n,1})$. Then
  $\zeta\left(\frac{x^{(\beta,s')}_{n,1}}{x^{(\beta,s)}_{n-1,2}}\right)
   \zeta\left(\frac{x^{(\beta,s')}_{n,1}}{x^{(\beta,s)}_{n-1,1}}\right)$
contributes $(w_{\beta,s'}-v^{ 2}w_{\beta,s})(w_{\beta,s'}-v^{4}w_{\beta,s})$.
\begin{itemize}[leftmargin=0.5cm]

\item
If $\hat{o}(x^{(\beta,s')}_{n-1,2})>\hat{o}(x^{(\beta,s)}_{n,1})$, then
$\zeta\left(\frac{x^{(\beta,s)}_{n,1}}{x^{(\beta,s')}_{n-1,2}}\right)$ contributes $(w_{\beta,s}-v^{4}w_{\beta,s'})$;
if $\hat{o}(x^{(\beta,s')}_{n-1,2})<\hat{o}(x^{(\beta,s)}_{n,1})$, then
  $\zeta\left(\frac{x^{(\beta,s')}_{n-1,2}}{x^{(\beta,s)}_{n,1}}\right)
   \zeta\left(\frac{x^{(\beta,s')}_{n-1,2}}{x^{(\beta,s)}_{n-1,2}}\right)
   \zeta\left(\frac{x^{(\beta,s')}_{n-1,2}}{x^{(\beta,s)}_{n-1,1}}\right)$
contributes $(w_{\beta,s}-v^{4}w_{\beta,s'})$.

\item
If $\hat{o}(x^{(\beta,s')}_{n-1,1})>\hat{o}(x^{(\beta,s)}_{n,1})$, then
$\zeta\left(\frac{x^{(\beta,s)}_{n,1}}{x^{(\beta,s')}_{n-1,1}}\right)$ contributes $(w_{\beta,s}-v^{2}w_{\beta,s'})$;
if $\hat{o}(x^{(\beta,s')}_{n-1,1})<\hat{o}(x^{(\beta,s)}_{n,1})$, then
  $\zeta\left(\frac{x^{(\beta,s')}_{n-1,1}}{x^{(\beta,s)}_{n,1}}\right)
   \zeta\left(\frac{x^{(\beta,s')}_{n-1,1}}{x^{(\beta,s)}_{n-1,2}}\right)
   \zeta\left(\frac{x^{(\beta,s')}_{n-1,1}}{x^{(\beta,s)}_{n-1,1}}\right)$
contributes $(w_{\beta,s}-v^{2}w_{\beta,s'})$.

\end{itemize}
We thus conclude that the $\phi_{\unl{d}}$-specialization of the $\zeta$-factors arising
from \eqref{eq:inincase} contributes the overall factor
\[
  (w_{\beta,s}-v^{2}w_{\beta,s'})(w_{\beta,s'}-v^{2}w_{\beta,s})(w_{\beta,s}-v^{4}w_{\beta,s'})(w_{\beta,s'}-v^{4}w_{\beta,s}).
\]
This completes the verification of divisibility of $\phi_{\unl{d}}(F)$ by $G_{\beta}$ of~\eqref{eq:formulagbeta-C-4}, up to
a monomial.
\end{itemize}

Finally, to prove that $F$ is integral, we need to show that for any $\beta\in\Delta^{+}$ and  $1\leq r\leq \ell_{\beta}$,
the contribution of the $\zeta$-factors between the variables $x^{(*,*)}_{*,*}$ that got specialized to $v^{?}z_{\beta,r}$
into $\Upsilon_{\unl{d},\unl{t}}(F)$ is divisible by $[t_{\beta,r}]_{v_{\beta}}!$. For $\beta=[i,j]$, this follows from
\cite[Lemma 3.51]{Tsy18}. For $\beta=[i,n,j]$ with $i<j<n$, we have $v_{\beta}=v_{i}=v$, and in the above analysis we never
used the $\zeta$-factors $\zeta\left(\frac{x^{(\beta,s)}_{i,1}}{x^{(\beta,s')}_{i,1}}\right)$ (with $1\leq s\neq s'\leq d_{\beta}$)
for the divisibility of $\phi_{\unl{d}}(F)$ by $G_{\beta}$ (see the analysis for the variables \eqref{icase-C}). For
$\beta=[i,n,i]$, we have $v_{\beta}=v_{n}=v^{2}$, and in the above analysis we never used the $\zeta$-factors
$\zeta\left(\frac{x^{(\beta,s)}_{n,1}}{x^{(\beta,s')}_{n,1}}\right)$ (with $1\leq s\neq s'\leq d_{\beta}$) for the divisibility
of $\phi_{\unl{d}}(F)$ by $G_{\beta}$ (see the analysis for the variables \eqref{eq:inincase}). We can thus appeal to the
``rank $1$'' computation of~\cite[Lemma 3.46]{Tsy18} to obtain the claimed divisibility by $[t_{\beta,r}]_{v_{\beta}}!$.
\end{proof}

Combining Propositions \ref{spekpC},  \ref{vanishC}, \ref{spanC}, \ref{goodC}, we obtain
the following upgrade of Theorem~\ref{shufflePBWD-C}:

\begin{Thm}\label{rttthm-C}
(a) The $\BQ(v)$-algebra isomorphism $\Psi\colon \qlc \,\iso\, S$ of Theorem {\rm \ref{shufflePBWD-C}(a)}
gives rise to a $\BZ[v,v^{-1}]$-algebra isomorphism $\Psi\colon \integralc \,\iso\, \mathcal{S}$.

\noindent
(b) Theorem {\rm \ref{PBWDintegralrtt}} holds for $\fg$ of type $C_n$.
\end{Thm}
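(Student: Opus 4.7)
The plan is to prove Theorem \ref{rttthm-C} by combining the four key propositions already established (Propositions \ref{spekpC}, \ref{vanishC}, \ref{spanC}, and \ref{goodC}) through a triangular decomposition argument, mirroring the structure used for Theorem \ref{lusthm-C} but now with the integrality condition~\eqref{eq:crossspe-C} playing the crucial role. First, I would dispose of the PBWD statement (b) and the injectivity half of (a): the linear independence of $\{\tilde{\mathcal{E}}^{\epsilon}_h\}_{h\in H}$ over $\BZ[v,v^{-1}]$ follows immediately from Theorem~\ref{shufflePBWD-C}(b), since each $\tilde{\mathcal{E}}^{\epsilon}_{\beta,s}$ is a nonzero scalar multiple of an $E_{\beta,s}$ of~\eqref{rootvector1}--\eqref{rootvector2}. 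The containment $\Psi(\integralc)\subset \mathcal{S}$ is exactly Proposition~\ref{goodC}, so the only remaining task is to establish the reverse containment $\mathcal{S}\subset \Psi(\integralc)$, which will simultaneously pin down part~(b) by producing the explicit expansion of any $F\in\mathcal{S}$ in terms of $\{\Psi(\tilde{\mathcal{E}}^{\epsilon}_h)\}_{h\in H}$.

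The core of the argument is the following RTT-integral analogue of Lemma~\ref{span}: for every $F\in \mathcal{S}_{\unl{k}}$ and $\unl{d}\in \mathrm{KP}(\unl{k})$ with $\phi_{\unl{d}'}(F)=0$ for all $\unl{d}'<\unl{d}$, one can find $F_{\unl{d}}\in \mathcal{S}^{\epsilon}_{\unl{k}}\coloneqq \mathrm{span}_{\BZ[v,v^{-1}]}\{\Psi(\tilde{\mathcal{E}}^{\epsilon}_h)\}_{h\in H_{\unl{k},\unl{d}}}$ with $\phi_{\unl{d}}(F-F_{\unl{d}})=0$ and $\phi_{\unl{d}'}(F_{\unl{d}})=0$ for $\unl{d}'<\unl{d}$. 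Given this, an induction on $\unl{d}$ with respect to the total order~\eqref{eq:KP-order} on $\mathrm{KP}(\unl{k})$ exhausts $F$: at each stage $F-F_{\unl{d}}$ stays in $\mathcal{S}_{\unl{k}}$ (since both $\mathcal{S}$ and $\Psi(\integralc)$ are $\BZ[v,v^{-1}]$-modules, with the latter sitting inside the former), and the triangularity in $\unl{d}$ terminates the process. This produces the desired expansion of $F$ as a $\BZ[v,v^{-1}]$-linear combination of $\Psi(\tilde{\mathcal{E}}^{\epsilon}_h)$ and simultaneously shows these monomials span~$\mathcal{S}$.

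To establish the analogue of Lemma~\ref{span}, I would invoke the explicit formula~\eqref{speehC} together with the rank-$1$ reduction of Remark~\ref{rankreduction}: each factor $P_{\lambda_{h,\beta}}$ of~\eqref{hlp-C} is an ordinary type $A_1$ shuffle product in the variables $\{w_{\beta,s}\}_{s=1}^{d_\beta}$. Dividing $\phi_{\unl{d}}(F)$ by the universal factor $\langle 1\rangle_v^{k_1+\cdots+k_{n-1}}\langle 2\rangle_v^{k_n}\cdot A_{\unl{d}}\cdot \prod_\beta G_\beta \cdot \prod_{\beta<\beta'} G_{\beta,\beta'}$ (which divides $\phi_{\unl{d}}(F)$ by Propositions~\ref{goodC} and~\ref{spanC}) reduces the problem to a product over $\beta\in\Delta^+$ of type $A_1$ statements. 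The RTT-integrality condition~\eqref{eq:crossspe-C} says precisely that, after applying $\varpi_{\unl{t}}$, each such rank-$1$ quotient is divisible by $\prod_{r=1}^{\ell_\beta}[t_{\beta,r}]_{v_\beta}!$. By the type $A_1$ analysis of~\cite[Lemmas 3.46, 3.51]{Tsy18}, such Laurent polynomials lie in the $\BZ[v,v^{-1}]$-span of symmetric polynomials of the shape~\eqref{hlp-C}, and can therefore be matched by a combination of $P_{\lambda_{h,\beta}}$'s for suitable $h\in H_{\unl{k},\unl{d}}$.

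The main obstacle will be the careful bookkeeping in this rank-$1$ reduction step: one must verify that the factors $A_{\unl{d}}$, $G_\beta$, $G_{\beta,\beta'}$ stripped off in the normalization~\eqref{eq:crossspe-C} interact compatibly under vertical specializations $\varpi_{\unl{t}}$, so that the divisibility by $\prod_{\beta,r}[t_{\beta,r}]_{v_\beta}!$ really does isolate the rank-$1$ integral span slice-by-slice. This is most delicate for the roots $\beta=[i,n,j]$ and $\beta=[i,n,i]$, where the two-step specialization $\phi_\beta$ of~\eqref{spe-C-3} introduces nontrivial denominators $B_\beta$, and where the normalization factors $\tilde{c}_\beta$ of~\eqref{ctildebeta-C} differ from the naive $c_\beta$ of~\eqref{eq:c-factor-C} by a factor of $[2]_v$. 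Once this matching is verified, the argument of~\cite[Theorems 4.14 and 4.16]{HT24} applies verbatim and yields both (a) and (b).
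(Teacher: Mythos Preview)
Your proposal is correct and follows precisely the approach the paper takes. The paper's proof consists of the single sentence ``Combining Propositions \ref{spekpC}, \ref{vanishC}, \ref{spanC}, \ref{goodC}, we obtain the following upgrade of Theorem~\ref{shufflePBWD-C}''; your write-up unpacks exactly how those four ingredients assemble (triangularity via \ref{vanishC}, the factorization \eqref{speehC} with rank-$1$ pieces $P_{\lambda_{h,\beta}}$ via \ref{spekpC}, divisibility by $G_{\beta,\beta'}$ under the vanishing hypothesis via \ref{spanC}, and the inclusion $\Psi(\integralc)\subset\mathcal{S}$ plus divisibility by $G_\beta$ via \ref{goodC}), together with the type $A_1$ integral-span results of \cite[Lemmas 3.46, 3.51]{Tsy18} that govern the rank-$1$ reduction, all in parallel with the argument for \cite[Theorems 4.14, 4.16]{HT24}.
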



\medskip

\section{Shuffle algebra and its integral forms in type $D_n$}\label{type D}

In this section, we establish the key properties of the specialization maps for the shuffle algebras of type $D_{n}$.
This implies the shuffle algebra realization and PBWD-type theorems for $\qld$ and its integral forms.


\subsection{$\qld$ and its shuffle algebra realization}

In type $D_{n}$, for any $F\in S_{\unl{k}}$ with $\unl{k}\in\mathbb{N}^{n}$, the wheel conditions are:
\begin{equation*}
\begin{aligned}
  F(\{x_{i,r}\}_{1\leq i\leq n}^{1\leq r\leq k_{i}})=0 \quad \text{once} \quad
  & x_{i,1}=v^{2}x_{i,2}=vx_{i+1,1} \quad \text{for some} \quad 1\leq i\leq n-2,\\
  \text{or} \quad
  & x_{i,1}=v^{2}x_{i,2}=vx_{i-1,1} \quad \text{for some} \quad 2\leq i\leq n-1,\\
   \text{or} \quad
   &x_{n-2,1}=v^{2}x_{n-2,2}=vx_{n,1},\\
  \text{or} \quad
  & x_{n,1}=v^{2}x_{n,2}=vx_{n-2,1}.
\end{aligned}
\end{equation*}
Recall the notations \eqref{eq:abbr-prD} for positive roots in type $D_n$. Similarly to type $C$, we shall use
$\mathrm{denom}_\beta$ to denote the denominator in~\eqref{polecondition} for any $F\in S_\beta$, for example for
$F=\Psi(\tilde{E}^{\pm}_{\beta,s})$.

\begin{Lem}\label{lem:psirv-D}
Consider the particular choices~\eqref{rvd1}--\eqref{rvd4} of quantum root vectors
$\{\tilde{E}^{\pm}_{\beta,s}\}_{\beta\in\Delta^{+}}^{s\in\BZ}$. Their images under $\Psi$ of~\eqref{eq:Psi-homom}
in the shuffle algebra $S$ of type $D_{n}$  are as follows:
\begin{itemize}[leftmargin=0.7cm]

\item
If $\beta=[i,j]$ with $1\leq i\leq j<n$ or $i=j=n$, then for any $s=s_{i}+\cdots+s_{j}$ used in \eqref{rvd1}:
\begin{align*}
  \Psi(\tilde{E}^{+}_{[i,j],s})\doteq
    \frac{\langle 1\rangle_{v}^{j-i}}{\mathrm{denom}_{[i,j]}} \cdot x_{i,1}^{s_{i}+1}\cdots x_{j-1,1}^{s_{j-1}+1}x_{j,1}^{s_{j}},\
  \Psi(\tilde{E}^{-}_{[i,j],s})\doteq
    \frac{\langle 1\rangle_{v}^{j-i}}{\mathrm{denom}_{[i,j]}} \cdot x_{i,1}^{s_{i}}x_{i+1,1}^{s_{i+1}+1}\cdots x_{j,1}^{s_{j}+1}.
\end{align*}

\item
If $\beta=[i,n]$ with $1\leq i\leq n-2$, then for any $s=s_{i}+\cdots+s_{n-2}+s_{n}$ used in \eqref{rvd2}:
\begin{align*}
  & \Psi(\tilde{E}^{+}_{[i,n],s})\doteq
    \frac{\langle 1\rangle_{v}^{n-i-1}}{\mathrm{denom}_{[i,n]}} \cdot x_{i,1}^{s_{i}+1}\cdots x_{n-2,1}^{s_{n-2}+1}x_{n,1}^{s_{n}},\\
  & \Psi(\tilde{E}^{-}_{[i,n],s})\doteq
    \frac{\langle 1\rangle_{v}^{n-i-1}}{\mathrm{denom}_{[i,n]}} \cdot x_{i,1}^{s_{i}}x_{i+1,1}^{s_{i+1}+1}\cdots x_{n-2,1}^{s_{n-2}+1} x_{n,1}^{s_{n}+1}.
\end{align*}

\item
If $\beta=[i,n,n-1]$ with $1\leq i\leq n-2$, then for any $s=s_{i}+\cdots+s_{n-2}+s_{n-1}+s_{n}$ used in \eqref{rvd3}:
\begin{align*}
  & \Psi(\tilde{E}^{+}_{[i,n,n-1],s})\doteq
   \frac{ \langle 1\rangle_{v}^{n-i}}{\mathrm{denom}_{[i,n,n-1]}} \cdot x_{i,1}^{s_{i}+1}\cdots x_{n-3,1}^{s_{n-3}+1}x_{n-2,1}^{s_{n-2}+2}x^{s_{n-1}}_{n-1,1}x_{n,1}^{s_{n}},\\
  & \Psi(\tilde{E}^{-}_{[i,n,n-1],s})\doteq
    \frac{\langle 1\rangle_{v}^{n-i}}{\mathrm{denom}_{[i,n,n-1]}} \cdot x_{i,1}^{s_{i}}x_{i+1,1}^{s_{i+1}+1}\cdots x_{n-2,1}^{s_{n-2}+1}x_{n-1,1}^{s_{n-1}+1} x_{n,1}^{s_{n}+1}.
\end{align*}

\item
If $\beta=[i,n,j]$ with $1\leq i<j\leq n-2$, then for any decomposition $s=s_{i}+\cdots +s_{j-1}+2s_{j}+\cdots +2s_{n-2}+s_{n-1}+s_{n}$
used in \eqref{rvd4}, we have:
\begin{equation*}
  \Psi(\tilde{E}^{+}_{[i,n,j],s})\doteq
  \frac{\langle 1\rangle_{v}^{2n-i-j-1}}{\mathrm{denom}_{[i,n,j]}}\cdot
  g_{1}\cdot \prod_{\ell=j}^{n-2}(v^2 x_{\ell,1}-x_{\ell,2})(v^{2} x_{\ell,2}-x_{\ell,1}),
\end{equation*}
\begin{equation*}
  \Psi(\tilde{E}^{-}_{[i,n,j],s})\doteq
  \frac{\langle 1\rangle_{v}^{2n-i-j-1}}{\mathrm{denom}_{[i,n,j]}} \cdot
  g_{2}\cdot \prod_{\ell=j}^{n-2}(v^2 x_{\ell,1}-x_{\ell,2})(v^{2} x_{\ell,2}-x_{\ell,1}),
\end{equation*}
where
\begin{align*}
  & g_{1}=\prod^{j-2}_{\ell=i}x_{\ell,1}^{s_{\ell}+1}x^{s_{j-1}+2}_{j-1,1}(x_{j,1}x_{j,2})^{s_{j}}
    \prod^{n-2}_{\ell=j+1}(x_{\ell,1}x_{\ell,2})^{s_{\ell}+1}x_{n-1,1}^{s_{n-1}+1}x_{n,1}^{s_{n}+1},\\
  & g_{2}=x_{i,1}^{s_{i}}\prod^{j-1}_{\ell=i+1}x_{\ell,1}^{s_{\ell}+1}
    \prod^{n-2}_{\ell=j}(x_{\ell,1}x_{\ell,2})^{s_{\ell}+1}x_{n-1,1}^{s_{n-1}+1}x_{n,1}^{s_{n}+1}.
\end{align*}

\end{itemize}
\end{Lem}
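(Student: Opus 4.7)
The proof proceeds by the same general strategy as that of Lemma \ref{lem:psirv-C}: a case-by-case induction on the length of the chain defining each root vector, combined with the shuffle-product formula \eqref{shuffleproduct} and basic $A$-type symmetrization identities. One simplification compared with type $C_n$ is that all simple roots of $D_n$ are short, so $v_i=v$ uniformly; hence no extra $\langle 2\rangle_v$-factors arise and every scalar prefactor is a pure power of $\langle 1\rangle_v$, as reflected in the stated formulas.

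I would begin with $\beta=[i,j]$ (a pure $A$-type chain or the isolated root $[n]$), by induction on $j-i$ using $\tilde{E}^{\pm}_{[i,j],s}=[\tilde{E}^{\pm}_{[i,j-1],s-s_j},e_{j,s_j}]_{v^{\pm 1}}$; this is a direct transcription of the type-$A_n$ computation. The case $\beta=[i,n]$ is essentially identical, using that node $n$ is connected only to $n-2$ in the Dynkin diagram, so the nontrivial $\zeta$-factor in the symmetrization is $\zeta_{n-2,n}(x_{n-2,\ast}/x_{n,1})$. The case $\beta=[i,n,n-1]$ then follows by taking the $v^{\pm 1}$-bracket of the already computed $\Psi(\tilde{E}^{\pm}_{[i,n],\ast})$ with $\Psi(e_{n-1,s_{n-1}})$; since the only common neighbor of $n$ and $n-1$ is $n-2$, the symmetrization runs over $x_{n-2,1}, x_{n-2,2}$ and collapses to the monomial factor $x_{n-2,1}^{s_{n-2}+2}$ in the $+$-variant (and to the corresponding form in the $-$-variant).

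The most substantive case is $\beta=[i,n,j]$ with $1\leq i<j\leq n-2$. I would fix $i$ and proceed by descending induction on $j$, starting from $j=n-2$ where $\Psi(\tilde{E}^{\pm}_{[i,n,n-2],s})$ is obtained from $\Psi(\tilde{E}^{\pm}_{[i,n,n-1],\ast})$ by bracketing with $e_{n-2,s_{n-2}}$ and fully symmetrizing over $x_{n-2,1},x_{n-2,2}$. The key identity is of the form
\begin{equation*}
\mathop{Sym}_{x_1,x_2}\!\left(\frac{(x_1-v^{-2}x_2)(x_1-v^{-1}y)(vx_2-z)}{x_1-x_2}\right)\doteq (v^2x_1-x_2)(v^2x_2-x_1),
\end{equation*}
which, when applied at node $\ell$ in the inductive step, produces exactly one factor $(v^2x_{\ell,1}-x_{\ell,2})(v^2x_{\ell,2}-x_{\ell,1})$ in the advertised product. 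The inductive step $j\to j-1$ uses $\tilde{E}^{\pm}_{[i,n,j-1],s}=[\tilde{E}^{\pm}_{[i,n,j],s-s_{j-1}},e_{j-1,s_{j-1}}]_{v^{\pm 1}}$; since node $j-1$ contributes only one variable to $\beta$, no new symmetrization at $j-1$ is required and that bracket merely produces a pure monomial in $x_{j-1,1}$ times the remainder.

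The main obstacle is the bookkeeping in this last case: at each step one must simultaneously verify that (i) the symmetrization at node $\ell\in\{j,\dots,n-2\}$ produces exactly the advertised quadratic factor, (ii) the monomial prefactor reassembles into $g_1$ or $g_2$ with the correct exponents (in particular the asymmetric exponent $s_{j-1}+2$ on $x_{j-1,1}$ in $g_1$, coming from the two successive $x_{j-1,1}$-contributions at the boundary between the single- and double-traversed regions), and (iii) the overall power of $\langle 1\rangle_v$ accumulates to $2n-i-j-1$. As in Lemma \ref{lem:psirv-C}, the denominator $\mathrm{denom}_\beta$ from \eqref{polecondition} absorbs all non-symmetric $\zeta$-denominators, so only the symmetric combinations need be tracked, and the recursion closes cleanly.
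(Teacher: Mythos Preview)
Your overall strategy (induction along the bracket definition, matching the paper's ``straightforward computation'') is correct, but the details in the $[i,n,j]$ case are garbled in a way that would make the argument fail as written.

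First, the multiplicity bookkeeping is inverted. In $\beta=[i,n,n-1]$ every node has multiplicity~$1$, so there is \emph{no} $\mathfrak{S}_2$-symmetrization at $n-2$: only a single variable $x_{n-2,1}$ is present, and the extra power $x_{n-2,1}^{s_{n-2}+2}$ in the $+$-variant arises simply because $n-2$ is adjacent to both $n-1$ and $n$. Conversely, in your inductive step $j\to j-1$ the node $j-1$ passes from multiplicity~$1$ in $[i,n,j]$ to multiplicity~$2$ in $[i,n,j-1]$, so the shuffle product $\Psi(\tilde{E}^{\pm}_{[i,n,j],\ast})\star\Psi(e_{j-1,s_{j-1}})$ \emph{does} require a genuine $\mathfrak{S}_2$-symmetrization in $x_{j-1,1},x_{j-1,2}$; it does not ``merely produce a pure monomial''. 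This is precisely where the new factor $(v^2x_{j-1,1}-x_{j-1,2})(v^2x_{j-1,2}-x_{j-1,1})$ is born, together with the shift that converts the exponent at $j-2$ to $s_{j-2}+2$ in $g_1$.

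Second, your ``key identity'' is false as stated: the right-hand side $(v^2x_1-x_2)(v^2x_2-x_1)$ does not depend on $y,z$, whereas the left-hand side does (set $y=z=0$ to see the discrepancy). The correct mechanism is that at each inductive step one symmetrizes at the newly doubled node $\ell$ while the neighboring node $\ell+1$ already carries the factor $(v^2x_{\ell+1,1}-x_{\ell+1,2})(v^2x_{\ell+1,2}-x_{\ell+1,1})$ from the previous step; the combination of this existing symmetric factor, the $\zeta$-factors linking $\ell$ to $\ell\pm 1$, and the $v^{\pm 1}$-commutator is what collapses to a monomial in $x_{\ell+1,\ast}$ times $(v^2x_{\ell,1}-x_{\ell,2})(v^2x_{\ell,2}-x_{\ell,1})$. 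The identity you need therefore involves variables at two adjacent doubled nodes simultaneously, not a single node in isolation.
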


\begin{proof}
Straightforward computation.
\end{proof}

For more general quantum root vectors $E_{\beta,s}$ defined in~\eqref{rootvector3}, we have:

\begin{Lem}\label{phirv-D}
For any $s\in \BZ$ and any choices of  $s_k$ and $\lambda_k$ in \eqref{rootvector3}, we have:
\begin{equation}
\label{eq:D-general}
  \phi_{\beta}\left(\Psi\left(E_{\beta,s}\right)\right) \doteq \langle 1 \rangle_{v}^{\abs{\beta}-1} \cdot w_{\beta,1}^{s+\abs{\beta}-1}
  \qquad \forall\, (\beta,s)\in\Delta^{+}\times \BZ.
\end{equation}
\end{Lem}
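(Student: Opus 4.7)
The strategy parallels that of Lemma~\ref{phirv-C} and proceeds by case-analysis on the shape of $\beta\in\Delta^{+}$ in type $D_n$. In each case, we expand $\Psi(E_{\beta,s})$ via~\eqref{shuffleproduct} as a symmetrization over $\mathfrak{S}_{\unl{k}}$ of products of rank-one shuffle elements weighted by the $\zeta$-factors \eqref{eq:zeta}, and then analyze which summands survive the specialization $\phi_{\beta}$ given by~\eqref{spe-D-1} or by the two-step prescription~\eqref{spe-D-2}--\eqref{spe-D-3}. A key simplification specific to type $D_n$ is that $d_i=1$ for every $i\in I$, so all factors $\langle\,\cdot\,\rangle_v$ arising from specialized $\zeta$-factors come uniformly as $\langle 1\rangle_v$; this is why the right-hand side of~\eqref{eq:D-general} is uniform across $\beta$ and has no $\langle 2\rangle_v$-corrections (contrast with Lemma~\ref{phirv-C}).

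For $\beta\in\{[i,j]\}\cup\{[i,n]\}\cup\{[i,n,n-1]\}$ (where $1\leq i\leq j<n$ or $i=j=n$ in the first family), one has $\nu_{\beta,\ell}\leq 1$ for each $\ell\in\beta$, so $\phi_{\beta}$ is the single-step specialization of~\eqref{spe-D-1}. The argument follows \cite[Lemma~4.2]{HT24} nearly verbatim: among all $\mathfrak{S}_{\unl{k}}$-summands in the shuffle expansion of~\eqref{rootvector3}, only the one respecting the reading order of the Lyndon word has a nonzero $\phi_{\beta}$-image, since any inversion produces a $\zeta$-factor of the form $\zeta(v^{\pm 1})=0$ under \eqref{spe-D-1}. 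Each of the $|\beta|-1$ surviving adjacent-node $\zeta$-factors contributes exactly $\langle 1\rangle_v$, yielding $\langle 1\rangle_v^{|\beta|-1}\cdot w_{\beta,1}^{s+|\beta|-1}$ after tallying the accumulated powers of $w_{\beta,1}$.

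The main obstacle is the case $\beta=[i,n,j]$ with $1\leq i<j\leq n-2$, where the two-step specialization \eqref{spe-D-2}--\eqref{spe-D-3} must be carried out carefully. The cleanest route is first to verify \eqref{eq:D-general} for the specific $\tilde{E}^{+}_{\beta,s}$ of~\eqref{rvd4} using the explicit formula from Lemma~\ref{lem:psirv-D}: applying $\phi^{(1)}_{\beta}$ to the product $\prod_{\ell=j}^{n-2}(v^2x_{\ell,1}-x_{\ell,2})(v^2x_{\ell,2}-x_{\ell,1})$ delivers, up to a $v$-monomial, precisely the factor $B_\beta$ of~\eqref{Bfactor-D}, which divides out cleanly in the second step before the substitution $w'_{\beta,1}\mapsto w_{\beta,1}$; the specialization of the remaining monomial $g_1$ (and of the denominator) then produces $\langle 1\rangle_v^{|\beta|-1}\cdot w_{\beta,1}^{s+|\beta|-1}$. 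For general $\lambda_k,s_k$ in~\eqref{rootvector3}, one argues as in the proof of Lemma~\ref{phirv-C} for $\beta=[i,n,i]$: assign each variable $x^{(\beta,1)}_{\ell,t}$ an ``origin'' in the two pieces into which the iterated commutator naturally splits (the subword $[i,\ldots,n-2,n]$ and the subword $[n-1,\ldots,j]$), and verify that only the configuration placing the first-layer variables in one piece and the second-layer variables in the other survives under $\phi^{(1)}_\beta$, the surplus $\zeta$-factors in any other configuration contributing a vanishing factor. The main technical hurdle is tracking the interaction between the two-step specialization and the second-layer variables $\{x^{(\beta,1)}_{\ell,2}\}_{j\leq\ell\leq n-2}$ sitting in the branch of the Dynkin diagram; once this configuration analysis is complete, independence of the result from the choice of $\lambda_k\in v^{\BZ}$ follows since all admissible summands differ by scalar multiples in $\BQ^{\times}\cdot v^{\BZ}$.
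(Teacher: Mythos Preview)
Your treatment of the multiplicity-free roots $\beta\in\{[i,j],[i,n],[i,n,n-1]\}$ is correct and matches the paper's appeal to type $A_{n-1}$ results. The gap is in your handling of $\beta=[i,n,j]$ with $j\leq n-2$ for general $\lambda_k,s_k$.

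The split you describe---into ``the subword $[i,\ldots,n-2,n]$ and the subword $[n-1,\ldots,j]$''---does not reflect the structure of $E_{\beta,s}$ in~\eqref{rootvector3}. Unlike the type-$C$ root $[i,n,i]$, where~\eqref{rootvector2} genuinely defines $E_{\beta,s}$ as a commutator of two smaller root vectors (so that a two-way origin tracking is natural), the type-$D$ root vector is a fully left-nested commutator $[\cdots[[e_{i_1,s_1},e_{i_2,s_2}]_{\lambda_1},e_{i_3,s_3}]_{\lambda_2},\cdots,e_{i_\ell,s_\ell}]_{\lambda_{\ell-1}}$. Its only natural two-piece decomposition peels off the \emph{outermost} letter:
\[
  E_{[i,n,j],s}=[E_{[i,n,j+1],r},\,e_{j,s_\ell}]_{\lambda_{\ell-1}}.
\]
This is exactly what the paper does, arguing by descending induction on $j$ from the base case $j=n-2$ (where $\alpha=[i,n,n-1]$ is multiplicity-free). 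At each step one performs a short case analysis on the placement of the single variable $x^{(\beta,1)}_{j,2}$: it shows that $\phi_\beta\big(\Psi(E_{\alpha,r})\star\Psi(e_{j,\cdot})\big)=0$ (one placement vanishes already under $\phi^{(1)}_\beta$, the other only after dividing by $B_\beta$ and sending $w'_{\beta,1}\mapsto w_{\beta,1}$), while $\phi_\beta\big(\Psi(e_{j,\cdot})\star\Psi(E_{\alpha,r})\big)$ has exactly one surviving placement, whose value is read off from the inductive hypothesis for $\alpha=[i,n,j+1]$ together with the remaining $\zeta$-factor $\zeta\big(x^{(\beta,1)}_{j,2}/x^{(\beta,1)}_{j+1,2}\big)$. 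Since one of the two commutator terms always specializes to zero, independence from $\lambda_k$ is automatic, and independence from the decomposition $s=\sum s_k$ follows because only the total exponent survives. Your separate preliminary verification for $\tilde{E}^{+}_{\beta,s}$ via Lemma~\ref{lem:psirv-D} is correct but becomes redundant once the inductive argument is in place.
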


\begin{proof}
It suffices to treat the cases of $\beta=[i,n,j]$ with $i<j\leq n-2$, since the other cases follow from type $A_{n-1}$ results
of~\cite[Lemma 1.4]{Tsy22}.

Let us first verify~\eqref{eq:D-general} for $\beta=[i,n,n-2]$. Recall that $E_{\beta,s}=[E_{\alpha,r},e_{n-2,s_{n-i+2}}]_{\lambda}$
with $\alpha=[i,n,n-1]$, $r=s_{1}+\cdots+s_{n-i+1}$, and $\lambda\in v^{\BZ}$, so that
\[
  \phi_{\beta}(\Psi(E_{\beta,s}))=
  \phi_{\beta}\left(\Psi(E_{\alpha,r})\star\Psi(e_{n-2,s_{n-i+2}})\right)-
  \lambda \phi_{\beta}\left(\Psi(e_{n-2,s_{n-i+2}})\star\Psi(E_{\alpha,r})\right).
\]
First, we claim that $\phi_{\beta}\left(\Psi(E_{\alpha,r})\star \Psi(e_{n-2,s_{n-i+2}})\right)=0$. To this end, we note that
\begin{equation}
\label{eq:psialphan-2-D}
  \Psi(E_{\alpha,r})\star \Psi(e_{n-2,s_{n-i+2}})=\sum_{\sigma\in\mathfrak{S}_{2}}F_{\beta}(\dots,x_{n-2,\sigma(1)},x_{n-2,\sigma(2)},\dots),
\end{equation}
where
\begin{align*}
  F_{\beta}=\ &\Psi(E_{\alpha,r})(x_{i,1},\dots,x_{n,1}) \Psi(e_{n-2,s_{n-i+2}})(x_{n-2,2})\times\\
    & \zeta\left(\frac{x_{n-3,1}}{x_{n-2,2}}\right)\zeta\left(\frac{x_{n-2,1}}{x_{n-2,2}}\right)
     \zeta\left(\frac{x_{n-1,1}}{x_{n-2,2}}\right)\zeta\left(\frac{x_{n,1}}{x_{n-2,2}}\right).
\end{align*}
Let us show that the $\phi_{\beta}$-specialization of each $\sigma(F_{\beta})$ in the symmetrization \eqref{eq:psialphan-2-D} vanishes:
\begin{itemize}[leftmargin=0.7cm]

\item
if $x^{(\beta,1)}_{n-2,2}$ is plugged into $\Psi(E_{\alpha,r})$, then $x^{(\beta,1)}_{n-2,1}$ is plugged into
$\Psi(e_{n-2,s_{n-i+2}})$ and so the $\phi^{(1)}_{\beta}$-specialization of the corresponding summand vanishes
due to the $\zeta$-factor $\zeta\left(\frac{x^{(\beta,1)}_{n-3,1}}{x^{(\beta,1)}_{n-2,1}}\right)$;

\item
if $x^{(\beta,1)}_{n-2,2}$ is plugged into $\Psi(e_{n-2,s_{n-i+2}})$, then
$
  \zeta\left(\frac{x^{(\beta,1)}_{n-3,1}}{x^{(\beta,1)}_{n-2,2}}\right)
  \zeta\left(\frac{x^{(\beta,1)}_{n-2,1}}{x^{(\beta,1)}_{n-2,2}}\right)
  \zeta\left(\frac{x^{(\beta,1)}_{n-1,1}}{x^{(\beta,1)}_{n-2,2}}\right)
$
contributes $B_{[i,n,n-2]}=(w_{\beta,1}-w'_{\beta,1})(w_{\beta,1}-v^{-4}w'_{\beta,1})$ into the $\phi^{(1)}_{\beta}$-specialization
of the corresponding summand, and so the $\phi_\beta$-specialization vanishes due to the $\zeta$-factor
$\zeta\left(\frac{x^{(\beta,1)}_{n,1}}{x^{(\beta,1)}_{n-2,2}}\right)$.

\end{itemize}
The evaluation of $\phi_{\beta}\left(\Psi(e_{n-2,s_{n-i+2}})\star\Psi(E_{\alpha,r})\right)$ proceeds in a similar way,
treating two cases:
\begin{itemize}[leftmargin=0.7cm]

\item
if $x^{(\beta,1)}_{n-2,2}$ is plugged into $\Psi(E_{\alpha,r})$, then $x^{(\beta,1)}_{n-2,1}$ is plugged into
$\Psi(e_{n-2,s_{n-i+2}})$ and so the $\phi^{(1)}_{\beta}$-specialization of the corresponding summand vanishes
due to the $\zeta$-factor $\zeta\left(\frac{x^{(\beta,1)}_{n-2,1}}{x^{(\beta,1)}_{n-1,1}}\right)$;

\item
if $x^{(\beta,1)}_{n-2,2}$ is plugged into $\Psi(e_{n-2,s_{n-i+2}})$, then
$
  \zeta\left(\frac{x^{(\beta,1)}_{n-2,2}}{x^{(\beta,1)}_{n-3,1}}\right)
  \zeta\left(\frac{x^{(\beta,1)}_{n-2,2}}{x^{(\beta,1)}_{n-2,1}}\right)
  \zeta\left(\frac{x^{(\beta,1)}_{n-2,2}}{x^{(\beta,1)}_{n-1,1}}\right)
$
contributes $B_{[i,n,n-2]}$ into the $\phi^{(1)}_{\beta}$-specialization of the corresponding summand,
so that the overall $\phi_\beta$-specialization of the corresponding summand has the form
$$
  \doteq
  \left\{\langle 1\rangle_{v}^{|\alpha|-1}  w_{\beta,1}^{r+|\alpha|-1} (w'_{\beta,1})^{s_{n-i+2}}
  (w'_{\beta,1}-v^{2} w_{\beta,1})\right\}_{w'_{\beta,1}\mapsto w_{\beta,1}} \doteq
  \langle 1\rangle_{v}^{|\beta|-1} \cdot w_{\beta,1}^{s+|\beta|-1},
$$
where we used $\phi_\alpha(E_{\alpha,r})\doteq \langle 1\rangle_{v}^{|\alpha|-1} \cdot w_{\alpha,1}^{r+|\alpha|-1}$
and utilized the remaining $\zeta$-factor $\zeta\left(\frac{x^{(\beta,1)}_{n-2,2}}{x^{(\beta,1)}_{n,1}}\right)$.

\end{itemize}
This completes our proof of \eqref{eq:D-general} for $\beta=[i,n,n-2]$.

We now verify \eqref{eq:D-general} for $\beta=[i,n,j]$ assuming it holds for any $\beta'=[i,n,k]$ with $j<k\leq n-2$.
Recall that $E_{\beta,s}=[E_{\alpha,r},e_{j,s_{2n-i-j}}]_{\lambda}$ with $\alpha=[i,n,j+1]$,  $r=s_{1}+\cdots+s_{2n-i-j-1}$,
and $\lambda\in v^{\BZ}$. Similarly to the previous case, we claim that
$\phi_{\beta}\left(\Psi(E_{\alpha,r})\star\Psi(e_{j,s_{2n-i-j}})\right)=0$. Indeed:
\begin{itemize}[leftmargin=0.7cm]

\item
if $x^{(\beta,1)}_{j,2}$ is plugged into $\Psi(E_{\alpha,r})$, then $x^{(\beta,1)}_{j,1}$ is plugged into
$\Psi(e_{j,s_{2n-i-j}})$ and so the $\phi^{(1)}_{\beta}$-specialization of the corresponding summand vanishes
due to the $\zeta$-factor $\zeta\left(\frac{x^{(\beta,1)}_{j-1,1}}{x^{(\beta,1)}_{j,1}}\right)$;

\item
if $x^{(\beta,1)}_{j,2}$ is plugged into $\Psi(e_{j,s_{2n-i-j}})$, then the $\phi^{(1)}_{\beta}$-specialization of
the corresponding summand vanishes again, due to the presence of the $\zeta$-factor
$\zeta\left(\frac{x^{(\beta,1)}_{j+1,2}}{x^{(\beta,1)}_{j,2}}\right)$.

\end{itemize}
The evaluation of $\phi_{\beta}\left(\Psi(e_{j,s_{2n-i-j}})\star \Psi(E_{\alpha,r})\right)$ proceeds by analyzing similar two cases:
\begin{itemize}[leftmargin=0.7cm]

\item
if $x^{(\beta,1)}_{j,2}$ is plugged into $\Psi(E_{\alpha,r})$, then $x^{(\beta,1)}_{j,1}$ is plugged into
$\Psi(e_{j,s_{2n-i-j}})$ and so the $\phi^{(1)}_{\beta}$-specialization of the corresponding summand vanishes
due to the $\zeta$-factor $\zeta\left(\frac{x^{(\beta,1)}_{j,1}}{x^{(\beta,1)}_{j+1,1}}\right)$;

\item
if $x^{(\beta,1)}_{j,2}$ is plugged into $\Psi(e_{j,s_{2n-i-j}})$, then by the induction we know that
$\phi^{(1)}_{\alpha}(\Psi(E_{\alpha,r}))$ is divisible by $B_{\alpha}$, and thus evoking the product of $\zeta$-factors
$
  \zeta\left(\frac{x^{(\beta,1)}_{j,2}}{x^{(\beta,1)}_{j-1,1}}\right)
  \zeta\left(\frac{x^{(\beta,1)}_{j,2}}{x^{(\beta,1)}_{j,1}}\right)
  \zeta\left(\frac{x^{(\beta,1)}_{j,2}}{x^{(\beta,1)}_{j+1,1}}\right)
$,
we see that the $\phi^{(1)}_{\beta}$-specialization of the corresponding summand is divisible by $B_{\beta}$.
Moreover, after dividing by $B_{\beta}$, we know by the induction assumption that the overall $\phi_\beta$-specialization
of the corresponding summand is
\begin{align*}
  \doteq &
  \left\{\langle 1\rangle_{v}^{|\alpha|-1}  w_{\beta,1}^{r+|\alpha|-1}  (w'_{\beta,1})^{s_{2n-i-j}}
  (v^{j+3-2n} w'_{\beta,1}-v^{j+5-2n} w'_{\beta,1})\right\}_{w'_{\beta,1}\mapsto w_{\beta,1}}\\
   \doteq&
  \langle 1\rangle_{v}^{|\beta|-1} \cdot w_{\beta,1}^{s+|\beta|-1},
\end{align*}
where we used $\phi_\alpha(E_{\alpha,r})\doteq \langle 1\rangle_{v}^{|\alpha|-1} \cdot w_{\alpha,1}^{r+|\alpha|-1}$
and utilized the remaining $\zeta$-factor $\zeta\left(\frac{x^{(\beta,1)}_{j,2}}{x^{(\beta,1)}_{j+1,2}}\right)$.

\end{itemize}
This completes our proof of \eqref{eq:D-general} for any $\beta=[i,n,j]$ with $i<j\leq n-2$.
\end{proof}

Let us now generalize the above lemma by computing $\phi_{\unl{d}}(\Psi(E_{h}))$ for any $h\in H_{\unl{k},\unl{d}}$.
Similarly to Proposition \ref{prop:phidEh}, we have:

\begin{Prop}\label{prop:phidEh-D}
For a summand $\sigma(F_{h})$ in the symmetrization \eqref{eq:part-symm}, we have $\phi_{\unl{d}}(\sigma(F_{h}))=0$ unless
for any $\beta\in\Delta^{+}$ and $1\leq s\leq d_{\beta}$, there is $s'$ with $1\leq s'\leq d_{\beta}$ so that
\begin{equation}
\label{fsunld-D}
  o(x^{(\beta,s')}_{i,t})=(\beta,s)\  \text{for any} \ i\in\beta\  \text{and}\  1\leq t\leq \nu_{\beta,i},
\end{equation}
that is we plug the variables $x^{(\beta,s')}_{*,*}$ into the same function $\Psi(E_{\beta,r_{\beta}(h,s)})$.
\end{Prop}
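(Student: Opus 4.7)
The plan is to proceed by induction on the rank $n \geq 4$, closely mirroring the structure of Proposition~\ref{prop:phidEh} for type $C_n$. The base case $D_4$ is verified by a direct case-by-case analysis along the convex order \eqref{lynorderD}: processing each positive root $\gamma$ in turn, and assuming the grouping \eqref{fsunld-D} has been established for all $(\beta, s) < (\gamma, p)$, one forces \eqref{fsunld-D} for $(\gamma, p)$ by exploiting the $\zeta$-factors in \eqref{shuffleproduct} together with the wheel conditions encoded in the two-step specialization $\phi_{[i,4,j]}$ of \eqref{spe-D-2}--\eqref{spe-D-3}.

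For the inductive step, I fix $\gamma \in \Delta^+$ and $1 \leq p \leq d_\gamma$, and assume \eqref{fsunld-D} holds for all $(\beta, s) < (\gamma, p)$. Picking any variable $x^{(\eta, q)}_{1, t}$ with $o(x^{(\eta,q)}_{1,t}) = (\gamma, p)$, I note that $1 \in \eta$ and $\eta \geq \gamma$ sharply restrict the possibilities for $\eta$ via the convex order \eqref{lynorderD}. The analysis then splits by the form of $\gamma$ as follows. \emph{Case $\gamma = [1,j]$ with $1 \leq j \leq n-1$}: standard type $A$ arguments via chains of $\zeta$-factors $\zeta(x^{(\eta,q)}_{\ell,1}/x^{(\eta,q)}_{\ell+1,1})$ (as in Case~1 of Proposition~\ref{prop:phidEh}) rule out $\eta$ properly larger than $\gamma$ and impose \eqref{fsunld-D} for $(\gamma, p)$. \emph{Case $\gamma = [1, n]$}: the only roots $\eta > \gamma$ with $1 \in \eta$ are $\eta = [1,n,k]$ for $2 \leq k \leq n-1$; a chain of $\zeta$-factors forces $o(x^{(\eta,q)}_{n,1}) = (\gamma, p)$, contradicting either $n \notin \gamma$ or the multiplicity counts, leaving only $\eta = \gamma$. \emph{Case $\gamma = [1, n, j]$}: this is the most intricate case, requiring explicit use of the two-step specialization \eqref{spe-D-2}--\eqref{spe-D-3}; the divisor $B_\beta$ of \eqref{Bfactor-D} is produced by specific $\zeta$-factor combinations on the pairs $\{x^{(\eta,q)}_{\ell,1}, x^{(\eta,q)}_{\ell,2}\}$ for $j \leq \ell \leq n-2$, while residual $\zeta$-factors then vanish upon the substitution $w'_{\beta,s} \mapsto w_{\beta,s}$, forcing the desired grouping. \emph{Case $\gamma \geq [2]$}: apply the induction hypothesis for $D_{n-1}$ (with the simple root $\alpha_1$ removed from consideration).

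As in the type $C_n$ treatment, the guiding principle is that whenever two variables from the same splitting group $\{x^{(\eta,q)}_{*,*}\}$ are plugged into distinct functions $\Psi(E_{\beta_1,*})$ and $\Psi(E_{\beta_2,*})$ with $\beta_1 \neq \beta_2$, one exploits the minimality of $p$ (which fixes the direction of the resulting inequality on $o$-values) together with the fact that, after the $\phi_{\unl{d}}$-specialization, the numerator $z - v^{-(\alpha_i,\alpha_j)}$ of the relevant $\zeta_{i,j}$ in \eqref{eq:zeta} becomes $z - z$ and vanishes. The novelty relative to Proposition~\ref{prop:phidEh} is only in replacing the roots $\beta = [i,n,i]$ of $C_n$ by the roots $\beta = [i,n,j]$ with $i < j \leq n-2$ of $D_n$, the $B_\beta$ of \eqref{Bfactor-C} by that of \eqref{Bfactor-D}, and the specialization points in \eqref{spe-C-2} by those in \eqref{spe-D-2}.

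The main obstacle is the case $\gamma = [1, n, j]$ with $2 \leq j \leq n-2$, precisely because the two-step specialization introduces the divisor $B_\beta$ that partially cancels the wheel-type $\zeta$-specializations produced among the variables $x^{(\eta,q)}_{\ell,2}$. Careful bookkeeping is needed to verify that the \emph{surviving} $\zeta$-factors (namely those linking the second-layer variables $x^{(\eta,q)}_{\ell,2}$ to the first-layer variables $x^{(\eta',q')}_{\ell,1}$ of a different splitting group) still force $\phi_{\unl{d}}(\sigma(F_h)) = 0$ whenever the variables $\{x^{(\gamma,q)}_{*,*}\}$ are not all plugged into the same $\Psi(E_{\gamma, r_\gamma(h,p)})$. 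The combinatorics of which $\zeta$-factors get absorbed into $B_\beta$ versus which remain to impose the vanishing must be worked out afresh for the $D_n$ shape of $B_\beta$ in \eqref{Bfactor-D}, but no conceptually new technique beyond that of~\cite{HT24} and the type $C_n$ argument above is needed.
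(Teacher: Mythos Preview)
Your proposal is correct and follows essentially the same approach as the paper: induction on $n$ with base case $D_4$, case analysis along the convex order~\eqref{lynorderD}, reduction to $D_{n-1}$ for $\gamma\geq[2]$, and the identification of the two-step specialization for $\beta=[i,n,j]$ with $j\leq n-2$ as the crux. The paper organizes the inductive step by the value of $\eta$ rather than $\gamma$ and records the key computational fact you allude to---namely that for $\eta=[1,n,n-2]$ the four $\zeta$-factors linking $x^{(\eta,r)}_{n-2,2}$ to $x^{(\eta,r)}_{n-3,1},x^{(\eta,r)}_{n-2,1},x^{(\eta,r)}_{n-1,1},x^{(\eta,r)}_{n,1}$ contribute $(w_{\eta,r}-v^{-4}w'_{\eta,r})(w_{\eta,r}-w'_{\eta,r})^{2}$, whereas $B_\eta$ of~\eqref{Bfactor-D} contains only a single factor $(w_{\eta,r}-w'_{\eta,r})$, forcing the vanishing---but this is precisely the bookkeeping you flagged as the main obstacle.
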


\begin{proof}
We shall use the same notation and argument as in the proof of Proposition \ref{prop:phidEh}. Fix $(\gamma,p)$ with
$\gamma\in \Delta^{+}$ and $1\leq p\leq d_{\gamma}$. It suffices to prove that if \eqref{fsunld-D} holds for any
$(\beta,s)<(\gamma,p)$, then $\phi_{\unl{d}}(\sigma(F_{h}))=0$ unless \eqref{fsunld-D} holds for $(\gamma,p)$.
The proof proceeds by an induction on~$n$.

\noindent
\underline{Step 1} (base of induction): Verification for type $D_{4}$.
\begin{itemize}[leftmargin=0.7cm]

\item
Case 1: $\gamma<[1,4,2]$. Suppose $o(x^{(\eta,r)}_{1,1})=(\gamma,p)$. If $\eta\neq [1,4,2]$, then due to the $A_3$-type
results we know that $\phi_{\unl{d}}(\sigma(F_{h}))=0$ unless $\eta=\gamma$ and we plug all the variables $x^{(\gamma,r)}_{*,*}$
into $\Psi(E_{\gamma,r_{\gamma}(h,p)})$. If $\eta=[1,4,2]$, then due to the $\zeta$-factors
  $\zeta\left(\frac{x^{(\eta,r)}_{1,1}}{x^{(\eta,r)}_{2,1}}\right)
   \zeta\left(\frac{x^{(\eta,r)}_{2,1}}{x^{(\eta,r)}_{3,1}}\right)
   \zeta\left(\frac{x^{(\eta,r)}_{2,1}}{x^{(\eta,r)}_{4,1}}\right)$,
we know that $\phi_{\unl{d}}(\sigma(F_{h}))=0$ unless
$
  o(x^{(\eta,r)}_{1,1})\geq o(x^{(\eta,r)}_{2,1})\geq o(x^{(\eta,r)}_{3,1}) \,\&\, o(x^{(\eta,r)}_{4,1}).
$
Since we already plugged variables into  all $\Psi(E_{\beta,r_{\beta}(h,s)})$ with $(\beta,s)<(\gamma,p)$, we must have
\[
  o(x^{(\eta,r)}_{1,1})= o(x^{(\eta,r)}_{2,1})= o(x^{(\eta,r)}_{3,1})=o(x^{(\eta,r)}_{4,1})=(\gamma,p),
\]
and $o(x^{(\eta,r)}_{2,2})>(\gamma,p)$. Then the $\zeta$-factors
$
  \zeta\left(\frac{x^{(\eta,r)}_{1,1}}{x^{(\eta,r)}_{2,2}}\right)
  \zeta\left(\frac{x^{(\eta,r)}_{2,1}}{x^{(\eta,r)}_{2,2}}\right)
  \zeta\left(\frac{x^{(\eta,r)}_{3,1}}{x^{(\eta,r)}_{2,2}}\right)
$
contribute $B_{\eta}=(w_{\eta,r}-w'_{\eta,r})(w_{\eta,r}-v^{-4}w'_{\eta,r})$ into the $\phi^{(1)}_{\eta}$-specialization
of the corresponding summand, and so the overall $\phi_{\eta}$-specialization vanishes due to the $\zeta$-factor
$\zeta\left(\frac{x^{(\eta,r)}_{4,1}}{x^{(\eta,r)}_{2,2}}\right)$.

\item Case 2: $\gamma=[1,4,2]$. Suppose $o(x^{(\eta,r)}_{1,1})=(\gamma,p)$. Since $1\in\eta$ and we already plugged variables
into all $\Psi(E_{\beta,r_{\beta}(h,s)})$ with $(\beta,s)<(\gamma,p)$ satisfying the rules \eqref{fsunld-D}, we have $\eta=\gamma$.
By the above argument in Case 1, we know that $\phi_{\unl{d}}(\sigma(F_{h}))=0$ unless we plug all the variables $x^{(\gamma,r)}_{*,*}$
into $\Psi(E_{\gamma,r_{\gamma}(h,p)})$.

\item Case 3: $\gamma>[1,4,2]$. We can use type $A_2$ results.

\end{itemize}
Thus the proposition is true for type $D_{4}$.

\medskip
\noindent
\underline{Step 2} (step of induction): Assuming the validity for type $D_{n-1}$, let us prove it for $D_n$.

To this end we present case-by-case study:
\begin{itemize}[leftmargin=0.7cm]

\item
Case 1: $\gamma\leq [1,n,3]$. Suppose $o(x^{(\eta,r)}_{1,1})=(\gamma,p)$, so that $\eta\geq \gamma$.

\begin{itemize}[leftmargin=0.5cm]

\item
If $\eta\leq [1,n,n-1]$, then the result follows from $A_{n-1}$-type case.

\item
If $\eta=[1,n,n-2]$, then we know that $\phi_{\unl{d}}(\sigma(F_{h}))=0$ unless
\[
  (\gamma,p)=o(x^{(\eta,r)}_{1,1})=\cdots=  o(x^{(\eta,r)}_{n-1,1})= o(x^{(\eta,r)}_{n,1}).
\]
If $o(x^{(\eta,r)}_{n-2,2})\ne (\gamma,p)$, then $o(x^{(\eta,r)}_{n-2,2})>(\gamma,p)$, and so the product of $\zeta$-factors
\[
  \zeta\left(\frac{x^{(\eta,r)}_{n-3,1}}{x^{(\eta,r)}_{n-2,2}}\right)
  \zeta\left(\frac{x^{(\eta,r)}_{n-2,1}}{x^{(\eta,r)}_{n-2,2}}\right)
  \zeta\left(\frac{x^{(\eta,r)}_{n-1,1}}{x^{(\eta,r)}_{n-2,2}}\right)
  \zeta\left(\frac{x^{(\eta,r)}_{n,1}}{x^{(\eta,r)}_{n-2,2}}\right)
\]
contributes $(w_{\eta,r}-v^{-4}w'_{\eta,r})(w_{\eta,r}-w'_{\eta,r})^{2}$ into the $\phi^{(1)}_{\eta}$-specialization of the summand.
Since the factor~\eqref{Bfactor-D} contains a single copy of $(w_{\eta,r}-w'_{\eta,r})$, we thus get $\phi_{\unl{d}}(\sigma(F_{h}))=0$.

\item
If $\eta=[1,n,j]$ with $2\leq j<n-2$, then by the induction assumption applied to $\tilde{\eta}=[1,n,j+1]$ we know that
$\phi_{\unl{d}}(\sigma(F_{h}))=0$ unless
\[
  (\gamma,p)=o(x^{(\eta,r)}_{1,1})=\cdots=  o(x^{(\eta,r)}_{n,1})= o(x^{(\eta,r)}_{n-1,1})=
  o(x^{(\eta,r)}_{n-2,2})=\cdots=o(x^{(\eta,r)}_{j+1,2}).
\]
If $o(x^{(\eta,r)}_{j,2})\ne (\gamma,p)$, then $o(x^{(\eta,r)}_{j,2})>(\gamma,p)$ and so $\phi_{\unl{d}}(\sigma(F_{h}))=0$
due to $\zeta\left(\frac{x^{(\eta,r)}_{j+1,2}}{x^{(\eta,r)}_{j,2}}\right)$.

\end{itemize}

\item
Case 2: $\gamma=[1,n,2]$. Suppose $o(x^{(\eta,r)}_{1,1})=(\gamma,p)$. If \eqref{fsunld-D} holds for any
$(\beta,s)<(\gamma,p)$, then $\eta=\gamma$ and $\phi_{\unl{d}}(\sigma(F))=0$ unless we plug all the variables
$x^{(\eta,r)}_{*,*}$ into $\Psi(E_{\gamma,r_{\gamma}(h,p)})$.
\smallskip

\item
Case 3: $\gamma> [1,n,2]$.
If \eqref{fsunld-D} holds for any $(\beta,s)<([2],1)$, then we can use the induction assumption for $D_{n-1}$ to conclude
$\phi_{\unl{d}}(\sigma(F))=0$ unless \eqref{fsunld-D} holds for all $(\gamma,p)$.

\end{itemize}
This completes the proof.
\end{proof}

Completely analogously to Propositions~\ref{spekpC} and~\ref{vanishC}, one can use Proposition \ref{prop:phidEh-D} to evaluate
$\phi_{\unl{d}'}(\Psi(E_{h}))$ for any $\unl{d}'\leq \unl{d}\in\text{KP}(\unl{k})$ and $h\in H_{\unl{k},\unl{d}}$:

\begin{Prop}\label{vanishD}
(a) For any $h\in H_{\unl{k},\unl{d}}$, we have
\begin{equation}
\label{speehD}
  \phi_{\unl{d}}(\Psi(E_{h}))\doteq
  \prod_{\beta,\beta'\in \Delta^+}^{\beta<\beta'}G_{\beta,\beta'}\cdot
  \prod_{\beta\in\Delta^{+}}\left(\langle 1\rangle_{v}^{d_{\beta}(\abs{\beta}-1)}\cdot G_{\beta}\right)\cdot
  \prod_{\beta\in\Delta^{+}} P_{\lambda_{h,\beta}},
\end{equation}
where the factors $\{P_{\lambda_{h,\beta}}\}_{\beta\in\Delta^{+}}$ are given by \eqref{hlp-C}, the terms
$G_{\beta,\beta'},G_{\beta}$ are products of linear factors $w_{\beta,s}$ and $w_{\beta,s}-v^{\BZ}w_{\beta',s'}$
which are independent of $h\in H_{\unl{k},\unl{d}}$ and are $\mathfrak{S}_{\unl{d}}$-symmetric.

\noindent
(b) Lemma {\rm \ref{vanish}} is valid for type $D_{n}$, with $\phi_{\unl{d}}$ of \eqref{spe-D-1}--\eqref{spe-D-3}.
\end{Prop}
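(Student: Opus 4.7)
The plan is to follow the same two-step strategy as in the type $C_n$ case (Propositions~\ref{spekpC} and~\ref{vanishC}), with Proposition~\ref{prop:phidEh-D} and Lemma~\ref{phirv-D} providing the type $D_n$ inputs. I would start with part (a). Using the formula~\eqref{eq:part-symm} for $\Psi(E_h)$ as a symmetrization of $F_h$, Proposition~\ref{prop:phidEh-D} restricts the sum to those summands $\sigma(F_h)$ for which, for every $(\beta,s)\in \Delta^+ \times \{1,\ldots,d_\beta\}$, there exists a group index $s'$ such that the entire variable block $\{x^{(\beta,s')}_{i,t}\}_{i\in\beta,\,1\le t\le \nu_{\beta,i}}$ is plugged into a single factor $\Psi(E_{\beta,r_\beta(h,s)})$. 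This turns the $\phi_{\unl{d}}$-specialization of each surviving summand into a product of three types of contributions: (i) the ``diagonal'' contributions of $\phi_\beta(\Psi(E_{\beta,r_\beta(h,s)}))$, which by Lemma~\ref{phirv-D} equal $\langle 1\rangle_v^{|\beta|-1} \cdot w_{\beta,s}^{r_\beta(h,s)+|\beta|-1}$ up to sign and powers of $v$; (ii) the specialization of the $\zeta$-factors between different groups of the same root $\beta$, producing together with the monomials in (i) the rank-$1$ shuffle products $P_{\lambda_{h,\beta}}$ of~\eqref{hlp-C} modulo a $\mathfrak{S}_{\unl{d}}$-symmetric factor $G_\beta$; (iii) the specialization of the $\zeta$-factors between groups belonging to different roots $\beta<\beta'$, producing the cross factors $G_{\beta,\beta'}$. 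Combining (i)--(iii) over all $\beta,\beta'$ produces~\eqref{speehD}.

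The actual computation of (ii) and (iii) is a direct translation of the type $C_n$ analysis: for each pair $(\beta,s)\ne (\beta,s')$ or $(\beta,s),(\beta',s')$ with $\beta<\beta'$, one orders the ``slots'' $o(x^{(\beta,s)}_{i,t})$ by which function of $\{\Psi(E_{\beta,r_\beta(h,*)})\}$ the variables get assigned to, and tracks which $\zeta$-factors survive the specialization. The new feature in type $D_n$ is the two-step specialization~\eqref{spe-D-2}--\eqref{spe-D-3} for the roots $\beta=[i,n,j]$ with $1\le i<j\le n-2$, where one must first collect the factor $B_\beta$ of~\eqref{Bfactor-D} in $\phi^{(1)}_\beta$ and then specialize $w'_{\beta,s}\mapsto w_{\beta,s}$. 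The key observation (already used in the proof of Lemma~\ref{phirv-D}) is that the product of $\zeta$-factors
\[
  \zeta\!\left(\tfrac{x^{(\beta,s)}_{j,2}}{x^{(\beta,s)}_{j-1,1}}\right)
  \zeta\!\left(\tfrac{x^{(\beta,s)}_{j,2}}{x^{(\beta,s)}_{j,1}}\right)
  \zeta\!\left(\tfrac{x^{(\beta,s)}_{j,2}}{x^{(\beta,s)}_{j+1,1}}\right)
\]
precisely accounts for the $(w_{\beta,s}-w'_{\beta,s})(w_{\beta,s}-v^{-4}w'_{\beta,s})$ factor at each level, and iterating from $\ell=j$ up to $\ell=n-2$ produces exactly $B_\beta$; this is the main obstacle and requires careful bookkeeping of levels $\ell$ and surviving factors after division.

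For part (b), I would again proceed exactly as in Proposition~\ref{vanishC}. Given $\unl{d}'<\unl{d}\in \mathrm{KP}(\unl{k})$, take $\gamma\in \Delta^+$ to be the smallest root with $d'_\gamma<d_\gamma$ and fix any splitting of variables for $\phi_{\unl{d}'}$. Writing $o(x'^{(*,*)}_{*,*})=(\beta,s)$ when the variable is plugged into $\Psi(E_{\beta,r_\beta(h,s)})$, the case analysis in Steps 1--2 of the proof of Proposition~\ref{prop:phidEh-D} forces, by ascending induction on $(\beta,s)\le (\gamma,d'_\gamma)$, that there is some $s'\le d'_\beta$ with all the variables $x'^{(\beta,s')}_{*,*}$ feeding into the same $\Psi(E_{\beta,r_\beta(h,s)})$. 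Since $\sum_{\beta\le \gamma} d_\beta > \sum_{\beta\le \gamma} d'_\beta$ by the choice of $\gamma$, there must exist some $\eta>\gamma$ and $1\le q\le d'_\eta$ with $o(x'^{(\eta,q)}_{1,t})=(\gamma,d'_\gamma+1)$ for some $t$. The same case-by-case argument as in Step 2 of the proof of Proposition~\ref{prop:phidEh-D} --- carried out by induction on $n$, starting with $D_4$ --- then shows that this configuration always forces $\phi_{\unl{d}'}(\sigma(F_h))=0$, either through a vanishing $\zeta$-factor or through the factor $B_\eta$ being overmatched when $\eta=[1,n,j]$ with $j\le n-2$. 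The main subtlety is in the last case: one must verify that the triple of $\zeta$-factors attached to $x^{(\eta,q)}_{j,2}$ produces only a single copy of $B_\eta$, and all additional cancellations force the overall vanishing — this is handled exactly as in the ``Case~1, $\eta=[1,n,n-2]$'' subcase of Proposition~\ref{prop:phidEh-D}.
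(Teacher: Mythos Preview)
Your proposal is correct and follows essentially the same approach as the paper, which simply states that the proof proceeds ``completely analogously to Propositions~\ref{spekpC} and~\ref{vanishC}'' using Proposition~\ref{prop:phidEh-D} as the type $D_n$ input. Your outline of the three types of contributions for part~(a) and the induction-on-$n$ argument reducing part~(b) to the case analysis of Proposition~\ref{prop:phidEh-D} is exactly the intended strategy.
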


\begin{Rk}\label{rk:gbeta-D}
The factors $\{G_{\beta}\}_{\beta\in\Delta^{+}}$ featuring in \eqref{speehD} are explicitly given by:
\begin{itemize}[leftmargin=0.7cm]

\item
If $\beta\neq [i,n,j]$ with $1\leq i< j\leq n-2$, then
\begin{equation}
\label{eq:formulagbeta-D-1}
  G_{\beta}=
  \prod_{1\leq s\leq d_{\beta}} w_{\beta,s}^{\abs{\beta}-1}
  \prod_{1\leq s\neq s'\leq d_{\beta}} (w_{\beta,s}-v^{2}w_{\beta,s'})^{\abs{\beta}-1}.
\end{equation}

\item
If $\beta=[i,n,j]$ with $1\leq i<j \leq  n-2$, then
\begin{equation}
\label{eq:formulagbeta-D-2}
\begin{aligned}
  G_{\beta}=
  \prod_{1\leq s\leq d_{\beta}} w_{\beta,s}^{\abs{\beta}-1}
  &\prod_{1\leq s\neq s'\leq d_{\beta}} (w_{\beta,s}-v^{2}w_{\beta,s'})^{\abs{\beta}-1}\times\\
  &\prod_{1\leq s\neq s'\leq d_{\beta}} \prod_{\ell=j}^{n-2}
    \left\{ (w_{\beta,s}-v^{2n-2\ell}w_{\beta,s'})(w_{\beta,s}-v^{2n-2\ell-4}w_{\beta,s'}) \right\}.
\end{aligned}
\end{equation}

\end{itemize}
\end{Rk}

The factors $G_{\beta,\beta'}$ featuring in \eqref{speehD} can be computed recursively, which  shall be used
in the proof of our next result:

\begin{Prop}\label{spanD}
Lemma {\rm \ref{span}}  is valid for type $D_{n}$, with $\phi_{\unl{d}}$ of \eqref{spe-D-1}--\eqref{spe-D-3}.
\end{Prop}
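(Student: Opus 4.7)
The plan is to mimic the strategy of Proposition \ref{spanC} step by step, with the bookkeeping adapted to the type-$D_n$ wheel conditions and the two-step specialization $\phi_{\beta,s}$ of \eqref{spe-D-2}--\eqref{spe-D-3}. Given $F \in S_{\unl{k}}$ with $\phi_{\unl{d}'}(F)=0$ for all $\unl{d}'<\unl{d}$, I will first show that $\phi_{\unl{d}}(F)$ is divisible by the product $\prod_{\beta\leq\beta'} G_{\beta,\beta'}$ (with the convention $G_{\beta,\beta}:=G_{\beta}$), where $G_{\beta,\beta'}$ and $G_{\beta}$ are the $\mathfrak{S}_{\unl{d}}$-symmetric polynomials appearing in Proposition~\ref{vanishD}(a) and Remark~\ref{rk:gbeta-D}. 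Once this divisibility is established, the resulting quotient is an $\mathfrak{S}_{\unl{d}}$-symmetric Laurent polynomial in $\{w_{\beta,s}\}$; applying the rank-one reduction of Remark~\ref{rankreduction} block-by-block in $\beta\in\Delta^+$, this quotient can be written as a $\BQ(v)$-linear combination of products $\prod_{\beta\in\Delta^+}P_{\lambda_{h,\beta}}$, yielding $F_{\unl{d}}\in S'_{\unl{k}}$ with $\phi_{\unl{d}}(F)=\phi_{\unl{d}}(F_{\unl{d}})$. The secondary condition $\phi_{\unl{d}'}(F_{\unl{d}})=0$ for $\unl{d}'<\unl{d}$ is then automatic from Proposition~\ref{vanishD}(b), since $F_{\unl{d}}$ is supported on $H_{\unl{k},\unl{d}}$.

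The divisibility of $\phi_{\unl{d}}(F)$ by each individual $G_{\beta,\beta'}$ is local: it suffices to analyze the case $d_\beta=2$ (when $\beta=\beta'$) or $d_\beta=d_{\beta'}=1$ (when $\beta<\beta'$), with $d_\gamma=0$ for all other $\gamma$. I argue by induction on $n$, with the base case $D_4$ verified by direct inspection of the short list of positive roots. For the induction step, any pair $(\beta,\beta')$ whose standard Lyndon expressions do not involve the simple root $\alpha_1$ reduces to the induction hypothesis for type $D_{n-1}$. It therefore remains to treat pairs with $1\in\beta$ or $1\in\beta'$, which split naturally into: self-pairs $(\beta,\beta)$ with $\beta\in\{[1,j],[1,n],[1,n,j]\}$; internal pairs along the $A$-type chain $\{[1,j]\}$, handled as in \cite[Lemma 3.17]{Tsy18}; mixed pairs $([1,j],[1,n,k])$, $([1,n],[1,n,k])$, and $([1,n,k],[1,n,j])$; and pairs $(\beta,\beta')$ with $\beta'\geq[2]$, where an auxiliary Kostant partition $\unl{d}'<\unl{d}$ obtained from a rearrangement $\beta+\beta'=\gamma+\gamma'$ is combined with the induction hypothesis to supply the required factors.

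The main obstacle will be the self-pair $\beta=\beta'=[1,n,j]$ with $1<j\leq n-2$. Here $\phi_{\beta,s}$ carries the nontrivial factor $B_\beta=\prod_{\ell=j}^{n-2}(w_{\beta,s}-v^{2\ell+4-2n}w'_{\beta,s})(w_{\beta,s}-v^{2\ell-2n}w'_{\beta,s})$ from \eqref{Bfactor-D}, while the target $G_\beta$ from \eqref{eq:formulagbeta-D-2} demands the extra cross-factors $\prod_{\ell=j}^{n-2}\{(w_{\beta,1}-v^{2n-2\ell}w_{\beta,2})(w_{\beta,1}-v^{2n-2\ell-4}w_{\beta,2})\}$ between the two copies. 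The combinatorial subtlety is to track precisely which of the type-$D_n$ wheel conditions (the chain relations $x_{i,1}=v^2x_{i,2}=vx_{i\pm 1,1}$ together with the branching relations $x_{n-2,1}=v^2x_{n-2,2}=vx_{n,1}$ and $x_{n,1}=v^2x_{n,2}=vx_{n-2,1}$) are absorbed into $B_\beta$ during $\phi^{(1)}_{\beta,s}$ and which survive as cross-factors after the second step. The approach will be to recurse on $n-j$: specializing the variables $x^{(\beta,1)}_{j,2}$ and $x^{(\beta,2)}_{j,2}$ produces the paired factors $(w_{\beta,1}-v^{\pm(2n-2j)}w_{\beta,2})$ and $(w_{\beta,1}-v^{\pm(2n-2j-4)}w_{\beta,2})$ from the wheel conditions at positions $j-1,j,j+1$ (together with their $s\leftrightarrow s'$ symmetric counterparts), reducing the problem to the pair $([1,n,j+1],[1,n,j+1])$. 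The mixed pairs $([1,j],[1,n,k])$, $([1,n],[1,n,k])$, and $([1,n,k],[1,n,j])$ are treated similarly, typically supplementing the direct wheel-condition contributions with vanishing hypotheses $\phi_{\unl{d}'}(F)=0$ for $\unl{d}'$ coming from nonstandard decompositions of $\beta+\beta'$, exactly as in the corresponding $C_n$ cases of Proposition~\ref{spanC}.
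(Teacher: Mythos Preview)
Your proposal is correct and takes essentially the same approach as the paper: reduce to the local two-root analysis, induct on $n$ using type $A_{n-1}$ and $D_{n-1}$ results, and extract the factors of $G_{\beta,\beta'}$ from wheel conditions supplemented by auxiliary vanishing hypotheses $\phi_{\unl{d}'}(F)=0$. Your identification of the self-pair $\beta=\beta'=[1,n,j]$ as the main subtlety, and your plan to handle it by recursing on $j$ while tracking which wheel conditions feed $B_\beta$ versus the cross-factors, matches the paper's treatment exactly.
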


\begin{proof}
The proof closely follows that of Proposition~\ref{spanC}. In particular, for any pair $\beta\leq \beta'$, let us consider
\begin{equation*}
  \unl{d}=
  \begin{cases}
    \big\{ d_{\beta}=2,\, \mathrm{and}\  d_{\gamma}=0\ \mathrm{for\ other}\ \gamma \big\} & \mathrm{if}\ \beta=\beta'\\
    \big\{ d_{\beta}=d_{\beta'}=1,\, \mathrm{and}\  d_{\gamma}=0\ \mathrm{for\ other}\ \gamma \big\} & \mathrm{if}\ \beta<\beta'
  \end{cases}
\end{equation*}
as before, and let $\unl{d}\in\text{KP}(\unl{k})$. Similarly to $C$-type, it then suffices to show that for any
$F\in S_{\unl{k}}$, $\phi_{\unl{d}}(F)$ is divisible by $G_{\beta,\beta'}$ if $\phi_{\unl{d}'}(F)=0$ for any
$\unl{d}'<\unl{d}$, where we use $G_{\beta,\beta}=G_{\beta}$.
Using type $A_n$ results and the induction, we still have the following cases to analyze:
\begin{itemize}[leftmargin=0.7cm]

\item
$\beta=\beta'=[1,n,j]$ with $2\leq j\leq n-2$.

According to Remark \ref{rk:gbeta-D}, we have
\begin{equation*}
  G_{\beta}=w_{\beta,1}w_{\beta,2}(w_{\beta,1}-v^{\pm 2}w_{\beta,2})
  (w_{\beta,1}-v^{\pm(2n-2j)}w_{\beta,2})(w_{\beta,1}-v^{\pm(2n-2j-4)}w_{\beta,2})\cdot G_{\alpha}
\end{equation*}
for $\alpha=[1,n,j+1]$. For any $F\in S_{\unl{k}}$, as we specialize all the variables but
$\{x^{(\beta,1)}_{j,2},x^{(\beta,2)}_{j,2}\}$, the wheel conditions involving the specialized variables produce
the factor $G_{\alpha}$ by the induction assumption. As we specialize $x^{(\beta,1)}_{j,2}$, the wheel conditions
\[
  x^{(\beta,1)}_{j,2}=v^{2}x^{(\beta,1)}_{j,1}=vx^{(\beta,1)}_{j-1,1}, \qquad
  x^{(\beta,1)}_{j,1}=v^{2}x^{(\beta,1)}_{j,2}=vx^{(\beta,1)}_{j+1,1}
\]
contribute the factor $B_{\beta}/B_{\alpha}=(w_{\beta,1}-v^{-2n+2j}w'_{\beta,1})(w_{\beta,1}-v^{-2n+2j+4}w'_{\beta,1})$
to the first step of the specialization $\phi^{(1)}_{\beta}(F)$, cf.~\eqref{spe-D-2}. Then in the second step of the
specialization, cf.~\eqref{spe-D-3}, we divide by $B_{\beta}/B_{\alpha}$ and specialize
$w'_{\beta,1} \mapsto w_{\beta,1}, w'_{\beta,2}\mapsto w_{\beta,2}$. Then the  wheel conditions
\begin{equation*}
  x^{(\beta,1)}_{j+1,2}=v^{2}x^{(\beta,2)}_{j+1,2}=vx^{(\beta,1)}_{j,2}, \ \
  x^{(\beta,1)}_{j,2}=v^{2}x^{(\beta,2)}_{j,1}=vx^{(\beta,2)}_{j-1,1}, \ \
  x^{(\beta,2)}_{j,1}=v^{2}x^{(\beta,1)}_{j,2}=vx^{(\beta,2)}_{j+1,1},
\end{equation*}
contribute the factor $(w_{\beta,1}-v^{ 2}w_{\beta,2})(w_{\beta,1}-v^{2n-2j}w_{\beta,2})(w_{\beta,1}-v^{2n-2j-4}w_{\beta,2})$
to $\phi_{\unl{d}}(F)$. Thus, from the symmetry, we see that $\phi_{\unl{d}}(F)$ is indeed divisible by $G_{\beta}$.

\item $\beta=[1,i]$, $\beta'=[1,n,n-1]$.

If $i\leq n-3$, then $G_{\beta,\beta'}=G_{[1,i],[1,n-2]}$, so  $\phi_{\unl{d}}(F)$
is divisible by $G_{\beta,\beta'}$ due to type $A_n$.

If $i=n-2$, then
\begin{equation*}
  G_{\beta,\beta'}=(w_{\beta,1}-w_{\beta',1})\cdot G_{\beta,\alpha} \qquad \mathrm{with} \quad \alpha=[1,n-1].
\end{equation*}
As we specialize all the variables but $x^{(\beta',1)}_{n,1}$, the wheel conditions involving the specialized variables
produce the factor $G_{\beta,\alpha}$ by the induction assumption. As we specialize $x^{(\beta',1)}_{n,1}$, consider
$\unl{d}'=\{d'_{[1,n-1]}=d'_{[1,n]}=1,\ \text{and}\ d'_{\gamma}=0\ \text{for other}\ \gamma\}$. Then $\unl{d}'<\unl{d}$
and $\phi_{\unl{d}'}(F)=0$ implies that $\phi_{\unl{d}}(F)$ is divisible by $w_{\beta,1}-w_{\beta',1}$,
and hence by $G_{\beta,\beta'}$.

If $i=n-1$, then
\begin{equation*}
  G_{\beta,\beta'}=(w_{\beta,1}-v^{-2}w_{\beta',1})\cdot G_{\beta,\alpha} \qquad \mathrm{with} \quad \alpha=[1,n].
\end{equation*}
By the induction assumption and the wheel conditions $F=0$ at
$x^{(\beta',1)}_{n-1,1}=v^{2}x^{(\beta,1)}_{n-1,1}=vx^{(\beta,1)}_{n-2,1}$,
we see that $\phi_{\unl{d}}(F)$ is divisible by $G_{\beta,\beta'}$.

If $i=n$, then
\begin{equation*}
  G_{\beta,\beta'}=(w_{\beta,1}-v^{\pm 2}w_{\beta',1})\cdot G_{\alpha,\alpha'}
  \qquad \mathrm{with} \quad \alpha=[1,n-2], \alpha'=[1,n].
\end{equation*}
By the induction assumption and wheel conditions $F=0$ at
$x^{(\beta,1)}_{n-2,1}=v^{2}x^{(\beta',1)}_{n-2,1}=vx^{(\beta',1)}_{n-1,1}$ and
$x^{(\beta',1)}_{n,1}=v^{2}x^{(\beta,1)}_{n,1}=vx^{(\beta,1)}_{n-2,1}$
we see that $\phi_{\unl{d}}(F)$ is divisible by $G_{\beta,\beta'}$.

\item $\beta=[1,i]$, $\beta'=[1,n,n-2]$.

If $i\leq n-4$, then $G_{\beta,\beta'}=G_{[1,i],[1,i+1]}$, so  $\phi_{\unl{d}}(F)$
is divisible by $G_{\beta,\beta'}$, due to type $A_n$.

If $i=n-3$, then
\begin{equation*}
  G_{\beta,\beta'}=(w_{\beta,1}-v^{- 2}w_{\beta',1})\cdot G_{\beta,\alpha}
  \qquad \mathrm{with} \quad \alpha=[1,n,n-1].
\end{equation*}
Consider $\unl{d}'=\{d'_{[1,n-2]}=d'_{[1,n,n-1]}=1,\, \text{and}\, d'_{\gamma}=0\ \text{for other}\ \gamma\}$.
Then $\unl{d'}<\unl{d}$ and $\phi_{\unl{d}'}(F)=0$ implies that $\phi_{\unl{d}}(F)$ is divisible by
$w_{\beta,1}-v^{- 2}w_{\beta',1}$, and hence by $G_{\beta,\beta'}$.

If $i=n-2$, then
\begin{equation*}
  G_{\beta,\beta'}=(w_{\beta,1}-v^{-4}w_{\beta',1})\cdot G_{\beta,\alpha}
  \qquad \mathrm{with} \quad \alpha=[1,n,n-1].
\end{equation*}
From induction assumption and the wheel condition $F=0$ at
$x^{(\beta',1)}_{n-2,2}=v^{2}x^{(\beta,1)}_{n-2,1}=vx^{(\beta,1)}_{n-3,1}$
we see that $\phi_{\unl{d}}(F)$ is divisible by $G_{\beta,\beta'}$.

If $i=n-1$, then
\begin{equation*}
  G_{\beta,\beta'}=(w_{\beta,1}-w_{\beta',1})(w_{\beta,1}-v^{-4}w_{\beta',1})\cdot G_{\beta,\alpha}
  \qquad \mathrm{with} \quad \alpha=[1,n,n-1].
\end{equation*}
Due to the induction assumption and the wheel conditions $F=0$ at
$x^{(\beta',1)}_{n-2,2}=v^{2}x^{(\beta,1)}_{n-2,1}=vx^{(\beta,1)}_{n-3,1}$ and
$x^{(\beta,1)}_{n-2,1}=v^{2}x^{(\beta',1)}_{n-2,2}=vx^{(\beta,1)}_{n-1,1}$,
we see that $\phi_{\unl{d}}(F)$ is divisible by $G_{\beta,\beta'}$.

If $i=n$, then
\begin{equation*}
  G_{\beta,\beta'}=(w_{\beta,1}-v^{\pm 2}w_{\beta',1})\cdot G_{\alpha,\beta'}
  \qquad \mathrm{with} \quad \alpha=[1,n-2].
\end{equation*}
By the induction assumption and the wheel conditions $F=0$ at
$x^{(\beta',1)}_{n,1}=v^{2}x^{(\beta,1)}_{n,1}=vx^{(\beta,1)}_{n-2,1}$ and
$x^{(\beta,1)}_{n-2,1}=v^{2}x^{(\beta',1)}_{n-2,1}=vx^{(\beta,1)}_{n,1}$
we see that $\phi_{\unl{d}}(F)$ is divisible by $G_{\beta,\beta'}$.

\item $\beta=[1,i]$,  $\beta'=[1,n,j]$ with $2\leq j\leq n-3$.

If $i\leq j-2$, then $G_{\beta,\beta'}=G_{[1,i],[1,j-1]}$, and so $\phi_{\unl{d}}(F)$ is divisible by $G_{\beta,\beta'}$.

If $i=j-1$ and $j\geq 3$, then
\begin{equation*}
  G_{\beta,\beta'}=(w_{\beta,1}-v^{\pm 2}w_{\beta',1})(w_{\beta,1}-v^{-2n+2j+2}w_{\beta',1})\cdot G_{[1,j-2],\beta'}.
\end{equation*}
As we specialize the remaining variable $x^{(\beta,1)}_{j-1,1}$, the wheel conditions $F=0$ at
$x^{(\beta',1)}_{j-1,1}=v^{2}x^{(\beta,1)}_{j-1,1}=vx^{(\beta',1)}_{j,1}$ and
$x^{(\beta,1)}_{j-1,1}=v^{2}x^{(\beta',1)}_{j-1,1}=vx^{(\beta',1)}_{j-2,1}$
contribute the factor $(w_{\beta,1}-v^{\pm 2}w_{\beta',1})$ into $\phi_{\unl{d}}(F)$. Consider
  $\unl{d}'=\{d'_{[1,j]}=d'_{[1,n,j+1]}=1,\, \text{and}\ d'_{\gamma}=0\ \text{for other}\ \gamma\}$.
Then $\unl{d}'<\unl{d}$ and $\phi_{\unl{d}'}(F)=0$ implies that $\phi_{\unl{d}}(F)$ is divisible by
$w_{\beta,1}-v^{-2n+2j+2}w_{\beta',1}$. Combining this with the induction assumption we see that
$\phi_{\unl{d}}(F)$ is divisible by $G_{\beta,\beta'}$.

If $i=j-1$ and $j=2$, then $\beta=[1]$, and $G_{\beta,\beta'}=(w_{\beta,1}-v^{-2n+6}w_{\beta',1})\cdot G_{[1],[1,n,3]}$.
Consider $\unl{d}'=\{d'_{[1,2]}=d'_{[1,n,3]}=1,\, \text{and}\ d'_{\gamma}=0\ \ \text{for other}\ \gamma\}$. Then
$\unl{d}'<\unl{d}$ and $\phi_{\unl{d}'}(F)=0$ implies that $\phi_{\unl{d}}(F)$ is divisible by
$w_{\beta,1}-v^{-2n+6}w_{\beta',1}$. Combining this with the induction assumption we see that
$\phi_{\unl{d}}(F)$ is divisible by $G_{\beta,\beta'}$.

If $i=j$, then $G_{\beta,\beta'}=(w_{\beta,1}-v^{-2n+2j}w_{\beta',1})\cdot G_{\beta,[1,n,j+1]}$. From the wheel condition
$F=0$ at $x^{(\beta',1)}_{j,2}=v^{2}x^{(\beta,1)}_{j,1}=vx^{(\beta,1)}_{j-1,1}$, we see that $\phi_{\unl{d}}(F)$ is divisible
by $(w_{\beta,1}-v^{-2n+2j}w_{\beta',1})$, which together with the induction assumption implies the divisibility by $G_{\beta,\beta'}$.

If $i\geq j+1$, then
\begin{equation*}
  G_{\beta,\beta'}=(w_{\beta,1}-v^{-2n+2j}w_{\beta',1})(w_{\beta,1}-v^{-2n+2j+4}w_{\beta',1})\cdot G_{\beta,\alpha}
  \quad \mathrm{with} \quad  \alpha=[1,n,j+1].
\end{equation*}
As we specialize all the variables but $x^{(\beta',1)}_{j,2}$, the wheel conditions involving the specialized variables
produce the factor $G_{\beta,\alpha}$ by the induction assumption. As we specialize $x^{(\beta',1)}_{j,2}$, the wheel
conditions at $x^{(\beta',1)}_{j,2}=v^{2}x^{(\beta',1)}_{j,1}=vx^{(\beta',1)}_{j-1,1}$ and
$x^{(\beta',1)}_{j,1}=v^{2}x^{(\beta',1)}_{j,2}=vx^{(\beta',1)}_{j+1,1}$ contribute the factor
$B_{\beta'}/B_{\alpha}=(w_{\beta',1}-v^{-2n+2j}w'_{\beta',1})(w_{\beta',1}-v^{-2n+2j+4}w'_{\beta',1})$
to the first step of the specialization $\phi^{(1)}_{\beta}(F)$, cf.~\eqref{spe-D-2}. Then in the second step
of the specialization, cf.~\eqref{spe-D-3}, we divide by $B_{\beta'}/B_{\alpha}$ and specialize
$w'_{\beta',1}\mapsto w_{\beta',1}$. The  wheel conditions $F=0$ at
$x^{(\beta',1)}_{j,2}=v^{2}x^{(\beta,1)}_{j,1}=vx^{(\beta,1)}_{j-1,1}$ and
$x^{(\beta,1)}_{j,1}=v^{2}x^{(\beta',1)}_{j,2}=vx^{(\beta,1)}_{j+1,1}$ contribute the extra factor
$(w_{\beta,1}-v^{-2n+2j}w_{\beta',1})(w_{\beta,1}-v^{-2n+2j+4}w_{\beta',1})$ into $\phi_{\unl{d}}(F)$.
Thus $\phi_{\unl{d}}(F)$ is divisible by $G_{\beta,\beta'}$.

\item $\beta=[1,n,n-1]$, $\beta'=[1,n,j]$ with $2\leq j\leq n-2$.

If $j=n-2$, then
\begin{equation*}
  G_{\beta,\beta'}= (w_{\beta,1}-w_{\beta',1})(w_{\beta,1}-v^{- 2}w_{\beta',1})(w_{\beta,1}-v^{-4}w_{\beta',1})\cdot G_{\beta}.
\end{equation*}
By the induction assumption and the wheel conditions $F=0$ at
\begin{equation*}
\begin{aligned}
   x^{(\beta,1)}_{n-2,1}=v^{2}x^{(\beta',1)}_{n-2,2}=vx^{(\beta,1)}_{n,1},\ \
   x^{(\beta',1)}_{n-2,2}=v^{2}x^{(\beta,1)}_{n-2,1}=vx^{(\beta,1)}_{n-3,1},\ \
   x^{(\beta',1)}_{n-1,2}=v^{2}x^{(\beta,1)}_{n-1,1}=vx^{(\beta',1)}_{n-2,2}
\end{aligned}
\end{equation*}
we see that $\phi_{\unl{d}}(F)$ is divisible by $G_{\beta,\beta'}$.

If $j<n-2$,  then
\begin{equation*}
\begin{aligned}
  G_{\beta,\beta'}=(w_{\beta,1}-v^{-2n+2j}w_{\beta',1})(w_{\beta,1}-v^{-2n+2j+4}w_{\beta',1})\cdot G_{\beta,[1,n,j+1]},
\end{aligned}
\end{equation*}
and we can apply the same arguments as for $(\beta,\beta')=([1,j+1],[1,n,j])$.

\item $\beta=[1,n,k]$, $\beta'=[1,n,j]$.

If $j>2$, then $G_{\beta,\beta'}=(w_{\beta,1}-v^{\pm 2}w_{\beta',1})\cdot G_{[2,n,k],[2,n,j]}$, and so
$\phi_{\unl{d}}(F)$ is divisible by $G_{\beta,\beta'}$ due to the induction assumption and wheel conditions
at $x^{(\beta,1)}_{2,1}=v^{2}x^{(\beta',1)}_{2,1}=vx^{(\beta',1)}_{1,1}$ and
$x^{(\beta',1)}_{2,1}=v^{2}x^{(\beta,1)}_{2,1}=vx^{(\beta,1)}_{1,1}$.

If $j=2$ and $k>3$, then
  $G_{\beta,\beta'}=(w_{\beta,1}-v^{-2n+4}w_{\beta',1})(w_{\beta,1}-v^{-2n+8}w_{\beta',1})\cdot G_{\beta,[1,n,3]}$.
From the wheel conditions $F=0$ at
$x^{(\beta',1)}_{2,2}=v^{2}x^{(\beta,1)}_{2,1}=vx^{(\beta,1)}_{1,1}$,
$x^{(\beta,1)}_{2,1}=v^{2}x^{(\beta',1)}_{2,2}=vx^{(\beta,1)}_{3,1}$,  and
the induction assumption we see that $\phi_{\unl{d}}(F)$ is divisible by $G_{\beta,\beta'}$.

If $j=2$ and $k=3$,  then
\begin{equation*}
\begin{aligned}
  G_{\beta,\beta'}=
  (w_{\beta,1}-v^{\pm 2}w_{\beta',1})(w_{\beta,1}-v^{2n-6}w_{\beta',1})(w_{\beta,1}-v^{2n-10}w_{\beta',1})\cdot G_{[1,n,4],[1,n,2]}.
\end{aligned}
\end{equation*}
Due to the induction assumption and the wheel conditions $F=0$ at
\begin{equation*}
\begin{aligned}
   &x^{(\beta,1)}_{3,2}=v^{2}x^{(\beta',1)}_{3,2}=vx^{(\beta',1)}_{4,2},\qquad
   v^2x^{(\beta,1)}_{3,2}=x^{(\beta',1)}_{3,2}=vx^{(\beta',1)}_{2,2},\\
   & x^{(\beta,1)}_{3,2}=v^{2}x^{(\beta',1)}_{3,1}=vx^{(\beta',1)}_{2,1},\qquad
   v^2x^{(\beta,1)}_{3,2}=x^{(\beta',1)}_{3,1}=vx^{(\beta',1)}_{4,1},
\end{aligned}
\end{equation*}
we see that $\phi_{\unl{d}}(F)$ is divisible by $G_{\beta,\beta'}$.

\item $\beta'\geq [2]>\beta$.

If $\beta=[1,i]$ and $\beta'=[2,n,j]$, then $G_{\beta,\beta'}=(w_{\beta,1}-w_{\beta',1})\cdot G_{[2,i],\beta'}$. Consider
$\unl{d}'=\{d'_{[1,n,j]}=d'_{[2,i]}=1,\ \text{and}\ d'_{\gamma}=0\ \text{for other}\ \gamma\}$. Then $\phi_{\unl{d}}(F)$
is divisible by $G_{\beta,\beta'}$ due to the induction assumption and $\phi_{\unl{d}'}(F)=0$.

If $\beta=[1,n,3]$ and $\beta'=[2,j]$, then
$G_{\beta,\beta'}=(w_{\beta,1}-v^{\pm 2}w_{\beta',1})(w_{\beta,1}-v^{2n-6}w_{\beta',1})\cdot G_{\beta,[3,j]}$. Consider
$\unl{d}'=\{d'_{[1,n,2]}=d'_{[3,j]}=1,\, \text{and}\ d'_{\gamma}=0\ \text{for other}\ \gamma\}$, so that $\unl{d}'<\unl{d}$.
Then $\phi_{\unl{d}}(F)$ is divisible by $G_{\beta,\beta'}$ due to the induction assumption, the condition $\phi_{\unl{d}'}(F)=0$,
and wheel conditions at $x^{(\beta,1)}_{2,1}=v^{2}x^{(\beta',1)}_{2,1}=vx^{(\beta,1)}_{3,1}$,
$v^{2}x^{(\beta,1)}_{2,1}=x^{(\beta',1)}_{2,1}=vx^{(\beta,1)}_{1,1}$.

If $\beta=[1,n,i]$ and $\beta'=[2,n,j]$ with $i>j$, then $G_{\beta,\beta'}=(w_{\beta,1}-w_{\beta',1})\cdot G_{[2,n,i],\beta'}$.
Consider $\unl{d}'=\{d'_{[1,n,j]}=d'_{[2,n,i]}=1,\, \text{and}\ d'_{\gamma}=0\ \text{for other}\ \gamma\}$, so that $\unl{d}'<\unl{d}$.
Then $\phi_{\unl{d}}(F)$ is divisible by $G_{\beta,\beta'}$ due to the induction assumption and the condition $\phi_{\unl{d}'}(F)=0$.

For all other cases, the divisibility of $\phi_{\unl{d}}(F)$ by $G_{\beta,\beta'}$ follows from the induction assumption
and proper count of wheel conditions similarly to the cases above.

\end{itemize}
This completes our proof.
\end{proof}

Combining Propositions~\ref{vanishD} and~\ref{spanD}, we immediately obtain the shuffle algebra realization and
the PBWD theorem for $\qld$:

\begin{Thm}\label{shufflePBWD-D}
(a) $\Psi\colon \qld \,\iso\, S$ of~\eqref{eq:Psi-homom} is a $\BQ(v)$-algebra isomorphism.

\noindent
(b) For any choices of $s_k$ and $\lambda_k$ in the definition~\eqref{rootvector3} of quantum root vectors $E_{\beta,s}$,
the ordered PBWD monomials $\{E_{h}\}_{h\in H}$ from \eqref{PBWDbases} form a $\BQ(v)$-basis of $\qld$.
\end{Thm}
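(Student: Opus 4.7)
The proof plan is immediate given Propositions~\ref{vanishD} (dominance) and~\ref{spanD} (span), and mirrors exactly the short combination used to deduce Theorem~\ref{shufflePBWD-C} from Propositions~\ref{vanishC} and~\ref{spanC}. The scheme is the standard one from~\cite{HT24,Tsy18}, with no new ingredients beyond the two specialization-map properties already established for type $D_n$.

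I would argue part (b) first. The spanning of $\qld$ by $\{E_h\}_{h\in H}$ is a standard consequence of the defining current relations: iterated $v$-commutators of the $e_{i,r}$ exhaust the algebra, and any unordered product can be brought into the convex order~\eqref{lynorderD} modulo terms indexed by smaller Kostant partitions. For linear independence, assume $\sum_{h\in H_{\unl{k}}} c_h E_h=0$ with some $c_h\neq 0$, and let $\unl{d}$ be the minimum (in the order~\eqref{eq:KP-order}) element of $\{\deg(h)\mid c_h\neq 0\}$. Applying $\Psi$ followed by $\phi_{\unl{d}}$ and invoking Proposition~\ref{vanishD}(b) to kill all $h$ with $\deg(h)<\unl{d}$, we obtain
\begin{equation*}
0=\sum_{h\in H_{\unl{k},\unl{d}}} c_h\,\phi_{\unl{d}}(\Psi(E_h)).
\end{equation*}
By the factorization~\eqref{speehD} combined with the rank-one reduction of Remark~\ref{rankreduction}, after cancelling the common $\unl{d}$-dependent prefactor $\prod_{\beta<\beta'}G_{\beta,\beta'}\cdot \prod_\beta \langle 1\rangle_v^{d_\beta(|\beta|-1)}G_\beta$, this reduces to a linear relation among the type $A_1$ rank-one shuffle products $\prod_{\beta\in\Delta^+} P_{\lambda_{h,\beta}}$ in the disjoint blocks of variables $\{w_{\beta,s}\}_{s=1}^{d_\beta}$. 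The classical linear independence of such elements for distinct exponent multisets $\{r_\beta(h,s)\}$ then forces all $c_h=0$, a contradiction.

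For part (a), the identical computation proves injectivity of $\Psi$: expanding any element of $\ker\Psi$ in the PBWD basis from part (b) and running the argument above gives the zero expansion. Surjectivity is obtained by iterating Proposition~\ref{spanD}: enumerate $\mathrm{KP}(\unl{k})=\{\unl{d}_1<\cdots<\unl{d}_N\}$ and, starting from $F\in S_{\unl{k}}$, successively produce $F_i\in S'_{\unl{k}}$ matching the specialization $\phi_{\unl{d}_i}$ of the current residue while preserving its vanishing under $\phi_{\unl{d}_1},\ldots,\phi_{\unl{d}_{i-1}}$; after $N$ steps the residue $F-\sum_i F_i$ is killed by every $\phi_{\unl{d}}$. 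A routine injectivity argument for the total specialization map $\bigoplus_{\unl{d}} \phi_{\unl{d}}$ on $S_{\unl{k}}$ (applied to a hypothetical minimal-degree PBWD expansion of the residue, exactly as in the linear-independence half) forces this residue to vanish, so $F\in S'_{\unl{k}}\subseteq \mathrm{Im}(\Psi)$.

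The main obstacle in this overall strategy was the verification of Propositions~\ref{vanishD} and~\ref{spanD}, both of which required the delicate case-by-case analysis of wheel conditions and $\zeta$-factor contributions carried out above. With those in hand, the deduction of Theorem~\ref{shufflePBWD-D} is essentially formal and entirely parallel to the $C_n$ treatment.
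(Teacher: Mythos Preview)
Your proposal is correct and follows essentially the same approach as the paper, which simply states that the theorem follows by combining Propositions~\ref{vanishD} and~\ref{spanD} exactly as in type $C_n$. One small imprecision: in your surjectivity argument, the residue $F-\sum_i F_i$ is not a priori in $S'_{\unl{k}}$, so invoking a ``PBWD expansion of the residue'' is not quite right; the cleaner justification is that the specialization $\phi_{\unl{d}_{\max}}$ at the all-simple-roots partition is just the numerator map (up to renaming variables), hence injective on all of $S_{\unl{k}}$, which immediately forces the residue to vanish.
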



\subsection{Shuffle algebra realization of the Lusztig integral form in type $D$}\label{lusd}

For any $\unl{k}\in \BN^n$, consider the $\BZ[v,v^{-1}]$-submodule $\mathbf{S}_{\unl{k}}$ of $S_{\unl{k}}$
consisting of rational functions $F$ satisfying the following two conditions:
\begin{enumerate}[leftmargin=1cm]

\item
If $f$ denotes the numerator of $F$ from \eqref{polecondition}, then
\begin{equation}\label{lid-1}
  f\in \BZ[v,v^{-1}][\{x_{i,r}^{\pm 1}\}_{1\leq i\leq n}^{1\leq r\leq k_{i}}]^{\mathfrak{S}_{\underline{k}}}.
\end{equation}

\item
For any $\unl{d}\in\text{KP}(\underline{k})$, the specialization $\phi_{\unl{d}}(F)$ is divisible by the product
\begin{equation}\label{lid-2}
  \prod_{\beta\in\Delta^{+}} \langle 1\rangle_{v}^{d_{\beta}(|\beta|-1)}.
\end{equation}

\end{enumerate}

Define $\mathbf{S}\coloneqq \bigoplus_{\unl{k}\in\BN^{n}}\mathbf{S}_{\unl{k}}$ and recall the Lusztig integral form
$\integraldl$ from Definition \ref{def:lusintegral}. Then, similarly to Proposition~\ref{lintegralC}, we have:

\begin{Prop}\label{lintegralD}
$\Psi(\integraldl) \subset \mathbf{S}$.
\end{Prop}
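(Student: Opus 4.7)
The plan is to adapt the argument of Proposition~\ref{lintegralC} to type $D_n$. Since $d_i=1$ for every $i\in I$, the divisibility required in~\eqref{lid-2} simplifies to $\prod_\beta\langle 1\rangle_v^{d_\beta(|\beta|-1)}$, which corresponds to the case of type $C$ with $\tilde{c}_\beta$ replaced by the much simpler factor $\langle 1\rangle_v^{|\beta|-1}$. Take a generator $F=\Psi\bigl(\mathbf{E}^{(\ell_1)}_{i_1,r_1}\cdots\mathbf{E}^{(\ell_m)}_{i_m,r_m}\bigr)$ with numerator $f$. Condition~\eqref{lid-1} is immediate from~\eqref{rank1-power}: each $\Psi(\mathbf{E}^{(\ell_q)}_{i_q,r_q})$ is a Laurent monomial with coefficient in $\BZ[v,v^{-1}]$, and the $\zeta$-factors in the shuffle product contribute integral numerators, while their denominators are absorbed into the pole-condition denominator.

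For~\eqref{lid-2}, it suffices to show, for each $\beta\in\Delta^+$ and each $1\le s\le d_\beta$, that the $\phi_{\unl d}$-specialization of the $\zeta$-factors among the variables $\{x^{(\beta,s)}_{i,t}\}_{i\in\beta}^{1\le t\le\nu_{\beta,i}}$ of $f$ is divisible by $\langle 1\rangle_v^{|\beta|-1}$. As in the type $C$ proof, write $o(x^{(*,*)}_{*,*})=q$ when the variable is plugged into $\Psi(\mathbf{E}^{(\ell_q)}_{i_q,r_q})$; since each $\Psi(\mathbf{E}^{(\ell_q)}_{i_q,r_q})$ uses only one color, variables of distinct colors receive distinct $o$-values. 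The cases $\beta=[i,j]$ and $\beta=[n]$ are handled by the type $A$ analysis from~\cite[Lemma~3.51]{Tsy18}. The cases $\beta=[i,n]$ and $\beta=[i,n,n-1]$ have $\nu_{\beta,\ell}=1$ for every $\ell\in\beta$, so each variable is plugged into a distinct $\Psi$-factor; each Dynkin-adjacent pair of colors in $\beta$ forces the $o$-values to be monotonic along the corresponding chain (noting that $(n-2,n-1)$ and $(n-2,n)$ are Dynkin-adjacent in type $D_n$, while $(n-1,n)$ is not), and each such adjacency contributes one surviving factor $\langle 1\rangle_v$. Counting these adjacencies yields exactly $|\beta|-1$ factors, as required.

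The main obstacle is the case $\beta=[i,n,j]$ with $1\le i<j\le n-2$, where $\phi_\beta$ proceeds in two steps~\eqref{spe-D-2}--\eqref{spe-D-3} and $\nu_{\beta,\ell}=2$ for $j\le\ell\le n-2$. The analysis parallels the $\beta=[i,n,i]$ case in the proof of Proposition~\ref{lintegralC}. For each $\ell\in\{j,\dots,n-2\}$, examine the quadruple $\{x^{(\beta,s)}_{\ell,1},x^{(\beta,s)}_{\ell,2},x^{(\beta,s)}_{\ell+1,1},x^{(\beta,s)}_{\ell+1,2}\}$ (with $\ell+1$ replaced by $n$ when $\ell=n-2$, noting that $x^{(\beta,s)}_{n-1,2}$ is absent from the specialization~\eqref{spe-D-2}); using the $t=1\leftrightarrow t=2$ symmetry one may assume $o(x^{(\beta,s)}_{\ell,1})\ge o(x^{(\beta,s)}_{\ell,2})$, and a case analysis against the $\ell+1$-colored $o$-values shows that in every surviving configuration a suitable triple of $\zeta$-factors contributes the pair $(w_{\beta,s}-v^{2\ell+4-2n}w'_{\beta,s})(w_{\beta,s}-v^{2\ell-2n}w'_{\beta,s})$ to the $\phi^{(1)}_\beta$-specialization. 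Multiplying across $\ell=j,\dots,n-2$ recovers the factor $B_\beta$ of~\eqref{Bfactor-D} that gets divided out in~\eqref{spe-D-3}. After the division and the substitution $w'_{\beta,s}\mapsto w_{\beta,s}$, the remaining $\zeta$-factors align with the Dynkin-adjacencies of the colors of $\beta$; chain-monotonicity of the $o$-values along these adjacencies yields the required $|\beta|-1=2n-i-j-1$ factors of $\langle 1\rangle_v$, completing the verification.
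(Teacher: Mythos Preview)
Your handling of the roots $[i,j]$, $[i,n]$, and $[i,n,n-1]$ is fine, but the argument for $\beta=[i,n,j]$ with $j\le n-2$ has a real gap. You model it on the type-$C$ case $\beta=[i,n,i]$ of Proposition~\ref{lintegralC}, invoking a ``$t=1\leftrightarrow t=2$ symmetry'' to run a quadruple-by-quadruple analysis. That symmetry exists in~\eqref{spe-C-2} because $x^{(\beta,s)}_{\ell,1}$ and $x^{(\beta,s)}_{\ell,2}$ are sent to the \emph{same} power $v^{1-\ell}$ of $v$ (with bases $w,w'$). In the $D_n$ specialization~\eqref{spe-D-2}, however, $x^{(\beta,s)}_{\ell,1}\mapsto v^{1-\ell}w$ while $x^{(\beta,s)}_{\ell,2}\mapsto v^{\ell+3-2n}w'$, so no such swap is available and the assumption $o(x^{(\beta,s)}_{\ell,1})\ge o(x^{(\beta,s)}_{\ell,2})$ is not WLOG. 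Moreover, the $B_\beta$-factor $(w-v^{2\ell-2n}w')$ at level $\ell$ arises (in the surviving configuration) from $\zeta\big(x^{(\beta,s)}_{\ell,2}/x^{(\beta,s)}_{\ell-1,1}\big)$, which involves a variable outside your quadruple at $\ell$; a purely local quadruple argument cannot capture it.

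More seriously, your scheme at $\ell=n-2$ (``replace $\ell+1$ by $n$'') drops $x^{(\beta,s)}_{n-1,1}$ entirely, yet the whole argument pivots on the relative position of $o(x^{(\beta,s)}_{n-2,2})$ against $o(x^{(\beta,s)}_{n-1,1})$ and $o(x^{(\beta,s)}_{n,1})$. The paper's proof proceeds by first forcing the two chains $o(x^{(\beta,s)}_{i,1})>\cdots>o(x^{(\beta,s)}_{n-2,1})>o(x^{(\beta,s)}_{n-1,1})\,\&\,o(x^{(\beta,s)}_{n,1})$ and $o(x^{(\beta,s)}_{n-2,2})>\cdots>o(x^{(\beta,s)}_{j,2})$, and then doing a three-case analysis at the junction: if $o(x^{(\beta,s)}_{n-2,2})$ exceeds $o(x^{(\beta,s)}_{n-1,1})$, the pair of $\zeta$-factors $\zeta(x^{(\beta,s)}_{n-1,1}/x^{(\beta,s)}_{n-2,2})\,\zeta(x^{(\beta,s)}_{n,1}/x^{(\beta,s)}_{n-2,2})$ contributes $(w-w')^2$, one more than the single $(w-w')$ in $B_\beta$, so the summand dies at $w'\to w$; if it sits between them, a leftover $\zeta(x^{(\beta,s)}_{n,1}/x^{(\beta,s)}_{n-2,2})$ kills the summand; only when $o(x^{(\beta,s)}_{n,1})>o(x^{(\beta,s)}_{n-2,2})$ do we get a single linear chain, and then the product $\prod_{\ell=j}^{n-2}\zeta(x^{(\beta,s)}_{\ell,2}/x^{(\beta,s)}_{\ell-1,1})\,\zeta(x^{(\beta,s)}_{\ell,2}/x^{(\beta,s)}_{\ell,1})\,\zeta(x^{(\beta,s)}_{\ell,2}/x^{(\beta,s)}_{\ell+1,1})$ supplies exactly $B_\beta$, with the remaining adjacent-pair $\zeta$-factors contributing the required $\langle 1\rangle_v^{|\beta|-1}$. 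Your sketch neither isolates this linear ordering nor accounts for the two vanishing cases, so the divisibility by $\langle 1\rangle_v^{|\beta|-1}$ is not established.
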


\begin{proof}
For any $m\in\BN$, $1\leq i_{1},\dots,i_{m}\leq n$, $r_{1},\dots,r_{m}\in\BZ$, $\ell_{1},\dots,\ell_{m}\in\BN$, let
\[
  F\coloneqq \Psi\big(\mathbf{E}^{(\ell_{1})}_{i_{1},r_{1}}\cdots \mathbf{E}^{(\ell_{m})}_{i_{m},r_{m}}\big),
\]
and $f$ be the numerator of $F$ from \eqref{polecondition}. The validity of the condition \eqref{lid-1} for $f$ follows
from~\eqref{rank1-power}. To verify the validity of the divisibility \eqref{lid-2}, we need to show that for any
$\beta\in \Delta^+$ and $1\leq s\leq d_{\beta}$, the total contribution of $\phi_{\unl{d}}$-specializations of the
$\zeta$-factors between the variables $\{x^{(\beta,s)}_{i,t}\}_{i\in\beta}^{1\leq t\leq \nu_{\beta,i}}$ of $f$ is divisible
by $\langle 1\rangle_{v}^{|\beta|-1}$. It suffices to treat only the cases $\beta=[i,n,j]$ with $1\leq i <j \leq n-2$,
since the other cases are treated completely analogously to type $A_n$. Similarly to the proof of Proposition~\ref{lintegralC},
we shall use the notation $o(x^{(*,*)}_{*,*})=q$ if a variable $x^{(*,*)}_{*,*}$ is plugged into
$\Psi(\mathbf{E}^{(\ell_{q})}_{i_{q},r_{q}})$.

According to~\eqref{spe-D-2}, the $\phi_{\unl{d}}$-specialization of any summand in $F$ vanishes unless
\begin{gather*}
  o(x^{(\beta,s)}_{i,1})\geq o(x^{(\beta,s)}_{i+1,1})\geq \cdots \geq o(x^{(\beta,s)}_{n-2,1})\geq o(x^{(\beta,s)}_{n-1,1})\
    \&\  o(x^{(\beta,s)}_{n,1}),\\
  o(x^{(\beta,s)}_{n-2,2})\geq o(x^{(\beta,s)}_{n-3,2})\geq \cdots \geq o(x^{(\beta,s)}_{j,2}).
\end{gather*}
Since $o(x^{(\beta,s)}_{i,t})\ne o(x^{(\beta,s)}_{i',t'})$ for $i\ne i'$, we have strict inequalities:
\begin{gather*}
  o(x^{(\beta,s)}_{i,1})> o(x^{(\beta,s)}_{i+1,1})> \cdots > o(x^{(\beta,s)}_{n-2,1})> o(x^{(\beta,s)}_{n-1,1})\
    \&\  o(x^{(\beta,s)}_{n,1}),\\
  o(x^{(\beta,s)}_{n-2,2})> o(x^{(\beta,s)}_{n-3,2})> \cdots > o(x^{(\beta,s)}_{j,2}).
\end{gather*}
With symmetry between the variables $x^{(\beta,s)}_{n-1,1}, x^{(\beta,s)}_{n,1}$, we may assume that
$o(x^{(\beta,s)}_{n-1,1})>o(x^{(\beta,s)}_{n,1})$ in the following analysis. We have the following three cases to consider:
\begin{itemize}[leftmargin=0.7cm]

\item
if $o(x^{(\beta,s)}_{n-2,2})> o(x^{(\beta,s)}_{n-1,1})>o(x^{(\beta,s)}_{n,1})$, then the $\zeta$-factors
  $\zeta\left(\frac{x^{(\beta,s)}_{n-1,1}}{x^{(\beta,s)}_{n-2,2}}\right)
   \zeta\left(\frac{x^{(\beta,s)}_{n,1}}{x^{(\beta,s)}_{n-2,2}}\right)$
contribute $(w_{\beta,s}-w'_{\beta,s})^{2}$ to the $\phi^{(1)}_{\beta}$-specialization of the summand, and consecutively
$(w_{\beta,s}-w'_{\beta,s})$ to the $\phi_{\beta}$-specialization (as $B_{\beta}$ of~\eqref{Bfactor-D} contains only one
factor $(w_{\beta,s}-w'_{\beta,s})$), so that the $\phi_{\unl{d}}$-specialization of the corresponding summand in $F$ vanishes;

\item
if $o(x^{(\beta,s)}_{n-1,1})>o(x^{(\beta,s)}_{n-2,2})>o(x^{(\beta,s)}_{n,1})$, then
\[
  o(x^{(\beta,s)}_{i,1})> \cdots > o(x^{(\beta,s)}_{n-2,1})> o(x^{(\beta,s)}_{n-1,1})>o(x^{(\beta,s)}_{n-2,2})>
  o(x^{(\beta,s)}_{n-3,2})>\cdots > o(x^{(\beta,s)}_{j,2}),
\]
so that the $\zeta$-factors
\begin{equation}
\label{eq:Bbetafactor-D-L}
  \prod^{n-2}_{\ell=j}\left\{\zeta\left(\frac{x^{(\beta,s)}_{\ell,2}}{x^{(\beta,s)}_{\ell-1,1}}\right)
  \zeta\left(\frac{x^{(\beta,s)}_{\ell,2}}{x^{(\beta,s)}_{\ell,1}}\right)
  \zeta\left(\frac{x^{(\beta,s)}_{\ell,2}}{x^{(\beta,s)}_{\ell+1,1}}\right)\right\}
\end{equation}
contribute $B_{\beta}$ to the $\phi^{(1)}_{\beta}$-specialization of the summand, and thus  the $\phi_{\unl{d}}$-specialization
of the corresponding summand in $F$ vanishes due to the remaining $\zeta$-factor
$\zeta\left(\frac{x^{(\beta,s)}_{n,1}}{x^{(\beta,s)}_{n-2,2}}\right)$;

\item
if $ o(x^{(\beta,s)}_{n-1,1})>o(x^{(\beta,s)}_{n,1})>o(x^{(\beta,s)}_{n-2,2})$, then
\[
  o(x^{(\beta,s)}_{i,1})> \cdots > o(x^{(\beta,s)}_{n-2,1})> o(x^{(\beta,s)}_{n-1,1})>o(x^{(\beta,s)}_{n,1})>
  o(x^{(\beta,s)}_{n-2,2})> \cdots > o(x^{(\beta,s)}_{j,2}).
\]
The  $\zeta$-factors of \eqref{eq:Bbetafactor-D-L}  contribute $B_{\beta}$ to the  $\phi^{(1)}_{\beta}$-specialization,
and the remaining  $\zeta$-factors
\[
  \left\{\prod^{n-3}_{\ell=j}\zeta\left(\frac{x^{(\beta,s)}_{\ell,2}}{x^{(\beta,s)}_{\ell+1,2}}\right)\right\} \cdot
  \left\{\prod_{\ell=i+1}^{n-1} \zeta\left(\frac{x^{(\beta,s)}_{\ell,1}}{x^{(\beta,s)}_{\ell-1,1}}\right)\right\} \cdot
  \zeta\left(\frac{x^{(\beta,s)}_{n-2,2}}{x^{(\beta,s)}_{n,1}}\right)
  \zeta\left(\frac{x^{(\beta,s)}_{n,1}}{x^{(\beta,s)}_{n-2,1}}\right)
\]
contribute $\langle 1\rangle_{v}^{|\beta|-1}$ to the $\phi_{\unl{d}}$-specialization of the corresponding summand in $F$.
\end{itemize}

This completes our proof.
\end{proof}

Recall the normalized divided powers \eqref{eq:nrv-D} of the quantum root vectors
$\{\tilde{\mathbf{E}}_{\beta,s}^{\pm,(k)}\}^{k\in\BN}_{\beta\in\Delta^{+},s\in\BZ}$ and the ordered monomials
$\{\tilde{\mathbf{E}}^{\epsilon}_{h}\}_{h\in H}$ of  \eqref{eq:Lus-pbwd}. For $\epsilon\in\{\pm\}$, let
$\mathbf{S}^{\epsilon}_{\unl{k}}$ be the $\BZ[v,v^{-1}]$-submodule of $\mathbf{S}_{\unl{k}}$ spanned by
$\{\Psi(\tilde{\mathbf{E}}^{\epsilon}_{h})\}_{h\in H_{\unl{k}}}$. Then, the following analogue of Proposition~\ref{span-Lus-C} holds:

\begin{Prop}\label{span-Lus-D}
For any $F\in \mathbf{S}_{\unl{k}}$ and $\unl{d}\in\mathrm{KP}(\unl{k})$, if $\phi_{\unl{d}'}(F)=0$ for all
$\unl{d}'\in \mathrm{KP}(\unl{k})$ such that $\unl{d}'<\unl{d}$, then there exists
$F_{\unl{d}}\in \mathbf{S}^{\epsilon}_{\unl{k}}$ such that $\phi_{\unl{d}}(F)=\phi_{\unl{d}}(F_{\unl{d}})$
and $\phi_{\unl{d}'}(F_{\unl{d}})=0$ for all $\unl{d}'<\unl{d}$.
\end{Prop}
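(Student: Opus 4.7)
The plan is to mimic the proof of Proposition~\ref{span-Lus-C}, which itself follows the template of~\cite[Proposition~3.11]{HT24}, exploiting three structural features of the type $D_n$ setup: the explicit factorization~\eqref{speehD} of $\phi_{\unl{d}}(\Psi(E_h))$ into an $h$-independent prefactor times the rank-one shuffle products $P_{\lambda_{h,\beta}}$ (Proposition~\ref{vanishD}(a)); the ``rank one reduction'' of Remark~\ref{rankreduction} identifying each $P_{\lambda_{h,\beta}}$ with a shuffle product of monomials in the type $A_1$ shuffle algebra; and the uniform normalization $\tilde{\mathbf{E}}^{\pm,(k)}_{\beta,s}=(\tilde{E}^\pm_{\beta,s})^k/[k]_v!$ valid for every $\beta\in\Delta^+$, without the auxiliary $[2]_v^k$ correction required for the root $[i,n,i]$ in type $C_n$.

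First I would note that the rational-coefficient version (Proposition~\ref{spanD}) already delivers an element $F'\in S'_{\unl{k}}$ with $\phi_{\unl{d}}(F)=\phi_{\unl{d}}(F')$ and $\phi_{\unl{d}'}(F')=0$ for $\unl{d}'<\unl{d}$; the task is to upgrade $F'$ to an element of $\mathbf{S}^\epsilon_{\unl{k}}$. The hypothesis $F\in\mathbf{S}_{\unl{k}}$ combined with condition~\eqref{lid-2} forces $\phi_{\unl{d}}(F)$ to be divisible by $\prod_\beta \langle 1\rangle_v^{d_\beta(|\beta|-1)}$, while Proposition~\ref{spanD} guarantees additional divisibility by $\prod_{\beta<\beta'} G_{\beta,\beta'}\cdot\prod_\beta G_\beta$. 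Dividing out these $h$-independent factors reduces the question to expressing a $\mathfrak{S}_{\unl{d}}$-symmetric Laurent polynomial as a $\BZ[v,v^{-1}]$-linear combination of the rank-one products $\prod_\beta P_{\lambda_{h,\beta}}$, which is handled by the integral PBWD basis for the type $A_1$ shuffle algebra (cf.~\cite[Lemma~1.3]{Tsy22}) applied coordinate-by-coordinate in $\beta\in\Delta^+$. Reassembling these factors and invoking Proposition~\ref{integralrv}, the resulting coefficients translate into a $\BZ[v,v^{-1}]$-linear combination of $\{\Psi(\tilde{\mathbf{E}}^\epsilon_h)\}_{h\in H_{\unl{k},\unl{d}}}\subset \mathbf{S}^\epsilon_{\unl{k}}$ whose $\phi_{\unl{d}}$-specialization agrees with $\phi_{\unl{d}}(F)$; a standard decreasing induction on $\unl{d}$ in the order~\eqref{eq:KP-order}, combined with Proposition~\ref{vanishD}(b) to arrange the vanishing $\phi_{\unl{d}'}(F_{\unl{d}})=0$ for all $\unl{d}'<\unl{d}$, then yields the desired $F_{\unl{d}}$.

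The main obstacle is the ``divide out common factors'' step: one must verify that after dividing $\phi_{\unl{d}}(F)$ by the $h$-independent prefactors $\prod G_{\beta,\beta'}\cdot\prod G_\beta\cdot\prod\langle 1\rangle_v^{d_\beta(|\beta|-1)}$, the quotient is a genuine Laurent polynomial in $\BZ[v,v^{-1}][\{w_{\beta,s}^{\pm 1}\}]^{\mathfrak{S}_{\unl{d}}}$, rather than merely a rational function. This integrality is not automatic from the definitions and requires a careful tracking of the cancellations established in Proposition~\ref{spanD}, together with the observation that the span $\mathbf{S}^\epsilon_{\unl{k}}$ is stable under the $\BZ[v,v^{-1}]$-operations used to reconstruct $F_{\unl{d}}$. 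Fortunately, compared to the $C_n$ analogue, the uniform normalization of divided powers in type $D_n$ removes the subtle $[2]_v$ denominators attached to the short-root vectors $[i,n,i]$, so that the rank-one divisibility criterion matches~\eqref{lid-2} verbatim, and the argument proceeds without the auxiliary bookkeeping present in Proposition~\ref{span-Lus-C}.
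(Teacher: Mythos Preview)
Your proposal is correct and follows precisely the approach the paper intends: the paper gives no proof of Proposition~\ref{span-Lus-D} beyond declaring it an analogue of Proposition~\ref{span-Lus-C}, which in turn is proved by reference to~\cite[Proposition~3.11]{HT24}, and you have accurately spelled out that argument (factorization~\eqref{speehD}, divisibility from~\eqref{lid-2} and Proposition~\ref{spanD}, rank-one reduction to the $A_1$ integral basis, and reassembly into $\mathbf{S}^\epsilon_{\unl{k}}$). One minor expository point: the ``decreasing induction on $\unl{d}$'' you mention at the end is not needed for this proposition itself---the vanishing $\phi_{\unl{d}'}(F_{\unl{d}})=0$ for $\unl{d}'<\unl{d}$ follows immediately from Proposition~\ref{vanishD}(b) once $F_{\unl{d}}$ is taken as a combination of $\Psi(\tilde{\mathbf{E}}^\epsilon_h)$ with $h\in H_{\unl{k},\unl{d}}$; the induction is used only in the subsequent deduction that $\mathbf{S}_{\unl{k}}=\mathbf{S}^\epsilon_{\unl{k}}$.
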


Combining Propositions \ref{lintegralD} and~\ref{span-Lus-D}, we obtain the following upgrade of Theorem \ref{shufflePBWD-D}:

\begin{Thm}\label{lusthm-D}
(a) The $\BQ(v)$-algebra isomorphism $\Psi\colon \qld \,\iso\, S$ of Theorem~\ref{shufflePBWD-D}(a) gives rise to
a $\BZ[v,v^{-1}]$-algebra isomorphism $\Psi\colon \integraldl \,\iso\, \mathbf{S}$.

\noindent
(b) Theorem {\rm \ref{pbwtheorem-Lus}} holds for $\fg$ of type $D_n$.
\end{Thm}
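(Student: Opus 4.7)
The plan is to mirror the proof of the type $C_n$ analogue (Theorem \ref{lusthm-C}), which was obtained by combining Propositions \ref{lintegralC} and \ref{span-Lus-C}. Here the corresponding ingredients are Propositions \ref{lintegralD} and \ref{span-Lus-D}, together with Theorem \ref{shufflePBWD-D} and Proposition \ref{integralrv}.

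First, I would establish inclusions in both directions. By Proposition \ref{integralrv}, each normalized divided power $\tilde{\mathbf{E}}_{\beta,s}^{\pm,(k)}$ lies in $\integraldl$, so every ordered monomial $\tilde{\mathbf{E}}_{h}^{\epsilon}$ of \eqref{eq:Lus-pbwd} belongs to $\integraldl$. Combined with Proposition \ref{lintegralD}, this shows that $\Psi(\tilde{\mathbf{E}}_{h}^{\epsilon})\in \mathbf{S}$ for each $h\in H$, and more generally $\Psi(\integraldl)\subset \mathbf{S}$. Conversely, I would fix $F\in \mathbf{S}_{\unl{k}}$ and argue by descending induction on the Kostant partition $\unl{d}\in \mathrm{KP}(\unl{k})$ with respect to the order \eqref{eq:KP-order}: using Proposition \ref{span-Lus-D}, one can successively subtract elements of $\mathbf{S}^{\epsilon}_{\unl{k}}$ to kill the specializations $\phi_{\unl{d}}(F)$ one partition at a time, starting from the smallest and moving up. Since for sufficiently large $\unl{d}$ there is nothing to kill (the set $\mathrm{KP}(\unl{k})$ is finite), this process terminates and expresses $F$ as a $\BZ[v,v^{-1}]$-linear combination of the $\Psi(\tilde{\mathbf{E}}_{h}^{\epsilon})$, so $\mathbf{S}^{\epsilon}_{\unl{k}}=\mathbf{S}_{\unl{k}}$. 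One caveat: this reduction produces finite $\BZ[v,v^{-1}]$-linear combinations only if the induction actually terminates; finiteness is guaranteed because the numerator of $\phi_{\unl{d}}(F)$ has bounded degree and only finitely many $\unl{d}'\in\mathrm{KP}(\unl{k})$ exist.

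Combining the two directions, $\Psi$ maps $\integraldl$ into $\mathbf{S}$ with $\mathbf{S}^{\epsilon}_{\unl{k}}=\mathbf{S}_{\unl{k}}$ for every $\unl{k}$, so $\Psi(\integraldl)=\mathbf{S}$. To upgrade this to a $\BZ[v,v^{-1}]$-algebra isomorphism and simultaneously obtain Theorem \ref{pbwtheorem-Lus} in type $D_n$, I would invoke the $\BQ(v)$-linear independence of $\{\Psi(E_{h})\}_{h\in H}$ from Theorem \ref{shufflePBWD-D}(b): by construction $\tilde{\mathbf{E}}_{h}^{\epsilon}$ is a nonzero rational multiple of the PBWD monomial built from the specific choices \eqref{rvd1}--\eqref{rvd4}, so $\{\Psi(\tilde{\mathbf{E}}_{h}^{\epsilon})\}_{h\in H}$ is $\BQ(v)$-linearly independent, hence $\BZ[v,v^{-1}]$-linearly independent. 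Together with the surjectivity $\mathbf{S}^{\epsilon}_{\unl{k}}=\mathbf{S}_{\unl{k}}$ established above, this shows $\{\Psi(\tilde{\mathbf{E}}_{h}^{\epsilon})\}_{h\in H}$ is a $\BZ[v,v^{-1}]$-basis of $\mathbf{S}$. Pulling back through the $\BQ(v)$-algebra isomorphism of Theorem \ref{shufflePBWD-D}(a) yields that $\{\tilde{\mathbf{E}}_{h}^{\epsilon}\}_{h\in H}$ is a $\BZ[v,v^{-1}]$-basis of $\integraldl$, giving part (b), and that $\Psi\colon \integraldl \,\iso\, \mathbf{S}$ is a $\BZ[v,v^{-1}]$-algebra isomorphism, giving part (a).

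The main technical obstacle is already absorbed into the preceding propositions, in particular the divisibility \eqref{lid-2} in Proposition \ref{lintegralD} and the spanning statement in Proposition \ref{span-Lus-D}. The only type-$D_n$-specific subtlety I expect in assembling the argument is to verify carefully that the rescaling constants in \eqref{eq:nrv-D} are uniformly $\frac{1}{[k]_v!}$ (unlike the type-$C_n$ case \eqref{eq:nrv-C} where the root $[i,n,i]$ required an extra $[2]_v^k$ factor), so no correction factor $\tilde{c}_\beta/c_\beta$ enters \eqref{lid-2}, and the divisibility by $\langle 1 \rangle_v^{d_\beta(|\beta|-1)}$ matches precisely the denominators in the rank-one computation underlying Proposition \ref{span-Lus-D}. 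Once this bookkeeping is confirmed, the proof is a formal consequence of the earlier results by the template of \cite[Proposition 3.11]{HT24}.
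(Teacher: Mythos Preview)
Your proposal is correct and follows essentially the same approach as the paper, which simply states that the theorem is obtained by combining Propositions \ref{lintegralD} and \ref{span-Lus-D}. You have supplied the routine details of how that combination works (inclusion $\Psi(\integraldl)\subset\mathbf{S}$, iterative use of the spanning proposition to get $\mathbf{S}^{\epsilon}_{\unl{k}}=\mathbf{S}_{\unl{k}}$, and linear independence from Theorem~\ref{shufflePBWD-D}(b)), exactly as in the type $C_n$ template.
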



\subsection{Shuffle algebra realization of the RTT integral form in type $D$}\label{rttd}

For any $\unl{k}\in\BN^{n}$, consider the $\BZ[v,v^{-1}]$-submodule $\mathcal{S}_{\unl{k}}$ of $S_{\unl{k}}$ consisting
of rational functions $F$ satisfying the following two conditions:
\begin{enumerate}[leftmargin=1cm]

\item
If $f$ denotes the numerator of $F$ from \eqref{polecondition}, then
\begin{equation}
\label{rttconstant1-D}
  f\in \langle 1\rangle_{v}^{|\unl{k}|}\cdot
  \BZ[v,v^{-1}][\{x_{i,r}^{\pm 1}\}_{1\leq i\leq n}^{1\leq r\leq k_{i}}]^{\mathfrak{S}_{\underline{k}}},
\end{equation}
where $|\unl{k}|=|(k_1,\ldots,k_n)|:=k_1+\dots+k_n$.

\medskip
\item
$F$ is \textbf{integral} in the sense of \cite[Definition 4.12]{HT24}: the \emph{cross specialization}
\begin{equation*}
  \Upsilon_{\unl{d},\unl{t}}(F)\coloneqq
  \varpi_{\unl{t}}\left(\frac{\phi_{\unl{d}}(F)}{\langle 1\rangle_{v}^{|\unl{k}|}\cdot \prod_{\beta\in\Delta^{+}}G_{\beta}}\right)
\end{equation*}
is divisible by $\prod_{\beta\in\Delta^{+}}^{1\leq r\leq \ell_{\beta}} [t_{\beta,r}]_{v}!$
(note that $v_{\beta}=v$ for any $\beta\in\Delta^{+}$ in type $D_n$) for any $\unl{d}\in\mathrm{KP}(\unl{k})$
and $\unl{t}=\{t_{\beta,r}\}_{\beta\in\Delta^{+}}^{1\leq r\leq \ell_{\beta}}$ satisfying \eqref{verticalpartition-1},
with $\varpi_{\unl{t}}$ of~\eqref{verticalspe} and $G_{\beta}$ of~(\ref{eq:formulagbeta-D-1},~\ref{eq:formulagbeta-D-2});
the divisibility of $\phi_{\unl{d}}(F)$ by $G_{\beta}$ is proved in Proposition \ref{goodD}.

\end{enumerate}
We define $\mathcal{S}:=\bigoplus_{\unl{k}\in\BN^{n}}\mathcal{S}_{\unl{k}}$. Recall the RTT integral form $\integrald$
from Definition \ref{def:rttintegral}. Then, similarly to Proposition~\ref{goodC}, we have:

\begin{Prop}\label{goodD}
$\Psi(\integrald) \subset \mathcal{S}$.
\end{Prop}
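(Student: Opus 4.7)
The plan is to adapt the proof of Proposition~\ref{goodC} to type $D_n$, with the welcome simplification that only a single normalization factor $\langle 1\rangle_v$ appears uniformly across all roots (as opposed to $\langle 2\rangle_v$ for $\beta=[n]$ in type $C_n$), and no auxiliary factor $A_{\unl{d}}$ is needed. Fix $\epsilon \in \{\pm\}$, $m \in \BN$, $\beta_1,\dots,\beta_m \in \Delta^+$, $r_1,\dots,r_m \in \BZ$, and set
\[
  F := \Psi\big(\tilde{\mathcal{E}}^\epsilon_{\beta_1,r_1}\cdots\tilde{\mathcal{E}}^\epsilon_{\beta_m,r_m}\big),
  \qquad \unl{k}:= \sum_{q=1}^m \beta_q,
\]
and let $f$ be the numerator of $F$. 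The first condition~\eqref{rttconstant1-D}, namely that $f$ is divisible by $\langle 1\rangle_v^{|\unl{k}|}$ with integral coefficients, is immediate from Lemma~\ref{lem:psirv-D} together with the normalization $\tilde{\mathcal{E}}^\epsilon_{\beta,s} = \langle 1\rangle_v \cdot \tilde{E}^\epsilon_{\beta,s}$, since each image $\Psi(\tilde{\mathcal{E}}^\epsilon_{\beta_q,r_q})$ carries an overall factor $\langle 1\rangle_v^{|\beta_q|}$.

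Next, I would verify that $\phi_{\unl{d}}(F)$ is divisible by $\prod_{\beta \in \Delta^+} G_\beta$ for every $\unl{d} \in \mathrm{KP}(\unl{k})$, where $G_\beta$ is given by~\eqref{eq:formulagbeta-D-1}--\eqref{eq:formulagbeta-D-2}. Expanding each $\tilde{\mathcal{E}}^\epsilon_{\beta_q,r_q}$ as a $\BZ[v,v^{-1}]$-linear combination of monomials in the elementary generators $e_{i,s}$, it suffices to prove the divisibility of $\phi_{\unl{d}}(\Psi(e_{i_1,s_1}\cdots e_{i_k,s_k}))$ by each $G_\beta$. For $\beta \neq [i,n,j]$ with $1\leq i<j\leq n-2$, the factor $G_\beta$ has the same shape as in type $A_{n-1}$ and the divisibility follows from~\cite[Lemma 3.51]{Tsy18}. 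The remaining case $\beta=[i,n,j]$ requires a pairwise analysis: for any $1\leq s\neq s' \leq d_\beta$, I would introduce the notation $\hat{o}(x^{(*,*)}_{*,*})=q$ when the variable is plugged into $\Psi(e_{i_q,s_q})$ and analyze the total contribution of the $\zeta$-factors between the variable sets $\{x^{(\beta,s)}_{*,*}\}$ and $\{x^{(\beta,s')}_{*,*}\}$, split into four blocks: the upper tail $\{x^{(\beta,\bullet)}_{\ell,1},x^{(\beta,\bullet)}_{\ell+1,1}\}$ for $i\leq \ell\leq n-3$, the branching vertex involving $\{x^{(\beta,\bullet)}_{n-2,1},x^{(\beta,\bullet)}_{n-1,1},x^{(\beta,\bullet)}_{n,1}\}$, the lower tail $\{x^{(\beta,\bullet)}_{\ell,2},x^{(\beta,\bullet)}_{\ell+1,2}\}$ for $j\leq \ell\leq n-3$, and the hinge at $\ell=n-2$. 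A case-by-case study mirroring the $C_n$ argument of Proposition~\ref{goodC} should produce exactly the expected factors $\{(w_{\beta,s}-v^2 w_{\beta,s'})(w_{\beta,s'}-v^2 w_{\beta,s})\}^{|\beta|-1}$ and $\prod_{\ell=j}^{n-2}(w_{\beta,s}-v^{2n-2\ell}w_{\beta,s'})(w_{\beta,s}-v^{2n-2\ell-4}w_{\beta,s'})$, matching~\eqref{eq:formulagbeta-D-2}.

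Finally, for the integrality condition I would argue along the lines of~\cite[Lemma 3.46]{Tsy18}: since $v_\beta = v$ uniformly in type $D_n$, and the analysis above never invokes the diagonal $\zeta$-factors $\zeta(x^{(\beta,s)}_{i,1}/x^{(\beta,s')}_{i,1})$ at $i=\min\beta$ (nor the analogue at the trivalent vertex for roots containing $n$), these free $\zeta$-factors, after passing through $\phi_{\unl{d}}$ and the vertical specialization $\varpi_{\unl{t}}$ of~\eqref{verticalspe}, realize a rank-one shuffle product in each interval $(\beta,r)$. By the rank-one computation of~\cite[Lemma 3.46]{Tsy18}, this contributes precisely the factor $\prod_{\beta,r}[t_{\beta,r}]_v!$ into $\Upsilon_{\unl{d},\unl{t}}(F)$, as required.

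The main obstacle will be the detailed case analysis for $\beta=[i,n,j]$, and in particular the bookkeeping around the trivalent vertex of the $D_n$ Dynkin diagram: the branching at $\{x^{(\beta,\bullet)}_{n-2,1},x^{(\beta,\bullet)}_{n-1,1},x^{(\beta,\bullet)}_{n,1}\}$ gives rise to several sub-orderings that must be ruled out individually, and certain sub-orderings force one to pass through the second step $w'_{\beta,s}\mapsto w_{\beta,s}$ of~\eqref{spe-D-3} to cancel a residual denominator $w_{\beta,s}-w'_{\beta,s}$ against a numerator factor of $\langle 1\rangle_v$. In contrast to type $C_n$, where the obstruction involved only the single long simple root at position $n$, here one must keep track of two distinct ``branch'' orderings for each unordered pair $(s,s')$, which roughly doubles the size of the casework; however, no new phenomenon appears, so the argument remains routine in spirit.
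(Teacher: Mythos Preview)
Your proposal is correct and follows essentially the same approach as the paper. One minor imprecision: the paper does not claim that \cite[Lemma 3.51]{Tsy18} covers $\beta=[i,n,n-1]$ directly, but treats this case separately with the same quadruple argument you describe (using $\{x^{(\beta,s)}_{\ell,1},x^{(\beta,s)}_{\ell+1,1},x^{(\beta,s')}_{\ell,1},x^{(\beta,s')}_{\ell+1,1}\}$ for $i\leq\ell\leq n-2$ together with $\{x^{(\beta,s)}_{n-2,1},x^{(\beta,s)}_{n,1},x^{(\beta,s')}_{n-2,1},x^{(\beta,s')}_{n,1}\}$); also, the paper organizes the extra factors in $G_{[i,n,j]}$ via the quadruples $\{x^{(\beta,s)}_{\ell,2},x^{(\beta,s')}_{\ell-1,1},x^{(\beta,s')}_{\ell,1},x^{(\beta,s')}_{\ell+1,1}\}$ for $j\leq\ell\leq n-2$ rather than your ``hinge'' block, and invokes the analysis from the proof of Proposition~\ref{lintegralD} to justify the ordering constraints within a single group before doing the pairwise $(s,s')$ bookkeeping---but these are organizational differences, not substantive ones.
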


\begin{proof}
For any $\epsilon\in\{\pm\}$, $m\in\BN$, $\beta_{1},\dots,\beta_{m}\in\Delta^{+}$, $r_{1},\dots,r_{m}\in\BZ$, let
\[
  F\coloneqq
  \Psi\big(\tilde{\mathcal{E}}^{\epsilon}_{\beta_{1},r_{1}}\cdots \tilde{\mathcal{E}}^{\epsilon}_{\beta_{m},r_{m}}\big),
\]
and $f$ be the numerator of $F$. We set $\unl{k}=\sum_{q=1}^{m} \beta_{q}$. First, we note that
the condition \eqref{rttconstant1-D} follows from Lemma \ref{lem:psirv-D}.

Next, we show that $\phi_{\unl{d}}(F)$ is divisible by $\prod_{\beta\in\Delta^{+}}G_{\beta}$ with $G_{\beta}$
of (\ref{eq:formulagbeta-D-1},~\ref{eq:formulagbeta-D-2}). Similarly to the proof of Proposition \ref{goodC}, we can
expand $\prod_{\ell=1}^{m}\tilde{\mathcal{E}}^{\epsilon}_{\beta_{\ell},r_{\ell}}$ as a linear combination of monomials
$\prod_{\ell=1}^{k}e_{i_{\ell},s_{\ell}}$ over $\BZ[v,v^{-1}]$, with $\unl{k}=\sum^{k}_{\ell=1}\alpha_{i_{\ell}}$, and
prove that each $\phi_{\unl{d}}(\Psi(e_{i_{1},s_{1}}\cdots e_{i_k,s_{k}}))$ is divisible by $G_{\beta}$ for any $\beta\in\Delta^{+}$.
For $\beta=[i,j]$ (with $1\leq i\leq j\leq n$) this follows from \cite[Lemma 3.51]{Tsy18}. It remains to treat the cases
$\beta=[i,n,n-1]$ with $1\leq i\leq n-2$, and $\beta=[i,n,j]$ with $1\leq i<j\leq n-2$. Henceforth, we shall use
the notation $\hat{o}(x^{(*,*)}_{*,*})=q$ if a variable $x^{(*,*)}_{*,*}$ is plugged into $\Psi(e_{i_q,s_q})$ for some $1\leq q\leq k$.
\begin{itemize}[leftmargin=0.7cm]

\item $\beta=[i,n,n-1]$.
Fix any $1\leq s\neq s'\leq d_{\beta}$. We can assume that
\begin{gather*}
  \hat{o}(x^{(\beta,s)}_{i,1})>  \cdots > \hat{o}(x^{(\beta,s)}_{n-2,1})>\hat{o}(x^{(\beta,s)}_{n,1}) \ \&\
  \hat{o}(x^{(\beta,s)}_{n-1,1}),\\
  \hat{o}(x^{(\beta,s')}_{i,1})> \cdots >\hat{o}(x^{(\beta,s')}_{n-2,1})> \hat{o}(x^{(\beta,s')}_{n,1}) \ \&\
  \hat{o}(x^{(\beta,s')}_{n-1,1}),
\end{gather*}
as otherwise the corresponding term is specialized to zero under $\phi_{\unl{d}}$. Using the same analysis as for
the variables \eqref{icase-C} in type $C_n$, we see that the $\phi_{\unl{d}}$-specialization of the $\zeta$-factors
arising from the quadruples
\[
  \left\{x^{(\beta,s)}_{\ell,1},x^{(\beta,s)}_{\ell+1,1},x^{(\beta,s')}_{\ell,1},x^{(\beta,s')}_{\ell+1,1}\right\}\
    (i\leq \ell\leq n-2),\quad
  \left\{x^{(\beta,s)}_{n-2,1},x^{(\beta,s)}_{n,1},x^{(\beta,s')}_{n-2,1},x^{(\beta,s')}_{n,1}\right\}
\]
produces a total factor $\{(w_{\beta,s}-v^{2}w_{\beta,s'})(w_{\beta,s'}-v^{2}w_{\beta,s})\}^{n-i}$,
which is $G_{\beta}$ of \eqref{eq:formulagbeta-D-1}, up to a monomial.

\smallskip
\item $\beta=[i,n,j]$.
Fix any $1\leq s\neq s'\leq d_{\beta}$. According to (\ref{spe-D-2}, \ref{spe-D-3}) and the analysis in the proof
of Proposition \ref{lintegralD}, we can assume that
\begin{align*}
  \hat{o}(x^{(\beta,t)}_{i,1})> \hat{o}(x^{(\beta,t)}_{i+1,1})> \cdots > &\hat{o}(x^{(\beta,t)}_{n-2,1})> \hat{o}(x^{(\beta,t)}_{n-1,1})\
    \&\  \hat{o}(x^{(\beta,t)}_{n,1})>\\
  & \hat{o}(x^{(\beta,t)}_{n-2,2})> \hat{o}(x^{(\beta,t)}_{n-3,2})> \cdots > \hat{o}(x^{(\beta,t)}_{j,2}),\quad t=s\ \text{or}\ s',
\end{align*}
as otherwise the $\phi_{\unl{d}}$-specialization of the corresponding summand vanishes. Then, similarly to $\beta=[i,n,n-1]$ case,
the $\phi_{\unl{d}}$-specialization of the $\zeta$-factors arising from the following quadruples
\begin{align*}
 & \left\{x^{(\beta,s)}_{\ell,1},x^{(\beta,s)}_{\ell+1,1},x^{(\beta,s')}_{\ell,1},x^{(\beta,s')}_{\ell+1,1}\right\}\
   (i\leq \ell\leq n-2),\quad
   \left\{x^{(\beta,s)}_{n-2,1},x^{(\beta,s)}_{n,1},x^{(\beta,s')}_{n-2,1},x^{(\beta,s')}_{n,1}\right\}, \\
 & \left\{x^{(\beta,s)}_{\ell+1,2},x^{(\beta,s)}_{\ell,2},x^{(\beta,s')}_{\ell+1,2},x^{(\beta,s')}_{\ell,2}\right\}\
   (j\leq \ell\leq n-3),\quad
   \left\{x^{(\beta,s)}_{n-1,1},x^{(\beta,s)}_{n-2,2},x^{(\beta,s')}_{n-1,1},x^{(\beta,s')}_{n-2,2}\right\},
\end{align*}
produces a total contribution of the factor $\{(w_{\beta,s}-v^{2}w_{\beta,s'})(w_{\beta,s'}-v^{2}w_{\beta,s})\}^{2n-i-j-1}$.

Next, for any $j\leq \ell\leq n-2$, let us consider the $\zeta$-factors arising from the variables
\begin{equation}
\label{eq:d-l}
  \big\{x^{(\beta,s)}_{\ell,2}  \,,\, x^{(\beta,s')}_{\ell-1,1} \,,\, x^{(\beta,s')}_{\ell,1} \,,\, x^{(\beta,s')}_{\ell+1,1}\big\} \,,
\end{equation}
where we recall that $\hat{o}(x^{(\beta,s')}_{\ell-1,1})> \hat{o}(x^{(\beta,s')}_{\ell,1})> \hat{o}(x^{(\beta,s')}_{\ell+1,1})$.
\begin{itemize}[leftmargin=0.5cm]

\item
If $\hat{o}(x^{(\beta,s)}_{\ell,2})>\hat{o}(x^{(\beta,s')}_{\ell-1,1})$, then the $\zeta$-factors
  $\zeta\left(\frac{x^{(\beta,s')}_{\ell-1,1}}{x^{(\beta,s)}_{\ell,2}}\right)
   \zeta\left(\frac{x^{(\beta,s')}_{\ell,1}}{x^{(\beta,s)}_{\ell,2}}\right)
   \zeta\left(\frac{x^{(\beta,s')}_{\ell+1,1}}{x^{(\beta,s)}_{\ell,2}}\right)$
contribute the overall factor $(w_{\beta,s}-v^{2n-2\ell}w_{\beta,s'})(w_{\beta,s}-v^{2n-2\ell-4}w_{\beta,s'})$
into the $\phi_{\unl{d}}$-specialization.

\item
If $\hat{o}(x^{(\beta,s')}_{\ell-1,1})>\hat{o}(x^{(\beta,s)}_{\ell,2})>\hat{o}(x^{(\beta,s')}_{\ell+1,1})$, then the $\zeta$-factors
  $\zeta\left(\frac{x^{(\beta,s)}_{\ell,2}}{x^{(\beta,s')}_{\ell-1,1}}\right)
   \zeta\left(\frac{x^{(\beta,s')}_{\ell+1,1}}{x^{(\beta,s)}_{\ell,2}}\right)$
contribute the overall factor $(w_{\beta,s}-v^{2n-2\ell}w_{\beta,s'})(w_{\beta,s}-v^{2n-2\ell-4}w_{\beta,s'})$
into the $\phi_{\unl{d}}$-specialization.

\item
If $\hat{o}(x^{(\beta,s')}_{\ell+1,1})>\hat{o}(x^{(\beta,s)}_{\ell,2})$, then the $\zeta$-factors
  $\zeta\left(\frac{x^{(\beta,s)}_{\ell,2}}{x^{(\beta,s')}_{\ell+1,1}}\right)
   \zeta\left(\frac{x^{(\beta,s)}_{\ell,2}}{x^{(\beta,s')}_{\ell,1}}\right)
   \zeta\left(\frac{x^{(\beta,s)}_{\ell,2}}{x^{(\beta,s')}_{\ell-1,1}}\right)$
contribute the overall factor $(w_{\beta,s}-v^{2n-2\ell}w_{\beta,s'})(w_{\beta,s}-v^{2n-2\ell-4}w_{\beta,s'})$
into the $\phi_{\unl{d}}$-specialization.

\end{itemize}
Thus the $\phi_{\unl{d}}$-specialization of the $\zeta$-factors arising from the quadruples \eqref{eq:d-l} produces a total
contribution of the factor $\prod_{\ell=j}^{n-2}\{(w_{\beta,s}-v^{2n-2\ell}w_{\beta,s'})(w_{\beta,s}-v^{2n-2\ell-4}w_{\beta,s'})\}$.
Therefore, the above contributions produce exactly the factor $G_{\beta}$ of \eqref{eq:formulagbeta-D-2},
up to a monomial.
\end{itemize}

Finally, to show that $F$ is integral, it suffices to prove that under the $\Upsilon_{\unl{d},\unl{t}}$, the contribution
of the $\zeta$-factors between the variables $x^{(*,*)}_{*,*}$ that got specialized to $v^{?}z_{\beta,r}$ is divisible by
$[t_{\beta,r}]_{v}!$ for any $\beta\in\Delta^{+}$ and $1\leq r\leq \ell_{\beta}$, cf.~\eqref{verticalspe}. For $\beta=[i,j]$,
this follows from \cite[Lemma~3.51]{Tsy18}. Similarly, for $\beta=[i,n,j]$ with $i<j<n$, we have not used
$\zeta\left(\frac{x^{(\beta,s)}_{i,1}}{x^{(\beta,s')}_{i,1}}\right)$ with $1\leq s\neq s'\leq d_{\beta}$ for the divisibility
of $\phi_{\unl{d}}(F)$ by $G_{\beta}$, thus we can appeal to the ``rank~$1$'' computation of~\cite[Lemma~3.46]{Tsy18} to deduce
the required divisibility by $[t_{\beta,r}]_{v}!$.
\end{proof}

Combining Propositions \ref{vanishD}, \ref{spanD}, and \ref{goodD}, we obtain
the following upgrade of Theorem~\ref{shufflePBWD-D}:

\begin{Thm}\label{rttthm-D}
(a) The $\BQ(v)$-algebra isomorphism $\Psi\colon \qld \,\iso\, S$ of Theorem {\rm \ref{shufflePBWD-D}(a)}
gives rise to a $\BZ[v,v^{-1}]$-algebra isomorphism $\Psi\colon \integrald \,\iso\, \mathcal{S}$.

\noindent
(b) Theorem {\rm \ref{PBWDintegralrtt}} holds for $\fg$ of type $D_n$.
\end{Thm}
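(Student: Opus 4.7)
The strategy mirrors that of Theorem \ref{rttthm-C} for type $C_n$, combining the three key ingredients already established in this section: the image computation \eqref{speehD} of Proposition \ref{vanishD}(a), the vanishing property of Proposition \ref{vanishD}(b), the spanning/reduction Proposition \ref{spanD}, and the containment $\Psi(\integrald)\subset \mathcal{S}$ of Proposition \ref{goodD}.

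First I would prove part (b). Fix $\epsilon\in\{\pm\}$ and any admissible choice of quantum root vectors from \eqref{rvd1}--\eqref{rvd4}. The linear independence of $\{\tilde{\mathcal{E}}^{\epsilon}_{h}\}_{h\in H}$ inside $\qld$ is immediate from Theorem \ref{shufflePBWD-D}(b), since $\tilde{\mathcal{E}}^{\epsilon}_{\beta,s}$ differs from $\tilde{E}^{\epsilon}_{\beta,s}$ only by the nonzero scalar $\langle 1\rangle_v$. To prove that $\{\tilde{\mathcal{E}}^{\epsilon}_{h}\}_{h\in H}$ spans $\integrald$ over $\BZ[v,v^{-1}]$, I will argue by induction on the grading $\unl{k}\in \BN^n$ and, within each graded piece, by downward induction on the Kostant partition $\unl{d}\in\mathrm{KP}(\unl{k})$ ordered by \eqref{eq:KP-order}. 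Given any element of $\integrald$ of grading $\unl{k}$, pass to its image $F\in \mathcal{S}_{\unl{k}}$ via Proposition~\ref{goodD}; by Proposition \ref{vanishD}(b) and induction, we may reduce to the case when $\phi_{\unl{d}'}(F)=0$ for all $\unl{d}'<\unl{d}$ for some specific $\unl{d}$. Proposition \ref{spanD} then provides a shuffle element $F_{\unl{d}}$ in the $\BQ(v)$-span of $\{\Psi(E_{h})\}_{h\in H_{\unl{k},\unl{d}}}$ with $\phi_{\unl{d}}(F)=\phi_{\unl{d}}(F_{\unl{d}})$; the key point is to upgrade this to a $\BZ[v,v^{-1}]$-span of $\{\Psi(\tilde{\mathcal{E}}^{\epsilon}_{h})\}_{h\in H_{\unl{k},\unl{d}}}$. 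This upgrade is the main technical point and is achieved by explicitly inverting the formula \eqref{speehD} in combination with the ``rank $1$ reduction'' of Remark \ref{rankreduction}: modulo $G_{\beta,\beta'}$ and $G_{\beta}$ factors, the right-hand side of \eqref{speehD} is a product over $\beta\in\Delta^+$ of elementary $A_1$-type shuffle products, whose integrality is governed precisely by the cross-specialization condition \eqref{eq:crossspe-C} (adapted to type $D$) via the classical ``rank $1$'' computation of \cite[Lemmas 3.46, 3.51]{Tsy18}. Applying this to $F\in \mathcal{S}_{\unl{k}}$ and using the integrality clause in the definition of $\mathcal{S}$, we obtain $\BZ[v,v^{-1}]$-coefficients expressing $F$ against $\{\Psi(\tilde{\mathcal{E}}^{\epsilon}_{h})\}$, and the residual $F-F_{\unl{d}}$ satisfies $\phi_{\unl{d}'}(F-F_{\unl{d}})=0$ for all $\unl{d}'\leq \unl{d}$, allowing the induction to proceed.

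The hardest step is controlling the $\BZ[v,v^{-1}]$-integrality of the coefficients at each stage of the inductive reduction, because the factors $G_{\beta}$ of~\eqref{eq:formulagbeta-D-1}--\eqref{eq:formulagbeta-D-2} carry mixed contributions from the long roots $[i,n,j]$ with $i<j\leq n-2$, whose specialization map \eqref{spe-D-3} involves a non-trivial two-step procedure dividing by $B_{\beta}$. The verification that $\phi_{\unl{d}}(F)$ is actually divisible (with a $\BZ[v,v^{-1}]$-quotient) by $\langle 1\rangle_v^{|\unl k|}\cdot \prod_{\beta}G_{\beta}$, rather than just by this product up to a $\BQ(v)$-scalar, is the place where one must carefully track every $\zeta$-factor contribution---this was essentially the content of the proofs of Propositions \ref{lintegralD} and \ref{goodD}, and here one reuses the same case analysis but now applied to an arbitrary $F\in\mathcal{S}_{\unl{k}}$ rather than to an a priori known element.

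Once the basis property in part (b) is established, the independence of $\integrald$ from the choices $\epsilon\in\{\pm\}$ and the parameters in \eqref{rvd1}--\eqref{rvd4} (Theorem \ref{PBWDintegralrtt}(a)) follows by a standard comparison argument: any two such bases are related by a triangular change of basis matrix with respect to the convex order \eqref{lynorderD}, whose diagonal entries are powers of $v$ by Lemma~\ref{phirv-D} (and hence units in $\BZ[v,v^{-1}]$). Finally, part (a) of the theorem is a direct corollary: Proposition~\ref{goodD} gives $\Psi(\integrald)\subset \mathcal{S}$, while the construction above shows every $F\in\mathcal{S}$ lies in $\Psi(\integrald)$, so $\Psi$ restricts to a $\BZ[v,v^{-1}]$-algebra isomorphism $\integrald\,\iso\,\mathcal{S}$.
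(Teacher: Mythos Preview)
Your proposal is correct and follows essentially the same approach as the paper: the paper's proof is the single line ``Combining Propositions \ref{vanishD}, \ref{spanD}, and \ref{goodD}'', and you have accurately unpacked this into the standard inductive spanning argument over $\mathrm{KP}(\unl{k})$, using the rank-1 reduction together with the integrality clause defining $\mathcal{S}$ to upgrade the $\BQ(v)$-coefficients to $\BZ[v,v^{-1}]$-coefficients (exactly as in \cite[\S3]{HT24} and \cite[Lemmas 3.46, 3.51]{Tsy18}). Your remarks on deducing Theorem~\ref{PBWDintegralrtt}(a) via the triangular comparison with unit diagonal from Lemma~\ref{phirv-D}, and on obtaining part~(a) from $\Psi(\integrald)\subset\mathcal{S}$ plus the spanning construction, are also on target.
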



\medskip

\section{Yangian counterpart}\label{yangian}

In this section, we generalize the results of Sections \ref{type C}--\ref{type D} to the Yangian case, thus establishing
shuffle algebra realizations of Yangians  and their Drinfeld-Gavarini duals in types $C_{n}$, $D_{n}$. This should be
viewed as the ``rational vs trigonometric'' counterpart, where we replace factors $\frac{z}{w}-v^k$ by $z-w-\frac{k}{2}\hbar$.
In particular, $\zeta_{i,j}(z)$ of~\eqref{eq:zeta} will be replaced by $\hzeta_{i,j}(z)=1+\frac{(\alpha_{i},\alpha_{j})\cdot \hbar}{2z}$.


\subsection{Yangians and their shuffle algebra realization}

We still use the notations from Section \ref{pre}. Let $\mathfrak{g}$ be a finite dimensional simple Lie algebra of type
$C_{n}$ or $D_{n}$. Following \cite{Dri88}, the \textbf{``positive subalgebra'' of the Yangian of $\fg$} in the new Drinfeld
realization, denoted by $\Yangian$, is the $\BQ[\hbar]$-algebra generated by $\{\mathsf{x}_{i,r}\}_{i\in I}^{r\in\mathbb{N}}$
subject to the following defining relations:
\begin{equation*}
  [\sx_{i,r+1},\sx_{j,s}]-[\sx_{i,r},\sx_{j,s+1}]=\frac{d_{i}a_{ij}\hbar}{2}(\sx_{i,r}\sx_{j,s}+\sx_{j,s}\sx_{i,r})
  \qquad \forall\, i,j\in I, r,s\in\BN,
\end{equation*}
\begin{equation*}
  \mathop{Sym}_{s_{1},\dots,s_{1-a_{ij}}}[\sx_{i,s_{1}},[\sx_{i,s_{2}},\cdots,[\sx_{i,s_{1-a_{ij}}},\sx_{j,r}]\cdots]]=0
  \qquad \forall\, i\neq j, s_{1},\dots,s_{1-a_{ij}},r\in \BN.
\end{equation*}
Analogously to \eqref{rootvector1}--\eqref{rootvector3}, let us define the \emph{root vectors}
$\{\sX_{\beta,s}\}_{\beta\in\Delta^{+}}^{s\in\BN}$ of $\Yangian$ in types $C_n, D_n$:
\begin{itemize}[leftmargin=0.7cm]

\item
$C_{n}$-type.

\noindent
For $\beta=[i_{1},\dots,i_{\ell}]\neq [i,n,i]$ and $s\in\BN$, we choose a decomposition $s=s_{1}+\cdots+s_{\ell}$
with $s_{1}, \dots,s_{\ell}\in\BN$. Then, we define
\begin{equation}
\label{rootvector1-Y}
  \sX_{\beta,s}\coloneqq
  [\cdots[[\sx_{i_{1},s_{1}},\sx_{i_{2},s_{2}}],\sx_{i_{3},s_{3}}],\cdots,
  \sx_{i_{\ell},s_{\ell}}].
\end{equation}
For $\beta=[i,n,i]$ and $s\in\BN$, we choose a decomposition $s=s_{1}+s_{2}$ with $s_{1},s_{2}\in\BN$, and consider
the root vectors $\sX_{[i,n-1],s_{1}}, \sX_{[i,n],s_{2}}$ defined in \eqref{rootvector1-Y}. Then, we define
\begin{equation}
\label{rootvector2-Y}
  \sX_{\beta,s}\coloneqq [\sX_{[i,n-1],s_{1}},\sX_{[i,n],s_{2}}].
\end{equation}

\item
$D_{n}$-type.

\noindent
For any $\beta=[i_{1},\dots,i_{\ell}]\in\Delta^{+}$ and $s\in\BN$, we choose a  decomposition $s=s_{1}+\cdots+s_{\ell}$
with $s_{1}, \dots,s_{\ell}\in\BN$. Then, we define
\begin{equation}
\label{rootvector3-Y}
  \sX_{\beta,s}\coloneqq
  [\cdots[[\sx_{i_{1},s_{1}},\sx_{i_{2},s_{2}}],\sx_{i_{3},s_{3}}],\cdots,
   \sx_{i_{\ell},s_{\ell}}].
\end{equation}

\end{itemize}
In particular, we have the following specific choices of root vectors $\{\tilde{\sX}_{\beta,s}\}_{\beta\in\Delta^{+}}^{s\in \BN}$:
\begin{itemize}[leftmargin=0.7cm]

\item
For $\beta=[i,n,i]$ and $s\in\BN$ ($\fg$ is of type $C_n$), we define
\begin{equation*}
\begin{aligned}
  \tilde{\sX}_{[i,n,i],s}\coloneqq
  [[\cdots&[\sx_{i,0},\sx_{i+1,0}],\cdots,\sx_{n-1,0}],[[\cdots[\sx_{i,0},\sx_{i+1,0}],\cdots,\sx_{n-1,0}],\sx_{n,s}]].
\end{aligned}
\end{equation*}

\item
Otherwise, for $\beta=[i_{1},\dots,i_{\ell}]$ and $s\in\BN$, we define
\begin{equation*}
  \tilde{\sX}_{\beta,s}\coloneqq[\cdots[[\sx_{i_{1},s},\sx_{i_{2},0}],\sx_{i_{3},0}],\cdots, \sx_{i_{\ell},0}].
\end{equation*}

\end{itemize}
Let $\sH$ denote the set of all functions $h\colon \Delta^{+}\times\BN\rightarrow \BN$ with finite support.
For any $h\in\sH$, we consider the ordered monomials
\begin{equation}
\label{eq:pbwd-yangian}
  \sX_{h}\, =\prod_{(\beta,s)\in\Delta^{+}\times\mathbb{N}}\limits^{\rightarrow}\sX_{\beta,s}^{h(\beta,s)}
  \qquad \text{and} \qquad
  \tilde{\sX}_{h}\, =\prod_{(\beta,s)\in\Delta^{+}\times\mathbb{N}}\limits^{\rightarrow}\tilde{\sX}_{\beta,s}^{h(\beta,s)}.
\end{equation}
Then, similarly to \cite{Lev93} (cf.\ \cite[Theorem B.3]{FT19}), we have:

\begin{Thm}\label{yangianbasis}
The elements $\{ \tilde{\sX}_{h}\}_{h\in \sH}$ form a basis of the free $\BQ[\hbar]$-module $\Yangian$.
\end{Thm}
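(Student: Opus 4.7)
The plan is to establish Theorem \ref{yangianbasis} by constructing a rational counterpart of the shuffle algebra framework developed in Sections \ref{type C}--\ref{type D}, so that the shuffle realization and the PBWD basis emerge simultaneously. Concretely, I would define the \emph{rational shuffle algebra} $\hat{S} = \bigoplus_{\unl{k} \in \BN^{I}} \hat{S}_{\unl{k}}$, where $\hat{S}_{\unl{k}}$ consists of $\mathfrak{S}_{\unl{k}}$-symmetric rational functions in $\{x_{i,r}\}$ whose denominators are products of factors $(x_{i,r} - x_{j,s})$ and whose numerators satisfy the additive analogues of the wheel conditions \eqref{wheelcon}, obtained by replacing each multiplicative relation $x = v^{k}y$ by the additive relation $x = y - k\hbar/2$. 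The shuffle product on $\hat{S}$ is defined as in \eqref{shuffleproduct} with $\zeta_{i,j}$ replaced by the rational kernel $\hzeta_{i,j}(z) = 1 + (\alpha_{i},\alpha_{j})\hbar/(2z)$. The assignment $\sx_{i,r} \mapsto x_{i,1}^{r}$ then extends to a $\BQ[\hbar]$-algebra homomorphism $\hat{\Psi}\colon \Yangian \to \hat{S}$, since the Yangian defining relations translate into precisely these rational pole and wheel conditions.

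Next, I would introduce rational specialization maps $\hat{\phi}_{\unl{d}}$ by mimicking formulas \eqref{spe-C-1}--\eqref{spe-C-3} and \eqref{spe-D-1}--\eqref{spe-D-3}: each multiplicative shift $v^{k} w_{\beta,s}$ is replaced by the additive shift $w_{\beta,s} - k\hbar/2$, and the auxiliary variables $w'_{\beta,s}$ together with the divisor factors $B_{\beta}$ of~\eqref{Bfactor-C}, \eqref{Bfactor-D} are adjusted accordingly. Then I would prove rational analogues of Propositions \ref{spekpC}, \ref{vanishC}, \ref{spanC} for type $C_{n}$ and Propositions \ref{vanishD}, \ref{spanD} for type $D_{n}$. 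The case-by-case analysis in Sections \ref{type C}--\ref{type D} depends only on the relative orderings $o(x^{(*,*)}_{*,*})$ of variables being plugged into factors of a shuffle product and on the combinatorial structure of the wheel conditions, both of which survive the trigonometric-to-rational degeneration verbatim; the factors $G_{\beta}$ and $G_{\beta,\beta'}$ of \eqref{eq:formulagbeta-C-1}--\eqref{eq:formulagbeta-D-2} become polynomials in additive differences with identical orders of vanishing.

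Combining these analogues yields a $\BQ[\hbar]$-algebra isomorphism $\hat{\Psi}\colon \Yangian \iso \hat{S}$, together with the linear independence of $\{\hat{\Psi}(\tilde{\sX}_{h})\}_{h \in \sH}$ in $\hat{S}$; the spanning property is the standard PBW-type induction, using the Yangian relations to rewrite an arbitrary monomial in $\{\sx_{i,r}\}$ as an ordered product of root vectors modulo lower-degree terms. Freeness of $\Yangian$ over $\BQ[\hbar]$ then follows because $\hat{S}$ is manifestly free over $\BQ[\hbar]$. The main technical hurdle is bookkeeping rather than conceptual: one must track the rational degeneration of the normalization factors $\langle m\rangle_{v}$ of \eqref{anglev} (which become nonzero multiples of $m\hbar$) to ensure that no spurious $\hbar$-torsion is introduced in the rational analogues of Lemmas \ref{phirv-C}, \ref{phirv-D}. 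An alternative classical approach, parallel to \cite{Lev93} and \cite[Theorem~B.3]{FT19}, is a filtered-graded degeneration identifying $\mathrm{gr}\,\Yangian$ with the appropriate current algebra and transferring the PBW basis from the quantum loop case of Theorems \ref{shufflePBWD-C}(b) and \ref{shufflePBWD-D}(b); the shuffle route, however, is preferable as it directly generalizes to the rational shuffle realization needed elsewhere in Section \ref{yangian}.
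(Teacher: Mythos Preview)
Your proposed route is a substantial detour from the paper's argument, which is much more direct: the paper simply observes that the specific root vectors $\tilde{\sX}_{\beta,s}$ differ from those of \cite[(A.11)]{FT19} only for $\beta=[i,n,i]$ in type $C_n$, and verifies that even these modified root vectors satisfy the two structural properties (B.1) and (B.2) of \cite[Appendix~B]{FT19}. Hence the Levendorskii-style filtered--graded argument of \cite[Theorems~B.2,~B.3]{FT19} applies verbatim, and that is the entire proof. This is exactly the ``alternative classical approach'' you relegate to your final sentence.

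Your main route reverses the paper's logical flow. In the paper, Theorem~\ref{yangianbasis} is established \emph{first} by the classical method and then used as input to Proposition~\ref{injyang} (injectivity of $\Psi$), which in turn feeds into Theorem~\ref{yangshuffle}. You propose to build the rational shuffle theory first and extract Theorem~\ref{yangianbasis} from it. This is not circular, but it is redundant: the specialization maps only give linear independence of $\{\hat\Psi(\tilde{\sX}_h)\}$ in $\hat S$, not spanning of $\{\tilde{\sX}_h\}$ in $\Yangian$. For the latter you appeal to ``the standard PBW-type induction, using the Yangian relations to rewrite an arbitrary monomial,'' which is precisely the spanning half of the Levendorskii/\cite{FT19} argument. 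But that same filtered--graded argument already supplies linear independence for free (via the surjection $\mathrm{gr}\,\Yangian \twoheadrightarrow U(\fg[t])$ and classical PBW). So once you invoke it for spanning, the shuffle machinery contributes nothing to the proof of Theorem~\ref{yangianbasis} itself; it is needed only for the later results (Theorem~\ref{yangshuffle} and the Drinfeld--Gavarini dual), which is indeed where the paper deploys it.
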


\begin{proof}
Comparing $\tilde{\sX}_{\beta,s}$ to the root vectors $e_{\beta}^{(s)}$ used in \cite[(A.11)]{FT19}, we see that
the only difference is in the root vectors $\tilde{\sX}_{[i,n,i],s}$ in $C_n$-type. However, the two key properties
(B.1) and (B.2) of \cite[Appendix B]{FT19} still hold for our root vectors. Hence, the proof of \cite[Theorem~B.2]{FT19}
and thus of \cite[Theorem B.3]{FT19} still goes through.
\end{proof}

We define the shuffle algebra $(\bar\BW,\star)$ analogously to the shuffle algebra $(S,\star)$ of Section \ref{pre}
with the following modifications:
\begin{itemize}[leftmargin=0.7cm]

\item
All rational functions $F\in \bar\BW$ are defined over $\BQ[\hbar]$.

\item
The matrix $(\hzeta_{i,j}(z))_{i,j\in I}$ is defined via
\begin{equation*}
  \hzeta_{i,j}(z)=1+\frac{(\alpha_{i},\alpha_{j})\cdot \hbar}{2z}.
\end{equation*}

\item
(\emph{pole conditions}) $F\in \bar\BW_{\unl{k}}$ has the form
\begin{equation}
  F=\frac{f(\{x_{i,r}\}_{i\in I}^{1\leq r\leq k_{i}})}
         {\prod_{i<j}^{a_{ij}\neq 0}\prod_{1\leq r\leq k_{i}}^{1\leq s\leq k_{j}}(x_{i,r}-x_{j,s})},
\label{poleyg}
\end{equation}
where $f\in \BQ[\hbar][\{x_{i,r}\}_{i\in I}^{1\leq r\leq k_{i}}]^{\mathfrak{S}_{\underline{k}}}$ and $<$ is an arbitrary order on $I$.

\item
(\emph{wheel conditions}) Let $f$ be the numerator of $F\in \bar\BW_{\unl{k}}$ from \eqref{poleyg}, then
\begin{equation}
\label{wheel-Y}
  f(\{x_{i,r}\}_{i\in I}^{1\leq r\leq k_{i}})=0 \  \text{once}\
  x_{i,s_{1}}=x_{i,s_{2}}+d_{i}\hbar=\cdots=x_{i,s_{1-a_{ij}}}-d_{i}a_{ij}\hbar=x_{j,r}-\frac{d_{i}a_{ij}}{2}\hbar
\end{equation}
for any $ i\neq j$ such that $a_{ij}\neq 0$, pairwise distinct $1\leq s_{1},\dots,s_{1-a_{ij}}\leq k_{i}$, and $1\leq r\leq k_{j}$.

\item
The shuffle product is defined like \eqref{shuffleproduct}, but $\zeta_{i,j}(\frac{x_{i,r}}{x_{j,s}})$
are replaced by $\hzeta_{i,j}(x_{i,r}-x_{j,s})$.

\end{itemize}
This definition is precisely engineered, so that  the assignment
$\sx_{i,r}\mapsto x_{i,1}^{r}\in \bar\BW_{\mathbf{1}_i}\, (i\in I, r\in\BN)$ gives rise to a $\BQ[\hbar]$-algebra homomorphism
\begin{equation}
\label{eq:Psi-homom-rat}
  \Psi\colon \Yangian \longrightarrow \bar\BW.
\end{equation}
Henceforth, we shall use the notation $\circeq$ as in~\cite[(5.19)]{HT24} (cf.~\eqref{eq:trig-const}):
\begin{equation*}
  A\circeq B \quad  \text{if} \quad  A=c\cdot B \quad \text{for some}\  c\in \BQ^{\times}.
\end{equation*}
We shall also use $\mathrm{denom}_\beta$ to denote the denominator in~\eqref{poleyg} for any $F\in \bar\BW_\beta$.

Then, we have the following straightforward analogues of Lemmas \ref{lem:psirv-C} and \ref{lem:psirv-D}:

\begin{Lem}\label{Y-psirv-C}
For type $C_{n}$, we have:
\begin{equation*}
\begin{aligned}
  \Psi(\tilde{\sX}_{[i,j],s})&\circeq \frac{\hbar^{j-i}x_{i,1}^{s}}{\mathrm{denom}_{[i,j]}}
    \ \ \mathrm{for} \ \ i\leq j\leq n , \\
  \Psi(\tilde{\sX}_{[i,n,j],s})&\circeq
    \frac{\hbar^{2n-i-j}x_{i,1}^{s}} {\mathrm{denom}_{[i,n,j]}}(2x_{j-1,1}-x_{j,1}-x_{j,2})
    \prod^{n-2}_{\ell=j}\hat{Q}(x_{\ell,1},x_{\ell,2},x_{\ell+1,1},x_{\ell+1,2})
    \ \mathrm{for} \ i<j<n , \\
  \Psi(\tilde{\sX}_{[i,n,i],s})&\circeq
    \frac{\hbar^{2n-2i}x_{n,1}^{s}} {\mathrm{denom}_{[i,n,i]}}
    \prod^{n-2}_{\ell=i}\hat{Q}(x_{\ell,1},x_{\ell,2},x_{\ell+1,1},x_{\ell+1,2})
  \ \ \mathrm{for} \ \ i<n,
\end{aligned}
\end{equation*}
where $\hat{Q}(x_{1},x_{2},y_{1},y_{2})=4(x_{1}x_{2}+y_1y_2)-2(x_1+x_2)(y_1+y_2)+\hbar^2$.
\end{Lem}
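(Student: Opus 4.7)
The plan mirrors the inductive scheme used in the proof of Lemma \ref{lem:psirv-C}, with trigonometric factors $x-v^k y$ systematically replaced by rational shifts $x-y-\frac{k\hbar}{2}$ arising from $\hzeta_{i,j}(z)=1+\frac{(\alpha_i,\alpha_j)\hbar}{2z}$. Under $\Psi$ of~\eqref{eq:Psi-homom-rat}, the generator $\sx_{i,r}$ goes to $x_{i,1}^r$, and every nested commutator defining $\tilde{\sX}_{\beta,s}$ unfolds into a symmetrized shuffle product whose building blocks are controlled by the $\hzeta$-factors.

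For the first formula $\Psi(\tilde{\sX}_{[i,j],s})$, I would proceed by induction on $j-i$, building up the chain $[\cdots[[\sx_{i,s},\sx_{i+1,0}],\sx_{i+2,0}],\cdots,\sx_{j,0}]$ one step at a time. Each step introduces only a single new variable, so the symmetrization in~\eqref{shuffleproduct} is trivial and the computation reduces to $\hzeta_{k,k+1}(x_{k,1}-x_{k+1,1})-\hzeta_{k+1,k}(x_{k+1,1}-x_{k,1})\circeq\frac{\hbar}{x_{k,1}-x_{k+1,1}}$, which contributes exactly one factor of $\hbar$ together with the expected denominator factor at each step.

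For $\Psi(\tilde{\sX}_{[i,n,j],s})$ with $i<j<n$, I would use descending induction on $j$. The base case $j=n-1$ handles $\tilde{\sX}_{[i,n,n-1],s}=[\tilde{\sX}_{[i,n],s},\sx_{n-1,0}]$, which after the symmetrization in $(x_{n-1,1},x_{n-1,2})$ produces the leading polynomial $(2x_{j-1,1}-x_{j,1}-x_{j,2})$. The inductive step, bracketing with $\sx_{j,0}$, triggers a symmetrization in $(x_{j,1},x_{j,2})$ against $(x_{j+1,1},x_{j+1,2})$ coming from the induction hypothesis; the crucial input is the rational symmetrization identity
\[
\mathop{Sym}_{x_1,x_2}\!\left(\frac{(x_1-x_2+\hbar)(x_1-y_2+\hbar/2)(x_2-y_1-\hbar/2)}{x_1-x_2}\right)\circeq \hat{Q}(x_1,x_2,y_1,y_2),
\]
which is the rational avatar of the trigonometric identity invoked after~\eqref{eq:Imtilde Eb-3}. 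It is immediate by direct expansion: both sides are polynomials symmetric in $(x_1,x_2)$ and in $(y_1,y_2)$ of bounded matching bi-degree, so comparing a few coefficients suffices. Iterating the inductive step accumulates the product $\prod_{\ell=j}^{n-2}\hat{Q}(x_{\ell,1},x_{\ell,2},x_{\ell+1,1},x_{\ell+1,2})$.

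Finally, for $\Psi(\tilde{\sX}_{[i,n,i],s})$ with $1\leq i<n$, I would use descending induction on $i$, writing $\tilde{\sX}_{[i,n,i],s}=[A,[A,\sx_{n,s}]]$ with $A=\tilde{\sX}_{[i,n-1],0}$. Expanding the outer bracket as $\Psi(A)\star\Psi(B)-\Psi(B)\star\Psi(A)$ with $B=[A,\sx_{n,s}]$, one of the two shuffle products vanishes by a variable-splitting argument of exactly the form used in the proof of Lemma \ref{phirv-C}; the surviving term, after repeated applications of the symmetrization identity above to the pairs $(x_{\ell,1},x_{\ell,2})$ fed in by the two copies of $A$, produces the claimed product $\prod_{\ell=i}^{n-2}\hat{Q}(x_{\ell,1},x_{\ell,2},x_{\ell+1,1},x_{\ell+1,2})$ together with the prefactor $\hbar^{2n-2i}x_{n,1}^s/\mathrm{denom}_{[i,n,i]}$. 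The main (but mild) obstacle is the combinatorial bookkeeping needed to pair up the two families of short-root variables coming from the two copies of $A$ under the symmetrization; apart from that, the computation is routine and directly parallel to the trigonometric proof of Lemma \ref{lem:psirv-C}.
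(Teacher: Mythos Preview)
Your overall plan is what the paper has in mind: Lemma~\ref{Y-psirv-C} is stated without proof as a ``straightforward analogue'' of Lemma~\ref{lem:psirv-C}, and your sketches for $\beta=[i,j]$ and $\beta=[i,n,j]$ (induction plus the rational $\hat Q$-symmetrization identity) are exactly the intended route.

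There is one genuine misstatement in your $[i,n,i]$ paragraph. You claim that after writing $\Psi(\tilde\sX_{[i,n,i],s})=\Psi(A)\star\Psi(B)-\Psi(B)\star\Psi(A)$, ``one of the two shuffle products vanishes by a variable-splitting argument of exactly the form used in the proof of Lemma~\ref{phirv-C}''. This is not correct: what vanishes in Lemma~\ref{phirv-C} is the \emph{specialization} $\phi_\beta\big(\Psi(E_{[i,n-1]})\star\Psi(E_{[i,n]})\big)$, not the shuffle element itself. As elements of $\bar\BW$, both $\Psi(A)\star\Psi(B)$ and $\Psi(B)\star\Psi(A)$ are nonzero, and you need both to compute $\Psi(\tilde\sX_{[i,n,i],s})$ as a shuffle function. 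The actual inductive step, parallel to the trigonometric proof of Lemma~\ref{lem:psirv-C}, keeps both terms: passing from $[j+1,n,j+1]$ to $[j,n,j]$ introduces a pair of new $x_j$-variables, and the full symmetrization of the commutator against $(x_{j+1,1},x_{j+1,2})$ collapses, via your displayed $\hat Q$-identity, to the extra factor $\hat Q(x_{j,1},x_{j,2},x_{j+1,1},x_{j+1,2})$ times $\Psi(\tilde\sX_{[j+1,n,j+1],\ast})$. So drop the vanishing claim and invoke the symmetrization identity for the difference of the two shuffle products, exactly as you already do in the $[i,n,j]$ case; the rest of your argument then goes through.
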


\begin{Lem}\label{Y-psirv-D}
For type $D_{n}$, we have:
\begin{align*}
  \Psi(\tilde{\sX}_{\beta,s}) &\circeq \frac{\hbar^{\abs{\beta}-1}x_{i,1}^{s}}{\mathrm{denom}_{\beta}}
    \quad \mathrm{for}\ \beta=[i,j]\ \mathrm{or}\ [i,n,n-1],\\
  \Psi(\tilde{\sX}_{[i,n,j],s}) &\circeq
    \frac{\hbar^{2n-i-j-1}} {\mathrm{denom}_{[i,n,j]}}x_{i,1}^{s}
    \prod_{\ell=j}^{n-2} (\hbar+x_{\ell,1}-x_{\ell,2})(\hbar-x_{\ell,1}+x_{\ell,2})
  \ \ \mathrm{for} \ \ i<j<n-1.
\end{align*}
\end{Lem}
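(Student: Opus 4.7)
The plan is to mirror the proof of Lemma~\ref{lem:psirv-D} verbatim, with the trigonometric $\zeta$-factors replaced by their rational counterparts $\hzeta_{i,j}(x-y) = 1 + \frac{(\alpha_i,\alpha_j)\hbar}{2(x-y)}$ and $v$-commutators replaced by ordinary commutators. Under $\Psi$ each bracket $[\cdot,\sx_{k,t}]$ in the definition of $\tilde\sX_{\beta,s}$ becomes a shuffle commutator $A\star B - B\star A$, which I would evaluate one bracket at a time, tracking the $\hbar$-powers and rational factors that survive the two-term symmetrization.

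First, for $\beta$ equal to $[i,j]$ (with $i\leq j<n$ or $i=j=n$), $[i,n]$, or $[i,n,n-1]$, I argue by induction on the length of the defining Lyndon word. The base case $\beta=[i]$ gives $\Psi(\sx_{i,s})=x_{i,1}^s$, and each inductive step $\tilde\sX_{\beta,s} = [\tilde\sX_{\beta',s}, \sx_{k,0}]$ introduces a single new variable $x_{k,1}$: the only $\hzeta$-factor with a nontrivial contribution is between $x_{k,1}$ and the unique previously introduced variable attached to a node adjacent to $k$ in the Dynkin diagram, and after the two-term symmetrization the pole cancels, leaving exactly one factor of $\hbar$. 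This produces the $\hbar^{|\beta|-1}$ prefactor by a telescoping count, while $x_{i,1}^s$ survives because all other generators enter at index $0$.

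For $\beta = [i,n,j]$ with $1\leq i<j<n-1$, I run a descending induction on $j$ with base case $j=n-2$. The base is obtained from $\tilde\sX_{[i,n,n-2],s} = [\tilde\sX_{[i,n,n-1],s}, \sx_{n-2,0}]$: shuffling in the new variable $x_{n-2,2}$ and symmetrizing over $\{x_{n-2,1}, x_{n-2,2}\}$, the resulting rational expression collapses (up to a $\BQ^\times$-scalar) to $(\hbar+x_{n-2,1}-x_{n-2,2})(\hbar-x_{n-2,1}+x_{n-2,2})$ times the expected prefactor. The inductive step is driven by the rational analogue of the $Q$-recursion from Lemma~\ref{lem:psirv-C}: a two-term symmetrization identity of the form
\begin{equation*}
  \mathop{Sym}_{x_1,x_2}\left(\frac{(\hbar+x_1-x_2)(\hbar+x_1-y_1)(\hbar-x_2+y_2)}{x_1-x_2}\right)
  \circeq (\hbar+y_1-y_2)(\hbar-y_1+y_2)
\end{equation*}
lets one extract the new quadratic factor $(\hbar+x_{j,1}-x_{j,2})(\hbar-x_{j,1}+x_{j,2})$ at each step without altering the remaining product inherited from the induction hypothesis.

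The main obstacle will be verifying the rational symmetrization identity above in the precise form required, and bookkeeping the $\hzeta$-factors so that the denominator $\mathrm{denom}_{[i,n,j]}$ is reproduced exactly, with no spurious poles or missing linear factors. Once this is in place, the $\hbar$-power count, the presence of the unique surviving monomial $x_{i,1}^s$, and the overall sign conventions all follow automatically, and the remainder of the proof reduces to the routine rational degeneration of the calculation implicit in the proof of Lemma~\ref{lem:psirv-D}.
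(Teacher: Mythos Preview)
Your overall strategy—direct inductive computation mirroring the trigonometric Lemma~\ref{lem:psirv-D}—is exactly what the paper has in mind (it gives no proof beyond labeling the result a ``straightforward analogue''). However, the key symmetrization identity you state is false: a direct check (say $\hbar=1$, $y_1=y_2=0$, $x_1=1$, $x_2=0$) gives $4$ on the left and $1$ on the right, and more structurally the left-hand side genuinely depends on $x_1,x_2$ while your right-hand side does not. You also invoke the $Q$-recursion from Lemma~\ref{lem:psirv-C}, but that is a type~$C$ phenomenon where the factors $Q(x_{\ell,*},x_{\ell+1,*})$ couple \emph{consecutive} indices. In type~$D$ the factors $(\hbar+x_{\ell,1}-x_{\ell,2})(\hbar-x_{\ell,1}+x_{\ell,2})$ are local to the single index $\ell$, so the inductive step from $[i,n,j+1]$ to $[i,n,j]$ is simpler than you suggest.

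Concretely, when you bracket with $\sx_{j,0}$ and symmetrize over $\{x_{j,1},x_{j,2}\}$, the numerator of $\Psi(\tilde\sX_{[i,n,j+1],s})$ does not involve $x_{j,*}$ at all (the existing product runs over $\ell\geq j+1$), so it passes through the symmetrization untouched. What you actually need is that the four $\hzeta$-factors linking $x_{j,2}$ to $x_{j-1,1},x_{j,1},x_{j+1,1},x_{j+1,2}$, combined with the $x_{j,1}$-dependence of $\mathrm{denom}_{[i,n,j+1]}$, produce after the commutator symmetrization exactly $(\hbar+x_{j,1}-x_{j,2})(\hbar-x_{j,1}+x_{j,2})$ times one extra $\hbar$ and the new denominator factors. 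This is a short explicit calculation with no hidden obstacle—hence the paper omits it—but the identity you wrote does not encode it.
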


Moreover, due to the equality $\hzeta_{i,j}(z)-\hzeta_{j,i}(-z)=\frac{(\alpha_{i},\alpha_{j})}{z}\hbar$, for more
general root vectors $\sX_{\beta,s}$ defined in~\eqref{rootvector1-Y}--\eqref{rootvector3-Y}, we have:

\begin{Lem}\label{conrv}
For any $\beta\in\Delta^{+}$ and $s\in\BN$, $\Psi(\sX_{\beta,s})$ is divisible by $\hbar^{|\beta|-1}$.
\end{Lem}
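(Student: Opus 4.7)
The plan is to proceed by induction on the height $|\beta|$, reducing the claim to a general statement about shuffle commutators: if $F\in \bar\BW_{\unl k}$ and $G\in \bar\BW_{\unl\ell}$ have numerators divisible by $\hbar^a$ and $\hbar^b$ respectively, then the shuffle commutator $F\star G - G\star F$ has numerator divisible by $\hbar^{a+b+1}$. We note that the denominators in~\eqref{poleyg} are independent of $\hbar$, so divisibility as a rational function over $\BQ[\hbar][\{x_{i,r}^{\pm 1}\}]$ coincides with divisibility of the numerator.

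The observation underlying the commutator claim is that $\hzeta_{i,j}(z)\big|_{\hbar=0}=1$ for all $i,j\in I$. Consequently the shuffle product on $\bar\BW$, reduced modulo $\hbar$, becomes the ordinary symmetrization of multiplication and is therefore commutative:
\[
  F\star G\big|_{\hbar=0} \,=\, \tfrac{1}{\unl{k}!\,\unl{\ell}!}\,\mathrm{Sym}_{\mathfrak{S}_{\unl{k}+\unl{\ell}}}\!\big(F\cdot G\big)\Big|_{\hbar=0} \,=\, G\star F\big|_{\hbar=0}.
\]
Writing $F=\hbar^a \tilde F$ and $G=\hbar^b \tilde G$ with $\tilde F\in\bar\BW_{\unl k}$, $\tilde G\in\bar\BW_{\unl \ell}$, the $\BQ[\hbar]$-linearity of $\star$ gives $F\star G-G\star F=\hbar^{a+b}(\tilde F\star\tilde G-\tilde G\star\tilde F)$, and the parenthesized factor vanishes at $\hbar=0$ by the displayed identity applied to $\tilde F,\tilde G$, hence is divisible by $\hbar$.

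The induction on $|\beta|$ is now routine. The base case $|\beta|=1$ is clear from $\Psi(\sx_{i,s})=x_{i,1}^s$. For the inductive step in type $D_n$ and in type $C_n$ with $\beta\neq[i,n,i]$, the definitions~\eqref{rootvector1-Y} and~\eqref{rootvector3-Y} express $\sX_{\beta,s}=[\sX_{\beta',s'},\sx_{i_\ell,s_\ell}]$ with $|\beta'|=|\beta|-1$, so the inductive hypothesis combined with the commutator claim yields divisibility of $\Psi(\sX_{\beta,s})$ by $\hbar^{(|\beta'|-1)+0+1}=\hbar^{|\beta|-1}$. For the exceptional $C_n$ case $\beta=[i,n,i]$, the definition~\eqref{rootvector2-Y} combines root vectors for $[i,n-1]$ and $[i,n]$ of heights $n-i$ and $n-i+1$, and the commutator claim gives divisibility by $\hbar^{(n-i-1)+(n-i)+1}=\hbar^{2(n-i)}=\hbar^{|\beta|}$, which is one power stronger than required. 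The entire argument thus reduces to the commutator claim, whose only nontrivial input is the commutativity of $\bar\BW\big|_{\hbar=0}$.
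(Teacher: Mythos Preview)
Your proof is correct and follows essentially the same idea as the paper: the paper invokes the identity $\hzeta_{i,j}(z)-\hzeta_{j,i}(-z)=\frac{(\alpha_{i},\alpha_{j})}{z}\hbar$, which is precisely the pairwise form of your observation that $\hzeta_{i,j}(z)|_{\hbar=0}=1$ makes the shuffle product commutative modulo $\hbar$, so that each commutator in the iterated definition of $\sX_{\beta,s}$ contributes one power of $\hbar$. Your write-up simply spells out the induction and the passage from the pairwise identity to the commutator claim that the paper leaves implicit.
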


Let us now adapt our key tool of {\em specialization maps} to the Yangian setup. For any $F\in\bar\BW_{\unl{k}}$
and $\unl{d}\in\text{KP}(\unl{k})$, let $f$ be the numerator of $F$ from  \eqref{poleyg}. The specialization map
$\phi_{\underline{d}}(F)$ is defined by successive specializations $\phi_{\beta,s}$ of the variables $x^{(\beta,s)}_{*,*}$
in $f$ for each $\beta\in\Delta^+$ and $1\leq s\leq d_{\beta}$ as follows (cf.\ \eqref{spe-C-1}--\eqref{spe-D-3}):
\begin{itemize}[leftmargin=0.7cm]

\item
$C_n$-type.

For $\beta\neq [i,n,i]$, we define $\phi_{\beta,s}(F)$ by specializing :
\begin{equation*}
  x^{(\beta,s)}_{\ell\neq n,1}\mapsto w_{\beta,s}-\frac{\ell-1}{2}\hbar,\quad
  x^{(\beta,s)}_{\ell\neq n,2}\mapsto w_{\beta,s}-\frac{2n+1-\ell}{2}\hbar,\quad
  x^{(\beta,s)}_{n,1}\mapsto w_{\beta,s}-\frac{n}{2}\hbar.
\end{equation*}
For $\beta= [i,n,i]$, we first define $\phi^{(1)}_{\beta,s}(F)$ by specializing:
\begin{align*}
  x^{(\beta,s)}_{\ell\neq n,1}\mapsto w_{\beta,s}-\frac{\ell-1}{2}\hbar,\quad
  x^{(\beta,s)}_{\ell\neq n,2}\mapsto w'_{\beta,s}-\frac{\ell-1}{2}\hbar,\quad
  x^{(\beta,s)}_{n,1}\mapsto w'_{\beta,s}-\frac{n}{2}\hbar.
\end{align*}
According to wheel conditions \eqref{wheel-Y}, $\phi^{(1)}_{\beta,s}(F)$ is divisible by
\begin{equation*}
  B_{\beta}=\big\{(w_{\beta,s}-w'_{\beta,s}+\hbar)(w_{\beta,s}-w'_{\beta,s}-\hbar)\big\}^{n-i-1}.
\end{equation*}
Then the overall specialization $\phi_{\beta,s}(F)$ is defined by
\begin{equation*}
  \phi_{\beta,s}(F)\coloneqq \phi^{(2)}_{\beta,s}\left(\phi^{(1)}_{\beta,s}(F)\right) =
  \eval{\frac{\phi^{(1)}_{\beta,s}(F)}{B_{\beta}}}_{w'_{\beta,s}\mapsto w_{\beta,s}+\hbar}.
\end{equation*}

\item
$D_n$-type.

For $\beta \neq [i,n,j]$ with $i<j\leq n-2$, we define $\phi_{\beta,s}(F)$ by specializing:
\begin{equation*}
  x^{(\beta,s)}_{\ell\neq n,1}\mapsto w_{\beta,s}-\frac{\ell-1}{2}\hbar,\quad
  x^{(\beta,s)}_{n,1}\mapsto w_{\beta,s}-\frac{n-2}{2}\hbar.
\end{equation*}
For $\beta= [i,n,j]$ with $1\leq i<j\leq n-2$, we first define $\phi^{(1)}_{\beta,s}(F)$ by  specializing:
\begin{align*}
   x^{(\beta,s)}_{\ell\neq n,1}\mapsto w_{\beta,s}-\frac{\ell-1}{2}\hbar,\quad
   x^{(\beta,s)}_{n,1}\mapsto w_{\beta,s}-\frac{n-2}{2}\hbar,\quad
   x^{(\beta,s)}_{\ell\neq n-1\& n,2}\mapsto w'_{\beta,s}-\frac{2n-3-\ell}{2}\hbar.
\end{align*}
According to wheel conditions \eqref{wheel-Y}, $\phi^{(1)}_{\beta,s}(F)$ is divisible by
\begin{equation*}
  B_{\beta}=\prod^{n-2}_{\ell=j}(w_{\beta,s}-w'_{\beta,s}-(n-\ell-2)\hbar)(w_{\beta,s}-w'_{\beta,s}-(n-\ell)\hbar).
\end{equation*}
Then, the overall specialization $\phi_{\beta,s}(F)$ is defined by:
\begin{equation*}
  \phi_{\beta,s}(F)\coloneqq \phi^{(2)}_{\beta,s}\left(\phi^{(1)}_{\beta,s}(F)\right) =
  \eval{\frac{\phi^{(1)}_{\beta,s}(F)}{B_{\beta}}}_{w'_{\beta,s}\mapsto w_{\beta,s}}.
\end{equation*}

\end{itemize}
For $\unl{d}\in \mathrm{KP}(\unl{k})$, the specialization map $\phi_{\underline{d}}(F)$ is defined by applying those
separate maps $\phi_{\beta,s}$ in each group $\big\{x^{(\beta,s)}_{i,t}\big\}_{1\leq t\leq \nu_{\beta,i}}^{i\in I}$
of variables (the result is independent of splitting):
\begin{equation*}
  \phi_{\underline{d}}\colon \bar\BW_{\unl{k}}\longrightarrow
  \BQ[\hbar][\{w_{\beta,s}\}_{\beta\in\Delta^{+}}^{1\leq s\leq d_{\beta}}]^{\mathfrak{S}_{\unl{d}}},
\end{equation*}
and we extend it by zero to all other components $\bar\BW_{\unl{\ell}}$ with $\unl{\ell}\ne \unl{k}$.
Then, we have the following straightforward analogues of Lemmas \ref{phirv-C} and \ref{phirv-D}:

\begin{Lem}\label{phiyangianrs}
If $\fg$ is of type $C_{n}$ or $D_{n}$, then we have:
\begin{equation*}
  \phi_{\beta}(\Psi(\sX_{\beta,s}))\circeq
  \hbar^{\kappa_{\beta}}\cdot p_{\beta,s}(w_{\beta,1}) \qquad \forall\, (\beta,s)\in\Delta^{+}\times \BN,
\end{equation*}
where $\kappa_{\beta}$ is given by \eqref{kappaC} in type $C_n$, $\kappa_{\beta}=\abs{\beta}-1$ in type $D_n$,
and $p_{\beta,s}(w)\in \BQ[\hbar][w]$ is a monic degree $s$ polynomial in $w$ over $\BQ[\hbar]$.
\end{Lem}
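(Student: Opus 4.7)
The plan is to mimic the proofs of Lemmas \ref{phirv-C} and \ref{phirv-D} in the rational setup, where multiplicative shifts $v^k$ become additive shifts $\tfrac{k}{2}\hbar$ and the factor $\zeta_{i,j}(x/y)=(x-v^{-(\alpha_i,\alpha_j)}y)/(x-y)$ is replaced by $\hzeta_{i,j}(x-y)=1+\tfrac{(\alpha_i,\alpha_j)\hbar}{2(x-y)}$. A key feature is that whenever two variables $x,y$ are specialized so that $\hzeta_{i,j}(x-y)$ would be the ``correct'' wheel-type shift, the factor $\hzeta_{i,j}(x-y)$ contributes exactly one power of~$\hbar$ (compared with a zero in the trigonometric case, modulo a $\hbar$-free prefactor). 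Conversely, whenever a specialization forces $\hzeta_{i,j}(x-y)$ to vanish at $(\alpha_i,\alpha_j)\hbar=-2(x-y)$ only at leading order in $\hbar$, the corresponding summand becomes zero up to an overall $\hbar^{\geq 1}$; after dividing by $B_\beta$ in the complicated cases, these powers of $\hbar$ balance exactly to give $\hbar^{\kappa_\beta}$.

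For $\beta \ne [i,n,i]$ in type $C_n$ and $\beta \ne [i,n,j]$ (with $i<j \leq n-2$) in type $D_n$, the specialization $\phi_\beta$ is single-step. Proceeding by induction on $|\beta|$ (as in the last paragraph of the proof of Lemma \ref{phirv-C}), the shuffle product $\Psi(\sX_{\beta,s})$ reduces to a symmetrization in which all but one permutation vanish under $\phi_\beta$, due to the presence of an $\hzeta$-factor that evaluates to zero at the incompatible specialization. The surviving summand is $\hbar^{|\beta|-1} \cdot g$ (by Lemma \ref{conrv}) times a product of $\hzeta$-factors each contributing one extra power of $\hbar$ at the leading term, yielding $\hbar^{\kappa_\beta}$ times a monic polynomial in $w_{\beta,1}$ of degree $s$ (each of the $\ell$ variables $x^{(\beta,1)}_{*,*}$ is specialized to $w_{\beta,1}+\text{const}\cdot \hbar$, so the maximal $w_{\beta,1}$-degree is $s_1+\cdots+s_\ell=s$, and the coefficient of $w^s_{\beta,1}$ is $1$ up to a nonzero rational constant absorbed by $\circeq$).

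For the exceptional root $\beta=[i,n,i]$ in type $C_n$ (resp.\ $\beta=[i,n,j]$ with $i<j\leq n-2$ in type $D_n$), we write $\sX_{\beta,s}=[\sX_{[i,n-1],s_1},\sX_{[i,n],s_2}]$ (resp.\ $\sX_{\beta,s}=[\sX_{[i,n,j+1],r},\sx_{j,s'}]$) and handle each summand of the commutator separately. In the first summand, every splitting of variables kills the $\phi_\beta$-specialization by the same case-analysis used in Lemma \ref{phirv-C}: either a ``wrong'' $\hzeta$-factor between specialized variables vanishes at leading order beyond what $B_\beta$ can absorb, or the collection of intermediate $\hzeta$-factors contributes exactly $B_\beta$ but leaves a final $\hzeta$-factor evaluating to zero. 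In the second summand, a unique splitting survives, and the product of intermediate $\hzeta$-factors contributes $B_\beta$ times a further factor of the form $(w'_{\beta,1}-w_{\beta,1}-k_1\hbar)(w'_{\beta,1}-w_{\beta,1}-k_2\hbar)$ for specific $k_1,k_2$; after dividing by $B_\beta$ and specializing $w'_{\beta,1}\mapsto w_{\beta,1}+\hbar$ (resp.\ $w'_{\beta,1}\mapsto w_{\beta,1}$), this yields the desired extra power $\hbar^2$ (resp.\ $\hbar$), combining with the $\hbar^{|[i,n-1]|-1+|[i,n]|-1}$ from Lemma~\ref{conrv} to produce $\hbar^{\kappa_\beta}$. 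The main obstacle, as in the trigonometric case, is the careful bookkeeping of $\hbar$-powers contributed by the intermediate $\hzeta$-factors versus those canceled by $B_\beta$; the monic degree-$s$ statement for $p_{\beta,s}(w_{\beta,1})$ then follows because only the term $w_{\beta,1}^{s_1+\cdots+s_\ell}=w_{\beta,1}^{s}$ arises at top $w_{\beta,1}$-degree, with coefficient $1$.
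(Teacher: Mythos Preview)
Your approach is exactly what the paper intends: it states Lemma~\ref{phiyangianrs} as a ``straightforward analogue of Lemmas~\ref{phirv-C} and~\ref{phirv-D}'' and gives no separate proof, so mimicking those arguments with $\zeta\to\hzeta$ and multiplicative $v$-shifts replaced by additive $\hbar$-shifts is precisely the expected route.

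One small correction in your bookkeeping for $\beta=[i,n,i]$ in type $C_n$: the cross $\hzeta$-factors in the surviving summand contribute $B_\beta\cdot\dfrac{(w'-w+\hbar)(w'-w-2\hbar)}{w'-w}$, not $B_\beta\cdot(w'-w-k_1\hbar)(w'-w-k_2\hbar)$ without denominator (compare the factor $\frac{(w'_{\beta,1}-v^{-2}w_{\beta,1})(w'_{\beta,1}-v^{4}w_{\beta,1})}{w'_{\beta,1}-w_{\beta,1}}$ in the proof of Lemma~\ref{phirv-C}). After dividing by $B_\beta$ and setting $w'\mapsto w+\hbar$, this gives a single extra power of $\hbar$, not $\hbar^2$; combined with $\hbar^{\kappa_{[i,n-1]}+\kappa_{[i,n]}}=\hbar^{2n-2i-1}$ from the inductive hypothesis (not Lemma~\ref{conrv}, which only gives divisibility), you obtain $\hbar^{2n-2i}=\hbar^{\kappa_\beta}$ as required. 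With this fix, your argument goes through.
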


For any $\unl{k}\in \BN^I$ and $\unl{d}\in \mathrm{KP}(\unl{k})$, we define the subsets $\sH_{\unl{k}}$,
$\sH_{\unl{k},\unl{d}}$ of $\sH$ similarly to \eqref{hunlkunld}, but with $h\in H$ been replaced by $h\in \sH$.
Using Lemma \ref{phiyangianrs} and arguing as in Sections \ref{type C}--\ref{type D}, we obtain the following
analogues of Propositions \ref{spekpC}, \ref{vanishC}, \ref{vanishD} for the Yangians of types $C_{n},D_{n}$:

\begin{Prop}\label{shuffleeleyang}
Let $\fg$ be of type $C_n$ or $D_n$. Then we have:

\noindent
(a) For  any $h\in\sH_{\unl{k},\unl{d}}$, we have
\begin{equation*}
  \phi_{\unl{d}}(\Psi(\sX_{h}))\circeq
  \hbar^{\sum_{\beta\in\Delta^{+}}d_{\beta}\kappa_{\beta}}\cdot
  \prod_{\beta,\beta'\in \Delta^+}^{\beta<\beta'}\hat{G}_{\beta,\beta'}\cdot \prod_{\beta\in\Delta^{+}}\hat{G}_{\beta}\cdot
  \prod_{\beta\in\Delta^{+}}\hat{P}_{\lambda_{h,\beta}},
\end{equation*}
where $\hat{G}_{\beta,\beta'},\hat{G}_{\beta}$ are \underline{independent of $h\in \sH_{\unl{k},\unl{d}}$} and
are rational counterparts of $G_{\beta,\beta'},G_{\beta}$ from Propositions {\rm \ref{spekpC}, \ref{vanishD}}
(obtained by replacing factors $(x-v^{t}y)$ with $(x-y-\frac{t}{2}\hbar)$), while
\begin{equation}
\label{hlp-rat}
  \hat{P}_{\lambda_{h,\beta}}={\mathop{Sym}}_{\mathfrak{S}_{d_{\beta}}}
  \left(\prod_{s=1}^{d_{\beta}}p_{\beta,r_{\beta}(h,s)}(w_{\beta,s})
  \prod_{1\leq s<r\leq d_{\beta}}\Big(1+\frac{(\beta,\beta)\cdot \hbar}{2(w_{\beta,s}-w_{\beta,r})}\Big)\right).
\end{equation}

\noindent
(b) For any $h\in\sH_{\unl{k},\unl{d}}$ and $\unl{d}'<\unl{d}$, we have $\phi_{\unl{d}'}(\Psi(\sX_{h}))=0$.
\end{Prop}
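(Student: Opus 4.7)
The plan is to mirror the argument of Propositions \ref{spekpC}, \ref{vanishC}, \ref{prop:phidEh-D}, \ref{vanishD} line-by-line, translating every trigonometric specialization $x_{i,r}\mapsto v^{?}w_{\beta,s}$ into its rational counterpart $x_{i,r}\mapsto w_{\beta,s}-\frac{?}{2}\hbar$, and every $\zeta$-factor into its rational analogue $\hzeta_{i,j}(x-y)=1+\frac{(\alpha_{i},\alpha_{j})\hbar}{2(x-y)}$. The upshot is that specializations of $\hzeta$-factors still produce linear factors of the form $(w_{\beta,s}-w_{\beta',s'}-\frac{k}{2}\hbar)$, so the combinatorial structure of the trigonometric proofs carries over without change.

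First, I would expand $\Psi(\sX_h)$ as a symmetrization analogous to \eqref{eq:formulapsiEh}--\eqref{eq:part-symm}, using the shuffle product of images $\Psi(\sX_{\beta,r_{\beta}(h,s)})$. For each summand $\sigma(F_h)$, the $\phi_{\unl{d}}$-specialization involves two types of contributions: $\hzeta$-factors between variables from different copies $\Psi(\sX_{\beta,r_{\beta}(h,s)})$, and factors from the images $\Psi(\sX_{\beta,r_{\beta}(h,s)})$ themselves. The first key step is to prove the rational analogue of Propositions \ref{prop:phidEh} and \ref{prop:phidEh-D}: namely that $\phi_{\unl{d}}(\sigma(F_h))=0$ unless, for every $(\beta,s)\in\Delta^{+}\times\{1,\ldots,d_{\beta}\}$, there is an index $s'$ such that all variables $x^{(\beta,s')}_{*,*}$ are plugged into the single factor $\Psi(\sX_{\beta,r_{\beta}(h,s)})$. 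The proof proceeds by induction on $n$ (with base $C_2$ or $D_4$) and a case-by-case analysis along the order on $\Delta^{+}$, exactly as in the trigonometric case; the only input required is that the rational $\hzeta$-factors vanish under precisely the same chains of specializations that forced vanishing in the trigonometric proof.

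Next, I would apply Lemma \ref{phiyangianrs} to the surviving diagonal summands, which by the rank-$1$ reduction (cf.\ Remark \ref{rankreduction}) assemble into the factor $\prod_{\beta}\hat{P}_{\lambda_{h,\beta}}$ of \eqref{hlp-rat}. The remaining $\hzeta$-factors between distinct groups $(\beta,s)\ne(\beta',s')$ produce the ``between root'' factor $\prod_{\beta<\beta'}\hat{G}_{\beta,\beta'}$ and the ``within root'' factor $\prod_{\beta}\hat{G}_{\beta}$; these are $h$-independent since they only depend on the choice of splitting pattern \eqref{fsunld}. To make part (a) precise and to identify the $\hbar$-power $\hbar^{\sum d_\beta \kappa_\beta}$, I would combine Lemma \ref{conrv} (which gives $\hbar^{|\beta|-1}$ from each root vector in type $D_n$, matching $\kappa_\beta$) with the explicit $\hbar$-powers from $\phi_{\beta,s}$-specializations of $\hzeta$-factors between in-$\beta$ variables (which, in type $C_n$, yield the extra $\hbar$-powers in $\kappa_\beta=4n-i-3j-1$ or $2n-2i$ arising from the factor $B_{\beta}$ division in the $\beta=[i,n,i]$ case, parallel to the $v$-powers in Lemma \ref{phirv-C}). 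For part (b), the vanishing for $\unl{d}'<\unl{d}$ follows by the identical inductive chain-of-$\hzeta$ analysis used in Proposition \ref{vanishC}, since the chain of specializations that forced a $\hzeta$-factor to vanish in the trigonometric case does the same in the rational case.

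The main obstacle I anticipate is bookkeeping the rational analogue of the factors $G_{\beta,\beta'}$, $G_{\beta}$ in the $\beta=[i,n,i]$ case of type $C_n$ and the $\beta=[i,n,j]$ case of types $C_n,D_n$, where the specialization $\phi_{\beta,s}$ proceeds in two stages via an auxiliary variable $w'_{\beta,s}$ and a division by $B_\beta$. One must check that the division cancels cleanly against the $\hzeta$-factor contributions before specializing $w'_{\beta,s}\mapsto w_{\beta,s}+\hbar$ (type $C_n$) or $w'_{\beta,s}\mapsto w_{\beta,s}$ (type $D_n$), and that no spurious $\hbar$-divisibility is lost. This is already the delicate point in Propositions \ref{spanC} and \ref{spanD}, and the rational version follows by the same case analysis, with each factor $(w_{\beta,s}-v^{k}w'_{\beta,s})$ tracked by its rational counterpart $(w_{\beta,s}-w'_{\beta,s}-\frac{k}{2}\hbar)$. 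Since every combinatorial inequality between the integer specialization indices used in the trigonometric proof is preserved in the rational case (the exponents of $v$ become shifts of $\hbar/2$), the argument transfers verbatim. Once (a) is established, (b) is a direct consequence of the same chain-of-wheel-condition analysis used in the proof of Proposition \ref{vanishC}.
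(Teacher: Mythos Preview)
Your proposal is correct and takes essentially the same approach as the paper, which simply states that the result follows ``using Lemma \ref{phiyangianrs} and arguing as in Sections \ref{type C}--\ref{type D}.'' Your elaboration of what this transfer entails---replacing each $\zeta$-factor by its rational counterpart $\hzeta$, rerunning the induction of Propositions \ref{prop:phidEh} and \ref{prop:phidEh-D}, and invoking Lemma \ref{phiyangianrs} on the surviving diagonal summands to produce the rank-$1$ factors $\hat{P}_{\lambda_{h,\beta}}$---is exactly the intended argument; one small remark is that the precise $\hbar^{\kappa_\beta}$ power comes directly from Lemma \ref{phiyangianrs} rather than from Lemma \ref{conrv}, and the two-stage $B_\beta$ bookkeeping you flag is handled in the rational proofs of Lemmas \ref{phirv-C}, \ref{phirv-D} (mirrored in Lemma \ref{phiyangianrs}) and Propositions \ref{prop:phidEh}, \ref{prop:phidEh-D}, not in Propositions \ref{spanC}, \ref{spanD}.
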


This features a ``rank $1$ reduction'': each $\hat{P}_{\lambda_{h,\beta}}$ from~\eqref{hlp-rat} can be viewed as the
shuffle product $p_{\beta,r_{\beta}(h,1)}(x)\star\cdots \star p_{\beta,r_{\beta}(h,d_\beta)}(x)$ in the $A_1$-type
shuffle algebra $\bar\BW$, evaluated at $\{w_{\beta,s}\}_{s=1}^{d_\beta}$. Therefore, combining
Proposition~\ref{shuffleeleyang} with Theorem \ref{yangianbasis}, we obtain:

\begin{Prop}\label{injyang}
The homomorphism $\Psi$ of~\eqref{eq:Psi-homom-rat} is injective.
\end{Prop}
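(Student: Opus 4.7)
The plan is to apply the standard triangular argument enabled by Proposition~\ref{shuffleeleyang}, paralleling the trigonometric proofs of Theorems~\ref{shufflePBWD-C} and~\ref{shufflePBWD-D}. Since $\Psi$ preserves the $\BN^I$-grading, fix $\unl{k}\in \BN^I$ and, using the PBWD basis of Theorem~\ref{yangianbasis}, write an arbitrary element of the grading-$\unl{k}$ component of $\Yangian$ as $x=\sum_{h\in \sH_{\unl{k}}} c_h\,\tilde{\sX}_h$ with $c_h\in \BQ[\hbar]$. Suppose $\Psi(x)=0$ but not all $c_h$ vanish, and let $\unl{d}\in \mathrm{KP}(\unl{k})$ be the \emph{minimal} Kostant partition (with respect to the order~\eqref{eq:KP-order}) for which $c_h\ne 0$ for some $h\in \sH_{\unl{k},\unl{d}}$.

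Apply the specialization map $\phi_{\unl{d}}$ to $\Psi(x)=0$. For $h$ with $\deg(h)>\unl{d}$, Proposition~\ref{shuffleeleyang}(b) (with the role of the smaller Kostant partition played by $\unl{d}$ itself and the larger one by $\deg(h)$) yields $\phi_{\unl{d}}(\Psi(\tilde{\sX}_h))=0$; terms with $\deg(h)<\unl{d}$ are absent by minimality. Substituting the formula of Proposition~\ref{shuffleeleyang}(a) for the surviving terms and cancelling the common nonzero factor $\hbar^{\sum_\beta d_\beta\kappa_\beta}\cdot\prod_{\beta<\beta'}\hat{G}_{\beta,\beta'}\cdot\prod_\beta \hat{G}_\beta$ inside the integral domain $\BQ[\hbar][\{w_{\beta,s}\}]^{\mathfrak{S}_{\unl{d}}}$ reduces the identity to
\begin{equation*}
  \sum_{h\in \sH_{\unl{k},\unl{d}}} c_h\,\prod_{\beta\in\Delta^+}\hat{P}_{\lambda_{h,\beta}}\, =\, 0.
\end{equation*}

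Since the variable groups $\{w_{\beta,s}\}_{s=1}^{d_\beta}$ for distinct $\beta\in\Delta^+$ are independent and $\hat{P}_{\lambda_{h,\beta}}$ depends only on the restriction $h|_{\{\beta\}\times \BN}$, the last equality splits into rank-$1$ problems: for each fixed $\beta$, the $A_1$-type shuffle polynomials $\{\hat{P}_{\lambda_{h,\beta}}\}$ must be shown linearly independent over $\BQ[\hbar]$ as $h|_{\{\beta\}\times \BN}$ ranges over functions $\BN\to\BN$ with $\sum_s h(\beta,s)=d_\beta$. This is a classical rank-$1$ fact, provable by a highest-monomial / leading-coefficient argument using that $p_{\beta,r}(w)$ is monic of degree $r$ by Lemma~\ref{phiyangianrs}. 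Thus every $c_h$ with $h\in \sH_{\unl{k},\unl{d}}$ vanishes, contradicting the choice of $\unl{d}$ and forcing $x=0$. The only bookkeeping subtlety is verifying that $\prod_\beta \hat{G}_\beta\cdot\prod_{\beta<\beta'}\hat{G}_{\beta,\beta'}$ is a nonzero polynomial (so the cancellation above is legitimate); this follows from its explicit factorized form, obtained by the substitution ``$v\rightsquigarrow \hbar$'' from~(\ref{eq:formulagbeta-C-1}--\ref{eq:formulagbeta-C-4}) and~(\ref{eq:formulagbeta-D-1},~\ref{eq:formulagbeta-D-2}), together with the recursive expressions for $\hat{G}_{\beta,\beta'}$ extracted from the rational analogues of Propositions~\ref{spanC} and~\ref{spanD}.
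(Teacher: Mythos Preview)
Your argument is correct and follows exactly the approach the paper has in mind: the paper simply states that Proposition~\ref{injyang} is obtained by ``combining Proposition~\ref{shuffleeleyang} with Theorem~\ref{yangianbasis}'', and you have carefully unpacked this triangular argument (basis + dominance + rank-1 reduction) in full detail. The only minor remark is that the paper leaves the rank-1 linear independence of the $\hat{P}_{\lambda_{h,\beta}}$ and the nonvanishing of the $\hat{G}$-factors implicit, whereas you have made both points explicit.
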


Following \cite[Definition 3.27]{Tsy19}, we introduce:

\begin{Def}
$F\in \bar\BW_{\unl{k}}$ is \textbf{good} if $\phi_{\unl{d}}(F)$ is divisible by
$\hbar^{\sum_{\beta\in \Delta^{+}}d_{\beta}\kappa_{\beta}}$ for any $\unl{d}\in\text{\rm KP}(\unl{k})$.
\end{Def}

Let $\BW_{\unl{k}}$ be the $\BQ[\hbar]$-submodule of all good elements in $\bar\BW_{\unl{k}}$, and set
$\BW:=\bigoplus_{\unl{k}\in\BN^{I}}\BW_{\unl{k}}$. Then analogously to our proofs of
Propositions \ref{lintegralC} and \ref{lintegralD}, we obtain (cf.\ \cite[Proposition 5.12]{HT24}):

\begin{Prop}\label{subsetyang}
$\Psi(\Yangian)\subset \BW$.
\end{Prop}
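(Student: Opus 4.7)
The plan is to follow the blueprint of Propositions~\ref{lintegralC} and~\ref{lintegralD} for the Lusztig integral forms, replacing multiplicative trigonometric $\zeta$-specializations with their additive rational counterparts. Since $\Yangian$ is generated as a $\BQ[\hbar]$-algebra by $\{\sx_{i,r}\}_{i\in I}^{r\in\BN}$, it will suffice to verify goodness for every monomial $F=\Psi(\sx_{i_1,r_1}\cdots\sx_{i_m,r_m})$ and every $\unl{d}\in\mathrm{KP}(\unl{k})$ with $\unl{k}=\sum_{q=1}^{m}\alpha_{i_q}$. Using $\Psi(\sx_{i,r})=x_{i,1}^{r}$ and unfolding the iterated shuffle product, the numerator of such $F$ is the $\fS_{\unl{k}}$-symmetrization of $\prod_q x_{i_q,1}^{r_q}$ multiplied by all cross $\hzeta$-factors. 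For any fixed splitting~\eqref{splitvariable}, $\phi_{\unl{d}}$ factorizes group-by-group in $(\beta,s)$, so the required divisibility reduces to showing that for each $\beta\in\Delta^+$, each $1\leq s\leq d_\beta$, and each assignment $o(x^{(\beta,s)}_{i,t})\in\{1,\ldots,m\}$, the $\phi_{\beta,s}$-specialization of the $\hzeta$-sub-product among the variables $\{x^{(\beta,s)}_{i,t}\}_{i\in\beta}^{1\leq t\leq\nu_{\beta,i}}$ has $\hbar$-order at least $\kappa_\beta$.

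For each root $\beta$, this case analysis is a direct additive rewriting of the one in Propositions~\ref{lintegralC} and~\ref{lintegralD}. The identity
\[
  \hzeta_{i,j}(z)=1+\frac{(\alpha_i,\alpha_j)\hbar}{2z}
\]
shows that a generic $\hzeta$-specialization yields a series $1+O(\hbar)$, while each specialization at a collision $x-y=-\frac{(\alpha_i,\alpha_j)\hbar}{2}$ contributes a factor of order exactly $\hbar$. Consequently, the orderings $o(x^{(\beta,s)}_{*,*})$ that detected $\langle k\rangle_v$ or $(v^k-1)$ factors in the trigonometric arguments now detect additive $\hbar$-contributions of the same multiplicities, producing exactly $\hbar^{\kappa_\beta}$ in each case. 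In type $D_n$ this mirrors the power $\langle 1\rangle_v^{|\beta|-1}$ of Proposition~\ref{lintegralD}, and in type $C_n$ this mirrors $\tilde c_\beta$ of~\eqref{ctildebeta-C}, whose rational degeneration is a nonzero constant multiple of $\hbar^{\kappa_\beta}$ with $\kappa_\beta$ as in~\eqref{kappaC}.

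The principal obstacle will be the $C_n$-type root $\beta=[i,n,i]$, for which $\phi_{\beta,s}=\phi^{(2)}_{\beta,s}\circ\phi^{(1)}_{\beta,s}$ introduces an auxiliary variable $w'_{\beta,s}$. Repeating verbatim the quadruple-by-quadruple analysis for $\{x^{(\beta,s)}_{\ell,1},x^{(\beta,s)}_{\ell,2},x^{(\beta,s)}_{\ell+1,1},x^{(\beta,s)}_{\ell+1,2}\}$ with $i\leq\ell\leq n-2$ used in the proof of Proposition~\ref{lintegralC}, I will first show that the $\phi^{(1)}_{\beta,s}$-image of the relevant $\hzeta$-sub-product is divisible by
\[
  B_\beta=\bigl\{(w_{\beta,s}-w'_{\beta,s}+\hbar)(w_{\beta,s}-w'_{\beta,s}-\hbar)\bigr\}^{n-i-1}.
\]
After dividing by $B_\beta$ and substituting $w'_{\beta,s}\mapsto w_{\beta,s}+\hbar$, the remaining $\hzeta$-factors on the triple $\{x^{(\beta,s)}_{n-1,1},x^{(\beta,s)}_{n-1,2},x^{(\beta,s)}_{n,1}\}$ together with the diagonal factors $\hzeta(x^{(\beta,s)}_{\ell+1,t}-x^{(\beta,s)}_{\ell,t})$ ($t=1,2$, $i\leq\ell\leq n-2$) will be seen to supply precisely $\hbar^{2n-2i}=\hbar^{\kappa_\beta}$, completing the argument.
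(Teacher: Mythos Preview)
Your proposal is correct and follows essentially the same approach as the paper, which simply states that the result is obtained analogously to the proofs of Propositions~\ref{lintegralC} and~\ref{lintegralD}. Your expansion of this argument---reducing to monomials in the generators, tracking the $\hbar$-order of the $\hzeta$-factors group-by-group, and handling the two-step specialization for $\beta=[i,n,i]$ in type $C_n$ via the same quadruple analysis---is exactly the rational transcription the paper has in mind, with the observation that the degeneration of $\tilde{c}_\beta$ has $\hbar$-order precisely $\kappa_\beta$.
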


Let $\BW'_{\unl{k}}$ be the $\BQ[\hbar]$-submodule of $\BW_{\unl{k}}$ spanned by $\{\Psi(\sX_{h})\}_{h\in \sH_{\unl{k}}}$.
Then, the following Yangian counterpart of Lemma \ref{span} holds true in types $C_{n}$ and $D_{n}$:

\begin{Prop}\label{spanyang}
For any $F\in \BW_{\unl{k}}$, $\unl{d}\in \text{\rm KP}(\unl{k})$, if $\phi_{\unl{d}'}(F)=0$
for all $\unl{d}'\in \text{\rm KP}(\unl{k})$ such that $\unl{d}'<\unl{d}$, then there exists
$F_{\unl{d}}\in \BW'_{\unl{k}}$ such that $\phi_{\unl{d}}(F)=\phi_{\unl{d}}(F_{\unl{d}})$
and $\phi_{\unl{d}'}(F_{\unl{d}})=0$ for all $\unl{d}'<\unl{d}$.
\end{Prop}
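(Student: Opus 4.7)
The plan is to deduce Proposition~\ref{spanyang} by transporting the argument of Propositions~\ref{span-Lus-C} and~\ref{span-Lus-D} to the rational setting via the substitution $(x-v^{t}y)\leadsto (x-y-\tfrac{t}{2}\hbar)$. Under this substitution the trigonometric wheel conditions~\eqref{wheelcon} pass to the rational ones~\eqref{wheel-Y}, the specialization maps of~\eqref{spe-C-1}--\eqref{spe-D-3} pass to the ones introduced in Section~\ref{yangian}, and the factors $G_\beta,\,G_{\beta,\beta'}$ of Propositions~\ref{spekpC} and~\ref{vanishD} pass to the rational counterparts $\hat G_\beta,\,\hat G_{\beta,\beta'}$ appearing in Proposition~\ref{shuffleeleyang}(a). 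The only new feature is the $\hbar$-power $\hbar^{\sum_\beta d_\beta \kappa_\beta}$, which will be extracted using the goodness hypothesis on $F$.

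Step~$1$ is to establish that, under the hypothesis $\phi_{\unl d'}(F)=0$ for all $\unl d'<\unl d$, goodness, and the rational wheel conditions~\eqref{wheel-Y},
\begin{equation*}
  \phi_{\unl d}(F)\;=\;\hbar^{\sum_\beta d_\beta \kappa_\beta}\cdot \prod_{\beta<\beta'}\hat G_{\beta,\beta'}\cdot \prod_\beta \hat G_\beta\cdot \prod_\beta Q_\beta
\end{equation*}
for some $Q_\beta\in\BQ[\hbar][\{w_{\beta,s}\}_{s=1}^{d_\beta}]^{\mathfrak{S}_{d_\beta}}$. The divisibility by each $\hat G_{\beta,\beta'}$ and $\hat G_\beta$ is local in the pair $(\beta,\beta')$: one picks the minimal Kostant partition $\unl d$ concentrated on $\{\beta,\beta'\}$ (with $d_\beta=2$ if $\beta=\beta'$, and $d_\beta=d_{\beta'}=1$ otherwise) and reads off the required linear factors from the wheels~\eqref{wheel-Y} $F=0$ encountered along the specialization, exactly as in Propositions~\ref{spanC} and~\ref{spanD}. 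The roots $\beta=[i,n,i]$ (type $C_n$) and $\beta=[i,n,j]$ with $i<j\le n-2$ (type $D_n$) proceed via the two-step specialization $\phi^{(1)}_{\beta,s},\,\phi^{(2)}_{\beta,s}$: one first extracts the rational $B_\beta$-factor from wheels involving the primed coordinates, divides by it, and then collects the additional linear factors after the final substitution $w'_{\beta,s}\mapsto w_{\beta,s}+\hbar$ (type $C$) or $w'_{\beta,s}\mapsto w_{\beta,s}$ (type $D$).

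Step~$2$ is the rank-$1$ reduction, directly analogous to Remark~\ref{rankreduction}. The formula~\eqref{hlp-rat} identifies each $\hat P_{\lambda_{h,\beta}}$ with the shuffle product $p_{\beta,r_\beta(h,1)}(x)\star\cdots\star p_{\beta,r_\beta(h,d_\beta)}(x)$ in the rank-$1$ Yangian shuffle algebra, evaluated at $\{w_{\beta,s}\}_{s=1}^{d_\beta}$. Since such shuffle products span all $\mathfrak{S}_{d_\beta}$-symmetric polynomials in $\{w_{\beta,s}\}$ over $\BQ[\hbar]$ (a standard fact, cf.\ Remark~\ref{rankreduction}), each $Q_\beta$ expands as a $\BQ[\hbar]$-linear combination of suitable $\hat P_{\lambda_{h,\beta}}$'s. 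Assembling across $\beta\in\Delta^+$ and invoking Proposition~\ref{shuffleeleyang}(a) then produces $F_{\unl d}\in\BW'_{\unl k}$ with $\phi_{\unl d}(F_{\unl d})=\phi_{\unl d}(F)$, and the vanishing $\phi_{\unl d'}(F_{\unl d})=0$ for $\unl d'<\unl d$ is immediate from Proposition~\ref{shuffleeleyang}(b). The main obstacle will be the book-keeping in Step~$1$ for the two-step roots $[i,n,i]$ and $[i,n,j]$, where one must verify carefully that no spurious cancellations arise in the rational $B_\beta$-factor after the substitutions $w'_{\beta,s}\mapsto w_{\beta,s}\pm\hbar$; otherwise the argument is a routine translation of the trigonometric proofs.
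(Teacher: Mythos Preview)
Your approach is correct and is precisely the one the paper intends: the paper's own proof is the single line ``The proof is analogous to that of \cite[Proposition~5.13]{HT24}'', and what you have written is exactly that analogy spelled out---translate the wheel/vanishing arguments of Propositions~\ref{spanC} and~\ref{spanD} to the rational setting, use goodness to extract the $\hbar$-power, and finish with the rank-$1$ reduction as in Remark~\ref{rankreduction} combined with Proposition~\ref{shuffleeleyang}. One small notational point: after dividing $\phi_{\unl d}(F)$ by $\hbar^{\sum_\beta d_\beta\kappa_\beta}\prod\hat G_{\beta,\beta'}\prod\hat G_\beta$ you obtain a single $\mathfrak{S}_{\unl d}$-symmetric polynomial $Q$ in all the $w_{\beta,s}$, not a literal product $\prod_\beta Q_\beta$; you must first expand $Q$ in the tensor-product basis $\bigotimes_\beta \BQ[\hbar][\{w_{\beta,s}\}_s]^{\mathfrak{S}_{d_\beta}}$ and then apply the rank-$1$ spanning to each tensor factor, but this is routine.
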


\begin{proof}
The proof is analogous to that of \cite[Proposition 5.13]{HT24}.
\end{proof}

Combining Propositions \ref{subsetyang}--\ref{spanyang}, we immediately obtain the shuffle algebra realization and
an upgrade of Theorem~\ref{yangianbasis} for $\Yangian$ in types $C_{n}$ and $D_{n}$, cf.~\cite[Theorem 5.14]{HT24}:

\begin{Thm}\label{yangshuffle}
Let $\fg$ be of type $C_n$ or $D_n$. Then we have:

\noindent
(a) The $\BQ[\hbar]$-algebra homomorphism $\Psi\colon \Yangian \to \bar\BW$ of \eqref{eq:Psi-homom-rat}
gives rise to a $\BQ[\hbar]$-algebra isomorphism $\Psi\colon \Yangian \,\iso\, \BW$.

\noindent
(b) The ordered monomials $\{\sX_{h}\}_{h\in\sH}$ of~\eqref{eq:pbwd-yangian} form a basis of the free
$\BQ[\hbar]$-module $\Yangian$.
\end{Thm}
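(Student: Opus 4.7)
My proof will parallel the trigonometric case (compare~\cite[Theorem~5.14]{HT24}), assembling the four propositions of this section in the same fashion that Theorems~\ref{shufflePBWD-C} and~\ref{shufflePBWD-D} were assembled from their trigonometric counterparts. For Part~(a), Proposition~\ref{subsetyang} ensures $\Psi$ lands in $\BW$, and Proposition~\ref{injyang} yields injectivity; only surjectivity remains. Fix $\unl{k}\in\BN^{I}$ and $F\in\BW_{\unl{k}}$. Since $\mathrm{KP}(\unl{k})$ is finite and totally ordered by~\eqref{eq:KP-order}, I iterate Proposition~\ref{spanyang}: at each stage, locate the smallest $\unl{d}\in\mathrm{KP}(\unl{k})$ with $\phi_{\unl{d}}(F)\neq 0$, extract $F_{\unl{d}}\in\BW'_{\unl{k}}\subset\Psi(\Yangian)$ matching $F$ at $\unl{d}$ without disturbing the earlier vanishings, and replace $F$ by $F-F_{\unl{d}}$. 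After at most $|\mathrm{KP}(\unl{k})|$ steps the residual $F'$ satisfies $\phi_{\unl{d}}(F')=0$ for every $\unl{d}\in\mathrm{KP}(\unl{k})$, while the accumulated subtrahend lies in $\Psi(\Yangian)$.

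\emph{The main obstacle} is to argue that any such $F'$ must be zero. My plan is to equip $\bar\BW$ with the natural $\BN$-grading $\deg x_{i,r}=r$, so that each graded piece $\bar\BW_{\unl{k},m}$---and hence $\BW_{\unl{k},m}$ and $\BW'_{\unl{k},m}$---becomes a free $\BQ[\hbar]$-module of finite rank. Proposition~\ref{shuffleeleyang}(a) supplies the triangular formula
\begin{equation*}
  \phi_{\unl{d}}(\Psi(\sX_{h}))\circeq \hbar^{\sum_{\beta\in\Delta^{+}} d_{\beta}\kappa_{\beta}}\cdot
  \prod_{\beta<\beta'}\hat{G}_{\beta,\beta'}\cdot \prod_{\beta\in\Delta^{+}}\hat{G}_{\beta}\cdot
  \prod_{\beta\in\Delta^{+}}\hat{P}_{\lambda_{h,\beta}} \qquad (h\in\sH_{\unl{k},\unl{d}}),
\end{equation*}
whose rank-one shuffle factors $\{\hat{P}_{\lambda_{h,\beta}}\}$ are linearly independent across distinct $h$; together with Proposition~\ref{shuffleeleyang}(b), this gives a sharp graded lower bound for the rank of $\BW'_{\unl{k},m}$. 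A matching upper bound for the rank of $\BW_{\unl{k},m}$---obtained either by a direct Hilbert-series count in the spirit of Theorems~\ref{shufflePBWD-C}(a) and~\ref{shufflePBWD-D}(a), or by a flat $\BQ[\hbar]$-family comparison with the semiclassical limit $v=e^{\hbar/2}\to 1$ in which the trigonometric identifications of those theorems are already available---forces $\BW_{\unl{k}}=\BW'_{\unl{k}}$, and hence $F'=0$.

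For Part~(b), linear independence of the ordered monomials $\{\sX_{h}\}_{h\in\sH}$ in $\Yangian$ is inherited from that of $\{\Psi(\sX_{h})\}$ via the injectivity of $\Psi$, the latter being immediate from the triangular formula above. Spanning follows by applying the iterative decomposition from Part~(a) to the image $\Psi(G)\in\BW$ of any $G\in\Yangian$, thereby exhibiting $G$ itself as a $\BQ[\hbar]$-linear combination of $\sX_{h}$'s via the injectivity of $\Psi$.
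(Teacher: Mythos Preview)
Your overall architecture is correct and matches the paper's: inject via Proposition~\ref{injyang}, land in $\BW$ via Proposition~\ref{subsetyang}, and iterate Proposition~\ref{spanyang} across $\mathrm{KP}(\unl{k})$ to peel off pieces of $F$ lying in $\BW'_{\unl{k}}$. The problem is your treatment of what you call ``the main obstacle''---showing that the residual $F'$ with $\phi_{\unl{d}}(F')=0$ for all $\unl{d}$ must vanish. Your proposed rank-matching argument is circular: the ``direct Hilbert-series count in the spirit of Theorems~\ref{shufflePBWD-C}(a) and~\ref{shufflePBWD-D}(a)'' does not exist---those theorems are proved by exactly the specialization-map iteration you are running, not by an independent dimension count---and the ``flat $\BQ[\hbar]$-family comparison'' with the trigonometric case is not set up anywhere in the paper. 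So as written, you have not established the required upper bound on $\operatorname{rk}\BW_{\unl{k},m}$.

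The fix is much simpler than what you attempt, and it is what the paper (following~\cite{HT24,Tsy18}) implicitly uses. Among all Kostant partitions of $\unl{k}$ there is the one consisting entirely of simple roots, $\unl{d}^{\,\mathrm{smp}}=\{d_{\alpha_i}=k_i,\ d_\beta=0\ \text{for}\ |\beta|>1\}$. For each simple root $\alpha_i$ the map $\phi_{\alpha_i,s}$ is just the relabeling $x^{(\alpha_i,s)}_{i,1}\mapsto w_{\alpha_i,s}$, with no two-step procedure and no $B_\beta$-division. Hence $\phi_{\unl{d}^{\,\mathrm{smp}}}(F')$ is nothing but the numerator $f'$ of $F'$ with variables renamed. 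Thus $\phi_{\unl{d}^{\,\mathrm{smp}}}(F')=0$ already forces $f'=0$, so $F'=0$. With this one-line observation in place of your rank argument, the rest of your proof of both (a) and (b) goes through as you wrote it.
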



\subsection{The Drinfeld-Gavarini dual $\dgdual$ and its shuffle algebra realization}\label{DGdual}

For any $(\beta,s)\in\Delta^{+}\times\BN$, define $\bar{\sX}_{\beta,s}\in Y_{\hbar}^{>}(\mathfrak{g})$ via
\begin{equation*}
  \bar{\sX}_{\beta,s}\coloneqq \hbar\cdot \sX_{\beta,s}.
\end{equation*}
We define $\dgdual$, the \emph{``positive subalgebra'' of the Drinfeld-Gavarini dual}, as the  $\BQ[\hbar]$-subalgebra
of $\Yangian$ generated by $\{ \bar{\sX}_{\beta,s}\}_{\beta\in\Delta^{+}}^{s\in\BN}$. For any $h\in \sH$, define
the ordered monomial (cf.~\eqref{eq:pbwd-yangian}):
\begin{equation}
\label{eq:pbwd-yangian-gavarini}
  \bar{\sX}_{h}\coloneqq \prod_{(\beta,s)\in\Delta^{+}\times\mathbb{N}}\limits^{\rightarrow}\bar{\sX}_{\beta,s}^{h(\beta,s)}.
\end{equation}
Following~\cite[Definition 3.8]{Tsy19}, we introduce:

\begin{Def}
$F\in \bar\BW_{\unl{k}}$ is \textbf{integral} if $F$ is divisible by $\hbar^{|\unl{k}|}$ and $\phi_{\unl{d}}(F)$
is divisible by $\hbar^{\sum_{\beta\in \Delta^{+}}d_{\beta}(\kappa_{\beta}+1)}$ for any $\unl{d}\in \mathrm{KP}(\unl{k})$.
\end{Def}

Let $\mathbf{W}_{\unl{k}}\subset \bar\BW_{\unl{k}}$ be the $\BQ[\hbar]$-submodule of all integral elements, and set
$\mathbf{W}:=\bigoplus_{\unl{k}\in\BN^{I}}\mathbf{W}_{\unl{k}}$. Then, due to Lemmas \ref{conrv}--\ref{phiyangianrs} and
Proposition \ref{subsetyang}, we have the following upgrade of Theorem~\ref{yangshuffle} (cf.~\cite[Theorems 5.16, 5.20]{HT24}):

\begin{Thm}\label{dgdualshuffle}
Let $\fg$ be of type $C_n$ or $D_n$. Then we have:

\noindent
(a) $\dgdual$ is independent of the choice of root vectors $\sX_{\beta,s}$ in \eqref{rootvector1-Y}--\eqref{rootvector3-Y}.

\noindent
(b) The $\BQ[\hbar]$-algebra isomorphism $\Psi\colon \Yangian \,\iso\, \BW$ of Theorem {\rm \ref{yangshuffle}(a)} gives rise
to a $\BQ[\hbar]$-algebra isomorphism $\Psi\colon \dgdual \,\iso\, \mathbf{W}$.

\noindent
(c) For any choices of $s_k$ in \eqref{rootvector1-Y}--\eqref{rootvector3-Y}, the ordered monomials $\{\bar{\sX}_{h}\}_{h\in \sH}$
of~\eqref{eq:pbwd-yangian-gavarini} form a basis of the free $\BQ[\hbar]$-module $\dgdual$.
\end{Thm}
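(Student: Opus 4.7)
The strategy is to establish part~(b) first, from which parts~(a) and~(c) will follow in a routine manner, closely mimicking the trigonometric arguments of Sections~\ref{type C}--\ref{type D} adapted to the rational setup and the $\dgdual$-normalization. Concretely, I plan to prove the two inclusions $\Psi(\dgdual)\subseteq \mathbf{W}$ and $\mathbf{W}\subseteq \Psi(\dgdual)$ separately; once (b) is in place, (a)~follows because $\mathbf{W}$ is defined with no reference to any choice of root vectors, so $\dgdual=\Psi^{-1}(\mathbf{W})$ inherits that independence; and (c)~follows because $\{\sX_h\}_{h\in\sH}$ is already known to be a $\BQ[\hbar]$-basis of $\Yangian$ by Theorem~\ref{yangianbasis}, so $\{\bar{\sX}_h=\hbar^{|h|}\sX_h\}_{h\in\sH}$ (with $|h|:=\sum_{(\beta,s)}h(\beta,s)$) is automatically $\BQ[\hbar]$-linearly independent in $\Yangian$, and the rank--$1$ refinement used in paragraph three will show that these monomials actually span $\dgdual$.

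For $\Psi(\dgdual)\subseteq \mathbf{W}$, I would first check that each generator lies in $\mathbf{W}$: the equality $\Psi(\bar{\sX}_{\beta,s})=\hbar\cdot\Psi(\sX_{\beta,s})$ together with Lemma~\ref{conrv} gives divisibility by $\hbar^{|\beta|}$, while the only nontrivial Kostant partition of the single-root weight is $d_\beta=1$, and Lemma~\ref{phiyangianrs} produces the required $\hbar^{\kappa_\beta+1}$-divisibility. Closure of $\mathbf{W}$ under the shuffle product is the only nontrivial step here: for $F\in\mathbf{W}_{\unl{k}},\,G\in\mathbf{W}_{\unl{\ell}}$, the $\hbar^{|\unl{k}+\unl{\ell}|}$-divisibility is additive, and the $\phi_{\unl{d}}$-divisibility is established by analyzing each summand of the symmetrization~\eqref{shuffleproduct} as in the proof of Propositions~\ref{prop:phidEh} and~\ref{prop:phidEh-D}: the variables splitting either respects a sub-Kostant partition on each side (in which case the $\hbar$-powers combine additively to give at least $\hbar^{\sum d_\beta(\kappa_\beta+1)}$), or else it is crossed, in which case the rational $\hzeta$-factors at the specialized arguments contribute their own explicit $\hbar$-factors making up the deficit.

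For the reverse inclusion $\mathbf{W}\subseteq\Psi(\dgdual)$, which I expect to be the main obstacle, I would take $F\in\mathbf{W}_{\unl{k}}$, write $F=\Psi(Z)$ for the unique $Z\in\Yangian$ produced by Theorem~\ref{yangshuffle}(a), and expand $Z=\sum_{h\in\sH_{\unl{k}}}c_h\,\sX_h$ via Theorem~\ref{yangianbasis}. The goal is to show $\hbar^{|h|}\mid c_h$ for every $h$; equivalently $Z\in\dgdual$. I would proceed by descending induction on $\text{deg}(h)=\unl{d}\in\text{KP}(\unl{k})$ using the order~\eqref{eq:KP-order}: for the maximal $\unl{d}$ remaining, Proposition~\ref{shuffleeleyang}(b) kills all $h$ with $\text{deg}(h)<\unl{d}$ inside $\phi_{\unl{d}}$, and Proposition~\ref{shuffleeleyang}(a) expresses $\phi_{\unl{d}}(F)$ as $\hbar^{\sum d_\beta\kappa_\beta}\cdot\hat{G}\cdot\sum_{h\in\sH_{\unl{k},\unl{d}}}c_h\prod_{\beta}\hat{P}_{\lambda_{h,\beta}}$, where $\hat{G}$ collects the $h$-independent factors. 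Since $F$ is integral, $\phi_{\unl{d}}(F)$ is divisible by $\hbar^{\sum d_\beta(\kappa_\beta+1)}$, so after canceling $\hbar^{\sum d_\beta\kappa_\beta}\cdot\hat{G}$ we are reduced to a ``rank~$1$'' statement: the coefficients of a $\BQ[\hbar]$-linear combination of the shuffle-rank-$1$ symmetric polynomials $\prod_\beta\hat{P}_{\lambda_{h,\beta}}$ that yields something divisible by $\hbar^{\sum d_\beta}$ must themselves individually be divisible by $\hbar^{d_\beta}$ in each factor. This rank-$1$ divisibility is exactly the content of \cite[Lemma~3.46]{Tsy18} (rational version), whose proof is available in the Yangian-of-$\mathfrak{sl}_2$ shuffle model. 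Applying this factor-wise yields $\hbar^{|h|}\mid c_h$ for the top-degree $h$'s; subtracting $\sum_{\text{deg}(h)=\unl{d}}c_h\,\Psi(\sX_h)=\sum\hbar^{|h|}\cdot(\text{unit})\cdot\Psi(\bar{\sX}_h/\hbar^{|h|})$ from~$F$ (which stays in $\mathbf{W}$ by paragraph two) and invoking the inductive hypothesis on the smaller partitions finishes the argument.

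The hard part throughout will be precisely the rank-$1$ reduction: one has to verify that the $\hat{G}_{\beta,\beta'}$ and $\hat{G}_\beta$ factors of Proposition~\ref{shuffleeleyang}(a) carry no hidden $\hbar$-divisibility that would compromise the bookkeeping, and that the vertical specialization analogue of \cite[Lemma~3.46]{Tsy18} in~$\BQ[\hbar]$ is truly the right statement in our grading. Once these details are in place, parts~(a) and~(c) drop out: for~(a), any alternative choice $\sX'_{\beta,s}$ of root vectors still satisfies Lemmas~\ref{conrv} and~\ref{phiyangianrs}, so $\Psi(\bar{\sX}'_{\beta,s})\in\mathbf{W}$, giving the corresponding $\dgdual'\subseteq\Psi^{-1}(\mathbf{W})=\dgdual$, and by symmetry $\dgdual=\dgdual'$; for~(c), the already-established linear independence of $\{\bar{\sX}_h\}$ in $\Yangian$ combines with the spanning statement produced by the inductive argument of paragraph three to yield the desired $\BQ[\hbar]$-basis of $\dgdual$.
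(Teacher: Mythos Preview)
Your overall strategy---prove the two inclusions for (b), then deduce (a) and (c)---is exactly right and matches the paper's (terse) approach. The reverse inclusion $\mathbf{W}\subseteq\Psi(\dgdual)$ via descending induction on Kostant partitions and the rank-$1$ reduction is correct, as is your derivation of (a) and (c) from (b).

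There is, however, a genuine gap in your argument for $\Psi(\dgdual)\subseteq\mathbf{W}$. You try to prove abstractly that $\mathbf{W}$ is closed under the shuffle product, appealing to Propositions~\ref{prop:phidEh} and~\ref{prop:phidEh-D} for the ``crossed'' case. But those propositions analyze symmetrizations of products of \emph{root vectors}, not of arbitrary $F\in\mathbf{W}_{\unl{k}}$ and $G\in\mathbf{W}_{\unl{\ell}}$. For arbitrary $F,G$ and a crossed splitting, the variables landing on the $F$-side are \emph{not} specialized according to any $\phi_{\unl{d}'}$ with $\unl{d}'\in\mathrm{KP}(\unl{k})$, so you cannot invoke the integral condition on $F$; nor do the $\hzeta$-factors alone make up the deficit. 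This is a real issue in type $C_n$, where $\kappa_\gamma+1>|\gamma|$ for $\gamma=[i,n,j]$ with $i<j$ (e.g.\ in $C_3$, take $\unl{d}=\{[1,3,2]\}$: one needs $\hbar^{5}$-divisibility of $\phi_{\unl{d}}(F\star G)$, but $\hbar^{|\unl{k}|}=\hbar^{4}$ and goodness only supply $\hbar^{4}$). In type $D_n$ your argument happens to work trivially, since there $\sum d_\gamma(\kappa_\gamma+1)=|\unl{k}|$.

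The fix is to abandon abstract closure and instead verify directly that $\Psi(\bar{\sX}_{\beta_1,r_1}\cdots\bar{\sX}_{\beta_m,r_m})\in\mathbf{W}$ for every product of generators, as the paper (following~\cite[Theorems~5.16,~5.20]{HT24}) does. This requires more than Lemma~\ref{conrv} and Proposition~\ref{subsetyang}: one must use the explicit form of each $\Psi(\sX_{\beta_j,r_j})$ from Lemmas~\ref{Y-psirv-C}--\ref{Y-psirv-D} (in particular the $\hat{Q}$-factors in type $C_n$) and carry out the rational analogue of the $\zeta$-factor bookkeeping in the proof of Proposition~\ref{goodC}, which is precisely what produces the extra $\hbar^{\sum_{\gamma=[i,n,j]}d_\gamma(2n-2j)}$ beyond $\hbar^{|\unl{k}|}$. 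Once that direct verification is done, the remainder of your plan goes through unchanged.
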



\medskip

\appendix
\section{The RTT realization in types $C_n$ and $D_n$}\label{sec:app}

In this section, we recall the RTT realization of $U_v(L\mathfrak{sp}_{2n})$ and $U_v(L\mathfrak{o}_{2n})$,
established in~\cite{JLM21,JLM20}, and use it to explain the natural origin and the name of the integral forms
$\integralc$ and $\integrald$ from Definition~\ref{def:rttintegral} and Subsections~\ref{rttc},~\ref{rttd}.
While the analysis is very similar, we shall start with $D_n$-type, which ends up in slightly simpler formulas.


\subsection{RTT realization of $U_{v}(L\sso_{2n})$}

Set $N=2n$. For $1\leq i\leq N$, we define $i'$ and $\bar{i}$ via:
\begin{align}
  i'& := N+1-i,
    \label{eq:prime} \\
  (\bar{1},\ldots,\bar{N}) &:= (n-1,\ldots,1, 0, 0, -1, \ldots, -n+1).
    \label{eq:bar}
\end{align}
To follow the notations of~\cite{JLM20}, we also define
  $$\xi=v^{2-N}.$$
Consider the trigonometric $R$-matrix with a spectral parameter $\bar{R}_\trig(z)$ given by
\begin{equation}
\label{trigR}
  \bar{R}_\trig(z):=
  \frac{z-1}{zv-v^{-1}} \, R + \frac{v-v^{-1}}{zv-v^{-1}} \, P - \frac{(v-v^{-1})(z-1)\xi}{(zv-v^{-1})(z-\xi)} \, Q ,
\end{equation}
where $P,Q,R\in (\End\, \BC^N)^{\otimes 2}$ are defined via:
\begin{equation*}
\begin{split}
  & P\ =\sum_{1\leq i,j\leq N} e_{ij}\otimes e_{ji}, \qquad
    Q\ =\sum_{1\leq i,j\leq N} v^{\bar{i}-\bar{j}} e_{i'j'}\otimes e_{ij},\\
  & R\ = v\sum_{1\leq i\leq N} e_{ii}\otimes e_{ii} \, +
    \sum_{1\leq i,j\leq N}^{i\ne j,j'} e_{ii}\otimes e_{jj} + v^{-1} \sum_{1\leq i\leq N} e_{ii}\otimes e_{i'i'} \ + \\
  & \qquad \quad (v-v^{-1})\sum_{i<j} e_{ij}\otimes e_{ji} - (v-v^{-1})\sum_{i>j} v^{\bar{i}-\bar{j}} e_{i'j'}\otimes e_{ij}.
\end{split}
\end{equation*}
This $\bar{R}_\trig(z)$ satisfies the famous \emph{Yang-Baxter equation} (with a spectral parameter):
\begin{equation}
\label{qYB}
  \bar{R}_{\trig;12}(z) \bar{R}_{\trig;13}(zw) \bar{R}_{\trig;23}(w)=
  \bar{R}_{\trig;23}(w) \bar{R}_{\trig;13}(zw) \bar{R}_{\trig;12}(z).
\end{equation}

Following~\cite{JLM20} (with the conceptual ideology going back to~\cite{frt}), we define the
\textbf{RTT integral form of the quantum loop algebra of $\sso_{N}$}, denoted by $\rtU^\rtt_v(L\sso_{N})$, to be
the associative $\BZ[v,v^{-1}]$-algebra generated by $\{\ell^\pm_{ij}[\mp r]\}_{1\leq i,j\leq N}^{r\in \BN}$ with
the following defining relations:
\begin{equation}
\label{affRTT}
\begin{split}
  & \ell^+_{ij}[0]=\ell^-_{ji}[0]=0\ \ \mathrm{for}\ 1\leq i<j\leq N,\\
  & \ell^\pm_{ii}[0]\ell^\mp_{ii}[0]=1\ \ \mathrm{for}\ 1\leq i\leq N,\\
  & \bar{R}_{\trig}(z/w)\Lf^\pm_1(z)\Lf^\pm_2(w)=\Lf^\pm_2(w)\Lf^\pm_1(z)\bar{R}_\trig(z/w),\\
  & \bar{R}_{\trig}(z/w)\Lf^+_1(z)\Lf^-_2(w)=\Lf^-_2(w)\Lf^+_1(z)\bar{R}_\trig(z/w),
\end{split}
\end{equation}
(the last two are commonly called the \emph{RTT relations}) as well as
\begin{equation}
\label{affRTT-extra}
  \Lf^\pm(z) D \Lf^\pm(z\xi)^{\mathrm{t}} D^{-1} = 1,
\end{equation}
where ${\mathrm{t}}$ denotes the matrix transposition with $E^{\mathrm{t}}_{ij}=E_{j'i'}$ and $D$ is the diagonal matrix
\begin{equation*}
  D=\mathrm{diag}\big(v^{\bar{1}},v^{\bar{2}},\ldots,v^{\bar{N}}\big).
\end{equation*}
Here, $\Lf^\pm(z)\in \rtU^\rtt_v(L\sso_N)[[z^{\pm 1}]]\otimes \End\, \BC^N$ is defined by
\begin{equation}
\label{eq:L-matrix}
  \Lf^\pm(z)\ =\sum_{1\leq i,j\leq N} \ell^\pm_{ij}(z)\otimes E_{ij} \qquad \mathrm{with} \qquad
  \ell^\pm_{ij}(z):=\sum_{r\geq 0} \ell^\pm_{ij}[\mp r] z^{\pm r}.
\end{equation}
We also define the $\BC(v)$-counterpart $U^\rtt_v(L\sso_N):=\rtU^\rtt_v(L\sso_N)\otimes_{\BZ[v,v^{-1}]} \BC(v)$.

Let $U_v(L\sso_N)$ be the quantum loop algebra of type $D_n$ in the new Drinfeld realization. It is a $\BC(v)$-algebra
generated by $\{x^\pm_{i,r},\varphi_{i,-k},\psi_{i,k},k_i^{\pm 1}\}_{1\leq i\leq n}^{r\in \BZ,k>0}$ with the relations
as in~\cite[\S1]{JLM20}. Identifying $x^+_{i,r}$ with our $e_{i,r}$, the subalgebra generated by
$\{x^+_{i,r}\}_{1\leq i\leq n}^{r\in \BZ}$ recovers our $U^>_v(L\sso_N)$ from Subsection~\ref{ssec:qlas}.
In what follows, we will consider the following generating series:
\begin{equation}
\label{eq:loop-series}
  x^\pm_i(z)=\sum_{r\in \BZ} x^\pm_{i,r} z^{-r},\qquad
  \varphi_i(z)=\sum_{k\geq 0} \varphi_{i,-k} z^k,\qquad
  \psi_i(z)=\sum_{k\geq 0} \psi_{i,k} z^{-k}.
\end{equation}

The relation between the algebras $U_v(L\sso_N)$ and $\rtU^\rtt_v(L\sso_{N})$ was established in~\cite{JLM20}.
To state the main result, we consider the Gauss decomposition of the matrices $\Lf^\pm(z)$ from~\eqref{eq:L-matrix}:
\begin{equation*}
  \Lf^\pm(z)=F^\pm(z)\cdot H^\pm(z)\cdot E^\pm(z).
\end{equation*}
Here, $F^\pm(z), H^\pm(z), E^\pm(z)\in \rtU^\rtt_v(L\sso_N)[[z^{\pm 1}]]\otimes \End\, \BC^N$ are of the form
\begin{equation*}
  F^\pm(z)=\sum_{i} E_{ii}+\sum_{i>j} f^\pm_{ij}(z)\cdot E_{ij},\
  H^\pm(z)=\sum_{i} h^\pm_i(z)\cdot E_{ii},\
  E^\pm(z)=\sum_{i} E_{ii}+\sum_{i<j} e^\pm_{ij}(z)\cdot E_{ij}.
\end{equation*}

\begin{Thm}[\cite{JLM20}]\label{thm:JLM-iso}
There is a unique $\BC(v)$-algebra isomorphism
\begin{equation*}
  \varrho \colon U_v(L\sso_N) \, \iso \, U^\rtt_v(L\sso_{N})
\end{equation*}
defined by
\begin{equation}
\label{JLM formulas D1}
\begin{split}
  & x^+_i(z)\mapsto \frac{e^+_{i,i+1}(zv^i)-e^-_{i,i+1}(zv^i)}{v-v^{-1}}, \qquad
    x^-_i(z) \mapsto \frac{f^+_{i+1,i}(zv^i)-f^-_{i+1,i}(zv^i)}{v-v^{-1}}, \\
  & \quad \psi_i(z)\mapsto h^-_{i+1}(zv^i)h^-_{i}(zv^i)^{-1}, \qquad
    \varphi_i(z)\mapsto h^+_{i+1}(zv^i)h^+_{i}(zv^i)^{-1}
\end{split}
\end{equation}
for $1\leq i<n$ and
\begin{equation}
\label{JLM formulas D2}
\begin{split}
  & x^+_n(z)\mapsto \frac{e^+_{n-1,n+1}(zv^{n-1})-e^-_{n-1,n+1}(zv^{n-1})}{v-v^{-1}}, \
    \psi_n(z)\mapsto h^-_{n+1}(zv^{n-1})h^-_{n-1}(zv^{n-1})^{-1}, \\
  & x^-_n(z) \mapsto \frac{f^+_{n+1,n-1}(zv^{n-1})-f^-_{n+1,n-1}(zv^{n-1})}{v-v^{-1}}, \
     \varphi_n(z)\mapsto h^+_{n+1}(zv^{n-1})h^+_{n-1}(zv^{n-1})^{-1}.
\end{split}
\end{equation}
\end{Thm}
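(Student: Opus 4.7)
The plan is to adapt the classical Ding--Frenkel strategy (developed for $U_v(L\mathfrak{gl}_N)$) to the orthogonal setting $\sso_N=\sso_{2n}$. The proof splits into four stages: (i) derive exchange relations among the Gauss coordinates $e^\pm_{ij}(z), f^\pm_{ij}(z), h^\pm_i(z)$ from the RTT relations \eqref{affRTT} together with the symmetry relation \eqref{affRTT-extra}; (ii) use these to verify that the images of the new Drinfeld generators under the assignments \eqref{JLM formulas D1}--\eqref{JLM formulas D2} satisfy the defining relations of $U_v(L\sso_N)$, so that $\varrho$ is a well-defined homomorphism; (iii) establish surjectivity by expressing each generator $\ell^\pm_{ij}[\mp r]$ as a polynomial in the images of Drinfeld generators; and (iv) establish injectivity by comparing PBW-type bases on both sides.

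For step (i), the key observation is that the $2\times 2$ block of the RTT relation supported on indices $\{i,i+1\}$ with $1\le i< n-1$ (staying away from the fork node and from the duality-partner pair $\{i,i'\}$) is formally identical to the type-$A$ RTT relation for $U_v(L\mathfrak{gl}_2)$, since the $Q$-term of $\bar{R}_\trig$ in \eqref{trigR} vanishes on this block. Consequently, the relations among $e^\pm_{i,i+1}(z), f^\pm_{i+1,i}(z), h^\pm_i(z), h^\pm_{i+1}(z)$ exactly mirror the $A_n$-case analysis, yielding the Drinfeld relations among $x^\pm_i(z)$ and $\varphi_i(z),\psi_i(z)$. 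The novel features appear (a) at the fork node $n$, where \eqref{JLM formulas D2} uses the indices $(n-1,n+1)$ bypassing the duality-central pair (forced by $\bar{n}=\bar{n+1}=0$), and (b) across pairs $(i,i')$ where both the $Q$-term of $\bar{R}_\trig$ and the relation \eqref{affRTT-extra} become active and must be combined.

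The main obstacle is step (ii) for the quantum Serre relations with $a_{ij}=-1$, in particular those involving node $n$ (that is, the pair $\{n-2,n\}$, and also $\{n,n-1\}$ through their shared neighbor $n-2$). These cubic identities are not direct consequences of a single RTT relation; they require simultaneous use of several $R$-matrix identities, the $Q$-term contribution, and the reflection-type relation \eqref{affRTT-extra}. A systematic approach is to first prove the corresponding higher-rank Serre identities in the type-$A$ RTT algebra for $U_v(L\mathfrak{gl}_N)$ (where the same Gauss coordinates make sense), then project these identities via the natural quotient to $U^\rtt_v(L\sso_N)$ and use \eqref{affRTT-extra} to close the resulting cubic expression. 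The relations involving the fork node $n$ are handled separately by a direct calculation with the six-vertex block of $\bar{R}_\trig$ supported on the indices $\{n-1,n,n+1,n+2\}$.

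Step (iii) proceeds by descending induction on the distance $|i-j|$: the near-diagonal entries $\ell^\pm_{ij}(z)$ with $|i-j|\leq 1$ are reconstructed directly from the formulas \eqref{JLM formulas D1}--\eqref{JLM formulas D2} and the Gauss decomposition, while far-off-diagonal entries are obtained inductively as coefficients in iterated products of the near-diagonal $\Lf^\pm(z)$-entries; the entries across the duality pair $(i,i')$ are controlled by \eqref{affRTT-extra}. Finally, step (iv) is a dimension count: on the Drinfeld side we already have the PBWD basis from Theorem~\ref{pbwtheorem} (for the positive part) and analogous bases for the negative and Cartan parts, while on the RTT side one constructs a matching ordered-monomial basis in the Gauss coordinates; comparing graded characters across the triangular decompositions forces $\ker\varrho=0$.
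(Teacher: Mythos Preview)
The paper does not prove this theorem: it is stated with the attribution \cite{JLM20} and used as a black box in the Appendix, so there is no ``paper's own proof'' to compare against.

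That said, your outline is broadly the Ding--Frenkel strategy that \cite{JLM20} itself follows, with one notable omission. The central technical device in \cite{JLM20} is the system of \emph{rank-reduction embeddings} $\rtU^\rtt_v(L\sso_{N-2})\hookrightarrow \rtU^\rtt_v(L\sso_N)$ constructed via quasideterminants of the Gauss blocks (the paper itself invokes these in the proof of Proposition~\ref{prop:E-explicitly}). These embeddings are what allow the verification of the Drinfeld relations---especially the Serre relations at and near the fork node---to be reduced to a bounded collection of low-rank computations, rather than the ad~hoc ``project from the type-$A$ RTT algebra and close using \eqref{affRTT-extra}'' scheme you sketch in step~(ii). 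Your approach to the Serre relations is plausible but vague at exactly the point where the real work lies.

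Your step~(iv) is also underspecified. Theorem~\ref{pbwtheorem} gives a PBWD basis only for the \emph{positive} subalgebra $\qld$, and its proof in this paper is independent of the RTT picture, so there is no circularity. But you still need (a) the full triangular decomposition of $U_v(L\sso_N)$ with bases for the Cartan and negative parts, and more seriously (b) an \emph{independent} PBW-type spanning set on the RTT side in the Gauss coordinates, together with a matching character count. In \cite{JLM20} injectivity is instead obtained by exhibiting an explicit inverse on generators (reconstructing the Drinfeld currents from the Gauss coordinates and vice versa), which sidesteps the need for an a~priori RTT-side PBW theorem.
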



\subsection{The RTT realization of $\integrald$}

Let $\rtU^{\rtt,>}_v(L\sso_{N})$ be the $\BZ[v,v^{-1}]$-subalgebra of $\rtU^\rtt_v(L\sso_{N})$ generated by the coefficients
of $\{e^\pm_{ij}(z)\}_{1\leq i<j\leq N}$, the matrix coefficients of $E^\pm(z)$. The key goal of this Appendix is to highlight
the natural origin of the integral form $\integrald$ introduced in Definition~\ref{def:rttintegral} and its specific quantum root
vectors (a special case of~\eqref{eq:rtt-vectors-D})
\begin{equation}
\label{eq:rtt-root-vectors}
\begin{split}
  & \tilde{\mathcal{E}}^{\rtt}_{[i,j],s}:= \langle 1\rangle_{v}\cdot
    [\cdots[[e_{i,s},e_{i+1,0}]_{v},e_{i+2,0}]_{v},\cdots,e_{j,0}]_{v},\\
  & \tilde{\mathcal{E}}^{\rtt}_{[i,n],s}:= \langle 1\rangle_{v}\cdot
    [[\cdots[e_{i,s},e_{i+1,0}]_{v},,\cdots,e_{n-2,0}]_{v},e_{n,0}]_{v},\\
  & \tilde{\mathcal{E}}^{\rtt}_{[i,n,j],s}:= \langle 1\rangle_{v}\cdot
    [\cdots[[[\cdots[e_{i,s},e_{i+1,0}]_{v},\cdots, e_{n-2,0}]_{v}, e_{n,0}]_{v},e_{n-1,0}]_{v},\cdots,e_{j,0}]_{v}
\end{split}
\end{equation}
for any $1\leq i<j<n$.  We also express the matrix coefficients of $E^\pm(z)$ as series in $z^{\pm 1}$:
\begin{equation}
\label{eq:rtt-matrix-coef}
  e^+_{ij}(z)=\sum_{r>0} e^{(-r)}_{ij} z^r,\qquad
  e^-_{ij}(z)=\sum_{r\geq 0} e^{(r)}_{ij} z^{-r}
  \qquad \forall\, 1\leq i<j\leq N.
\end{equation}
Finally we define $e_{ij}(z):=e^+_{ij}(z)-e^-_{ij}(z)$. The key technical result of this subsection is:

\begin{Prop}\label{prop:E-explicitly}
(a) For any $1\leq i<j<n$, we have:
\begin{equation}
\label{eq:iterated-1}
  e_{i,j+1}(z)=(1-v^2)^{i-j}\cdot
  [\cdots[[e_{i,i+1}(z),e^{(0)}_{i+1,i+2}]_{v},e^{(0)}_{i+2,i+3}]_{v},\cdots,e^{(0)}_{j,j+1}]_{v}.
\end{equation}

\noindent
(b) For any $1\leq i<n-1$, we have:
\begin{equation}
\label{eq:iterated-1.2}
  e_{i,n+1}(z)=(1-v^2)^{i-n+1}\cdot
  [[\cdots[e_{i,i+1}(z),e^{(0)}_{i+1,i+2}]_{v},\cdots,e^{(0)}_{n-2,n-1}]_{v},e^{(0)}_{n-1,n+1}]_{v}.
\end{equation}

\noindent
(c) For any $1\leq i<j<n$, we have:
\begin{multline}
\label{eq:iterated-2}
  e_{i,j'}(z)=(1-v^2)^{i+j-2n+1}(-1)^{j-n}\times \\
  [\cdots[[[\cdots[e_{i,i+1}(z),e^{(0)}_{i+1,i+2}]_{v},\cdots, e^{(0)}_{n-2,n-1}]_{v},e^{(0)}_{n-1,n+1}]_{v},
   e^{(0)}_{n-1,n}]_{v},\cdots,e^{(0)}_{j,j+1}]_{v}.
\end{multline}
\end{Prop}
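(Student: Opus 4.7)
The plan is to follow the Gauss-decomposition approach of Jing-Liu-Molev \cite{JLM20,JLM21}: all three identities (a)--(c) will be derived by iterating a short list of ``ladder'' commutation relations among the matrix entries of $E^\pm(z)$, each of which is extracted from a specific matrix coefficient of the RTT relation $\bar{R}_{\trig}(z/w) \Lf^\pm_1(z) \Lf^\pm_2(w) = \Lf^\pm_2(w) \Lf^\pm_1(z) \bar{R}_{\trig}(z/w)$ from~\eqref{affRTT}.

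Concretely, working with the combinations $e_{ij}(z) := e^+_{ij}(z) - e^-_{ij}(z)$, the first task is to establish the following three ladder identities:
\begin{equation*}
  [e_{i,k+1}(z),\, e^{(0)}_{k+1,k+2}]_v = (1-v^2)\, e_{i,k+2}(z) \qquad (1\leq i\leq k,\ k+2\leq n),
\end{equation*}
\begin{equation*}
  [e_{i,n-1}(z),\, e^{(0)}_{n-1,n+1}]_v = (1-v^2)\, e_{i,n+1}(z) \qquad (1\leq i\leq n-2),
\end{equation*}
\begin{equation*}
  [e_{i,k'}(z),\, e^{(0)}_{k-1,k}]_v = -(1-v^2)\, e_{i,(k-1)'}(z) \qquad (1\leq i< k-1,\ 2\leq k\leq n).
\end{equation*}
Each is obtained by isolating the coefficient of a suitably chosen basis element $E_{ab}\otimes E_{cd}$ on both sides of the RTT relation, substituting the Gauss decomposition $\Lf^\pm = F^\pm H^\pm E^\pm$, and observing that the $F^\pm$- and $H^\pm$-contributions drop out after forming the difference $e^+_{ij}-e^-_{ij}$ (a standard triangularity argument). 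The constant $(1-v^2)$ originates from the asymptotic value of the scalar factor $\frac{(v-v^{-1})z}{zv-v^{-1}}$ in $\bar{R}_{\trig}$, while the minus sign in the third identity is produced by the $Q$-term of $\bar{R}_{\trig}$ in~\eqref{trigR}, which realises the transposition $i\mapsto i'$.

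Granted these ladder relations, formulas (a), (b), (c) follow by straightforward induction on the length of the nested bracket. The prefactors $(1-v^2)^{i-j}$, $(1-v^2)^{i-n+1}$, and $(1-v^2)^{i+j-2n+1}$ come out by counting iterations, while the sign $(-1)^{j-n}$ in (c) accumulates from precisely the $n-j$ ``mirror'' steps in the third phase. The main obstacle will be the bookkeeping at the branching node $n$: there the $Q$-term in $\bar{R}_{\trig}$ mixes $e^{(0)}_{n-1,n}$ and $e^{(0)}_{n-1,n+1}$ nontrivially, and one must verify that a single bracket with $e^{(0)}_{n-1,n+1}$ is the correct ``switch'' at the branching step. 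A secondary technicality is to check that the generators $e^{(0)}_{k,k+1}$ satisfy the quantum Serre relations of type $D_n$, so that the iterated commutators on the right-hand sides of (a)--(c) are unambiguously defined; this is again a routine consequence of~\eqref{affRTT}--\eqref{affRTT-extra}.
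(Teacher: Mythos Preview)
Your approach is essentially the same as the paper's: both extract specific matrix coefficients of the RTT relation~\eqref{affRTT} to obtain one-step ``ladder'' identities such as $e_{1,n+1}(z)=(1-v^2)^{-1}[e_{1,n-1}(z),e^{(0)}_{n-1,n+1}]_v$, and then iterate. The paper additionally invokes the rank-reduction embeddings of \cite[\S3.2, Proposition~4.2]{JLM20} to reduce immediately to $i=1$; this is what makes the passage from $\ell^\pm_{i,j}(z)$ to $e^\pm_{i,j}(z)$ clean, since in the first row $\ell^\pm_{1,j}(z)=h^\pm_1(z)e^\pm_{1,j}(z)$ with no $F^\pm$-contributions and $h^\pm_1(z)$ commutes with the relevant $e^{(0)}_{k,k+1}$. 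Your phrase ``the $F^\pm$- and $H^\pm$-contributions drop out after forming the difference $e^+_{ij}-e^-_{ij}$ (a standard triangularity argument)'' is too optimistic for general $i$: the $F$-part does not cancel simply by taking that difference, and the $H$-part must be commuted through explicitly (as the paper does via an auxiliary matrix coefficient, yielding~\eqref{eq:rtt-4}). Rank reduction is the device that lets you avoid this bookkeeping. Finally, the remark about verifying Serre relations is unnecessary: the iterated $v$-commutators on the right-hand sides are already well-defined elements of the associative algebra, with no ambiguity to resolve.
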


\begin{proof}
Due to the ``rank reduction'' embeddings of~\cite[\S3.2, Proposition 4.2]{JLM20}, it suffices to
prove formulas~\eqref{eq:iterated-1}--\eqref{eq:iterated-2} for $i=1$. In fact, both \eqref{eq:iterated-1}
and~\eqref{eq:iterated-2} for $i=1$ are proved exactly as~\cite[(A.13, A.14)]{HT24}. Thus, we shall only
provide details for $i=1$ case of~\eqref{eq:iterated-1.2}.

Comparing matrix coefficients $\langle v_1\otimes v_{n-1}|\cdots|v_{n-1}\otimes v_{n+1}\rangle$ of both sides of
the RTT relation $\bar{R}_{\trig}(z/w)\Lf^-_1(z)\Lf^-_2(w)=\Lf^-_2(w)\Lf^-_1(z)\bar{R}_\trig(z/w)$, we get:
\begin{multline*}
   \frac{z-w}{vz-v^{-1}w} \ell^-_{1,n-1}(z)\ell^-_{n-1,n+1}(w)+\frac{(v-v^{-1})z}{vz-v^{-1}w}\ell^-_{n-1,n-1}(z)\ell^-_{1,n+1}(w)=\\
   \frac{z-w}{vz-v^{-1}w} \ell^-_{n-1,n+1}(w)\ell^-_{1,n-1}(z)+\frac{(v-v^{-1})w}{vz-v^{-1}w}\ell^-_{n-1,n-1}(w)\ell^-_{1,n+1}(z).
\end{multline*}
Expanding all rational factors as series in $z/w$ and evaluating the $[w^0]$-coefficients, we obtain:
\begin{equation}
\label{eq:rtt-2}
  v \ell^-_{1,n-1}(z)\ell^-_{n-1,n+1}[0]=v\ell^-_{n-1,n+1}[0]\ell^-_{1,n-1}(z)+(1-v^2)\ell^-_{n-1,n-1}[0]\ell^-_{1,n+1}(z).
\end{equation}
Comparing matrix coefficients $\langle v_1\otimes v_{n-1}|\cdots|v_{n-1}\otimes v_{n-1}\rangle$ of the same RTT relation, we get:
\begin{equation*}
   \frac{z-w}{vz-v^{-1}w} \ell^-_{1,n-1}(z)\ell^-_{n-1,n-1}(w) +
   \frac{(v-v^{-1})z}{vz-v^{-1}w}\ell^-_{n-1,n-1}(z)\ell^-_{1,n-1}(w) =
   \ell^-_{n-1,n-1}(w)\ell^-_{1,n-1}(z).
\end{equation*}
Expanding both rational factors as series in $z/w$ and evaluating the $[w^0]$-coefficients, we obtain:
\begin{equation}
\label{eq:rtt-4}
   {\ell^-_{n-1,n-1}[0]}^{-1}\ell^-_{1,n-1}(z)=v^{-1} \ell^-_{1,n-1}(z){\ell^-_{n-1,n-1}[0]}^{-1}.
\end{equation}
Multiplying both sides of~\eqref{eq:rtt-2} by ${\ell^-_{n-1,n-1}[0]}^{-1}$ on the left and applying \eqref{eq:rtt-4}, we obtain:
\begin{equation*}
  (1-v^2)\ell^-_{1,n+1}(z)=[\ell^-_{1,n-1}(z),e^{(0)}_{n-1,n+1}]_v.
\end{equation*}
As $\ell^-_{1,n+1}(z)=h^-_1(z)e^-_{1,n+1}(z)$, $\ell^-_{1,n-1}(z)=h^-_1(z)e^-_{1,n-1}(z)$, and $[h^-_1(z),e^{(0)}_{n-1,n+1}]=0$, we get:
\begin{equation}
\label{eq:rtt-6}
  e^-_{1,n+1}(z)=(1-v^2)^{-1} \cdot [e^-_{1,n-1}(z),e^{(0)}_{n-1,n+1}]_v.
\end{equation}
Arguing in the same way, but using $\bar{R}_{\trig}(z/w)\Lf^+_1(z)\Lf^-_2(w)=\Lf^-_2(w)\Lf^+_1(z)\bar{R}_\trig(z/w)$ instead,
we also obtain:
\begin{equation}
\label{eq:rtt-6'}
  e^+_{1,n+1}(z)=(1-v^2)^{-1} \cdot [e^+_{1,n-1}(z),e^{(0)}_{n-1,n+1}]_v.
\end{equation}
Subtracting \eqref{eq:rtt-6} from \eqref{eq:rtt-6'}, we finally get:
\begin{equation*}
  e_{1,n+1}(z)=(1-v^2)^{-1} \cdot [e_{1,n-1}(z),e^{(0)}_{n-1,n+1}]_v.
\end{equation*}
Applying formula~\eqref{eq:iterated-1} for $e_{1,n-1}(z)$ completes our proof of \eqref{eq:iterated-1.2} for $i=1$.
\end{proof}

Combining Proposition~\ref{prop:E-explicitly} with identification~(\ref{JLM formulas D1},~\ref{JLM formulas D2}) and
formulas~(\ref{eq:rtt-root-vectors}), we get:

\begin{Cor}
For any $1\leq i < j< n$ and $s\in \BZ$, we have:
\begin{equation*}
  \varrho(\tilde{\mathcal{E}}^{\rtt}_{[i,j],s}) \doteq e^{(s)}_{i,j+1} ,\qquad
  \varrho(\tilde{\mathcal{E}}^{\rtt}_{[i,n],s}) \doteq e^{(s)}_{i,n+1} ,\qquad
  \varrho(\tilde{\mathcal{E}}^{\rtt}_{[i,n,j],s}) \doteq e^{(s)}_{i,j'}.
\end{equation*}
\end{Cor}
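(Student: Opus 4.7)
The proof will combine Proposition~\ref{prop:E-explicitly}, which writes the matrix coefficients $e^{(s)}_{i,j+1}$, $e^{(s)}_{i,n+1}$, $e^{(s)}_{i,j'}$ as iterated $v$-commutators of the RTT generators, with the explicit Drinfeld-JLM identification of Theorem~\ref{thm:JLM-iso}, which reads off the new Drinfeld generators $e_{k,r}$ from those very same entries of $E^\pm(z)$. Once both sides are expressed as iterated $v$-brackets in a common set of RTT generators, the statement reduces to bookkeeping the prefactor $(1-v^2)^{?}$ against the normalization~$\langle 1\rangle_v$ in~\eqref{eq:rtt-root-vectors}.

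First I would invert~\eqref{JLM formulas D1}--\eqref{JLM formulas D2} at the level of modes. Using the expansion~\eqref{eq:rtt-matrix-coef} of $e^\pm_{k,k+1}(z)$ and extracting the $z^{-r}$-coefficient after the shift $z\mapsto zv^k$, a short computation gives
\begin{equation*}
  \varrho(e_{k,r})\,\doteq\,\frac{e^{(r)}_{k,k+1}}{\langle 1\rangle_v}\quad (1\leq k<n),\qquad
  \varrho(e_{n,r})\,\doteq\,\frac{e^{(r)}_{n-1,n+1}}{\langle 1\rangle_v}.
\end{equation*}
The sign from $[z^{-r}]$ and the power $v^{-kr}$ from the shift both lie in $\BQ^{\times}\!\cdot v^{\BZ}$ and are therefore absorbed by $\doteq$, but the factor $1/\langle 1\rangle_v$ is \emph{not} absorbed -- this is the crucial point.

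Next I would apply $\varrho$ to the iterated $v$-commutator in~\eqref{eq:rtt-root-vectors}. Since $\varrho$ is a $\BC(v)$-algebra homomorphism, each of the $|\beta|$ Drinfeld generators $e_{k,r}$ appearing in $\tilde{\mathcal{E}}^{\rtt}_{\beta,s}$ contributes a factor $\langle 1\rangle_v^{-1}$, while the explicit $\langle 1\rangle_v$-prefactor in~\eqref{eq:rtt-root-vectors} kills exactly one of them. Thus, up to $\doteq$,
\begin{equation*}
  \varrho(\tilde{\mathcal{E}}^{\rtt}_{\beta,s})\,\doteq\,\langle 1\rangle_v^{-(|\beta|-1)}\cdot \big[\cdots\big]_v,
\end{equation*}
where $[\cdots]_v$ is precisely the RTT iterated $v$-bracket on the right-hand side of~\eqref{eq:iterated-1}, \eqref{eq:iterated-1.2}, or~\eqref{eq:iterated-2} respectively, evaluated with $e_{i,i+1}(z)$ replaced by its $z^{-s}$-coefficient (which equals $\doteq e^{(s)}_{i,i+1}$).

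Finally, I would read off the $z^{-s}$-coefficient of~\eqref{eq:iterated-1}--\eqref{eq:iterated-2} to invert these identities. Using $1-v^2=-v\langle 1\rangle_v$, the three prefactors $(1-v^2)^{i-j}$, $(1-v^2)^{i-n+1}$, $(1-v^2)^{i+j-2n+1}$ become $\langle 1\rangle_v^{-(|\beta|-1)}$ times a unit in $\BQ^{\times}\!\cdot v^{\BZ}$, because the heights satisfy $|[i,j]|=j-i+1$, $|[i,n]|=n-i$, and $|[i,n,j]|=2n-i-j$. Inverting, each RTT iterated bracket equals $\langle 1\rangle_v^{|\beta|-1}\cdot e^{(s)}_{i,?}$ up to $\doteq$, and these $\langle 1\rangle_v$-powers cancel exactly against those produced in the previous step, yielding $\varrho(\tilde{\mathcal{E}}^{\rtt}_{\beta,s})\doteq e^{(s)}_{i,?}$. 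The only potential pitfall is keeping track of the $\langle 1\rangle_v$-powers, since $\langle 1\rangle_v$ does not lie in $\BQ^{\times}\!\cdot v^{\BZ}$ and is therefore not absorbed by $\doteq$; this is also exactly the reason that the normalization $\langle 1\rangle_v\cdot \tilde{E}^{+}_{\beta,s}$ in Definition~\ref{def:rttintegral} -- rather than the bare $\tilde{E}^{+}_{\beta,s}$ that generates the Lusztig form -- is the correct one for matching the natural RTT generators.
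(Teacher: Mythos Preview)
Your proof is correct and follows exactly the same approach as the paper, which simply states that the corollary is obtained by combining Proposition~\ref{prop:E-explicitly} with the identification~(\ref{JLM formulas D1},~\ref{JLM formulas D2}) and the definitions~\eqref{eq:rtt-root-vectors}. You have carefully spelled out the bookkeeping of $\langle 1\rangle_v$-powers that the paper leaves implicit, and your observation that $(1-v^2)^{-(|\beta|-1)}\doteq \langle 1\rangle_v^{-(|\beta|-1)}$ precisely cancels the $\langle 1\rangle_v^{1-|\beta|}$ coming from $\varrho$ applied to the $|\beta|$ Drinfeld generators is the heart of the matter.
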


Since the elements~\eqref{eq:rtt-root-vectors} are specific case of quantum root vectors~\eqref{eq:rtt-vectors-D},
we finally obtain:

\begin{Prop}
$\varrho(\integrald)=\rtU^{\rtt,>}_v(L\sso_{2n})$.
\end{Prop}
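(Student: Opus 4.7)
The proof proceeds via a double inclusion. For $\varrho(\integrald)\subseteq \rtU^{\rtt,>}_v(L\sso_{2n})$, I would first invoke Theorem~\ref{PBWDintegralrtt}(a) to observe that $\integrald$ equals the $\BZ[v,v^{-1}]$-subalgebra of $\qld$ generated by the specific normalized quantum root vectors $\tilde{\mathcal{E}}^{\rtt}_{\beta,s}$ from~\eqref{eq:rtt-root-vectors} (together with the simple-root elements $\tilde{\mathcal{E}}^{\rtt}_{[i,i],s}=\langle 1\rangle_v\cdot e_{i,s}$ for $i\leq n$, which are the $i=j$ degenerations of the $[i,j]$ pattern). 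The preceding Corollary then shows that $\varrho$ sends each such generator to a $\BZ[v,v^{-1}]^{\times}$-multiple of a Laurent coefficient $e^{(s)}_{i,k}$, hence into $\rtU^{\rtt,>}_v(L\sso_{2n})$.

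For the reverse inclusion $\rtU^{\rtt,>}_v(L\sso_{2n})\subseteq \varrho(\integrald)$, the plan is to show that the RTT side is generated, as a $\BZ[v,v^{-1}]$-algebra, already by the coefficients of the ``simple-root'' series $e^\pm_{i,i+1}(z)$ for $1\leq i<n$ together with $e^\pm_{n-1,n+1}(z)$. By the JLM formulas~\eqref{JLM formulas D1} and~\eqref{JLM formulas D2}, each such coefficient is a $\BZ[v,v^{-1}]^{\times}$-multiple of $\varrho(\tilde{\mathcal{E}}^{\rtt}_{[i,i],s})$ or $\varrho(\tilde{\mathcal{E}}^{\rtt}_{[n,n],s})$, both of which lie in $\varrho(\integrald)$, so the claim will follow once this generation property is established.

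To verify the generation property, one uses~\eqref{eq:iterated-1}--\eqref{eq:iterated-2} of Proposition~\ref{prop:E-explicitly} to express every $e_{ij}(z)$ whose index pair $(i,j)$ corresponds to a positive root of $\sso_{2n}$ as an iterated $v$-commutator of the simple-root series with zero-mode coefficients $e^{(0)}_{k,k+1}$, a manifestly integral construction. The remaining matrix entries---the ``antidiagonal'' pairs $(i,i')$ and the entries $e^\pm_{ij}(z)$ with $i+j>2n+1$ lying in the ``second-half'' of the $L$-matrix---must be reduced to the former via the extra constraint~\eqref{affRTT-extra}, which encodes precisely the folding symmetry of $\Lf^\pm(z)$ across the antidiagonal. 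Combining the Gauss decomposition with individual matrix-coefficient identities extracted from this constraint expresses each missing Gauss factor $e^\pm_{ij}(z)$ as an integral polynomial combination of the already-accounted-for entries.

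The main obstacle lies in this last step: although~\eqref{affRTT-extra} abstractly relates entries across the antidiagonal, extracting from it the precise $\BZ[v,v^{-1}]$-integral formulas for the ``missing'' $e^\pm_{ij}(z)$ requires careful matrix-coefficient manipulations in the spirit of the proof of Proposition~\ref{prop:E-explicitly}, tracking powers of $z$ and ensuring no denominator $v-v^{-1}$ is introduced. This is precisely where the normalization factors $\langle 1\rangle_v$ in the definition of the $\tilde{\mathcal{E}}^{\rtt}_{\beta,s}$ play their essential role, as was already evident from Subsection~\ref{rttd}.
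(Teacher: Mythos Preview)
Your forward inclusion is fine, and you correctly identify that the ``second-half'' entries $e^\pm_{ij}(z)$ with $i+j>2n$ must be handled through the constraint~\eqref{affRTT-extra}. The genuine gap is in your reduction step for the reverse inclusion: the claim that $\rtU^{\rtt,>}_v(L\sso_{2n})$ is generated over $\BZ[v,v^{-1}]$ by the simple-root coefficients alone is false. Reading~\eqref{eq:iterated-1} carefully, one has
\[
  [\cdots[e^{(s)}_{i,i+1},e^{(0)}_{i+1,i+2}]_v,\ldots,e^{(0)}_{j,j+1}]_v=(1-v^2)^{\,j-i}\,e^{(s)}_{i,j+1},
\]
so the iterated $v$-commutator is an integral \emph{multiple} of $e^{(s)}_{i,j+1}$, not the other way around; since $(1-v^2)^{-1}\notin\BZ[v,v^{-1}]$, the element $e^{(s)}_{i,j+1}$ does not lie in the $\BZ[v,v^{-1}]$-subalgebra generated by the simple-root coefficients. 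This is not a technicality---it is exactly why $\integrald$ is defined via \emph{all} root vectors $\tilde{\mathcal{E}}^\epsilon_{\beta,s}$ rather than just $\langle 1\rangle_v\cdot e_{i,r}$.

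The fix is to drop the detour through simple roots and use the Corollary directly: it already gives $\varrho(\tilde{\mathcal{E}}^{\rtt}_{\beta,s})\doteq e^{(s)}_{i,k}$ for every positive-root pair $(i,k)$ with $i+k\leq 2n$, and a moment's bookkeeping with the powers of $(v-v^{-1})$ and $(1-v^2)$ in Proposition~\ref{prop:E-explicitly} shows the proportionality constant is a unit $\pm v^{\BZ}$. Thus $\varrho(\integrald)$ is exactly the $\BZ[v,v^{-1}]$-subalgebra generated by $\{e^{(s)}_{i,k}:i+k\leq 2n\}$, and it remains only to invoke~\eqref{affRTT-extra} (as in~\cite{JLM20}) to see that the entries with $i+k>2n$ already lie in this subalgebra. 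This is precisely the route the paper takes; your last paragraph was headed in the right direction, but the preceding reduction undermined it.
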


This result explains why we called $\integrald$ the RTT integral form of $\qld$. Moreover, Theorem~\ref{PBWDintegralrtt}(b)
implies the PBWD theorem for $\rtU^{\rtt,>}_v(L\sso_{2n})$, cf.~\cite[Theorem~3.25]{FT19}:

\begin{Cor}
The ordered monomials in $\big\{e^{(r)}_{ij} \,|\ i<j \ \mathrm{such\ that}\ i+j\leq N, r\in \BZ\big\}$ form a
basis of a free $\BZ[v,v^{-1}]$-module $\rtU^{\rtt,>}_v(L\sso_{2n})$, where the ordering is given by
$e^{(r)}_{ij}\leq e^{(s)}_{k\ell}$ if $i<k$, or $i=k$ and $j<\ell$, or $i=k,j=\ell$ and $r\leq s$.
\end{Cor}


\subsection{RTT realization of $U_{v}(L\mathfrak{sp}_{2n})$}

Set $N=2n$. For $1\leq i\leq N$, we amend~\eqref{eq:bar} via:
\begin{equation*}
  (\bar{1},\ldots,\bar{N}):=(n,\ldots,2, 1, -1, -2, \ldots, -n),
\end{equation*}
while $i'$ is defined via~\eqref{eq:prime}. We define $\xi=v^{2-N}$ as before. Finally we also introduce:
\begin{equation*}
  \varepsilon_i=1 ,\qquad \varepsilon_{i'}=-1 \qquad \forall\, 1\leq i\leq n.
\end{equation*}
The corresponding trigonometric $R$-matrix $\bar{R}_\trig(z)$ (satisfying~\eqref{qYB}) is still given by~\eqref{trigR},
but $P,Q,R\in (\End\, \BC^N)^{\otimes 2}$ are now modified as follows:
\begin{equation*}
\begin{split}
  & P\ =\sum_{1\leq i,j\leq N} e_{ij}\otimes e_{ji},\qquad
    Q\ =\sum_{1\leq i,j\leq N} v^{\bar{i}-\bar{j}} \varepsilon_i\varepsilon_j\, e_{i'j'}\otimes e_{ij},\\
  & R\ = v\sum_{1\leq i\leq N} e_{ii}\otimes e_{ii} \, +
    \sum_{1\leq i,j\leq N}^{i\ne j,j'} e_{ii}\otimes e_{jj} + v^{-1} \sum_{1\leq i\leq N} e_{ii}\otimes e_{i'i'} \ + \\
  & \qquad \quad (v-v^{-1})\sum_{i<j} e_{ij}\otimes e_{ji} -
    (v-v^{-1})\sum_{i>j} v^{\bar{i}-\bar{j}} \varepsilon_i\varepsilon_j\, e_{i'j'}\otimes e_{ij}.
\end{split}
\end{equation*}

Define the \textbf{RTT integral form of the quantum loop algebra of $\mathfrak{sp}_{N}$}, denoted by
$\rtU^\rtt_v(L\mathfrak{sp}_{N})$, to be the associative $\BZ[v,v^{-1}]$-algebra generated by
$\{\ell^\pm_{ij}[\mp r]\}_{1\leq i,j\leq N}^{r\in \BN}$ with the same defining relations~(\ref{affRTT},~\ref{affRTT-extra}),
whereas ${\mathrm{t}}$ is now defined via $E^{\mathrm{t}}_{ij}=\varepsilon_i\varepsilon_j E_{j'i'}$.
Here, the generators are encoded via $\Lf^\pm(z)\in \rtU^\rtt_v(L\mathfrak{sp}_N)[[z^{\pm 1}]]\otimes \End\, \BC^N$
defined as in~\eqref{eq:L-matrix}. We also define the $\BC(v)$-counterpart
$U^\rtt_v(L\mathfrak{sp}_N):=\rtU^\rtt_v(L\mathfrak{sp}_N)\otimes_{\BZ[v,v^{-1}]} \BC(v)$.

Let $U_v(L\mathfrak{sp}_N)$ be the quantum loop algebra of type $C_n$ in the new Drinfeld realization. It is a
$\BC(v)$-algebra generated by $\{x^\pm_{i,r},\varphi_{i,-k},\psi_{i,k},k_i^{\pm 1}\}_{1\leq i\leq n}^{r\in \BZ,k>0}$
with the relations as in~\cite[\S1]{JLM21}. Identifying $x^+_{i,r}$ with our $e_{i,r}$, the subalgebra generated by
$\{x^+_{i,r}\}_{1\leq i\leq n}^{r\in \BZ}$ recovers our $U^>_v(L\mathfrak{sp}_N)$.

The relation between the algebras $U_v(L\mathfrak{sp}_N)$ and $\rtU^\rtt_v(L\mathfrak{sp}_{N})$ was established in~\cite{JLM21}.
Evoking the generating series~\eqref{eq:loop-series} and the Gauss decomposition of $\Lf^\pm(z)$, we have:

\begin{Thm}[\cite{JLM21}]\label{thm:JLM-iso-C}
There is a unique $\BC(v)$-algebra isomorphism
\begin{equation*}
  \varrho \colon U_v(L\mathfrak{sp}_N) \, \iso \, U^\rtt_v(L\mathfrak{sp}_{N})
\end{equation*}
defined by
\begin{equation}
\label{JLM formulas C1}
\begin{split}
  & x^+_i(z)\mapsto \frac{e^+_{i,i+1}(zv^i)-e^-_{i,i+1}(zv^i)}{v-v^{-1}}, \quad
    x^-_i(z) \mapsto \frac{f^+_{i+1,i}(zv^i)-f^-_{i+1,i}(zv^i)}{v-v^{-1}}, \\
  & \quad \psi_i(z)\mapsto h^-_{i+1}(zv^i)h^-_{i}(zv^i)^{-1},\qquad
    \varphi_i(z)\mapsto h^+_{i+1}(zv^i)h^+_{i}(zv^i)^{-1}
\end{split}
\end{equation}
for $1\leq i<n$ and
\begin{equation}
\label{JLM formulas C2}
\begin{split}
  & x^+_n(z)\mapsto \frac{e^+_{n,n+1}(zv^{n+1})-e^-_{n,n+1}(zv^{n+1})}{v^2-v^{-2}}, \ \
    x^-_n(z) \mapsto \frac{f^+_{n+1,n}(zv^{n+1})-f^-_{n+1,n}(zv^{n+1})}{v^2-v^{-2}}, \\
  & \psi_n(z)\mapsto h^-_{n+1}(zv^{n+1})h^-_{n}(zv^{n+1})^{-1},\quad
    \varphi_n(z)\mapsto h^+_{n+1}(zv^{n+1})h^+_{n}(zv^{n+1})^{-1}.
\end{split}
\end{equation}
\end{Thm}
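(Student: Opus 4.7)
The plan is to follow the Gauss-decomposition strategy of \cite{JLM20,JLM21}, adapting the arguments used to prove Theorem~\ref{thm:JLM-iso} to the symplectic setting. The first step is to show that the assignments~\eqref{JLM formulas C1}--\eqref{JLM formulas C2} are compatible with the defining relations of $U_v(L\mathfrak{sp}_N)$ in the new Drinfeld realization. Concretely, one derives explicit commutation relations among the Gauss coordinates $\{e^\pm_{ij}(z), f^\pm_{ij}(z), h^\pm_k(z)\}$ by comparing suitable matrix coefficients of the four RTT-type relations $\bar{R}_\trig(z/w)\Lf^\epsilon_1(z)\Lf^{\epsilon'}_2(w) = \Lf^{\epsilon'}_2(w)\Lf^\epsilon_1(z)\bar{R}_\trig(z/w)$ for $\epsilon,\epsilon' \in \{+,-\}$, together with the orthogonality condition~\eqref{affRTT-extra}. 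For indices $1\le i,j<n$ this block is governed by the $\gl_n$-part of $\bar R_\trig$, so that the analysis parallels the type-$A$ derivation and yields the quadratic and cubic Serre relations for the currents $x^\pm_i(z), \varphi_i(z), \psi_i(z)$, with the normalization factor $v-v^{-1}$ reflecting $d_i=1$.

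The second step is to construct an inverse homomorphism $U^\rtt_v(L\mathfrak{sp}_N) \to U_v(L\mathfrak{sp}_N)$, which amounts to reconstructing all off-diagonal entries of $E^\pm(z)$ and $F^\pm(z)$ from the simple-root currents via iterated $v$-commutators. The derivation proceeds exactly as in the proof of Proposition~\ref{prop:E-explicitly}, using the same rank-reduction embeddings together with comparison of matrix coefficients of the RTT relations at suitable pairs $\langle v_i \otimes v_j | \cdots | v_k \otimes v_\ell\rangle$; this establishes surjectivity. Injectivity then follows from a PBW-type size comparison: the Drinfeld side admits the ordered PBWD basis $\{E_h\}_{h\in H}$ from Theorem~\ref{shufflePBWD-C}, and a filtration on $U^\rtt_v(L\mathfrak{sp}_N)$ by degree in the $L^\pm$-generators yields a matching basis of the RTT algebra, so the homomorphism $\varrho$ is bijective.

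The main obstacle will be the tail node, where $\alpha_n$ is long with $d_n=2$. Three features distinguish this case from the orthogonal situation of Theorem~\ref{thm:JLM-iso}: the normalization factor in~\eqref{JLM formulas C2} must be $v^2-v^{-2}$ rather than $v-v^{-1}$, reflecting $(\alpha_n,\alpha_n)=4$; the spectral shift is $zv^{n+1}$ instead of $zv^{n-1}$, forcing careful bookkeeping of the alternating shifts $v^i$ along the Dynkin chain; and the modified transposition $E^\mathrm{t}_{ij}=\varepsilon_i\varepsilon_j E_{j'i'}$ with signs $\varepsilon_i=\pm 1$ introduces sign changes when crossing the level $n \leftrightarrow n+1$ in~\eqref{affRTT-extra}. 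The most delicate point is verifying the $q$-Serre relations between $x^+_n(z)$ and $x^+_{n-1}(w)$: because $1-a_{n-1,n}=3$ while $1-a_{n,n-1}=2$, the two sides of the Dynkin diagram produce Serre polynomials of different degrees, and one has to trace carefully how these emerge from the RTT relations, the $Q$-term contributions in $\bar R_\trig(z)$, and the orthogonality constraint~\eqref{affRTT-extra}. It is precisely here that the $v^2-v^{-2}$ denominator in \eqref{JLM formulas C2} is indispensable to match the Drinfeld-new relations.
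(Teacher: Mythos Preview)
The paper does not prove Theorem~\ref{thm:JLM-iso-C}: it is quoted verbatim from~\cite{JLM21} and used as a black box in the Appendix to identify $\integralc$ with $\rtU^{\rtt,>}_v(L\mathfrak{sp}_{2n})$. There is therefore no ``paper's own proof'' to compare against.

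Your sketch is a plausible outline of the strategy in~\cite{JLM21}, but one point deserves comment. You propose to deduce injectivity of $\varrho$ from the PBWD basis of Theorem~\ref{shufflePBWD-C}. That is logically available within the present paper (the Appendix comes after Section~\ref{type C}, and Theorem~\ref{shufflePBWD-C} does not rely on the RTT picture), so there is no circularity here. However, this is not how~\cite{JLM21} proceeds: that paper predates the shuffle-algebra PBWD results and instead establishes the isomorphism by constructing both maps explicitly and checking the full set of Drinfeld-new relations directly from the RTT relations and~\eqref{affRTT-extra}, with injectivity coming from the explicit inverse rather than a dimension count. If your goal is to reconstruct the proof of the cited theorem on its own terms, you should not invoke Theorem~\ref{shufflePBWD-C}; if your goal is merely to justify the statement within the logical flow of the present paper, then citing~\cite{JLM21} suffices and no argument is needed.
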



\subsection{The RTT realization of $\integralc$}

Let $\rtU^{\rtt,>}_v(L\mathfrak{sp}_{N})$ be the $\BZ[v,v^{-1}]$-subalgebra of $\rtU^\rtt_v(L\mathfrak{sp}_{N})$
generated by the coefficients of $\{e^\pm_{ij}(z)\}_{1\leq i<j\leq N}$, the matrix coefficients of $E^\pm(z)$. The key goal
of this Appendix is to highlight the natural origin of the integral form $\integralc$ introduced in Definition~\ref{def:rttintegral}
and its specific quantum root vectors (a special case of~\eqref{eq:rtt-vectors-C})
\begin{equation}
\label{eq:rtt-root-vectors-C}
\begin{split}
  & \tilde{\mathcal{E}}^{\rtt}_{[n],s}:= \langle 2\rangle_{v}\cdot  e_{n,s},\\
  & \tilde{\mathcal{E}}^{\rtt}_{[i,j],s}:= \langle 1\rangle_{v}\cdot
    [\cdots[[e_{i,s},e_{i+1,0}]_{v},e_{i+2,0}]_{v},\cdots,e_{j,0}]_{v},\\
  & \tilde{\mathcal{E}}^{\rtt}_{[i,n],s}:= \langle 1\rangle_{v}\cdot
    [[\cdots[e_{i,s},e_{i+1,0}]_{v},,\cdots,e_{n-1,0}]_{v},e_{n,0}]_{v^2},\\
  & \tilde{\mathcal{E}}^{\rtt}_{[i,n,j],s}:= \langle 1\rangle_{v}\cdot
    [\cdots[[[\cdots[e_{i,s},e_{i+1,0}]_{v},\cdots, e_{n-1,0}]_{v}, e_{n,0}]_{v^2},e_{n-1,0}]_{v},\cdots,e_{j,0}]_{v},
\end{split}
\end{equation}
for any $1\leq i<j<n$, while the root generators $\tilde{\mathcal{E}}^{\rtt}_{[i,n,i],s}$ are defined slightly differently via:
\begin{equation}
\label{eq:rtt-root-vectors-C-long}
  \tilde{\mathcal{E}}^{\rtt}_{[i,n,i],s} :=
  \frac{-1}{1-v^2}[\tilde{\mathcal{E}}^{\rtt}_{[i,n-1],s},\tilde{\mathcal{E}}^{\rtt}_{[i,n],0}]-
  \sum^{\mathrm{same\ sign}}_{a+b=s}
  \tilde{\mathcal{E}}^{\rtt}_{[i,n],a} \tilde{\mathcal{E}}^{\rtt}_{[i,n-1],b},
\end{equation}
where the condition ``$\mathrm{same\ sign}$'' in the sum means that $a,b\leq 0$ if $s\leq 0$, and $a,b>0$ if $s>0$.

We also express the matrix coefficients of $E^\pm(z)$ as series in $z^{\pm 1}$:
\begin{equation*}
  e^+_{ij}(z)=\sum_{r>0} e^{(-r)}_{ij} z^r,\qquad  e^-_{ij}(z)=\sum_{r\geq 0} e^{(r)}_{ij} z^{-r}
  \qquad \forall\, 1\leq i<j\leq N,
\end{equation*}
and define $e_{ij}(z):=e^+_{ij}(z)-e^-_{ij}(z)$. The key technical result of this subsection is:

\begin{Prop}\label{prop:E-explicitly-C}
(a) For any $1\leq i<j<n$, we have:
\begin{equation}
\label{eq:iterated-1-C}
  e_{i,j+1}(z)=(1-v^2)^{i-j}\cdot
  [\cdots[[e_{i,i+1}(z),e^{(0)}_{i+1,i+2}]_{v},e^{(0)}_{i+2,i+3}]_{v},\cdots,e^{(0)}_{j,j+1}]_{v}.
\end{equation}

\noindent
(b) For any $1\leq i<n$, we have:
\begin{equation}
\label{eq:iterated-1.2-C}
  e_{i,n+1}(z)=(1-v^4)^{-1}(1-v^2)^{i-n+1}\cdot
  [[\cdots[e_{i,i+1}(z),e^{(0)}_{i+1,i+2}]_{v},\cdots, e^{(0)}_{n-1,n}]_{v}, e^{(0)}_{n,n+1}]_{v^2}.
\end{equation}

\noindent
(c) For any $1\leq i<j<n$, we have:
\begin{multline}
\label{eq:iterated-2-C}
  e_{i,j'}(z)=(1-v^4)^{-1}(1-v^2)^{i+j-2n+1}(-1)^{j-n}\times \\
  [\cdots[[[\cdots[e_{i,i+1}(z),e^{(0)}_{i+1,i+2}]_{v},\cdots, e^{(0)}_{n-1,n}]_{v}, e^{(0)}_{n,n+1}]_{v^2},
  e^{(0)}_{n-1,n}]_{v},\cdots,e^{(0)}_{j,j+1}]_{v}.
\end{multline}

\noindent
(d) For any $1\leq i<n$, we have:
\begin{equation}
\label{eq:iterated-3-C}
  e^\pm_{i,i'}(z)=\frac{-1}{1-v^2}[e^\pm_{in}(z),e^{(0)}_{i,n+1}]-e^\pm_{i,n+1}(z)e^\pm_{in}(z).
\end{equation}
\end{Prop}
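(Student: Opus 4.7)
The plan is to mirror the $D_n$-type argument of Proposition \ref{prop:E-explicitly}: reduce each of (a)--(d) to the case $i=1$ via the rank-reduction embeddings of \cite{JLM21}, and then extract the desired identities from suitable matrix coefficients of the RTT relations \eqref{affRTT} (together with the unitarity constraint \eqref{affRTT-extra} where needed). In each case one chooses a specific tensor basis vector $\langle v_a\otimes v_b|\cdots|v_c\otimes v_d\rangle$, expands all rational factors of $\bar R_{\mathrm{trig}}(z/w)$ as series in $z/w$, extracts the $[w^0]$-coefficient, and passes to the difference of the $L^+L^-$ versus $L^-L^-$ versions of the resulting identity in order to eliminate the Gauss $H$-factor. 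The two new features in $C_n$ that drive the differences from the $D_n$ argument are (i) the long root at node $n$, which replaces $v$-commutators by $v^2$-commutators at the step $n\leftrightarrow n+1$, and (ii) the new positive root $[i,n,i]$, for which the genuinely new quadratic relation (d) must be established.

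For part (a), the indices involved lie entirely in $\{1,\dots,n\}$, so the relevant $R$-matrix entries are of ``$A$-type'' and the argument proceeds word-for-word as for \eqref{eq:iterated-1}: compare $\langle v_1\otimes v_j|\cdots|v_j\otimes v_{j+1}\rangle$ of the RTT relation and extract the $[w^0]$-coefficient. For parts (b) and (c), I would extract the $[w^0]$-coefficient of $\langle v_1\otimes v_n|\cdots|v_n\otimes v_{n+1}\rangle$ of the same RTT relation. The crucial change compared to $D_n$ is that the $R$-matrix entries governing the passage $n\leftrightarrow n+1$ now carry $v^{\pm 2}$ rather than $v^{\pm 1}$ (reflecting $d_n=2$ together with $\bar n=1,\,\overline{n+1}=-1$), which produces the $v^2$-commutator and the prefactor $(1-v^4)^{-1}$ in \eqref{eq:iterated-1.2-C}. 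Part (c) then follows by iterating this identity together with (a), exactly as in the derivation of \eqref{eq:iterated-2}.

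Part (d) is the main new difficulty, since it is both quadratic in $e^\pm$'s and peculiar to type $C_n$. My plan is to extract the $[w^0]$-coefficient of the matrix element $\langle v_1\otimes v_n|\cdots|v_n\otimes v_{1'}\rangle$ (or equivalently of $\langle v_1\otimes v_{n+1}|\cdots|v_{n+1}\otimes v_{1'}\rangle$) of an appropriate RTT relation, where for the first time the $Q$-term of $\bar R_{\mathrm{trig}}$ contributes nontrivially, precisely because the pair $(1,1')$ activates the reflected summand $v^{\bar i-\bar j}\varepsilon_i\varepsilon_j\,e_{i'j'}\otimes e_{ij}$ of $Q$. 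This $Q$-contribution is what will force the extra quadratic term $-e^\pm_{1,n+1}(z)e^\pm_{1n}(z)$ in \eqref{eq:iterated-3-C}; the $R$- and $P$-parts of $\bar R_{\mathrm{trig}}$ reproduce the commutator $\tfrac{-1}{1-v^2}[e^\pm_{1n}(z),e^{(0)}_{1,n+1}]$ in the style of the proof of (b).

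The hard part will be carefully tracking all summands appearing in the Gauss expansion
\[
  \ell^\pm_{1,1'}(z) = h^\pm_1(z)e^\pm_{1,1'}(z) + \sum_{k>1} f^\pm_{1k}(z)h^\pm_k(z)e^\pm_{k,1'}(z),
\]
and verifying that, after subtracting the two orderings of the RTT relation and clearing the leading $h^\pm_1(z)$ factor (using the $C_n$-analogue of \eqref{eq:rtt-4}), all unwanted $f^\pm$ terms and all $e^\pm_{k,1'}(z)$ with $k>1$ cancel, leaving exactly \eqref{eq:iterated-3-C} for $i=1$. The rank-reduction embedding then promotes the identity to all $1\leq i<n$, completing the proposition; combined with (a)--(c) and formulas \eqref{eq:rtt-root-vectors-C}--\eqref{eq:rtt-root-vectors-C-long}, this yields $\varrho(\tilde{\mathcal{E}}^{\rtt}_{[i,n,i],s})\doteq e^{(s)}_{i,i'}$ and hence $\varrho(\integralc)=\rtU^{\rtt,>}_v(L\mathfrak{sp}_{2n})$, as anticipated.
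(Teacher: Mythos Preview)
Your overall strategy---rank reduction to $i=1$ followed by extracting matrix coefficients of the RTT relations---is exactly what the paper does, and your treatment of parts (a)--(c) is on target. However, your diagnosis of part~(d) contains a genuine error.

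You attribute the quadratic term $-e^\pm_{1,n+1}(z)e^\pm_{1,n}(z)$ to the $Q$-summand of $\bar R_{\trig}$. But a direct check shows that $Q$ does \emph{not} contribute to the matrix element $\langle v_1\otimes v_n|\cdots|v_n\otimes v_{1'}\rangle$: for $Q=\sum v^{\bar i-\bar j}\varepsilon_i\varepsilon_j\,e_{i'j'}\otimes e_{ij}$ to act nontrivially on $v_1\otimes v_n$ one would need $j'=1$ and $j=n$ simultaneously, which is impossible; similarly on the other side. The RTT equation at this matrix element therefore looks exactly like the ``$A$-type'' relations and yields only the single-commutator shape, as in~\eqref{eq:rtt-2-C}. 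Your worry about $f^\pm$-terms in the Gauss expansion of $\ell^\pm_{1,1'}(z)$ is also misplaced: since $F^\pm$ is lower unitriangular, $f^\pm_{1k}=0$ for $k>1$, so $\ell^\pm_{1,j}(z)=h^\pm_1(z)e^\pm_{1,j}(z)$ on the nose.

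What actually produces the quadratic term is a combination of two ingredients you did not identify. First, the constant term $\ell^-_{n,1'}[0]$ appearing in \eqref{eq:rtt-2-C} equals $\ell^-_{n,n}[0]\,e^{(0)}_{n,1'}$, and the unitarity relation~\eqref{affRTT-extra} gives the symmetry $e^{(0)}_{n,1'}=-e^{(0)}_{1,n+1}$. Second---and this is the crucial point---one needs the auxiliary matrix element $\langle v_1\otimes v_1|\cdots|v_1\otimes v_{n+1}\rangle$, which yields the nontrivial commutation
\[
  v\,e^{(0)}_{1,n+1}\,h^-_1(z)=h^-_1(z)\bigl(e^{(0)}_{1,n+1}-(1-v^2)e^-_{1,n+1}(z)\bigr).
\]
When you push $e^{(0)}_{1,n+1}$ past $h^-_1(z)$ in~\eqref{eq:rtt-2-C}, the correction term $-(1-v^2)e^-_{1,n+1}(z)$ is precisely what generates the extra product $-e^-_{1,n+1}(z)e^-_{1,n}(z)$. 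So the mechanism is: the index $1$ in $e^{(0)}_{1,n+1}$ collides with $h^-_1(z)$, not any contribution from~$Q$.
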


\begin{proof}
Due to the ``rank reduction'' embeddings of~\cite[\S3.3, Proposition 4.2]{JLM21}, it suffices to prove
formulas~\eqref{eq:iterated-1-C}--\eqref{eq:iterated-3-C} for $i=1$. In fact,
\eqref{eq:iterated-1-C}--\eqref{eq:iterated-2-C} for $i=1$ are proved completely analogously
to~\cite[(A.13, A.14)]{HT24}. Thus, we shall only provide details for $i=1$ case of~\eqref{eq:iterated-3-C}.

Comparing matrix coefficients $\langle v_1\otimes v_{n}|\cdots|v_{n}\otimes v_{1'}\rangle$ of both sides of the
RTT relation $\bar{R}_{\trig}(z/w)\Lf^-_1(z)\Lf^-_2(w)=\Lf^-_2(w)\Lf^-_1(z)\bar{R}_\trig(z/w)$, we get:
\begin{multline*}
   \frac{z-w}{vz-v^{-1}w} \ell^-_{1,n}(z)\ell^-_{n,1'}(w)+\frac{(v-v^{-1})z}{vz-v^{-1}w}\ell^-_{n,n}(z)\ell^-_{1,1'}(w)=\\
   \frac{z-w}{vz-v^{-1}w} \ell^-_{n,1'}(w)\ell^-_{1,n}(z)+\frac{(v-v^{-1})w}{vz-v^{-1}w}\ell^-_{n,n}(w)\ell^-_{1,1'}(z).
\end{multline*}
Expanding all rational factors as series in $z/w$ and evaluating the $[w^0]$-coefficients, we obtain:
\begin{equation}
\label{eq:rtt-2-C}
  v \ell^-_{1,n}(z)\ell^-_{n,1'}[0]=v\ell^-_{n,1'}[0]\ell^-_{1,n}(z)+(1-v^2)\ell^-_{n,n}[0]\ell^-_{1,1'}(z).
\end{equation}
Comparing matrix coefficients $\langle v_1\otimes v_{n}|\cdots|v_{n}\otimes v_{n}\rangle$ of the same RTT relation, we get:
\begin{equation*}
   \frac{z-w}{vz-v^{-1}w} \ell^-_{1,n}(z)\ell^-_{n,n}(w)+\frac{(v-v^{-1})z}{vz-v^{-1}w}\ell^-_{n,n}(z)\ell^-_{1,n}(w)=
   \ell^-_{n,n}(w)\ell^-_{1,n}(z).
\end{equation*}
Expanding both rational factors as series in $z/w$ and evaluating the $[w^0]$-coefficients, we obtain:
\begin{equation*}
   {\ell^-_{n,n}[0]}^{-1} \ell^-_{1,n}(z)=v^{-1} \ell^-_{1,n}(z){\ell^-_{n,n}[0]}^{-1},
\end{equation*}
which after left multiplication by $(\ell_{1,1}(z))^{-1}=(h^-_1(z))^{-1}$ yields:
\begin{equation}
\label{eq:rtt-4-C}
   {\ell^-_{n,n}[0]}^{-1} e^-_{1,n}(z)=v^{-1} e^-_{1,n}(z){\ell^-_{n,n}[0]}^{-1}.
\end{equation}
Comparing matrix coefficients $\langle v_1\otimes v_{1}|\cdots|v_{1}\otimes v_{n+1}\rangle$ of the same RTT relation, we get:
\begin{equation*}
   \ell^-_{1,1}(z)\ell^-_{1,n+1}(w)=
   \frac{z-w}{vz-v^{-1}w} \ell^-_{1,n+1}(w)\ell^-_{1,1}(z)+\frac{(v-v^{-1})w}{vz-v^{-1}w}\ell^-_{1,1}(w)\ell^-_{1,n+1}(z).
\end{equation*}
Expanding both rational factors as series in $z/w$ and evaluating the $[w^0]$-coefficients, we obtain:
\begin{equation*}
   v  \ell^-_{1,n+1}[0] \ell^-_{1,1}(z)=\ell^-_{1,1}(z) \ell^-_{1,n+1}[0] - (1-v^2)\ell^-_{1,1}[0] \ell^-_{1,n+1}(z),
\end{equation*}
which after left multiplication by $(\ell_{1,1}[0])^{-1}$ and evoking $\ell^-_{1,1}(z)=h^-_1(z)$ yields:
\begin{equation}
\label{eq:rtt-4.2-C}
   v e^{(0)}_{1,n+1} h^-_1(z)=h^-_1(z)\left(e^{(0)}_{1,n+1}-(1-v^2)e^-_{1,n+1}(z)\right).
\end{equation}
Plugging~(\ref{eq:rtt-4-C},~\ref{eq:rtt-4.2-C}) into~\eqref{eq:rtt-2-C} and evoking $e^{(0)}_{n,1'}=-e^{(0)}_{1,n+1}$,
we obtain the desired formula:
\begin{equation*}
  e^-_{1,1'}(z)=\frac{-1}{1-v^2}[e^-_{1,n}(z),e^{(0)}_{1,n+1}]-e^-_{1,n+1}(z)e^-_{1,n}(z).
\end{equation*}
Arguing in the same way, but using
$\bar{R}_{\trig}(z/w)\Lf^+_1(z)\Lf^-_2(w)=\Lf^-_2(w)\Lf^+_1(z)\bar{R}_\trig(z/w)$ instead, we also obtain
a similar formula for $e^+_{1,1'}(z)$. This completes our proof of \eqref{eq:iterated-3-C} for $i=1$.
\end{proof}

Combining Proposition~\ref{prop:E-explicitly-C} with~(\ref{JLM formulas C1},~\ref{JLM formulas C2})
and~(\ref{eq:rtt-root-vectors-C},~\ref{eq:rtt-root-vectors-C-long}), we get:

\begin{Cor}
For any $1\leq i < j< n$ and $s\in \BZ$, we have:
\begin{equation*}
  \varrho(\tilde{\mathcal{E}}^{\rtt}_{[i,j],s}) \doteq e^{(s)}_{i,j+1} ,\quad
  \varrho(\tilde{\mathcal{E}}^{\rtt}_{[i,n],s}) \doteq e^{(s)}_{i,n+1} ,\quad
  \varrho(\tilde{\mathcal{E}}^{\rtt}_{[i,n,j],s}) \doteq e^{(s)}_{i,j'} ,\quad
  \varrho(\tilde{\mathcal{E}}^{\rtt}_{[i,n,i],s}) \doteq e^{(s)}_{i,i'}.
\end{equation*}
\end{Cor}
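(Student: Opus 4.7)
The plan is to combine three ingredients: the isomorphism $\varrho$ of Theorem~\ref{thm:JLM-iso-C}, which identifies every Drinfeld generator $e_{i,s}=x^+_{i,s}$ with a Laurent coefficient of $e_{i,i+1}(zv^i)/(v-v^{-1})$ when $i<n$ and of $e_{n,n+1}(zv^{n+1})/(v^2-v^{-2})$ when $i=n$; the definitions \eqref{eq:rtt-root-vectors-C}--\eqref{eq:rtt-root-vectors-C-long} of the RTT root vectors as (iterated) $v$-commutators of Drinfeld generators; and the iterated-commutator identities (a)--(d) of Proposition~\ref{prop:E-explicitly-C}, which present the series $e_{i,j+1}(z)$, $e_{i,n+1}(z)$, $e_{i,j'}(z)$, and $e^\pm_{i,i'}(z)$ as (iterated) $v$-commutators of $e_{i,i+1}(z)$ with the zero-modes $e^{(0)}_{k,k+1}$ and $e^{(0)}_{n,n+1}$.

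For $\beta\in\{[i,j],[i,n],[i,n,j]\}$ with $1\leq i<j<n$, I would apply $\varrho$ to the iterated $v$-commutator that defines $\tilde{\mathcal{E}}^{\rtt}_{\beta,s}$ in~\eqref{eq:rtt-root-vectors-C}. Each inner Drinfeld generator $e_{k,0}$ becomes, up to an element of $\BQ^{\times}\cdot v^{\BZ}$, the zero-mode $e^{(0)}_{k,k+1}$ (or $e^{(0)}_{n,n+1}$ when $k=n$), while the ``active'' generator $e_{i,s}$ becomes the $z^{-s}$-coefficient of $e_{i,i+1}(zv^i)/(v-v^{-1})$. Consequently, $\varrho(\tilde{\mathcal{E}}^{\rtt}_{\beta,s})$ is---up to a scalar in $\BQ^{\times}\cdot v^{\BZ}$---the $z^{-s}$-coefficient of the iterated $v$-commutator appearing on the right-hand side of part (a), (b), or (c) of Proposition~\ref{prop:E-explicitly-C}, which equals the $z^{-s}$-coefficient of $e_{i,j+1}(z)$, $e_{i,n+1}(z)$, or $e_{i,j'}(z)$, and so is $\doteq e^{(s)}_{i,j+1}$, $e^{(s)}_{i,n+1}$, or $e^{(s)}_{i,j'}$, respectively. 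The preparation scalars $\langle 1\rangle_v,\langle 2\rangle_v$ in~\eqref{eq:rtt-root-vectors-C}, the denominators $(v-v^{-1}),(v^2-v^{-2})$ from Theorem~\ref{thm:JLM-iso-C}, and the overall constants $(1-v^2)^{i-j}$, $(1-v^4)^{-1}(1-v^2)^{i-n+1}$, etc.\ from Proposition~\ref{prop:E-explicitly-C} all lie in $\BQ^{\times}\cdot v^{\BZ}$, which is exactly the ambiguity permitted by $\doteq$.

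The main obstacle, and the reason for the particular choice~\eqref{eq:rtt-root-vectors-C-long} of the root vector $\tilde{\mathcal{E}}^{\rtt}_{[i,n,i],s}$, is the last identity. Here I would extract the $z^{-s}$-Fourier coefficient of~\eqref{eq:iterated-3-C} separately for $e^+_{i,i'}(z)$ and $e^-_{i,i'}(z)$, using that $e^+_{i,n+1}(z)e^+_{in}(z)$ contains only strictly positive powers of $z$ while $e^-_{i,n+1}(z)e^-_{in}(z)$ contains only non-positive powers. Applying the three already-proven identities of the corollary, the commutator piece translates into a scalar multiple of $[\tilde{\mathcal{E}}^{\rtt}_{[i,n-1],s},\tilde{\mathcal{E}}^{\rtt}_{[i,n],0}]$, while the quadratic piece translates into products $\tilde{\mathcal{E}}^{\rtt}_{[i,n],a}\tilde{\mathcal{E}}^{\rtt}_{[i,n-1],b}$ with $a+b=s$; the ``same-sign'' restriction in~\eqref{eq:rtt-root-vectors-C-long} is then forced by which half-series contributes the Fourier mode in question. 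The delicate point is that the proportionality constant relating $\varrho(e_{i,s})$ to $e^{(s)}_{i,i+1}$ carries a sign depending on whether $s$ is positive, zero, or negative, and one must check that the boundary contributions with $a=0$ or $b=0$ in~\eqref{eq:rtt-root-vectors-C-long} match the corresponding mixed terms $e^{(0)}_{i,n+1}e^{(s)}_{in}$ and $e^{(s)}_{i,n+1}e^{(0)}_{in}$ on the RTT side; since the conclusion only needs to hold up to $\doteq$, these sign and boundary adjustments only alter a constant in $\BQ^{\times}\cdot v^{\BZ}$, so the final identification $\varrho(\tilde{\mathcal{E}}^{\rtt}_{[i,n,i],s})\doteq e^{(s)}_{i,i'}$ follows.
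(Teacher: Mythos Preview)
Your approach is exactly the paper's: the corollary is presented there as an immediate consequence of combining Proposition~\ref{prop:E-explicitly-C} with the JLM formulas~(\ref{JLM formulas C1},~\ref{JLM formulas C2}) and the definitions~(\ref{eq:rtt-root-vectors-C},~\ref{eq:rtt-root-vectors-C-long}), with no further detail. One small caveat on the $[i,n,i]$ case: your closing clause ``since the conclusion only needs to hold up to $\doteq$, these sign and boundary adjustments only alter a constant'' is a non-sequitur---a global constant cannot absorb mismatched boundary terms in a sum; rather, as you correctly say just before, one must actually track the exact constants in $\varrho(\tilde{\mathcal{E}}^{\rtt}_{[i,n-1],s})$ and $\varrho(\tilde{\mathcal{E}}^{\rtt}_{[i,n],a})$ and verify that the boundary contributions match those coming from~\eqref{eq:iterated-3-C}, after which the overall scalar lands in $\BQ^\times\cdot v^{\BZ}$.
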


The following result explains why we called $\integralc$ the RTT integral form of $\qlc$:

\begin{Prop}
$\varrho(\integralc)=\rtU^{\rtt,>}_v(L\mathfrak{sp}_{2n})$.
\end{Prop}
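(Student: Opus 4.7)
The plan is to mirror the approach of the preceding $D_n$ subsection, establishing two mutual inclusions of $\BZ[v,v^{-1}]$-subalgebras.

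For the inclusion $\varrho(\integralc) \subset \rtU^{\rtt,>}_v(L\mathfrak{sp}_{2n})$: by Theorem~\ref{PBWDintegralrtt}(a), the algebra $\integralc$ is independent of the choice of normalized quantum root vectors, so it suffices to argue with the specific generators $\{\tilde{\mathcal{E}}^{\rtt}_{\beta,s}\}$ from~\eqref{eq:rtt-root-vectors-C} and~\eqref{eq:rtt-root-vectors-C-long}. The Corollary above identifies $\varrho(\tilde{\mathcal{E}}^{\rtt}_{\beta,s})$ with the appropriate $e^{(s)}_{i,k}$ up to a scalar in $\BQ^{\times}\cdot v^{\BZ}$. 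Sharpening this, one verifies that the scalar is in fact a unit $\pm v^m \in \BZ[v,v^{-1}]^{\times}$: a careful bookkeeping shows that the factors $(v-v^{-1})^{-1}$ and $(v^2-v^{-2})^{-1}$ produced by the Gauss decomposition substitutions~(\ref{JLM formulas C1},~\ref{JLM formulas C2}) combine with the powers of $(1-v^2)$ and the factor $(1-v^4)^{-1}$ from Proposition~\ref{prop:E-explicitly-C} and with the prefactors $\langle 1\rangle_v,\langle 2\rangle_v$ in~\eqref{eq:rtt-root-vectors-C} in such a way that all $\BZ[v,v^{-1}]$-non-units cancel. Consequently each $\varrho(\tilde{\mathcal{E}}^{\rtt}_{\beta,s})$ lies in $\rtU^{\rtt,>}_v(L\mathfrak{sp}_{2n})$.

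For the reverse inclusion $\rtU^{\rtt,>}_v(L\mathfrak{sp}_{2n}) \subset \varrho(\integralc)$: combined with the scalar refinement above, the Corollary puts $e^{(s)}_{i,k}$ into $\varrho(\integralc)$ for all ``basic'' positions $(i,k)$, namely $(i,j+1)$ for $1\leq i\leq j<n$, $(i,n+1)$ for $1\leq i\leq n-1$, $(i,j')$ for $1\leq i<j<n$, $(i,i')$ for $1\leq i\leq n-1$, together with $(n,n+1)$. These exhaust the pairs $(i,k)$ with $1\leq i<k\leq N$ and $i+k\leq N+1$, in bijection with $\Delta^{+}(C_n)$. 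For the remaining pairs with $i+k>N+1$, the elements $e^{(s)}_{i,k}$ are recovered from the ``unitarity'' relation~\eqref{affRTT-extra}: solving the $(i,k)$-matrix coefficient of $\Lf^\pm(z)D\Lf^\pm(z\xi)^{\mathrm{t}}D^{-1}=1$ and passing to the Gauss decomposition yields explicit $\BZ[v,v^{-1}]$-polynomial expressions for the coefficients of $e^\pm_{i,k}(z)$ in terms of $e^\pm$'s at the reflected basic positions $(k',i')$ (where $k'+i'=2N+2-i-k<N+1$) and products thereof. Induction on $i+k$ then places all remaining $e^{(s)}_{i,k}$ in $\varrho(\integralc)$.

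The main technical obstacle is the scalar tracking described above. In contrast to the $D_n$ case, where all scalings involved only powers of $(v-v^{-1})$, the present $C_n$ setup also carries $(v^2-v^{-2})$ factors from the short simple root $\alpha_n$ and the nontrivial $(1-v^4)^{-1}=-v^{-2}(v^2-v^{-2})^{-1}$ factor appearing in~\eqref{eq:iterated-1.2-C} and~\eqref{eq:iterated-2-C}, each of which must cancel precisely against the prefactor $\langle 1\rangle_v$ (or $\langle 2\rangle_v$) in the definition of the corresponding $\tilde{\mathcal{E}}^{\rtt}_{\beta,s}$. A second subtlety concerns the long root vectors $\tilde{\mathcal{E}}^{\rtt}_{[i,n,i],s}$ in~\eqref{eq:rtt-root-vectors-C-long}: these consist of a $v$-commutator term scaled by $-1/(1-v^2)$ plus a ``same-sign'' product sum, precisely engineered to match the quadratic identity~\eqref{eq:iterated-3-C}, in which $e^\pm_{i,i'}(z)$ splits as a $v$-commutator plus a product $e^\pm_{i,n+1}(z)e^\pm_{i,n}(z)$. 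Extracting the coefficient of $z^{\mp s}$ and matching the mode-separation condition with the $\pm$-splitting of the product requires a careful case analysis for $s\leq 0$ versus $s>0$.
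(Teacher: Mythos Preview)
Your outline has a genuine gap at the long roots $\beta=[i,n,i]$. You invoke Theorem~\ref{PBWDintegralrtt}(a) to treat the elements $\tilde{\mathcal{E}}^{\rtt}_{[i,n,i],s}$ of~\eqref{eq:rtt-root-vectors-C-long} as admissible generators of $\integralc$, but that theorem only asserts independence of the \emph{decomposition} $s=2s_i+\cdots+s_n$ and of the sign $\epsilon$ inside the specific template~\eqref{rvc4}. The element~\eqref{eq:rtt-root-vectors-C-long} is not of that form: it carries a prefactor $\frac{-1}{1-v^2}\notin\BZ[v,v^{-1}]$ and an extra product-sum correction, and moreover the two inner root vectors $\tilde{\mathcal{E}}^{\rtt}_{[i,n-1],s}$ and $\tilde{\mathcal{E}}^{\rtt}_{[i,n],0}$ use \emph{different} $s_k$'s (namely $s_i=s$ versus $s_i=0$), whereas~\eqref{rvc4} forces the same $s_i,\dots,s_{n-1}$ in both halves. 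So Theorem~\ref{PBWDintegralrtt}(a) does not apply, and neither the membership $\tilde{\mathcal{E}}^{\rtt}_{[i,n,i],s}\in\integralc$ nor the generation of $\integralc$ by these elements is established by your argument.

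This is exactly the point the paper's proof is devoted to. It isolates $\tilde{\mathcal{E}}'_{[i,n,i],s}:=\frac{-1}{1-v^2}[\tilde{\mathcal{E}}^{\rtt}_{[i,n-1],s},\tilde{\mathcal{E}}^{\rtt}_{[i,n],0}]$ (the product-sum tail being manifestly in $\integralc$) and proves $\tilde{\mathcal{E}}'_{[i,n,i],s}\in\integralc$ \emph{via the shuffle realization}: one checks directly that $\Psi(\tilde{\mathcal{E}}'_{[i,n,i],s})$ satisfies the three defining conditions~\eqref{rttconstant1-C}--\eqref{eq:crossspe-C} of $\mathcal{S}$, then invokes Theorem~\ref{rttthm-C}(a). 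The divisibility~\eqref{rttconstant1-C} requires the observation that the combination $\sA-\sB$ of $\zeta$-products is itself divisible by $\langle 1\rangle_v$, compensating for the $\frac{1}{1-v^2}$; the condition~\eqref{rttconstant2-C} is checked by rerunning the analysis of Proposition~\ref{goodC}. Finally, the specialization computation~\eqref{eq:long-spec-rtt} together with the PBWD basis shows that $\tilde{\mathcal{E}}'_{[i,n,i],s}-a\,\tilde{\mathcal{E}}^{+}_{[i,n,i],s}$ (for a suitable unit $a$) lies in the subalgebra generated by the shorter root vectors, which yields the reverse generation statement. Your scalar-tracking and unitarity-relation remarks are correct and relevant, but they do not substitute for this shuffle-algebra step.
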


\begin{proof}
We note that $\tilde{\mathcal{E}}^{\rtt}_{\beta,s}$ of~\eqref{eq:rtt-root-vectors-C} coincide with
$\tilde{\mathcal{E}}^{+}_{\beta,s}$ from~\eqref{eq:rtt-vectors-C} corresponding to $s_i=s$ and $s_{\ne i}=0$
in the formulas~\eqref{rvc1}--\eqref{rvc3}, for all roots except $\beta=[i,n,i]\ (1\leq i<n)$. While
$\tilde{\mathcal{E}}^{\rtt}_{[i,n,i],s}$ and $\tilde{\mathcal{E}}^{+}_{[i,n,i],s}$ differ, we claim
that they generate the same $\BZ[v,v^{-1}]$-subalgebra together with the elements above. To this end,
it is convenient to replace $\tilde{\mathcal{E}}^{\rtt}_{[i,n,i],s}$ rather with
\[
  \tilde{\mathcal{E}}'_{[i,n,i],s} :=
  \frac{-1}{1-v^2}[\tilde{\mathcal{E}}^{\rtt}_{[i,n-1],s},\tilde{\mathcal{E}}^{\rtt}_{[i,n],0}],
\]
as the elements $\tilde{\mathcal{E}}^{\rtt}_{[i,n],a}$, $\tilde{\mathcal{E}}^{\rtt}_{[i,n-1],b}$ featured
in \eqref{eq:rtt-root-vectors-C-long} belong to $\integralc$ for any $a,b\in\BZ$.

First, let us show that $\tilde{\mathcal{E}}'_{[i,n,i],s}$ belongs to $\integralc$, or equivalently that
$\Psi(\tilde{\mathcal{E}}'_{[i,n,i],s})$ belongs to $\mathcal{S}$ of Subsection \ref{rttc}, due to Theorem~\ref{rttthm-C}(a).
To this end, we set
\begin{align*}
  & \sA= \prod_{\ell=i}^{n-2} \left\{\zeta\left(\frac{x_{\ell,1}}{x_{\ell,2}}\right)
    \zeta\left(\frac{x_{\ell,1}}{x_{\ell+1,2}}\right)\zeta\left(\frac{x_{\ell+1,1}}{x_{\ell,2}}\right)\right\}\cdot
    \zeta\left(\frac{x_{n-1,1}}{x_{n-1,2}}\right)\zeta\left(\frac{x_{n-1,1}}{x_{n,1}}\right) , \\
  & \sB= \prod_{\ell=i}^{n-2}\left\{\zeta\left(\frac{x_{\ell,2}}{x_{\ell,1}}\right)
    \zeta\left(\frac{x_{\ell+1,2}}{x_{\ell,1}}\right)\zeta\left(\frac{x_{\ell,2}}{x_{\ell+1,1}}\right)\right\}\cdot
    \zeta\left(\frac{x_{n-1,2}}{x_{n-1,1}}\right)\zeta\left(\frac{x_{n,1}}{x_{n-1,1}}\right),
\end{align*}
so that
\[
  \Psi(\tilde{\mathcal{E}}'_{[i,n,i],s})\doteq
  \langle 1\rangle^{2n-2i-1}_{v} \langle 2\rangle_{v}\cdot
  \mathop{Sym}\left(\frac{x^{s}_{i,1}x^{-1}_{n-1,1} \cdot \prod_{k=i}^{n-1}x_{k,1}x_{k,2}\cdot (\sA-\sB)}
    {\mathrm{denom}_{[i,n-1]}(\{x_{k,1}\}_{k=i}^{n-1})\cdot \mathrm{denom}_{[i,n]}(\{x_{k,2}\}_{k=i}^{n-1},x_{n,1})} \right),
\]
where $\mathop{Sym}$ denotes symmetrization with respect to all pairs $\{x_{k,1},x_{k,2}\}_{k=i}^{n-1}$. Since
\begin{align*}
  & \zeta\left(\frac{x_{n-1,1}}{x_{n,1}}\right) - \zeta\left(\frac{x_{n,1}}{x_{n-1,1}}\right),\qquad
    \zeta\left(\frac{x_{\ell,1}}{x_{\ell,2}}\right) - \zeta\left(\frac{x_{\ell,2}}{x_{\ell,1}}\right) \ \  (i\leq \ell \leq n-1), \\
  & \zeta\left(\frac{x_{\ell,1}}{x_{\ell+1,2}}\right)\zeta\left(\frac{x_{\ell+1,1}}{x_{\ell,2}}\right) -
    \zeta\left(\frac{x_{\ell+1,2}}{x_{\ell,1}}\right)\zeta\left(\frac{x_{\ell,2}}{x_{\ell+1,1}}\right)\ \ (i\leq \ell \leq n-2)
\end{align*}
are all divisible by $\langle 1\rangle_{v}$, we see that so is $\sA-\sB$. Hence, $\Psi(\tilde{\mathcal{E}}'_{[i,n,i],s})$
satisfies the condition~\eqref{rttconstant1-C}.

Next, we show that for any $\unl{d}\in\text{KP}(\unl{k})$ with $\unl{k}=2\alpha_{i}+\cdots+2\alpha_{n-1}+\alpha_{n}$,
the specialization $\phi_{\unl{d}}(\Psi(\tilde{\mathcal{E}}'_{[i,n,i],s}))$ is divisible by $A_{\unl{d}}$ of \eqref{rttconstant2-C}.
If $\unl{d}=\unl{d}_{0}=\{d_{[i,n,i]}=1,\, d_{\gamma}=0\ \text{for other}\ \gamma\}$, then
\begin{equation}
\label{eq:long-spec-rtt}
  \phi_{\unl{d}_{0}}(\Psi(\tilde{\mathcal{E}}'_{[i,n,i],s}))\doteq
    \langle 1\rangle^{2n-2i-1}_{v} \langle 2\rangle^{2}_{v}\cdot w_{\beta,1}^{s+2n-2i}
\end{equation}
by Lemma \ref{phirv-C}, so that $\Psi(\tilde{\mathcal{E}}'_{[i,n,i],s})$ is non-zero and
$\phi_{\unl{d}_{0}}(\Psi(\tilde{\mathcal{E}}'_{[i,n,i],s}))$ satisfies the condition~\eqref{rttconstant2-C}.
For any $\unl{d}>\unl{d}_{0}$, arguing as in the proof of Proposition~\ref{goodC}, we see that the $\zeta$-factors
arising from the variables $x^{(\beta,s)}_{*,*}$ with $\beta=[i,n,j]$ and $d_{\beta}>0$ contribute $A_{\unl{d}}$
in the $\phi_{\unl{d}}$-specialization (since $o(x^{(\beta,s)}_{\ell,1})\ne o(x^{(\beta,s)}_{\ell,2})$ in the present setup
of $\Psi(\tilde{\mathcal{E}}^{\rtt}_{[i,n-1],s} \tilde{\mathcal{E}}^{\rtt}_{[i,n],0})$ and
$\Psi(\tilde{\mathcal{E}}^{\rtt}_{[i,n],0} \tilde{\mathcal{E}}^{\rtt}_{[i,n-1],s})$, we actually never have to reserve to
the $Q$-factors of~\eqref{eq:Qform-C} or the factors~\eqref{eq:otherfactors} that were utilized a few times in the proof
of Proposition~\ref{goodC}, and thus the overall contribution of $A_{\unl{d}}$ arises precisely from the same $\zeta$-factors
as used in the proof of Proposition~\ref{goodC}).

Finally, if we expand $\tilde{\mathcal{E}}'_{[i,n,i],s}$ as a linear combination of monomials
$\prod_{\ell=1}^{k}e_{i_\ell,s_{\ell}}$ with coefficients in $\BZ[v,v^{-1}]$, then as in the proof of
Proposition~\ref{goodC} we also see that $\Psi(\tilde{\mathcal{E}}'_{[i,n,i],s})$ is integral. Thus
$\Psi(\tilde{\mathcal{E}}'_{[i,n,i],s})\in \mathcal{S}$, so that $\tilde{\mathcal{E}}'_{[i,n,i],s}\in\integralc$
by Theorem~\ref{rttthm-C}(a). On the other hand, combining~\eqref{eq:long-spec-rtt} with Lemma~\ref{vanishC} and
Theorem~\ref{rttthm-C}(b), we see that $\tilde{\mathcal{E}}'_{[i,n,i],s}-a\cdot \tilde{\mathcal{E}}^{+}_{[i,n,i],s}$
is a polynomial in $\tilde{\mathcal{E}}^{+}_{\beta,s}\ (|\beta|\leq 2n-2i)$ with coefficients in $\BZ[v,v^{-1}]$
for some $a\in \BQ^\times \cdot v^{\BZ}$.

This proves that the quantum root vectors $\{\tilde{\mathcal{E}}^{\rtt}_{\beta,s}\}^{s\in\BZ}_{\beta\in\Delta^{+}}$ indeed generate $\integralc$.
\end{proof}


\medskip

\end{document}